\documentclass[12pt]{article}

%%%%%%% adjusting section headers
\usepackage{sectsty}
\sectionfont{\centering}

\usepackage{graphicx} % Required for inserting images
\usepackage{amsthm}
\usepackage{amsmath}
\usepackage{amsfonts}
\usepackage{amssymb}
\usepackage{enumerate}
\usepackage{esint}
\usepackage{xcolor}
\usepackage{float} %to make figures appear at any given height in a page
\usepackage{geometry}
\usepackage{hyperref}
\usepackage{mdwlist}
\geometry{paper=letterpaper,left=2cm,right=2cm,top=2cm,bottom=3cm}
\numberwithin{equation}{section} % so that equation numbers indicate the section that they belong to
%%%%%%%%%%%%%%%

\newtheorem{theorem}{Theorem}[section]

\newtheorem{proposition}{Proposition}[section]

\newtheorem{lemma}{Lemma}[section]

\newtheorem{corollary}{Corollary}[section]

\theoremstyle{definition}
\newtheorem{definition}{Definition}[section]

\theoremstyle{remark}
\newtheorem{remark}{Remark}[]

%%%%%%%%%%%%%%%%

\newcommand{\dist}{\mathrm{dist}}
\newcommand{\mres}{\mathbin{\vrule height 1.6ex depth 0pt width
0.13ex\vrule height 0.13ex depth 0pt width 1.3ex}}     % restriction of a measure

\newcommand{\R}{\mathbb{R}}
\newcommand{\lmin}{\lambda_{\mathrm{min}}}
\newcommand{\lmax}{\lambda_{\mathrm{max}}}
\newcommand{\de}{\mathrm{d}}
%%%%%%%%%%%%%%%%%%%%%%%

\title{On the geometry of measures with density bounds in a Hölder anisotropic setting} 
\author{Ignacio Tejeda}
\date{}

\begin{document}

\maketitle
\begin{abstract}
    We study the regularity of the support of a Radon measure $\mu$ on $\mathbb R^{n+1}$ for which anisotropic versions of its $n$-dimensional density ratio and its doubling character are assumed to converge with H\"older rate. We show that in either case, if the support of $\mu$ is flat enough, then it is a $C^{1,\gamma}$ $n$-dimensional submanifold of $\mathbb R^{n+1}$, for some $\gamma\in (0,1)$. If the flatness assumption is dropped, then the support of $\mu$ is the union of a $C^{1,\gamma}$ $n$-dimensional submanifold of $\mathbb R^{n+1}$ and a closed singular set that is either empty if $n\leq 2$, or has Hausdorff dimension at most $n-3$ if $n\geq 3$.
\end{abstract}

\section{Introduction}\label{introduction}
    Let $\mu$ be a Radon measure on $\R^{n+1}$. We consider the problem of characterizing geometric properties of $\mu$ with the behavior of its $m$-dimensional density. Traditionally, this quantity is defined as
    \begin{equation}\Theta^m(\mu,X)=\lim_{r\searrow 0}\frac{\mu(B(X,r))}{\omega_mr^m},\label{introduction - round density definition}\end{equation}
    provided that the limit exists, where $\omega_m$ denotes the $m$-dimensional Lebesgue measure of the unit ball in $\R^m$, and $B(X,r)$ is an Euclidean open ball of radius $r$ and center $X$ in $\R^{n+1}$. If the limit does not exist, one can consider the lower and upper densities of $\mu$, $\Theta^{m}_*(\mu,\cdot)$ and $\Theta^{*m}(\mu,\cdot)$, obtained by replacing the limit in \eqref{introduction - round density definition} with $\liminf$ or $\limsup$ as $r\searrow 0$, respectively, both of which always exist.
    
    In the context of this work, much of the geometric information about a measure $\mu$ is contained in its \textit{support}, the set
    $$\mathrm{spt}(\mu)=\{X\in\mathbb R^{n+1}: \mu(B(X,r))>0,\text{ for all }r>0\}.$$
    Intuitively, if the ratio $\frac{\mu(B(X,r))}{\omega_mr^m}$ behaves well, one can expect $\mathrm{spt}(\mu)$ to behave as a set of Hausdorff dimension $m$ near $X$, possibly with good regularity properties depending on the asymptotic behavior of $\frac{\mu(B(X,r))}{\omega_mr^m}$ as $r\searrow 0$.
    
    Results in this direction originated with the seminal work of Besicovitch in \cite{Bes28}, \cite{Bes38}, \cite{Bes39}, where he showed that if $m=1$, $n+1=2$ and $\mu = \mathcal{H}^1\mres\Sigma$ with $0<\mathcal{H}^1(\Sigma)<\infty$, then the existence, positivity and finiteness $\mathcal{H}^1-$almost everywhere of $\Theta^1(\mu,\cdot)$  on $\Sigma$ is equivalent to the $1-$rectifiability of $\mu$. After several decades, work of various authors including Marstrand \cite{Mar61}, Mattila \cite{Mat75} and Preiss \cite{Pre87} culminated in a deep result of Preiss, stating that given any integer $1\leq m\leq n+1$ and any Radon measure $\mu$ on $\R^{n+1}$, the $\mu-$almost everywhere existence, positivity and finiteness of $\Theta^m(\mu,\cdot)$ is equivalent to the $m-$rectifiability of $\mu$ (see also notes by De Lellis in \cite{De08}). This completed the picture in the qualitative setting of rectifiability.

    More recently, work has been done in connection with densities and other analytic quantities in quantitative settings. Tolsa showed in \cite{Tol15} that the so-called weak density condition implies uniform rectifiability for Ahlfors-David regular measures, extending a result of David and Semmes (\cite{DS91}, \cite{DS92}) to arbitrary dimensions. In a different direction, higher order rectifiability and parametrization results have been obtained by David, Kenig and Toro \cite{DKT01},  Ghinassi \cite{Ghi20}, Del Nin and Idu \cite{DI22}, Hoffman \cite{Hof24} and Lewis \cite{Le13}.
    
    In \cite{DKT01}, the authors showed that if there exists $\alpha\in(0,1)$ such that $\mu$ locally satisfies
    \begin{equation}\label{introduction - dkt assumption}
        \left|\frac{\mu(B(X,r))}{\omega_nr^n}-1\right| \leq Cr^\alpha, \quad X\in\Sigma=\mathrm{spt}(\mu),
    \end{equation}
    for small $r>0$, then under a suitable flatness assumption, $\Sigma$ is a $C^{1,\gamma}-$submanifold of $\R^{n+1}$ of dimension $n$, where $\gamma\in(0,1)$ depends on $\alpha$. Notice that \eqref{introduction - dkt assumption} implies that $\Theta^n(\mu,\cdot)=1$ everywhere on $\Sigma$, and it gives additional information on the rate at which this limit is attained. The flatness assumption needed in \cite{DKT01} is that $\Sigma$ is Reifenberg flat of dimension $n$, with a constant\footnote{Although their results are stated with the assumption that $\Sigma$ is Reifenberg-flat with vanishing constant, one can check that the vanishing condition is not necessary in their proofs.} that is small enough depending on $n$ (see Section \ref{prelim} or Reifenberg's work in \cite{Rei60}). This assumption helps ensure that $\Sigma$ does not have many holes \cite{Rei60}, as well as ruling out cone singularities \cite{KP87}.

    More generally, it is shown in \cite{DKT01} that the same conclusion about $\Sigma$ holds if $\mu$ obeys a quantitative form of \textit{asymptotic optimal doubling}.
    \begin{definition}
        A Radon measure $\mu$ on $\mathbb R^{n+1}$ is \emph{asymptotically optimally doubling} of dimension $n$ if for every compact set $K\subset\mathbb R^{n+1}$,
        $$\limsup_{r\searrow 0}\left\lbrace\left|\frac{\mu(B(X,tr))}{\mu(B(X,r))}-t^n\right|:X\in \Sigma\cap K,\ \frac{1}{2}\leq t\leq 1\right\rbrace=0.$$
        Additionally, given $\alpha\in(0,1)$, $\mu$ is \emph{$\alpha$-H\"older asymptotically optimally doubling} of dimension $n$ if for every compact set $K\subset\mathbb R^{n+1}$ there exist constants $C_K>0$ and $r_K>0$ such that for every $r\in (0,r_K]$,
        
        \begin{equation}
            \label{introduction - holder asod definition}
            \sup\left\lbrace\left|\frac{\mu(B(X,tr))}{\mu(B(X,r))}-t^n\right|:X\in \Sigma\cap K,\ \frac{1}{2}\leq t\leq 1\right\rbrace\leq C_Kr^\alpha.
        \end{equation}
    \end{definition}

    In this work we consider conditions that are analogous versions of \eqref{introduction - dkt assumption} and \eqref{introduction - holder asod definition} in an anisotropic setting, where the balls used in both conditions are replaced with ellipses whose shape depends on their center. More precisely, we consider a matrix valued function $X\mapsto\Lambda(X)$, $X\in \R^{n+1}$, such that $\Lambda(X)$ is symetric, positive definite for every $X$. The ellipses are given by
    \begin{equation}
        B_{\Lambda}(X,r)=X+\Lambda(X)B(0,r),\quad r>0.
        \label{introduction - ellipses definition}
    \end{equation}
    The corresponding $m-$density is
    \begin{equation}
        \label{introduction - elliptic density definition}\Theta^m_{\Lambda}(\mu,X)=\lim_{r\searrow 0}\frac{\mu(B_{\Lambda}(X,r))}{\omega_mr^m},\quad X\in\Sigma,
    \end{equation}
   whenever the limit exists; otherwise, one could consider the corresponding lower and upper densities as in the Euclidean case. This type of density has been considered by Casey, Goering, Toro and Wilson in \cite{CGTW25}, where the authors showed that $m-$rectifiability can be characterized by the $\mu-$almost everywhere existence, positivity and finiteness of  $\Theta^m_{\Lambda}(\mu,\cdot)$. For our purposes, we will restrict our attention to the case $m=n$.
   
   Our arguments will also rely on the notion of Reifenberg flatness, defined in terms of the quantity
   \begin{equation*}
b\beta_\Sigma(X,r) = \inf_P\left\lbrace \frac{1}{r}D[\Sigma\cap B(X,r); P\cap B(X,r)]\right\rbrace.
\end{equation*}
Here $D[\cdot,\cdot]$ denotes Hausdorff distance and the infimum is taken over all $n-$planes containing $X$. Given a compact set $K\subset\R^{n+1}$ and a radius $r_0>0$, we denote
$$b\beta_\Sigma(K,r_0)=\sup_{r\in (0,r_0]}\sup_{X\in\Sigma\cap K}b\beta_\Sigma(X,r).$$
Some of our main results make reference to the following geometric condition:
\begin{equation}
    \begin{split}
    \label{introduction - flatness condition}
        &\text{For every compact set $K\subset \mathbb R^{n+1}$, there exists $r_K>0$ depending on $K$ and $\Lambda$,}\\ 
        &\text{such that $b\beta_\Sigma(K,r_K)<\delta_K$, where $\delta_K>0$ is a number determined by $K$ and $\Lambda$.}    \end{split}
\end{equation}
Note that any set $\Sigma$ satisfies \ref{introduction - flatness condition} with $\delta_K\geq 1$. On the other hand, if $\delta_K<1$, then \eqref{introduction - flatness condition} gives information on the flatness of $\Sigma$ at points in $\Sigma\cap K$.

\begin{theorem}\label{theorem 2}
       Suppose the mapping $X\mapsto\Lambda(X)$ is locally H\"older continuous with exponent $\beta\in (0,1)$. Assume that that there exists $\alpha\in(0,1)$ such that the following holds: for every compact set $K\subset\R^{n+1}$ there exists a constant $C_K>0$ such that for every $X\in\Sigma\cap K$, $t\in [\frac{1}{2},1]$ and $r\in(0,1]$,
       \begin{equation}
        \label{introduction - lambda holder asod}
        \left|\frac{\mu(B_\Lambda(X,tr))}{\mu(B_\Lambda(X,r))}-t^n\right|\leq C_Kr^\alpha.
    \end{equation}
    If $n\geq 3$, suppose additionally that $\Sigma$ satisfies \eqref{introduction - flatness condition} with $\delta_K$ small enough depending on $K$ and $\Lambda$. Then $\Sigma$ is a $C^{1,\gamma}$ $n$-dimensional submanifold of $\R^{n+1}$, for some $\gamma\in(0,1)$ depending on $\alpha$ and $\beta$.
   \end{theorem}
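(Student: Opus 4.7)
The plan is to reduce Theorem \ref{theorem 2} to the framework of \cite{DKT01} in two stages: first convert the anisotropic Hölder-doubling hypothesis \eqref{introduction - lambda holder asod} into a density estimate analogous to \eqref{introduction - dkt assumption}, then linearize $\Lambda$ so the Euclidean parametrization arguments apply with controlled anisotropy errors. For the first stage, fix a compact $K$ and $X\in\Sigma\cap K$; iterating \eqref{introduction - lambda holder asod} across dyadic scales and telescoping the ratios $\mu(B_\Lambda(X,r_k))/\mu(B_\Lambda(X,r_{k-1}))$ against $2^{-n}$ should yield a positive, finite anisotropic density $\theta_\Lambda(X)=\lim_{r\searrow 0}\mu(B_\Lambda(X,r))/(\omega_n r^n)$ together with the quantitative rate
\[
\left|\frac{\mu(B_\Lambda(X,r))}{\omega_n r^n}-\theta_\Lambda(X)\right|\leq C_K' r^\alpha.
\]
Comparing anisotropic balls at nearby centers via the Hölder continuity of $\Lambda$ then shows that $\theta_\Lambda$ is itself locally Hölder with exponent $\min(\alpha,\beta)$, so that normalizing $\mu$ by $\theta_\Lambda$ produces a DKT-type density bound modulo an $O(r^{\min(\alpha,\beta)})$ error.

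For the linearization step, at each $X_0\in\Sigma\cap K$ the affine map $Y\mapsto\Lambda(X_0)^{-1}(Y-X_0)$ sends $B_\Lambda(X_0,r)$ onto $B(0,r)$ and turns the matrix field into a Hölder field $\widetilde\Lambda$ with $\widetilde\Lambda(0)=I$. For $|Y|\le r$ this gives $B_{\widetilde\Lambda}(Y,s)=Y+(I+O(r^\beta))B(0,s)$, so anisotropic and Euclidean balls differ only by an $O(r^\beta)$ multiplicative distortion. Feeding this back into the density estimate produces a Euclidean analogue of \eqref{introduction - dkt assumption} with effective Hölder exponent $\gamma_0=\min(\alpha,\beta)$ after absorbing the anisotropy error.

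With Euclidean density control in hand, I would follow the scheme of \cite{DKT01}: associate to each pair $(X,r)$ an optimal $n$-plane $P(X,r)$ realizing (up to a constant) the infimum in $b\beta_\Sigma(X,r)$, and show that the density estimate forces the tilt $\angle(P(X,r),P(X,r/2))$ to be $O(r^{\gamma_0})$, so that $b\beta_\Sigma(X,r)\to 0$ at a Hölder rate. For $n\le 2$ no a priori flatness is needed since the low-dimensional classification of $n$-uniform measures precludes conical singularities at density one, while for $n\ge 3$ the smallness of $b\beta_\Sigma(K,r_K)$ assumed in \eqref{introduction - flatness condition} ensures the iteration starts close enough to a plane for the decay to take effect. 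Integrating the tilt estimate in the Reifenberg manner then produces a $C^{1,\gamma}$ graph parametrization of $\Sigma$ over each tangent plane.

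The principal obstacle will be the joint bookkeeping of the two Hölder rates $\alpha$ and $\beta$: changing the center by $|X-Y|$ introduces an $|X-Y|^\beta$ anisotropy error that must be compatible with the $r^\alpha$-decay of the doubling at scale $r\sim|X-Y|$ throughout the iteration. One must ensure that both sources of error fit uniformly into a single geometric decay $b\beta_\Sigma(X,r)\lesssim r^\gamma$ without further degrading the final exponent, which I expect to force $\gamma$ to be a definite fraction of $\min(\alpha,\beta)$ determined by the standard loss in the Reifenberg iteration.
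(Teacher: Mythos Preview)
Your proposal is correct and follows essentially the same route as the paper: pass from the H\"older doubling hypothesis to an anisotropic density estimate for the normalized measure $\mu/\theta_\Lambda$ (this is the paper's Lemma~\ref{asod - technical lemma}, which reduces Theorem~\ref{theorem 2} to Theorem~\ref{theorem 1}), then linearize at each $X_0$ via $\Lambda(X_0)^{-1}$ and run the DKT01 moment and $\beta$-number machinery with the anisotropy absorbed as an $O(r^\beta)$ perturbation. One refinement on the bookkeeping you flag at the end: comparing $\theta_\Lambda(X)$ with $\theta_\Lambda(Y)$ forces the scale choice $r\sim|X-Y|^{1/(1+\alpha)}$ so that the density error $O(r^\alpha)$ and the center-shift error $O(|X-Y|/r)$ balance, which yields H\"older exponent $\min(\alpha,\beta)/(1+\alpha)$ for $\theta_\Lambda$ (and hence for the normalized density estimate) rather than $\min(\alpha,\beta)$.
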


   \begin{theorem}\label{theorem 1}
       Suppose the mapping $X\mapsto\Lambda(X)$ is locally H\"older continuous with exponent $\beta\in (0,1)$. Assume that that there exists $\alpha\in(0,1)$ such that the following holds: for every compact set $K\subset\R^{n+1}$ there exists a constant $C_K>0$ such that for every $X\in\Sigma\cap  K$ and $r\in(0,1]$,
       \begin{equation}
        \label{introduction - main density estimate}\left|\frac{\mu(B_{\Lambda}(X,r))}{\omega_nr^n}-1\right| \leq C_Kr^\alpha.
    \end{equation}
    If $n\geq 3$, suppose additionally that $\Sigma$ satisfies \eqref{introduction - flatness condition} with $\delta_K$ small enough depending on $K$ and $\Lambda$. Then $\Sigma$ is a $C^{1,\gamma}$ $n$-dimensional submanifold of $\R^{n+1}$, for some $\gamma\in(0,1)$ depending on $\alpha$ and $\beta$.
   \end{theorem}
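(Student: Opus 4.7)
The plan is to deduce Theorem \ref{theorem 1} directly from Theorem \ref{theorem 2}: I will show that the one-scale density bound \eqref{introduction - main density estimate} automatically implies the two-scale comparison \eqref{introduction - lambda holder asod}, after which the desired conclusion is immediate. Heuristically this is natural because \eqref{introduction - main density estimate} pins down $\mu(B_\Lambda(X,r))$ up to a relative error of order $r^\alpha$ at every scale, so the ratio of $\mu$ at two comparable scales $r$ and $tr$ should inherit an error of the same order.

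To carry this out, fix a compact set $K\subset\R^{n+1}$ with associated constant $C_K$, and choose $r_K\in(0,1]$ so that $C_Kr_K^\alpha\leq 1/2$. For $X\in\Sigma\cap K$, $t\in[\tfrac12,1]$ and $r\in(0,r_K]$, set
\[
    \eta_1=\frac{\mu(B_\Lambda(X,tr))}{\omega_n(tr)^n}-1,\qquad \eta_2=\frac{\mu(B_\Lambda(X,r))}{\omega_nr^n}-1,
\]
so that $|\eta_1|,|\eta_2|\leq C_Kr^\alpha\leq 1/2$ by \eqref{introduction - main density estimate} (using $t\leq 1$ for $\eta_1$). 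A direct computation gives
\[
    \left|\frac{\mu(B_\Lambda(X,tr))}{\mu(B_\Lambda(X,r))}-t^n\right|=t^n\left|\frac{\eta_1-\eta_2}{1+\eta_2}\right|\leq 4C_Kr^\alpha,
\]
which is exactly \eqref{introduction - lambda holder asod} on $(0,r_K]$. Monotonicity of $r\mapsto\mu(B_\Lambda(X,r))$ together with the lower bound $\mu(B_\Lambda(X,r_K))\geq\omega_nr_K^n/2$ makes the ratio bounded above uniformly in $X\in\Sigma\cap K$ for $r\in(r_K,1]$, so the inequality extends to all $r\in(0,1]$ after enlarging $C_K$ using $r^\alpha\geq r_K^\alpha$. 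Therefore $\mu$ is $\alpha$-H\"older asymptotically optimally doubling on $K$ with a new constant $C_K'$, and applying Theorem \ref{theorem 2} --- under the same flatness hypothesis \eqref{introduction - flatness condition} when $n\geq 3$ --- yields that $\Sigma$ is a $C^{1,\gamma}$ $n$-dimensional submanifold of $\R^{n+1}$.

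The main obstacle here is essentially non-existent: the real analytic and geometric work is already encapsulated in Theorem \ref{theorem 2}, and the present reduction simply exploits the fact that the density hypothesis is strictly stronger than the asymptotic doubling hypothesis. The only mild care required is quantitative bookkeeping: one must verify that the H\"older exponent $\alpha$ is preserved through the algebraic step above (it is, since $|\eta_1-\eta_2|\lesssim r^\alpha$), and that the flatness threshold $\delta_K$ required by Theorem \ref{theorem 2} is compatible with the one assumed in Theorem \ref{theorem 1}. This compatibility is automatic, since both statements impose condition \eqref{introduction - flatness condition} with $\delta_K$ depending on $K$ and $\Lambda$ in the same way, so the exponent $\gamma$ produced by Theorem \ref{theorem 2} furnishes the one claimed here in terms of $\alpha$ and $\beta$.
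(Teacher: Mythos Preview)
Your algebraic reduction is correct in isolation: the density bound \eqref{introduction - main density estimate} does imply the H\"older doubling bound \eqref{introduction - lambda holder asod} with the same exponent $\alpha$, and your computation is essentially the one carried out in the paper as equation \eqref{pseudo tangents - quantitative asod} in the proof of Proposition \ref{pseudo tangents - density implies asod}.

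The problem is the logical direction of the dependency. In this paper, Theorem \ref{theorem 2} is \emph{derived from} Theorem \ref{theorem 1}, not the other way around: see the sentence in the introduction ``As we will see, the proof of Theorem \ref{theorem 2} relies on Theorem \ref{theorem 1}'', and Section \ref{asod}, where the doubling hypothesis is used to build an auxiliary measure $\mu_0$ satisfying the density estimate \eqref{asod - mu0 has density 1}, after which Theorem \ref{theorem 1} is invoked. So your argument, read within the paper, is circular: you are proving Theorem \ref{theorem 1} by appealing to Theorem \ref{theorem 2}, whose only proof here rests on Theorem \ref{theorem 1}.

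The paper's actual proof of Theorem \ref{theorem 1} is self-contained and substantially more work: it proceeds via the moment estimates of Proposition \ref{moments - main proposition}, the $\beta$-number decay of Proposition \ref{beta numbers - main proposition}, the analysis of $\Lambda$-pseudo tangent measures in Proposition \ref{pseudo tangents - pseudo tangents are uniform}, and the flatness upgrade of Proposition \ref{pseudo tangents - main flatness result}, all assembled through Proposition \ref{reif flatness - dkt theorem}. Your reduction would only be a valid shortcut if Theorem \ref{theorem 2} had an independent proof not passing through Theorem \ref{theorem 1}; no such proof is given here.
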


   \begin{remark}
       The H\"older continuity condition above and \eqref{introduction - main density estimate} will be often referred to as the continuity and density assumptions of Theorem \ref{theorem 1}.
   \end{remark}

   These are analogues of the corresponding results in \cite{DKT01}. The main novelty here is the ability to replace round balls with ellipses that change from point to point. This type of question lies in the framework of studying densities or other related analytic quantities determined by norms other than the Euclidean one. In our case, the associated norm depends on the point, and is given by $\|Z\|_X=|\Lambda(X)^{-1}Z|$, so that
   $$B_{\Lambda}(X,r)=\{Y\in\R^{n+1}:\|Y-X\|_X < r\}.$$

    As we will see, the proof of Theorem \ref{theorem 2} relies on Theorem \ref{theorem 1}. On the other hand, the proof of Theorem \ref{theorem 1} uses the following result of \cite{DKT01}.

    \begin{proposition}[\cite{DKT01} - Proposition 9.1]
\label{introduction - DKT beta to C1gamma regularity}
Let $\gamma\in (0,1]$. Suppose $\Sigma$ is a Reifenberg-flat set with vanishing constant of dimension $m$ in $\R^{n+1}$, $m\leq n+1$, and that for each compact set $K\subset\R^{n+1}$ there exist constants $C_K,r_K>0$ such that
\begin{equation}
    \label{introduction - beta estimate}
    \beta_\Sigma(X,r)\leq C_Kr^\gamma,
\end{equation}
for all $X\in K\cap\Sigma$ and $r\in (0,r_K]$. Then $\Sigma$ is a $C^{1,\gamma}$ submanifold of dimension $m$ of $\R^{n+1}$.
\end{proposition}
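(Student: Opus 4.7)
The plan is to combine Reifenberg's topological disk theorem with the quantitative control on approximating planes furnished by the Hölder decay $\beta_\Sigma(X,r)\le C_Kr^\gamma$. Reifenberg-flatness with vanishing constant already yields a local bi-Hölder parametrization of $\Sigma$ by an $m$-dimensional disk, so $\Sigma$ is a topological $m$-submanifold of $\mathbb R^{n+1}$. What remains is to upgrade the local parametrization to a $C^{1,\gamma}$ one using the extra decay of $\beta_\Sigma$, and for this the key is to produce a Hölder continuous field of tangent planes.

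First I would construct a tangent plane at each $X\in\Sigma$. Fix a compact set $K$, a point $X\in\Sigma\cap K$, and for each integer $k\ge 0$ with $2^{-k}\le r_K$ pick an $m$-plane $P_k(X)$ which nearly attains the infimum defining $\beta_\Sigma(X,2^{-k})$, so that $\Sigma$ lies within distance $C\,2^{-k(1+\gamma)}$ of $P_k(X)$ inside $B(X,2^{-k})$. Since $P_k(X)$ and $P_{k+1}(X)$ both approximate the same set at comparable scales, an elementary linear-algebraic argument based on choosing $m+1$ affinely independent points of $\Sigma$ well-spread in $B(X,2^{-k-1})$ bounds the angle between consecutive planes by $C\,2^{-k\gamma}$. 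Summability of this geometric series gives a limit plane $T_X\Sigma$ with
\[
\mathrm{angle}(P_k(X),T_X\Sigma)\le C\,2^{-k\gamma}.
\]
A parallel comparison at a fixed scale between $P_k(X)$ and $P_k(Y)$ for nearby $X,Y\in\Sigma\cap K$ with $|X-Y|\sim 2^{-k}$ then yields $\mathrm{angle}(T_X\Sigma,T_Y\Sigma)\le C\,|X-Y|^\gamma$.

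Next I would convert the Hölder continuity of $X\mapsto T_X\Sigma$ into a $C^{1,\gamma}$ graph representation. Fix $X_0\in\Sigma\cap K$, introduce coordinates $(y,z)\in T_{X_0}\Sigma\times T_{X_0}\Sigma^{\perp}$, and use the Reifenberg parametrization together with the fact that $T_{X_0}\Sigma$ well approximates $\Sigma$ at all sufficiently small scales to write $\Sigma$ locally near $X_0$ as the graph of a Lipschitz function $f:T_{X_0}\Sigma\to T_{X_0}\Sigma^{\perp}$ with small Lipschitz constant. At each $y$ the plane $T_{(y,f(y))}\Sigma$ is the graph of an affine map $L_y$; from the $2^{-k(1+\gamma)}$ approximation of $\Sigma$ by this plane one extracts the pointwise Taylor-type estimate $|f(y+h)-f(y)-L_y h|\le C|h|^{1+\gamma}$, and a Campanato-type argument then shows $f\in C^{1,\gamma}$ locally with $Df(y)=L_y$.

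The main obstacle will be the bookkeeping that turns the quantitative control on planes into genuine differentiability of $f$, uniformly over $K$. A first delicate point is that the infimum in $\beta_\Sigma(X,r)$ is taken over all $m$-planes rather than $m$-planes through $X$, so one must estimate the displacement between a near-optimal plane and its parallel translate through $X$ at each dyadic scale, and verify that this shift is consistent with the Hölder rate so that the telescoping sum defining $T_X\Sigma$ still converges at rate $2^{-k\gamma}$. A second delicate point is controlling the interaction between the bi-Hölder Reifenberg parametrization and the affine maps $L_y$: one must show that the graphing is possible in coordinates that depend on $X_0$, and that the resulting function $f$ inherits all the Hölder estimates on tangent planes with the correct exponent $\gamma$ (not a worse one produced by composing with the parametrization).
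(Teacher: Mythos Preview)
The paper does not supply its own proof of this proposition: it is quoted verbatim from \cite{DKT01} (Proposition 9.1 there) and used as a black box, first in the introduction and again as Proposition \ref{reif flatness - dkt theorem} in Section \ref{pseudo tangents}. So there is no proof in the present paper to compare against.

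That said, your outline is essentially the argument in \cite{DKT01}: build approximate tangent planes at dyadic scales, show successive planes differ by an angle $O(2^{-k\gamma})$, sum to get a limit plane $T_X\Sigma$, compare $T_X\Sigma$ and $T_Y\Sigma$ at the scale $|X-Y|$ to get H\"older continuity of the tangent field, and then pass to a $C^{1,\gamma}$ graph via a Campanato-type estimate. This is the standard route and would go through.

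One correction: your ``first delicate point'' is based on a misreading. In this paper the numbers $\beta_\Sigma(X,r)$ are \emph{centered}: the infimum in \eqref{beta numbers - definition of centered beta infty} is explicitly taken over $n$-planes $P$ with $X\in P$ (the paper calls this ``a centered version of P.\ Jones' $\beta_\infty$ numbers''). So there is no translation to control, and the telescoping argument for $T_X\Sigma$ is cleaner than you anticipate. Your second delicate point---ensuring the Reifenberg parametrization and the graphing in coordinates over $T_{X_0}\Sigma$ are compatible without degrading the H\"older exponent---is genuine, and in \cite{DKT01} it is handled by working directly with the graph over $T_{X_0}\Sigma$ once the Lipschitz constant is small, bypassing the bi-H\"older map entirely for the differentiability step.
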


\begin{remark}
    It can be seen from the proof of this result that $\Sigma$ only needs to be Reifenberg flat with a constant that is small enough depending on the dimension $n$.
\end{remark}
    Thus, Theorem \ref{theorem 1} will be proven once we complete the following steps:
    \begin{enumerate}
        \item[Step 1.] Prove that \eqref{introduction - beta estimate} holds under the assumptions of Theorem \ref{theorem 1}.
        \item[Step 2.] Show that under the assumptions of Theorem \ref{theorem 1}, condition \eqref{introduction - flatness condition} implies that $\Sigma$ is Reifenberg flat with vanishing constant.
    \end{enumerate}
  Finally, we also prove a result that describes the case in which \eqref{introduction - lambda holder asod} is satisfied but no flatness assumption is made on $\Sigma$. This result contains anisotropic analogues of work of Preiss, Tolsa and Toro \cite{PTT08} and Nimer \cite{Ni18} when the codimension is $1$.

  \begin{theorem}
      \label{theorem 3}
      Suppose the mapping $X\mapsto\Lambda(X)$ is H\"older continuous with exponent $\beta\in(0,1)$. Assume that there exists $\alpha\in (0,1)$ such that the following holds: for every compact set $K\subset \mathbb R^{n+1}$ there exists a constant $C_K>0$ such that for every $X\in\Sigma\cap K$, $t\in [\frac{1}{2},1]$ and $r\in (0,1]$,
      $$\left|\frac{\mu(B_\Lambda(X,tr))}{\mu(B_\Lambda(X,r))}-t^n\right|\leq C_Kr^\alpha.$$
      Then $\Sigma=\mathcal{R}\cup\mathcal{S}$, where $\mathcal{S}$ is a closed set of Hausdorff dimension at most $n-3$ if $n\geq 3$, or $\mathcal{S}=\varnothing$ if $n\leq 2$, and $\mathcal{R}$ is a $C^{1,\gamma}$-submanifold of $\mathbb R^{n+1}$ of dimension $n$, for some $\gamma\in (0,1)$ depending on $\alpha$ and $\beta$.
  \end{theorem}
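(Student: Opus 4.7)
The plan is to combine Theorem \ref{theorem 2} with a singular-set analysis in the spirit of Preiss--Tolsa--Toro \cite{PTT08} and Nimer \cite{Ni18}, adapted to the anisotropic setting via a pointwise linear change of coordinates. I would define $\mathcal{R}$ to be the set of points $X \in \Sigma$ admitting a compact neighborhood on which $\Sigma$ obeys \eqref{introduction - flatness condition} with $\delta_K$ as small as Theorem \ref{theorem 2} requires, and set $\mathcal{S} = \Sigma \setminus \mathcal{R}$. By construction $\mathcal{R}$ is relatively open in $\Sigma$, so $\mathcal{S}$ is closed; covering $\mathcal{R}$ by such neighborhoods and invoking Theorem \ref{theorem 2} on each would yield that $\mathcal{R}$ is a $C^{1,\gamma}$ $n$-submanifold of $\mathbb R^{n+1}$, with $\gamma\in(0,1)$ depending only on $\alpha$ and $\beta$.

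The heart of the argument is controlling $\dim_H\mathcal{S}$, for which I would run a tangent-measure analysis. For $X\in\Sigma$ and $r>0$, set $\Phi_{X,r}(Y)=r^{-1}\Lambda(X)^{-1}(Y-X)$ and $T_{X,r}\mu=(\Phi_{X,r})_\sharp\mu/\mu(B_\Lambda(X,r))$, a probability measure on $B(0,1)$. The H\"older continuity $\|\Lambda(Y)-\Lambda(X)\|\lesssim |Y-X|^\beta$ makes the ball $B_\Lambda(Y,\rho)$ close, in Hausdorff distance $O(\rho\cdot r^\beta)$, to the linearized ball $Y+\Lambda(X)B(0,\rho)$ for $Y\in B_\Lambda(X,r)$ and $\rho\leq r$. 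Feeding this into \eqref{introduction - lambda holder asod}, every weak-$*$ subsequential limit $\nu$ of $\{T_{X,r}\mu\}_{r\searrow 0}$ should satisfy the Euclidean uniformity
\[
\nu(B(Z,s))=s^n\,\nu(B(Z,1)),\qquad Z\in\mathrm{spt}(\nu),\ s\in[\tfrac12,1],
\]
which by iteration exhibits $\nu$, after normalization, as an $n$-uniform measure on $\mathbb R^{n+1}$ in the usual Euclidean sense.

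Once tangent measures are identified with Euclidean $n$-uniform measures, the classification results of Preiss \cite{Pre87} give that every such tangent is flat when $n\leq 2$; a standard compactness-contradiction argument then produces \eqref{introduction - flatness condition} around each $X\in\Sigma$ with arbitrarily small $\delta_K$, so $\mathcal{S}=\varnothing$ and the conclusion follows from Theorem \ref{theorem 2}. For $n\geq 3$, a point $X$ lies in $\mathcal{S}$ precisely when at least one tangent of $\mu$ at $X$ is non-flat, which is exactly the set analyzed in the Euclidean case by \cite{PTT08} and \cite{Ni18}. Since $\Phi_{X,r}$ is bi-Lipschitz with constants uniform in $X$ on compacts, Nimer's covering and stratification argument \cite{Ni18}, which sharpens the earlier bound of \cite{PTT08} to $n-3$, should transfer through these linearizations and yield $\dim_H\mathcal{S}\leq n-3$.

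The main obstacle I anticipate is the transfer from the anisotropic doubling \eqref{introduction - lambda holder asod} to genuine Euclidean uniformity of the tangent measures. The hypothesis is formulated through $B_\Lambda(Y,\cdot)$ with the center-dependent matrix $\Lambda(Y)$, whereas the blow-up $T_{X,r}\mu$ linearizes with the fixed matrix $\Lambda(X)$. One must show that the distortion between $B_\Lambda(Y,\rho)$ and $\Lambda(X)B(\Lambda(X)^{-1}(Y-X),\rho)$ is $O(\rho\, r^\beta)$ at scale $r$ and absorb this error into the $r^\alpha$ right-hand side without losing the H\"older rate. This bookkeeping is precisely where the dependence of the final exponent $\gamma$ on both $\alpha$ and $\beta$ arises, essentially as $\min(\alpha,\beta)$.
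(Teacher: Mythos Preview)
Your overall strategy coincides with the paper's: split $\Sigma$ into a regular part treated by Theorem~\ref{theorem 2} and a singular part bounded via the tangent-measure machinery of \cite{PTT08} and \cite{Ni18}, with the anisotropic-to-Euclidean transfer carried out through the pointwise linearization $\Phi_{X,r}$. Your identification of the $\Lambda$-tangents with Euclidean $n$-uniform measures is correct and is exactly Proposition~\ref{pseudo tangents - pseudo tangents are uniform} in the paper.

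The gap is in the link between your $\mathcal{S}$ and the tangent-measure characterization you invoke. With your definition, $X\in\mathcal{S}$ means only that no \emph{neighborhood} of $X$ obeys \eqref{introduction - flatness condition}; this could a priori happen because singularities accumulate toward $X$ while every tangent \emph{at $X$ itself} is flat, and in that case Nimer's conservation-of-singularities step (the paper's Proposition~\ref{singular set - conservation of singularities}) breaks down, since its proof requires that each $X_k\in\mathcal{S}$ carry a non-flat $\Lambda$-tangent at $X_k$. Closing this requires a propagation-of-flatness argument: smallness of the $L^2$ flatness $\tilde\beta_{2,\mu}$ at one ball forces smallness on all sub-balls centered nearby. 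The paper devotes Section~\ref{nonflat case} to this (Theorems~\ref{nonflat case - theorem 1}--\ref{nonflat case - theorem 2} and Corollary~\ref{nonflat case - corollary}), adapting \cite{PTT08} through Preiss's flatness-at-infinity criterion (Theorem~\ref{nonflat case - theorem of preiss}); equivalently, it defines $\mathcal{R}=\{X:\limsup_{r\searrow 0}b\beta_\Sigma(X,r)=0\}$ and proves openness \emph{via} the propagation rather than by fiat. This is a genuine compactness argument on the space of uniform measures, not a matter of absorbing the $O(r^\beta)$ ball distortion you flag as the main obstacle.
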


   The structure of the paper is as follows. Section \ref{prelim} contains technical lemmas that are needed later on, as well as definitions of relevant notions of flatness. Sections \ref{moments} and \ref{beta numbers} provide a proof of the fact that \eqref{introduction - beta estimate} holds under the assumptions of Theorem \ref{theorem 1}.  In sections \ref{pseudo tangents} and \ref{flatness} we show that under the assumptions of Theorem \ref{theorem 1}, condition \eqref{introduction - flatness condition} implies that $\Sigma$ is  Reifenberg flat with vanishing constant, and we prove Theorem \ref{theorem 1}. Section \ref{asod} shows how to derive Theorem \ref{theorem 2} from Theorem \ref{theorem 1}, and Sections \ref{nonflat case} and \ref{singular set} contain a proof of Theorem \ref{theorem 3}.
    
\section{Preliminaries}\label{prelim}

We will adopt the convention that any local constants depending on a compact set $K\subset\mathbb R^{n+1}$ may be denoted by $C_K$. Moreover, we may allow $C_K$ to depend on the matrix-valued function $\Lambda$, and any updates to the value of $C_K$ may be incorporated without changing notation.

\subsection{Measure theoretic background}

Recall that a Radon measure $\nu$ on $\R^{n+1}$ is called $m$-\textit{uniform}, for $0<m\leq n+1$, if there exists a constant $C>0$ such that for all $X\in\mathrm{spt}(\nu)$ and $r>0$,
\begin{equation}
\label{pseudo tangents - uniform measure}
    \nu(B(X,r)) = Cr^m.
\end{equation}
 Uniform measures resemble, in particular, the asymptotic behavior as $r\to 0$ of a measure $\mu$ that satisfies \eqref{introduction - dkt assumption}. A well known result of Marstrand \cite{Mar64} implies that if $\nu$ is $m$-uniform, then $m$ is an integer.
An important example of $m$-uniform measure is when $\nu$ is a \textit{flat} measure (of dimension $m$), i.e. there exists an $m$-plane $P$ and a constant $c>0$ such that $\nu=c\mathcal{H}^m\mres P$, where $\mathcal{H}^m$ denotes $m$-dimensional Hausdorff measure. Flat measures play a key role in the study of rectifiability (see for example \cite{Ma95} for a good summary). However, not all uniform measures are flat, as shown first by Preiss \cite{Pre87} and Kowalski and Preiss \cite{KP87}. Even though a complete classification of uniform measures is still an open problem, the following results summarize some of the main facts we need to know about uniform measures in this work.

\begin{theorem}[Preiss \cite{Pre87}]
\label{prelim - uniform measures of dimension 2}
    Let $\nu$ be an $m$-uniform measure on $\R^{n+1}$, $0\leq m\leq n+1$. If $m\leq 2$, then $\nu$ is flat of dimension $m$.
\end{theorem}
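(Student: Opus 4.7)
The plan is to split by cases on $m$; Marstrand's theorem forces $m$ to be an integer, so we need only handle $m \in \{0, 1, 2\}$. The cases $m = 0$ and $m = 1$ are elementary, while $m = 2$ is the substantive content, due to Preiss.

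For $m = 0$, the condition $\nu(B(X, r)) = C$ for every $r > 0$ and every $X \in \mathrm{spt}(\nu)$ forces $\mathrm{spt}(\nu)$ to be a single point $\{X_0\}$, and hence $\nu = C \delta_{X_0}$: two distinct support points would produce disjoint balls at sufficiently small radii each of mass $C$, which is incompatible with the total mass available. For $m = 1$, I would argue that the linear growth $\nu(B(X, r)) = Cr$ at every support point forces $\mathrm{spt}(\nu)$ to lie on a single affine line. The key observation is that three support points forming a nondegenerate triangle would, by uniformity applied at the vertices and midpoints of the sides, produce a strict excess of mass that is incompatible with the sharp linear relation between mass and radius.

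The substantive case is $m = 2$, where the plan is to follow Preiss's moment expansion. For every $y \in \mathrm{spt}(\nu)$ and every $t > 0$, uniformity combined with the layer-cake formula in polar coordinates gives
\begin{equation*}
I(y, t) = \int_{\R^{n+1}} e^{-t|y - x|^2}\, d\nu(x) = G(t),
\end{equation*}
a quantity that depends only on $t$. Differentiating $I(\cdot, t)$ in $y$ at support points yields a hierarchy of constraints on the tensor moments
\begin{equation*}
M_k(y, t) = \int_{\R^{n+1}} (y - x)^{\otimes k}\, e^{-t|y - x|^2}\, d\nu(x), \qquad k \geq 0,
\end{equation*}
and these constraints extend from $\mathrm{spt}(\nu)$ to all of $\R^{n+1}$ by real-analyticity of $I(\cdot, t)$ in $y$. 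In parallel one studies tangent measures of $\nu$ at infinity: they exist by a compactness argument, are themselves $2$-uniform, and are conical by scale invariance.

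The main obstacle, which is the core of Preiss's argument, is classifying these $2$-uniform cones. The aim is to show that every $2$-uniform cone in $\R^{n+1}$ is a multiple of $\mathcal H^2$ on a $2$-plane. This rests on a delicate expansion of $M_2$ and $M_4$ around the vertex, producing an associated quadratic form whose rank is at most two and whose level set controls $\mathrm{spt}(\nu)$. The rigidity that fails in dimension $m \geq 3$ (witnessed by the Kowalski--Preiss light cone in $\R^4$) is precisely what one is able to extract for $m \leq 2$. Once the tangent cones at infinity are identified as $2$-planes, a standard blow-down argument transfers flatness back to $\nu$, completing the proof.
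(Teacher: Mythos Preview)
The paper does not prove this theorem; it is stated as a citation of Preiss \cite{Pre87} (and the accompanying exposition in De Lellis's notes \cite{De08}) and is used as a black box. There is therefore no ``paper's own proof'' to compare against.

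Your outline is a faithful high-level summary of Preiss's original argument, but it is a plan rather than a proof. The cases $m=0$ and $m=1$ are fine in spirit, though your $m=1$ sketch (``a strict excess of mass'') would need to be made precise; the standard route is to first show the support is a convex cone after blow-up, and then that a $1$-uniform cone is a line. For $m=2$, you correctly identify the key steps---the Gaussian moment identities, the passage to conical tangents, and the classification of $2$-uniform cones---but the actual work is entirely deferred. In particular, the sentence ``producing an associated quadratic form whose rank is at most two and whose level set controls $\mathrm{spt}(\nu)$'' hides the entire content: one must show that the second moment tensor $b^{(2)}$ of the cone has trace equal to $m$ and that the support lies in its image, which for $m=2$ forces the rank to be exactly $2$ and the support to be the corresponding $2$-plane. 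Without carrying out those computations (or at least pointing precisely to where they are done, e.g.\ \cite[Corollary 6.14]{De08}), the proposal remains a roadmap and not a proof.
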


\begin{theorem}[Kowalski, Preiss \cite{KP87}]
\label{flat measures - kowalski preiss}
    Let $\nu$ be an $n$-uniform measure in $\R^{n+1}$.
    Then after a translation, rotation and multiplication by a constant, either
    \begin{equation}
    \label{flat measures - plane}
        \nu = \mathcal{H}^n\mres \{(x_1,\ldots, x_{n+1})\in\R^{n+1}: x_{n+1}=0\},
    \end{equation}
    or $n\geq 3$ and
    \begin{equation}
    \label{flat measures - light cone}
        \nu = \mathcal{H}^n\mres \{(x_1,\ldots, x_{n+1})\in\R^{n+1}: x_4^2= x_1^2 + x_2^2 + x_3^2\}.
    \end{equation}
\end{theorem}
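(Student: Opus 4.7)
The plan is to follow the classical moment-analysis approach via a Gaussian potential. First I would introduce the auxiliary function
$$F(X,s) = \int_{\R^{n+1}} e^{-s|Y-X|^2}\, d\nu(Y), \qquad X\in\R^{n+1},\ s>0.$$
A layer-cake / integration-by-parts identity combined with the $n$-uniformity $\nu(B(X,r)) = cr^n$ for $X\in\mathrm{spt}(\nu)$ shows that
$$F(X,s) = c\,\Gamma\!\left(\tfrac{n}{2}+1\right) s^{-n/2} \qquad \text{for all } X\in\mathrm{spt}(\nu),$$
so $F(\cdot,s)$ is constant on $\mathrm{spt}(\nu)$ for every fixed $s>0$. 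On the other hand $F(\cdot,s)$ extends to a real-analytic function on all of $\R^{n+1}$, being the Gaussian convolution of a Radon measure; the difference $F(X,s) - c\,\Gamma(n/2+1) s^{-n/2}$ is therefore a real-analytic function of $X$ vanishing identically on $\mathrm{spt}(\nu)$.

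Next I would extract algebraic constraints on $\mathrm{spt}(\nu)$. Differentiating $F(X,s)$ repeatedly in $X$ gives, for each multi-index $\alpha$, a moment
$$M_\alpha(X,s) = \int_{\R^{n+1}} (Y-X)^\alpha e^{-s|Y-X|^2}\, d\nu(Y)$$
that must be independent of $X\in\mathrm{spt}(\nu)$. Expanding in powers of $s$ (or equivalently, comparing with the analogous identity for the flat model measure on an $n$-plane through $X$) produces a tower of polynomial identities: the first moment vanishes, the second moment is a pure multiple of the identity, and so on. The central step in the Kowalski--Preiss argument is to combine these identities with translation in $X\in\mathrm{spt}(\nu)$ to show that there exists a real polynomial $P$ on $\R^{n+1}$, of degree at most two, such that $\mathrm{spt}(\nu)\subseteq\{P=0\}$. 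This is where the codimension being exactly one is essential.

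Once $\mathrm{spt}(\nu)$ is known to lie on a real algebraic hypersurface of degree at most two, I would finish by classification. After a rigid motion and scaling, $P$ is either affine, giving the hyperplane \eqref{flat measures - plane}, or its quadratic part is nontrivial and can be reduced to diagonal form $\sum_i \varepsilon_i x_i^2 = \kappa$ with $\varepsilon_i \in \{-1,0,1\}$ and $\kappa\in\{-1,0,1\}$. Testing the uniformity condition $\mathcal{H}^n(\{P=0\}\cap B(X,r)) = cr^n$ simultaneously at smooth points and at the vertex (if any) forces strong symmetry: ellipsoids, cylinders, paraboloids, and most indefinite signatures give a density that either depends on $X$ or fails to scale as $r^n$, and are therefore ruled out. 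The only surviving non-planar possibility has signature $(3,1)$ with $\kappa=0$, namely $x_4^2 = x_1^2+x_2^2+x_3^2$, which requires $n+1\geq 4$, giving exactly the alternative \eqref{flat measures - light cone}.

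The main obstacle is the polynomial-degree bound in the second step: deducing from the identity $F(X,s) = c\,\Gamma(n/2+1) s^{-n/2}$ on $\mathrm{spt}(\nu)$ that the support lies in the zero set of a quadratic polynomial. This is the technical heart of \cite{KP87} and is specific to codimension one; the subsequent classification of quadric hypersurfaces by direct computation of $n$-dimensional Hausdorff measure of balls, while intricate in its case analysis, is comparatively elementary.
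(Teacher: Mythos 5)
This statement is quoted in the paper as a known result of Kowalski and Preiss, cited to \cite{KP87}; the paper offers no proof of it, so there is no internal argument to compare yours against. Judged on its own terms, your outline does track the strategy of the original Kowalski--Preiss paper: the Gaussian potential $\int e^{-s|Y-X|^2}\,d\nu(Y)$ is indeed constant in $X$ on $\mathrm{spt}(\nu)$ (your computation of the value $c\,\Gamma(\frac n2+1)s^{-n/2}$ is correct), the moment identities extracted from it are exactly the exact-case ancestors of the approximate $b$ and $Q$ estimates the paper develops in Section 3, and the heart of the matter is precisely the containment of $\mathrm{spt}(\nu)$ in the zero set of a polynomial of degree at most two, which is special to codimension one.

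As a proof, however, the proposal has two genuine gaps rather than just compressed detail. First, the degree-two containment is only asserted; you correctly identify it as the technical core of \cite{KP87}, but nothing in your sketch indicates how the tower of moment identities actually closes up to a single quadratic relation valid at every support point, so the argument as written does not get off the ground at its decisive step. Second, the classification stage is incomplete in two ways: ruling out ellipsoids, cylinders, paraboloids and the other signatures by testing $\mathcal{H}^n(\{P=0\}\cap B(X,r))=cr^n$ is the easier half, but you must also \emph{verify} that the light cone $x_4^2=x_1^2+x_2^2+x_3^2$ genuinely is $n$-uniform (this is the surprising computation in \cite{KP87}, not a formality), and you must upgrade the set-level conclusion $\mathrm{spt}(\nu)\subset\{P=0\}$ to the measure-level conclusion $\nu=c\,\mathcal{H}^n\mres\mathrm{spt}(\nu)$ stated in the theorem, e.g.\ via the uniform density ratios and a differentiation argument. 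With those pieces supplied the route is the standard one; without them the proposal is an accurate roadmap of \cite{KP87} rather than a proof.
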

 Theorem \ref{flat measures - kowalski preiss} characterizes $n$-uniform measures in codimension 1. The cone in \eqref{flat measures - light cone} is often referred to as the Kowalski-Preiss cone, or light cone. Other families of examples of $3$-uniform measures in arbitrary codimension have been found by A. Dali Nimer \cite{Ni22}, but our arguments will not rely on those directly.

The way in which uniform measures arise in our context is as \textit{tangent measures} of a measure that satisfies the assumptions of Theorems \ref{theorem 2}, \ref{theorem 1} or \ref{theorem 3}. Given a Radon measure $\mu$ on $\R^{n+1}$, $X\in\Sigma=\mathrm{spt}(\mu)$ and $r>0$, consider the map $T_{X,r}$ and the measure $\mu_{X,r}$ given by
\begin{equation}
    \label{nonflat case - round blow up map}
    T_{X,r}(Z)=\frac{Z-X}{r},\quad \mu_{X,r}=\frac{1}{\mu(B(X,r))}T_{X,r}[\mu],
\end{equation}
where $Z\in\mathbb R^{n+1}$, and $T_{X,r}[\mu]$ is the push-forward measure of $\mu$ via $T_{X,r}$. We say that $\nu$ is a tangent measure of $\mu$ at $X$ if $\mu_{X,r_k}\rightharpoonup\nu$ for some sequence $r_k>0$ with $r_k\to 0$, where ``$\rightharpoonup$" denotes weak convergence of Radon measures (see \cite{Ma95} for basic properties). It is convenient to note that if $\mu$ is $m$-uniform and $\nu$ is a tangent measure of $\mu$ as defined above, then $\nu(B(0,1))=1$.

The following is a standard notion of distance between Radon measures that we will need later in connection with tangent measures. For $r>0$, let $\mathcal{L}(r)$ denote the set of non-negative Lipschitz functions $\varphi$ on $\R^{n+1}$ with $\mathrm{spt}(\varphi)\subset B(0,r)$ and $\mathrm{Lip}(\varphi)\leq 1$. Given Radon measures $\alpha,\beta$ on $\R^{n+1}$, let
\begin{equation}
    \label{prelim - distance between measures}
\mathcal{F}_r(\nu,\tilde\nu)=\sup_{\varphi\in\mathcal{L
}(r)}\left|\int \varphi\de\nu-\int \varphi\de\tilde\nu\right|, \quad \mathcal{F}(\nu,\tilde\nu)=\sum_{k\geq 1}2^{-k}\mathcal{F}_{2^k}(\nu,\tilde\nu).
\end{equation}

As the following result shows (see \cite[Proposition 1.12]{Pre87}), these functionals metrize the weak convergence of Radon measures.

\begin{proposition}[Preiss \cite{Pre87}]\label{singular set - metric for weak convergence}

    Suppose $\nu,\nu_k$ are radon measures on $\R^{n+1}$, $k\in\mathbb N$. Then the following conditions are equivalent:
   
      $$\mathrm{(i)  }\  \nu_k\rightharpoonup\nu,\quad
       \mathrm{(ii) } \lim_k\mathcal{F}(\nu_k,\nu)=0,\quad \mathrm{(iii)}\ \text{For all }r>0,\ \lim_k \mathcal{F}_r(\nu_k,\nu)=0.$$
   
\end{proposition}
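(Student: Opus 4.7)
The plan is to prove $\mathrm{(i)}\Leftrightarrow\mathrm{(iii)}\Leftrightarrow\mathrm{(ii)}$. The central structural fact I would use throughout is that every $\varphi\in\mathcal{L}(r)$ is non-negative, $1$-Lipschitz, supported in $\overline{B(0,r)}$ and therefore bounded by $r$, so $\mathcal{L}(r)$ is equicontinuous and uniformly bounded and hence, by Arzelà-Ascoli, relatively compact in $C(\overline{B(0,r)})$ with the supremum topology. This compactness is what converts the pointwise (in $\varphi$) convergence delivered by weak convergence into the uniform-in-$\varphi$ convergence required by $\mathcal{F}_r$.

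For $\mathrm{(i)}\Rightarrow\mathrm{(iii)}$ I would argue by contradiction: if $\mathcal{F}_r(\nu_{k_j},\nu)\geq\epsilon$ along a subsequence, pick witnesses $\varphi_{k_j}\in\mathcal{L}(r)$ realizing the supremum up to $\epsilon/2$, extract a further sub-subsequence $\varphi_{k_j}\to\varphi_\infty\in\mathcal{L}(r)$ uniformly, and use that uniform convergence on the fixed compact set $\overline{B(0,r)}$ combined with weak convergence (under which $\sup_j\nu_{k_j}(\overline{B(0,r)})<\infty$) forces both $\int\varphi_{k_j}\de\nu_{k_j}\to\int\varphi_\infty\de\nu$ and $\int\varphi_{k_j}\de\nu\to\int\varphi_\infty\de\nu$, a contradiction. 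For the converse $\mathrm{(iii)}\Rightarrow\mathrm{(i)}$, given $\varphi\in C_c(\mathbb R^{n+1})$ supported in $B(0,R)$, I would split $\varphi=\varphi^+-\varphi^-$, uniformly approximate each non-negative part by a compactly supported $1$-Lipschitz function in $B(0,R+1)$ via a standard Lipschitz regularization, rescale by the appropriate Lipschitz constant to land in $\mathcal{L}(R+1)$ up to scalar multiples, and combine $\mathcal{F}_{R+1}(\nu_k,\nu)\to 0$ with uniform local mass bounds on the $\nu_k$ to conclude.

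For $\mathrm{(ii)}\Rightarrow\mathrm{(iii)}$ the inequality $\mathcal{F}_r(\nu_k,\nu)\leq \mathcal{F}_{2^K}(\nu_k,\nu)\leq 2^K\mathcal{F}(\nu_k,\nu)$ with $2^K\geq r$ is immediate. For $\mathrm{(iii)}\Rightarrow\mathrm{(ii)}$ I would truncate the series at level $N$: the finite head $\sum_{j\leq N}2^{-j}\mathcal{F}_{2^j}(\nu_k,\nu)$ tends to zero as $k\to\infty$ by $\mathrm{(iii)}$, while the tail is estimated via the trivial bound $\mathcal{F}_{2^j}(\nu_k,\nu)\leq 2^j(\nu_k(B(0,2^j))+\nu(B(0,2^j)))$ after first deducing, by testing $\mathrm{(iii)}$ against Lipschitz bumps dominating $\mathbf{1}_{B(0,2^j)}$, that $\sup_k\nu_k(B(0,2^j))\lesssim \nu(B(0,2^j))+1$.

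The main obstacle is the tail control in $\mathrm{(iii)}\Rightarrow\mathrm{(ii)}$: the series defining $\mathcal{F}$ need not be finite for arbitrary Radon measures, so one has to work on the subspace on which $\mathcal{F}$ is a genuine metric (which is implicit in the paper's normalizations of tangent measures) and show that on that subspace the tail can be made small uniformly in $k$ by choice of $N$. The remaining implications reduce to Arzelà-Ascoli plus density of Lipschitz functions in $C_c(\mathbb R^{n+1})$ in the supremum norm.
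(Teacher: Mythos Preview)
The paper does not supply its own proof of this proposition: it is stated as a known result and attributed to Preiss \cite[Proposition 1.12]{Pre87}, with no argument given in the text. So there is no ``paper's proof'' to compare against; the proposition is invoked purely as a black box in Section~\ref{singular set}.

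That said, your sketch is the standard route and is essentially correct for $\mathrm{(i)}\Leftrightarrow\mathrm{(iii)}$ and $\mathrm{(ii)}\Rightarrow\mathrm{(iii)}$. You have also correctly isolated the one genuine difficulty, namely the tail estimate in $\mathrm{(iii)}\Rightarrow\mathrm{(ii)}$: with the definition $\mathcal{F}(\nu,\tilde\nu)=\sum_{k\geq 1}2^{-k}\mathcal{F}_{2^k}(\nu,\tilde\nu)$ exactly as written in the paper, the series can diverge for general Radon measures, and your proposed tail bound $\mathcal{F}_{2^j}(\nu_k,\nu)\leq 2^j(\nu_k(B(0,2^j))+\nu(B(0,2^j)))$ does not give summability without growth control. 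In Preiss's original formulation this is handled by a slightly different normalization (effectively capping each term so the series always converges), and the statement as quoted in the paper should be read with that convention in mind; the paper only ever applies the proposition to measures arising as normalized blow-ups, where the issue does not arise. Your remark that one must restrict to the subspace on which $\mathcal{F}$ is finite, or equivalently adjust the definition, is exactly the right diagnosis.
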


We will also need a result of Preiss based on the notion of \textit{tangent measure at infinity}. Recall that if $\mu,\nu$ are nonzero Radon measures on $\R^{n+1}$, then $\nu$ is a tangent measure of $\mu$ at infinity (of dimension $m$) if for every $X\in\R^{n+1}$,
$$\frac{1}{r^m}T_{X,r}[\mu]\rightharpoonup\nu,$$
as $r\to\infty$. Note that \textit{a priori}, different sequences of radii may lead to different limits. However, Preiss showed that this is not the case if $\mu$ is $m$-uniform. Recall that an $m$-uniform measure $\nu$ is \textit{conical} if for every $r>0$,
\begin{equation}
    \label{nonflat case - conical measure}
\frac{1}{r^m}T_{0,r}[\nu]=\nu.
\end{equation}

\begin{theorem}[Preiss \cite{Pre87}]
    \label{nonflat case - uniqueness of tangents for uniform measures}
    Let $\nu$ be an $m$-uniform measure in $\R^{n+1}$. Then $\nu$ has a unique tangent measure $\tilde\nu$ at $\infty$, where $\tilde\nu$ is $m$-uniform and conical. Moreover, for every $X\in\mathrm{spt}(\nu)$, $\nu$ has a unique tangent measure at $X$, which is also $m$-uniform and conical.
\end{theorem}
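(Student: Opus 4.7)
The plan is to follow Preiss's original approach, exploiting the algebraic rigidity of $m$-uniform measures through exact Gaussian moment identities. The argument has four components: existence of blow-downs, $m$-uniformity of the limit, uniqueness via moment matching, and deduction of conicality and of the pointwise statement from the version at infinity.

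\textbf{Existence and $m$-uniformity of tangents at infinity.} After translation, assume $0 \in \mathrm{spt}(\nu)$ and set $\nu_r := r^{-m} T_{0,r}[\nu]$. From $\nu(B(Y,s)) = Cs^m$ for $Y \in \mathrm{spt}(\nu)$ one directly computes $\nu_r(B(Z,s)) = Cs^m$ for $Z \in \mathrm{spt}(\nu_r)$, and in particular the uniform bound $\nu_r(B(0,R)) = CR^m$ for all $r,R>0$. This yields weak precompactness; extract $\nu_{r_k} \rightharpoonup \tilde\nu$ along some $r_k \to \infty$. The uniform lower mass bound forces Hausdorff convergence of $\mathrm{spt}(\nu_{r_k}) \cap B(0,R)$ to $\mathrm{spt}(\tilde\nu) \cap B(0,R)$ for each $R$; combined with Proposition \ref{singular set - metric for weak convergence}, this lets us pass $\nu_r(B(Z,s)) = Cs^m$ to the limit and conclude that $\tilde\nu$ is $m$-uniform with constant $C$. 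The limit is independent of the basepoint, since the two normalizations $r^{-m}T_{0,r}[\nu]$ and $r^{-m}T_{X,r}[\nu]$ differ by a translation of order $|X|/r \to 0$.

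\textbf{The Gaussian moment identity.} A layer-cake computation using $\nu(B(X,r)) = Cr^m$ yields the exact identity
\begin{equation*}
    \int e^{-|Y-X|^2/(2s^2)}\, d\nu(Y) = C\,2^{m/2}\Gamma(m/2+1)\, s^m, \qquad X \in \mathrm{spt}(\nu),\ s > 0.
\end{equation*}
Fixing a basepoint $X_0 \in \mathrm{spt}(\nu)$ and using $|Y-X|^2 = |Y-X_0|^2 - 2\langle Y-X_0, X-X_0\rangle + |X-X_0|^2$, one Taylor-expands the exponential in $X-X_0$ and compares coefficients to obtain a system of polynomial identities among the weighted moments
\begin{equation*}
    b_{\alpha}(X_0,s) := \int (Y-X_0)^{\alpha}\, e^{-|Y-X_0|^2/(2s^2)}\, d\nu(Y), \qquad \alpha \in \mathbb{N}^{n+1},
\end{equation*}
valid at every scale $s>0$ as $X$ ranges over $\mathrm{spt}(\nu)$. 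These identities encode the rigidity of $\nu$ at all scales simultaneously.

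\textbf{Uniqueness via moment matching.} This is the main obstacle. The rescaled moments $\tilde b_{\alpha}(X_0,s) := s^{-m-|\alpha|} b_{\alpha}(X_0,s)$ are bounded uniformly in $s$ (by the Gaussian tail and the polynomial mass growth), and their values along the blow-down sequence $r_k$ converge to the analogous moments of $\tilde\nu$ at the limit basepoint. The polynomial identities of the previous step survive the limit and determine each $\tilde b_{\alpha}$ of $\tilde\nu$ inductively in $|\alpha|$, with the base case fixed by the universal constant $C$. Since a Radon measure of polynomial growth $\tilde\nu(B(0,R)) \leq CR^m$ is uniquely determined by all of its Gaussian-weighted polynomial integrals, any two subsequential limits $\tilde\nu_1, \tilde\nu_2$ must coincide, yielding uniqueness of the tangent measure at infinity. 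Conicality follows at once: for any $t>0$, $t^{-m}T_{0,t}[\tilde\nu]$ is the weak limit of $(tr_k)^{-m}T_{0,tr_k}[\nu]$ along $tr_k \to \infty$ and hence equals $\tilde\nu$.

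\textbf{Tangent at a point.} For $X \in \mathrm{spt}(\nu)$, the family $r^{-m}T_{X,r}[\nu]$ as $r \to 0$ is the same type of object as in the previous steps indexed by the opposite limit. The Gaussian identity and the moment argument above are indifferent to whether $r \to 0$ or $r \to \infty$: the normalized moments $\tilde b_{\alpha}(X,s)$ are bounded on all of $(0,\infty)$ and satisfy the same polynomial identities, so the same inductive moment-matching forces uniqueness of the weak limit as $r \to 0$, together with $m$-uniformity and, by the same dilation trick, conicality. The principal technical difficulty throughout is Step 3: controlling high-order moments uniformly and verifying that the infinitely many polynomial relations propagate to the weak limit simultaneously, which is where Preiss's bookkeeping is most delicate.
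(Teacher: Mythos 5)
The paper does not prove this statement: it is quoted from Preiss \cite{Pre87} (see also \cite{De08}) and used as a black box, so there is no internal proof to compare against; I can only judge your sketch on its own terms. Your first two steps are fine (compactness of the blow-downs, persistence of $m$-uniformity under weak convergence with Ahlfors bounds, and the exact Gaussian identity via layer cake are all standard). The genuine gap is in Step 3, which is where all the content of the theorem lies. You claim that the polynomial identities among the Gaussian moments ``determine each $\tilde b_\alpha$ of $\tilde\nu$ inductively in $|\alpha|$, with the base case fixed by the universal constant $C$.'' If that were true, any two $m$-uniform measures with $0$ in their support and the same constant $C$ would have identical Gaussian-weighted moments and hence coincide; but $\mathcal{H}^3\mres P$ for a $3$-plane $P\subset\R^4$ and $\mathcal{H}^3$ restricted to the Kowalski--Preiss light cone (Theorem \ref{flat measures - kowalski preiss}) are both $3$-uniform with the same constant $\omega_3$, satisfy identities of exactly the same form, and are distinct measures whose second moments already differ as quadratic forms. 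The identities constrain the moments only through the support of $\nu$, so they cannot by themselves pin down the values. What actually has to be proved is that the suitably normalized moments of the fixed measure $\nu$ converge as the scale tends to infinity --- a full limit, not a subsequential one. Passing the identities to a subsequential limit, as you propose, only shows that every blow-down satisfies the same relations, which is where one starts and does not give uniqueness. That convergence of moments is precisely the delicate core of Preiss's argument (asymptotic expansions of the moments $b_{k,s}$ in the scale parameter, with quantitative error control, carried out inductively in $k$), and your sketch does not supply it.

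The last paragraph inherits the same gap: uniqueness and conicality of the tangent at a finite point are asserted by claiming the argument is ``indifferent'' to whether $r\to 0$ or $r\to\infty$, but the asymptotic regime of the moments is genuinely different at the two ends, and the missing convergence argument is again the whole issue. The parts of your outline that are sound are peripheral: the deduction of conicality from uniqueness at infinity via the dilation trick, and the fact that a Radon measure of polynomial growth is determined by its Gaussian-weighted polynomial integrals.
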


The question of whether an $m$-uniform measure is flat, is only relevant when $m\geq 3$, by Theorem \ref{prelim - uniform measures of dimension 2}. A key result of Preiss in this direction says that for an $m$-uniform measure $\nu$, $m\geq 3$, if $\nu$ is close enough to being flat at infinity, then it is flat.

\begin{theorem}[Preiss \cite{Pre87}]
\label{nonflat case - theorem of preiss}
     If $3\leq m\leq n+1$, there exists a constant $\varepsilon_0>0$ depending only on $n$ and $m$ such that if $\nu$ is an $m$-uniform measure on $\mathbb{R}^{n+1}$ with $\nu(B(X,1))=1$ for $X\in\mathrm{spt}(\nu)$, for which its tangent measure $\tilde\nu$ at $\infty$ satisfies
    \begin{equation}
        \label{nonflat case - preiss closeness to plane}
        \tilde F(\tilde\nu):=\min_{P}\int_{B(0,1)}\mathrm{dist}(X,P)^2\de\tilde\nu(X)\leq\varepsilon_0^2,
    \end{equation}
    then $\nu$ is flat. Here, the minimum is taken over all $m$-planes $P$ in $\mathbb R^{n+1}$ with $0\in P$.
 \end{theorem}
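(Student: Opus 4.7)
The plan is to follow Preiss's original moment-based approach. Under the normalization $\nu(B(X,1))=1$ on $\mathrm{spt}(\nu)$, the layer-cake formula together with uniformity yields the identity
\begin{equation*}
    \int_{\R^{n+1}} e^{-s|X-Y|^2}\de\nu(X) = c_m\, s^{-m/2}, \qquad Y\in\mathrm{spt}(\nu),\ s>0,
\end{equation*}
where $c_m>0$ depends only on $m$; the striking feature is that the right-hand side is independent of $Y\in\mathrm{spt}(\nu)$. Differentiating in $Y$ and expanding in powers of $s$ produces an infinite hierarchy of tensor-valued polynomial moment identities that $\nu$ must satisfy at every point of its support. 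These rigidity equations are the engine of the entire proof.

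Apply Theorem \ref{nonflat case - uniqueness of tangents for uniform measures} to pass to the tangent measure $\tilde\nu$ at infinity, which is $m$-uniform and conical. Conicality forces the moments of $\tilde\nu$ to be homogeneous functions, and the Laplace identity above couples them tightly. The hypothesis $\tilde F(\tilde\nu)\leq\varepsilon_0^2$ says precisely that the lowest transverse second moment of $\tilde\nu$ against its best-approximating $m$-plane $P$ is small. I would then propagate this smallness through the moment hierarchy: show that control of the lowest transverse moment, together with the rigidity equations and conicality, forces \emph{all} transverse moments of $\tilde\nu$ to vanish, so that $\mathrm{spt}(\tilde\nu)\subset P$ and hence $\tilde\nu=\mathcal{H}^m\mres P$.

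Once $\tilde\nu$ is identified as flat, one transfers flatness back from infinity to $\nu$ itself. Running the moment machinery at finite scales, the leading asymptotic behavior of the moments of $\nu$ (as the scale goes to infinity) is prescribed by $\tilde\nu$; with $\tilde\nu$ flat, the moment identities evaluated at each $Y\in\mathrm{spt}(\nu)$ pin down all finite-scale moments of $\nu$ to coincide with those of $\mathcal{H}^m\mres P$ after a rigid motion. Since the Gaussians $e^{-s|\cdot-Y|^2}$ are a determining family for Radon measures, one concludes $\nu=\mathcal{H}^m\mres P$ up to an isometry.

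The hard part is the stability step for $\tilde\nu$. Conical $m$-uniform measures can be genuinely non-flat --- the Kowalski-Preiss cone of Theorem \ref{flat measures - kowalski preiss} is the canonical example, and A.~Dali Nimer has constructed further families --- so small $\tilde F(\tilde\nu)$ does not \emph{a priori} preclude $\tilde\nu$ from being a perturbation of one of these. A qualitative version of the stability is easy: by compactness and weak closedness of the class of conical $m$-uniform measures, any limit of such measures with $\tilde F\to 0$ must be flat. Promoting this into an explicit threshold $\varepsilon_0=\varepsilon_0(n,m)$ requires a delicate algebraic analysis of the polynomial moment structure produced above, and a quantitative isolation of flat measures among conical $m$-uniform measures. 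This is where the bulk of Preiss's technical work lies, and what the dependence of $\varepsilon_0$ on $n,m$ ultimately encodes.
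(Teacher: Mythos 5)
This statement is not proved in the paper at all: it is quoted verbatim as a theorem of Preiss \cite{Pre87} (and immediately afterwards restated in the smoothed form of De Lellis via the functional $F$), so there is no internal argument to compare yours against; the intended ``proof'' is simply the citation.

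Taken on its own terms, your proposal is a roadmap of Preiss's original argument rather than a proof, and the gap is exactly at the two places where the theorem is hard. First, the step ``small $\tilde F(\tilde\nu)$ plus conicality forces all transverse moments of $\tilde\nu$ to vanish'' is asserted, not derived; the quantitative isolation of flat measures among conical $m$-uniform measures (the existence of a threshold $\varepsilon_0(n,m)$) is precisely the content you would need to supply, and you acknowledge deferring it. Second, and more seriously, the transfer step is wrong as described: knowing that the tangent at infinity $\tilde\nu$ is flat does \emph{not} let the moment identities ``pin down all finite-scale moments of $\nu$'' so that a determining-family argument with Gaussians finishes the job. The implication ``flat (or nearly flat) at infinity $\Rightarrow$ flat'' is the deepest part of Preiss's paper: it requires expanding the Gaussian moments $b^{(k)}_s$, showing that closeness to flatness at infinity propagates to a uniformly small excess at every point and every scale, and then running a decay/iteration argument to upgrade small excess to exact flatness. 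The leading asymptotics of the moments as the scale tends to infinity constrain, but do not determine, the measure at finite scales, which is why the statement carries the restriction $m\geq 3$ and a dimensional constant $\varepsilon_0$ at all. As written, your outline correctly names the ingredients (the identity $\int e^{-s|X-Y|^2}\,\de\nu=c_m s^{-m/2}$ on $\mathrm{spt}(\nu)$, passage to the conical tangent at infinity via Theorem \ref{nonflat case - uniqueness of tangents for uniform measures}), but both substantive implications are left to ``Preiss's technical work,'' so the proposal does not constitute a proof; for the purposes of this paper one should simply cite \cite{Pre87} (or \cite{De08}).
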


De Lellis introduced in \cite[Lemma 6.20]{De08} a smooth version of $\tilde F$ that is convenient to work with. Let $\varphi\in C^\infty_c(\mathbb R^{n+1})$ be a radially non-increasing function such that $\chi_{B(0,1)}\leq\varphi\leq\chi_{B(0,2)}$. Given a Radon measure $\nu$ with $0\in\mathrm{spt}(\nu)$, let
\begin{equation}
    \label{singular set - functional}
    F(\nu)=\min_{P}\frac{1}{\nu(B(0,1))}\int\varphi(Z)\mathrm{dist}(Z,P)^2\de\nu(Z),
\end{equation}
where the minimum is taken over all $m$-planes $P$ in $\mathbb R^{n+1}$ with $0\in P$. In contrast with $\tilde F$, $F$ has the property of being continuous with respect to sequential weak convergence of Radon measures. Moreover, $\nu(B(0,1))F(\nu)\geq \tilde F(\nu)$ for any Radon measure $\nu$, so Theorem \ref{nonflat case - theorem of preiss} can be reformulated as follows.

\begin{corollary}[De Lellis \cite{De08}]
    Let $\nu$ be an $m$-uniform measure on $\R^{n+1}$ with $\nu(B(0,1))=1$. If $3\leq m\leq n+1$, there exists $\varepsilon_0>0$, depending only on $m$ and $n$, such that if 
    \begin{equation}
        \label{singular set - Preiss reformulation}
        \limsup_{R\to\infty}F(\nu_{0,R})\leq\varepsilon^2_0,
    \end{equation}
    then $\nu$ is flat.
\end{corollary}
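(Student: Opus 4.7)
The plan is to reduce to Preiss's Theorem~\ref{nonflat case - theorem of preiss} by transferring the hypothesis on $F$ to the unique tangent measure $\tilde\nu$ of $\nu$ at infinity. The point of the reformulation is that $F$, unlike $\tilde F$, is continuous under weak convergence of Radon measures, so the assumption $\limsup_{R\to\infty}F(\nu_{0,R})\leq\varepsilon_0^2$ can be passed to the weak limit and deposited on $\tilde\nu$; once one is working with $\tilde\nu$, which is itself $m$-uniform, the inequality $\tilde F(\tilde\nu)\leq \tilde\nu(B(0,1))\,F(\tilde\nu)$ recovers the hypothesis of Preiss's original statement.

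First I would invoke Theorem~\ref{nonflat case - uniqueness of tangents for uniform measures} to produce $\tilde\nu$, the unique tangent measure of $\nu$ at infinity, which is $m$-uniform and conical. Since $\nu$ is $m$-uniform with $\nu(B(0,1))=1$ and $0\in\mathrm{spt}(\nu)$, the proportionality constant in $\nu(B(X,r))=cr^m$ must be $c=1$, so the blow-downs $\nu_{0,R}=\nu(B(0,R))^{-1}T_{0,R}[\nu]$ coincide with the rescalings $R^{-m}T_{0,R}[\nu]$ appearing in the definition of tangent measure at infinity. Uniqueness then gives $\nu_{0,R}\rightharpoonup\tilde\nu$ as $R\to\infty$ (along every sequence, not just subsequences). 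Moreover the identity $\nu_{0,R}(B(0,s))=s^m$ holds for all $R,s>0$, and passing to the weak limit (using that $\tilde\nu$ assigns no mass to spheres, since it is $m$-uniform) yields $\tilde\nu(B(0,s))=s^m$; in particular $\tilde\nu(B(0,1))=1$.

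Next, the continuity of $F$ with respect to weak convergence of Radon measures gives
\[
F(\tilde\nu)=\lim_{R\to\infty}F(\nu_{0,R})\leq\limsup_{R\to\infty}F(\nu_{0,R})\leq\varepsilon_0^2.
\]
Combining this with the noted inequality $\tilde F(\tilde\nu)\leq\tilde\nu(B(0,1))\,F(\tilde\nu)=F(\tilde\nu)\leq\varepsilon_0^2$, Preiss's Theorem~\ref{nonflat case - theorem of preiss} applied to $\nu$ (whose normalization $\nu(B(X,1))=1$ for every $X\in\mathrm{spt}(\nu)$ follows from $c=1$) concludes that $\nu$ is flat.

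The only mildly subtle step I anticipate is the transfer of the normalization $\tilde\nu(B(0,1))=1$ through the weak limit, which for general Radon measures would not be automatic, but is routine here because $\nu_{0,R}(B(0,s))\equiv s^m$ exactly and $\tilde\nu$ does not charge spheres. Everything else is bookkeeping: rewriting $\nu_{0,R}$ in the form matching the definition of tangent measure at infinity, and quoting continuity of $F$ together with the comparison $\tilde F\leq\nu(B(0,1))F$ already recorded in the excerpt.
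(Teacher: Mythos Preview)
Your proof is correct and follows exactly the approach the paper indicates in the paragraph preceding the corollary: pass to the unique tangent $\tilde\nu$ at infinity via $\nu_{0,R}\rightharpoonup\tilde\nu$, use continuity of $F$ to obtain $F(\tilde\nu)\leq\varepsilon_0^2$, and then invoke $\tilde F(\tilde\nu)\leq\tilde\nu(B(0,1))F(\tilde\nu)$ together with Theorem~\ref{nonflat case - theorem of preiss}. The paper does not give a formal proof beyond that sketch, so your write-up simply fills in the expected details.
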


\subsection{The matrix-valued function $\Lambda$}
Let $\mathrm{GL}(n+1,\mathbb R)$ denote the space of $(n+1)\times(n+1)$ real invertible matrices, endowed with the operator norm
$$\|A\|=\sup_{\substack{V\in\mathbb R^{n+1}\\ |V|\leq 1}}|AV|,\quad A\in\mathrm{GL}(n+1,\R),$$
where $|\cdot|$ denotes Euclidean norm in $\R^{n+1}$. We consider a mapping $\Lambda:\mathbb R^{n+1}\to \mathrm{GL}(n+1, \R)$ with the property that $\Lambda(X)$ is a symmetric positive definite matrix for each $X\in\R^{n+1}$. In particular, all the eigenvalues of $\Lambda(X)$ are real and positive. We also assume that $\Lambda$ is locally H\"older continuous with exponent $\beta\in (0,1)$, in the sense that for each compact set $K\subset\R^{n+1}$ there exists a constant $H_K>0$ such that for all $X,Y\in K$,

\begin{equation}
    \label{prelim - holder continuity of lambda}
    \|\Lambda(X) - \Lambda(Y)\|\leq H_K|X-Y|^\beta.
\end{equation}

Important properties of $\Lambda$ will be encoded in the smallest and largest eigenvalues of $\Lambda(X)$ at a given point $X$, which we will denote by $\lambda_{\text{min}}(X)$ and  $\lambda_{\text{max}}(X)$, respectively. \\

\begin{lemma}[Regularity of eigenvalues]
\label{prelim - regularity of eigenvalues}
For all $X,Y\in\R^{n+1}$ we have
$$|\lmin(X)- \lmin(Y)|\leq \|\Lambda(X) - \Lambda(Y)\|,$$
$$|\lmax(X)- \lmax(Y)|\leq \|\Lambda(X) - \Lambda(Y)\|.$$
\end{lemma}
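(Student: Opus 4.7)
The plan is to use the variational (Rayleigh quotient) characterization of the extreme eigenvalues of a symmetric matrix. Since $\Lambda(X)$ is symmetric for every $X$, we have
\begin{equation*}
\lmin(X)=\min_{|v|=1}\langle\Lambda(X)v,v\rangle,\qquad \lmax(X)=\max_{|v|=1}\langle\Lambda(X)v,v\rangle,
\end{equation*}
and similarly for $Y$. These identities turn the problem into a simple perturbation argument: pick an optimal unit vector for one of the quotients, use it as a test vector for the other, and control the discrepancy by the operator norm of $\Lambda(X)-\Lambda(Y)$.

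Concretely, I would let $v_Y$ be a unit vector attaining $\lmin(Y)$ and write
\begin{equation*}
\lmin(X)\leq\langle \Lambda(X)v_Y,v_Y\rangle=\langle\Lambda(Y)v_Y,v_Y\rangle+\langle(\Lambda(X)-\Lambda(Y))v_Y,v_Y\rangle\leq\lmin(Y)+\|\Lambda(X)-\Lambda(Y)\|,
\end{equation*}
where the last inequality uses Cauchy--Schwarz to estimate $|\langle(\Lambda(X)-\Lambda(Y))v_Y,v_Y\rangle|\leq\|\Lambda(X)-\Lambda(Y)\|$. Swapping the roles of $X$ and $Y$ gives the reverse bound, yielding the first claim. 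For $\lmax$ the argument is identical with the max characterization in place of the min, choosing a maximizer $w_X$ for $\lmax(X)$ and testing it against $\Lambda(Y)$.

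There is no real obstacle here: the proof is a two-line application of the min-max principle for symmetric matrices, and it uses nothing about $\Lambda$ beyond symmetry (positive definiteness is not needed for the statement itself). The only mild point to keep in mind is to use symmetry so that the Rayleigh quotient actually equals the extreme eigenvalue (as opposed to, say, the extreme singular value), which is exactly the hypothesis available to us.
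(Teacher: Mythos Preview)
Your proof is correct. For $\lmax$ it essentially coincides with the paper's argument: the paper writes $\lmax(X)=\|\Lambda(X)\|$ and invokes the triangle inequality for the operator norm, which is the same estimate you obtain from the Rayleigh quotient. For $\lmin$, however, the two arguments genuinely diverge. The paper does \emph{not} use the variational characterization; instead it writes $1/\lmin(X)=\|\Lambda(X)^{-1}\|$, expresses $|\lmin(X)-\lmin(Y)|$ as a quotient involving $\|\Lambda(X)^{-1}\|$ and $\|\Lambda(Y)^{-1}\|$, and then uses the resolvent identity $\Lambda(X)^{-1}-\Lambda(Y)^{-1}=\Lambda(X)^{-1}(\Lambda(Y)-\Lambda(X))\Lambda(Y)^{-1}$ together with submultiplicativity to conclude. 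That route leans on invertibility (hence on positive definiteness), whereas your min--max argument works for any symmetric $\Lambda$, as you note. Your approach is shorter and more elementary; the paper's approach, while slightly more involved here, has the advantage that the resolvent-identity computation is reusable elsewhere when one needs to compare $\Lambda(X)^{-1}$ and $\Lambda(Y)^{-1}$ directly.
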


These estimates and the continuity assumption \eqref{prelim - holder continuity of lambda} imply that the functions $\lmin(\cdot)$ and $\lmax(\cdot)$ are locally H\"older continuous with exponent $\beta$. From this and from the fact that $\Lambda(X)$ is an invertible matrix for every $X\in\mathbb R^{n+1}$, it follows that $\lmin(\cdot),\ \lmin(\cdot)^{-1},\ \lmax(\cdot)$ and $ \lmax(\cdot)^{-1}$ are locally bounded from above and below by positive constants, and $\lmin(\cdot)^{-1}$ and $\lmax(\cdot)^{-1}$ are also locally H\"older continuous with exponent $\beta$. These considerations will be used in many of our estimates.

\begin{proof}[Proof of Lemma \ref{prelim - regularity of eigenvalues}]
    For $\lmax$ we can write $\lmax(X) = \|\Lambda(X)\|$, so the second estimate in the statement follows from triangle inequality. As for $\lmin$, notice that $1/\lmin(X)$ is the largest eigenvalue of $\Lambda(X)^{-1}$, so $1/\lmin(X)=\|\Lambda(X)^{-1}\|$. Therefore
    \begin{align*}
        |\lmin(X)-\lmin(Y)| &= \left\lvert\frac{1}{\|\Lambda(X)^{-1}\|} - \frac{1}{\|\Lambda(Y)^{-1}\|}\right\rvert= \frac{|\|\Lambda(X)^{-1}\| - \|\Lambda(Y)^{-1}\||}{\|\Lambda(X)^{-1}\|\|\Lambda(Y)^{-1}\|}\\
        &\leq \frac{\|\Lambda(X)^{-1} - \Lambda(Y)^{-1}\|}{\|\Lambda(X)^{-1}\|\|\Lambda(Y)^{-1}\|}= \frac{|\Lambda(X)^{-1}(I-\Lambda(X)\Lambda(Y)^{-1})\|}{\|\Lambda(X)^{-1}\|\|\Lambda(Y)^{-1}\|}\\
        &=\frac{\|\Lambda(X)^{-1}(\Lambda(Y) - \Lambda(X))\Lambda(Y)^{-1}\|}{\|\Lambda(X)^{-1}\|\|\Lambda(Y)^{-1}\|}\leq \|\Lambda(X) - \Lambda(Y)\|.
    \end{align*}
\end{proof}
The main role of the mapping $\Lambda$ in our context is to determine the ellipses $B_\Lambda(X,r)$ in \eqref{introduction - ellipses definition}. In particular, the regularity of $\Lambda$ ensures that these ellipses enjoy some compatibility, as the next lemma shows.

\begin{lemma}[Nested nonconcentric ellipses]\label{prelim - lemma on nonconcentric ellipses}
Suppose $\Lambda$ is H\"older continuous as in \eqref{prelim - holder continuity of lambda}. Let $K\subset\R^{n+1}$ be compact. If $X,Y\in K$, $r>0$ and $|X-Y|<C_Kr$ for some constant $C_K>0$ depending on $K$, then
\begin{equation}
    \label{prelim - nested ellipses, larger radius}
    B_\Lambda(X,r)\subset B_\Lambda(Y, r + \lmin(X)^{-1}|X-Y| + C_K r^{1+\beta}).
\end{equation}
If in addition $|X-Y|\leq \lmin(X)r/2$ and $r$ is small enough  depending on $K$ and $\Lambda$, then 
$$r - \lmin(X)^{-1}(X)|X-Y| - C_Kr^{1+\beta}>0$$
and
\begin{equation}
    \label{prelim - nested ellipses, smaller radius} B_\Lambda(X,r)\supset B_\Lambda(Y, r - \lmin(X)^{-1}|X-Y| - C_Kr^{1+\beta}).
\end{equation}
\end{lemma}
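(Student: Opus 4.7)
The plan is to translate both containments into scalar inequalities by using the equivalent description
\[ B_\Lambda(X,r) = \{Z \in \R^{n+1} : |\Lambda(X)^{-1}(Z-X)| < r\}, \]
and then control everything through the Hölder continuity of $\Lambda$ (hypothesis \eqref{prelim - holder continuity of lambda}) and the eigenvalue bounds from Lemma \ref{prelim - regularity of eigenvalues}.

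For \eqref{prelim - nested ellipses, larger radius}, I would take $Z \in B_\Lambda(X,r)$ and set $W = \Lambda(X)^{-1}(Z-X)$, so $|W| < r$. Splitting $Z - Y = \Lambda(X)W + (X - Y)$ and applying $\Lambda(Y)^{-1}$,
\[ \Lambda(Y)^{-1}(Z-Y) \;=\; \Lambda(Y)^{-1}\Lambda(X)\,W \;+\; \Lambda(Y)^{-1}(X-Y). \]
The second term is bounded in norm by $\lmin(Y)^{-1}|X-Y|$, and Lemma \ref{prelim - regularity of eigenvalues} together with $|X-Y| < C_K r$ lets me replace $\lmin(Y)^{-1}$ by $\lmin(X)^{-1}$ at the cost of $O(|X-Y|^{1+\beta}) = O(r^{1+\beta})$. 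For the first term, write $\Lambda(Y)^{-1}\Lambda(X) = I + \Lambda(Y)^{-1}(\Lambda(X) - \Lambda(Y))$; since $\|\Lambda(X) - \Lambda(Y)\| \leq H_K|X-Y|^\beta$ and $\|\Lambda(Y)^{-1}\| = \lmin(Y)^{-1}$ is bounded on $K$, the extra contribution has norm at most $C_K |X-Y|^\beta |W| \leq C_K r^{1+\beta}$. Collecting terms yields the desired inclusion with a suitable $C_K$.

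For \eqref{prelim - nested ellipses, smaller radius} the argument is symmetric. Set $s := r - \lmin(X)^{-1}|X-Y| - C_K r^{1+\beta}$; the assumption $|X-Y|\leq \lmin(X)r/2$ forces $\lmin(X)^{-1}|X-Y|\leq r/2$, so $s \geq r(\tfrac{1}{2} - C_K r^\beta) > 0$ whenever $r$ is small enough depending on $K$ and $\Lambda$. Given $Z \in B_\Lambda(Y,s)$, put $V = \Lambda(Y)^{-1}(Z-Y)$ with $|V| < s$, and decompose
\[ \Lambda(X)^{-1}(Z-X) \;=\; \Lambda(X)^{-1}\Lambda(Y)\,V \;+\; \Lambda(X)^{-1}(Y-X). \]
Expanding $\Lambda(X)^{-1}\Lambda(Y) = I + \Lambda(X)^{-1}(\Lambda(Y) - \Lambda(X))$ and repeating the Hölder estimate (using $s < r$) gives $|\Lambda(X)^{-1}(Z-X)| \leq s + \lmin(X)^{-1}|X-Y| + C_K r^{1+\beta} = r$, as needed. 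A single constant $C_K$ can be chosen to serve both inclusions by taking the maximum of the two constants produced.

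The only delicate point is the asymmetry in the statement, which uses $\lmin(X)^{-1}$ rather than $\lmin(Y)^{-1}$. That forces me to trade $\lmin(Y)^{-1}$ for $\lmin(X)^{-1}$ in the first inclusion and to verify (in the second) that the same asymmetric constant still closes the estimate; both trades cost a quantity of order $|X-Y|^{1+\beta}$, which Lemma \ref{prelim - regularity of eigenvalues} and the hypothesis $|X-Y| < C_K r$ absorb into the $C_K r^{1+\beta}$ remainder. Apart from this bookkeeping, the proof is a routine application of the operator-norm identities already laid out in the preliminaries.
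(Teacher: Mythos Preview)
Your proposal is correct and follows essentially the same approach as the paper: both proofs pick $Z$ in the source ellipse, write $\Lambda(\text{target center})^{-1}(Z-\text{target center})$ as the sum of a ``main'' term $\Lambda(\cdot)^{-1}\Lambda(\cdot)W$ and a ``shift'' term $\Lambda(\cdot)^{-1}(X-Y)$, expand the product as $I+\Lambda(\cdot)^{-1}(\Lambda(X)-\Lambda(Y))$, and absorb the H\"older error and the $\lmin(Y)^{-1}\to\lmin(X)^{-1}$ trade into $C_Kr^{1+\beta}$. The only cosmetic difference is that the paper phrases the computation as $Z=Y+\Lambda(Y)[\ldots]$ rather than applying $\Lambda(Y)^{-1}$ directly to $Z-Y$, which amounts to the same estimate.
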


\begin{proof}
    Let $Z\in B_\Lambda(X,r)$. Write $Z = X + \Lambda(X) W$, where $W\in B(0,r)$. Then
    \begin{equation*}
        Z = Y + X - Y + \Lambda(X)W= Y + \Lambda(Y)[\Lambda(Y)
^{-1}(X - Y + \Lambda(X)W)].
\end{equation*}
Estimating the term in the brackets and keeping in mind the continuity of $\Lambda$, we get
\begin{align*}
    |\Lambda(Y)^{-1}(X-Y + \Lambda(X)W)| &\leq |\Lambda(Y)^{-1}(X-Y)| + |\Lambda(Y)^{-1}\Lambda(X)W|\\
    &\leq \lmin(Y)^{-1}|X - Y| + |(\Lambda(Y)^{-1}(\Lambda(X) - \Lambda(Y)) + I)W |\\
    &\leq \lmin(Y)^{-1}|X-Y| + H_K\lmin(Y)^{-1}|X-Y|^\beta|W| + |W|\\
    &\leq (\lmin(X)^{-1} + H_K|X-Y|^\beta)|X-Y|\\
    &\ \ \ \ \ \ \ \ \ \ \ \  \ \ \ \ \ \ \ \ \ \ \ +H_K\lmin(Y)^{-1}|X-Y|^\beta|W| + |W|\\
    &\leq \lmin(X)^{-1}|X-Y| + C_Kr^{1+\beta} + |W|\\
    &\leq \lmin(X)^{-1}|X-Y| + C_Kr^{1+\beta} + r.
\end{align*}
This implies that
$$Z\in Y + \Lambda(Y)B(0, r + \lmin(X)^{-1}|X-Y| + C_Kr^{1+\beta}),$$
which proves \eqref{prelim - nested ellipses, larger radius}. To prove \eqref{prelim - nested ellipses, smaller radius}, let $Z\in B_\Lambda(Y,\rho)$, with $\rho>0$ to be determined. Write 
\begin{equation*}Z=Y + \Lambda(Y)W = X + \Lambda(X)[\Lambda(X)^{-1}(Y-X+\Lambda(Y)W)],
\end{equation*}
where $W\in B(0,\rho)$, and estimate similarly as before
\begin{equation}
\label{prelim - upper estimate 1}
\begin{split}
    |\Lambda(X)^{-1}(Y - X +\Lambda(Y)W)|&\leq |\Lambda(X)^{-1}(Y - X)| + |\Lambda(X)^{-1}\Lambda(Y)W|\\
    &\leq |\Lambda(X)^{-1}(Y - X)| + |\Lambda(X)^{-1}(\Lambda(Y)-\Lambda(X))W| + |W|\\
    &\leq \lmin(X)^{-1}|X-Y| + H_K\lmin(X)^{-1}|X-Y|^\beta|W| + |W|\\
    &< \lmin(X)^{-1}|X-Y| + C_Kr^{1+\beta} + \rho.
\end{split}
\end{equation}
We would like this upper bound not to exceed $r$, which can be achieved by choosing 
$$\rho = r - \lmin(X)^{-1}|X-Y| - C_Kr^{1+\beta}.$$
Notice that by our assumptions, if $r$ is small enough depending on $K$ and $\Lambda$, we have

$$\rho \geq \frac{r}{2} - C_Kr^{1+\beta}>0.$$
With this choice of $\rho$, it follows from \eqref{prelim - upper estimate 1} that $Z\in X+\Lambda(X)B(0,r)$,
which completes the proof of the lemma.
\end{proof}

\subsection{Flatness notions}
\label{prelim - flatness notions}
To conclude this section we collect some necessary definitions and basic facts about flatness conditions. Given a closed set $\Sigma\subset\R^{n+1}$, for each $X\in\Sigma$ and $R>0$ let
\begin{equation}
\label{prelim - definition of theta numbers}
b\beta_\Sigma(X,r) = \inf_P\left\lbrace \frac{1}{r}D[\Sigma\cap B(X,r); P\cap B(X,r)]\right\rbrace,
\end{equation}
where the infimum is taken over all $n$-planes $P$ through $X$. Here $D$ denotes Hausdorff distance between two closed sets $A$ and $B$, given by 

$$D[A,B] = \max\left\lbrace \sup_{X\in A}\mathrm{dist}(X,B),\sup_{Y\in B}\mathrm{dist}(Y,A)\right\rbrace,$$
where $\mathrm{dist}(X,B) = \inf_{Y\in B}|X-Y|$ and similarly for $\mathrm{dist}(Y,A)$. We will also denote the closed $\varepsilon$-neighborhood of a set $E\subset\R^{n+1}$ by
\begin{equation}\label{prelim - notation for neighborhood}(E;\varepsilon)=\{Z\in \R^{n+1}:\mathrm{dist}(Z,E)\leq\varepsilon\}.\end{equation}

The quantity $b\beta_\Sigma(X,r)$ measures bilateral flatness of $\Sigma$ in Euclidean balls, and it is the main ingredient in the notions of \textit{$\delta$-Reifenberg flatness} or \textit{vanishing Reifenberg flatness} (see \cite{Rei60}). We will also need the following anisotropic version of $b\beta_\Sigma$,
$$b\beta_{\Sigma,\Lambda}(X,r)= \inf_P\left\lbrace \frac{1}{r}D[\Sigma\cap B_\Lambda(X,r); P\cap B_\Lambda(X,r)]\right\rbrace,$$
where the infimum is again taken over all $n$-planes $P$ through $X$. The only difference between this quantity and $b\beta_\Sigma(X,r)$ is that $B(X,r)$ is now replaced by $B_\Lambda(X,r)$.

The following lemma provides a way to compare $b\beta_\Sigma$ with $b\beta_{\Sigma,\Lambda}$. Its statement and many estimates later on make reference to the following eigenvalue bounds, associated with any compact set $K\subset\R^{n+1}$,
\begin{equation}
    \label{prelim - lmin and lmax of K}\lmin(K)=\min_{X\in(\Sigma\cap K;1)}\lmin(X),\quad \lmax(K)=\sup_{X\in(\Sigma\cap K;1)}\lmax(X),
\end{equation}
as well as a local notion of eccentricity of $\Lambda$,
\begin{equation}
\label{prelim - eccentricity}
    e_\Lambda(K)=\frac{\lmax(K)}{\lmin(K)}.
\end{equation}
The fact that some of these quantities consider a neighborhood of $\Sigma\cap K$ as opposed to just $\Sigma\cap K$ will become relevant in later sections.

\begin{lemma}[Euclidean and anisotropic flatness]
\label{prelim - lambda and normal reif flat}
    Let $\Sigma\subset\R^{n+1}$ be closed and let $K\subset\R^{n+1}$ be compact. Then there exists a constant $\delta_K>0$ depending on $K$ and $\Lambda$ with the following property. Let $\delta\in (0,\delta_K)$, $X,\overline{X}\in\Sigma\cap K$, $r>0$, $r'=\lmax(K)r$, $r''=\lmin(K)r$, and let $P$ be an $n$-plane through $X$. Let us denote $B_\Lambda(X,\overline{X},r)=X+\Lambda(\overline{X})B(0,r)$.
    \begin{enumerate}
        \item If
        \begin{equation}
    \label{prelim - round flatness assumption}
        D[\Sigma\cap B(X,r');P\cap B(X,r')]\leq \delta r',
    \end{equation}
    then
    \begin{equation}
    \label{prelim - elliptic flatness conclusion}
        D[\Sigma\cap B_\Lambda(X,\overline{X},r);P\cap B_\Lambda(X,\overline{X},r))]\leq (2+e_\Lambda(K))\delta r'.
    \end{equation}
    \item If 
    \begin{equation}
    \label{prelim - elliptic flatness assumption}
        D[\Sigma\cap B_\Lambda(X,\overline{X},r);P\cap B_\Lambda(X,\overline{X},r))]\leq \delta r,
    \end{equation}
    then
    \begin{equation}
    \label{prelim - round flatness conclusion}
        D[\Sigma\cap B(X,r'');P\cap B(X,r'')]\leq 2\delta r.
    \end{equation}
    \end{enumerate}
    Moreover, $\delta_K$ can be taken to be
    \begin{equation}
        \label{prelim - value of delta_K}\delta_K=\min\{\lmin(K),e_\Lambda(K)^{-1}\}.
    \end{equation}
\end{lemma}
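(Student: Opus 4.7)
The plan is to leverage the two elementary inclusions
\[B(X,r'')\;\subset\;B_\Lambda(X,\overline{X},r)\;\subset\;B(X,r'),\]
which come directly from the eigenvalue bounds $\lmin(\overline{X})\geq\lmin(K)$ and $\lmax(\overline{X})\leq\lmax(K)$ together with the fact that $\overline{X}\in\Sigma\cap K\subset(\Sigma\cap K;1)$. These inclusions let one convert a Hausdorff-distance bound in one of the balls into a bound in the other, at the cost of perturbing a witness point that may lie outside the target ball. In each half of the argument I would adjust such a witness either by a radial retraction onto the boundary of the target ball or by a slight shrinkage toward $X$, the cost of the perturbation being controlled by $e_\Lambda(K)$ or by $\lmin(K)^{-1}$.

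For Part~1, I would split $D[\Sigma\cap B_\Lambda(X,\overline{X},r);P\cap B_\Lambda(X,\overline{X},r)]$ into its two directed halves. Given $Y\in\Sigma\cap B_\Lambda(X,\overline{X},r)\subset\Sigma\cap B(X,r')$, hypothesis \eqref{prelim - round flatness assumption} yields $Z\in P\cap B(X,r')$ with $|Y-Z|\leq\delta r'$. The ellipsoidal triangle inequality $|\Lambda(\overline{X})^{-1}(Z-X)|\leq r+\lmin(K)^{-1}\delta r'=(1+e_\Lambda(K)\delta)r$ shows that the radial retraction $\tilde Z=X+s(Z-X)$, with $s=\min\{1,\,r/|\Lambda(\overline{X})^{-1}(Z-X)|\}$, belongs to $P\cap B_\Lambda(X,\overline{X},r)$ (since $X,Z\in P$) and satisfies $|\tilde Z-Z|\leq e_\Lambda(K)\delta r'$, so $|Y-\tilde Z|\leq(1+e_\Lambda(K))\delta r'$. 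Conversely, given $Z\in P\cap B_\Lambda(X,\overline{X},r)\subset P\cap B(X,r')$, I would first shrink to $\tilde Z=X+(1-\eta)(Z-X)$ with $\eta=e_\Lambda(K)\delta$, apply \eqref{prelim - round flatness assumption} at $\tilde Z$ to produce $Y\in\Sigma$ with $|Y-\tilde Z|\leq\delta r'$, and verify $Y\in B_\Lambda(X,\overline{X},r)$ via the same ellipsoidal triangle inequality. This step forces $\eta<1$, hence $\delta<e_\Lambda(K)^{-1}$, and yields $|Y-Z|\leq(1+e_\Lambda(K))\delta r'\leq(2+e_\Lambda(K))\delta r'$.

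Part~2 is symmetric and uses the inclusion $B(X,r'')\subset B_\Lambda(X,\overline{X},r)$. For $Y\in\Sigma\cap B(X,r'')\subset\Sigma\cap B_\Lambda$, hypothesis \eqref{prelim - elliptic flatness assumption} yields $Z\in P\cap B_\Lambda$ with $|Y-Z|\leq\delta r$; if $|Z-X|>r''$, a radial retraction onto $\partial B(X,r'')$ costs at most $|Z-X|-r''\leq\delta r$ and produces $\tilde Z\in P\cap B(X,r'')$ with $|Y-\tilde Z|\leq2\delta r$. For the reverse half, given $Z\in P\cap B(X,r'')\subset P\cap B_\Lambda$, I would shrink to $\tilde Z=X+(1-\eta)(Z-X)$ with $\eta=\delta/\lmin(K)$, which requires $\delta<\lmin(K)$. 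Then \eqref{prelim - elliptic flatness assumption} at $\tilde Z$ gives $Y\in\Sigma$ with $|Y-\tilde Z|\leq\delta r$, and the identity $|Y-X|\leq(1-\eta)r''+\delta r=r''$ --- forced by the very choice of $\eta$ --- places $Y$ in $\Sigma\cap B(X,r'')$ with $|Y-Z|\leq 2\delta r$.

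The two constraints $\delta<e_\Lambda(K)^{-1}$ from Part~1 and $\delta<\lmin(K)$ from Part~2 coalesce into the value \eqref{prelim - value of delta_K}, $\delta_K=\min\{\lmin(K),e_\Lambda(K)^{-1}\}$. Beyond routine triangle-inequality bookkeeping, the only real technical point I expect to have to handle carefully is choosing the shrinkage parameter $\eta$ so that the shrunk base point, together with the $\delta$-error produced by the hypothesis, still lands inside the smaller target ball; balancing this is the principal obstacle, and it is precisely what pins down the value of $\delta_K$.
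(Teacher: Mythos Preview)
Your proposal is correct and follows the same overall strategy as the paper: use the inclusions $B(X,r'')\subset B_\Lambda(X,\overline{X},r)\subset B(X,r')$, and for each of the four directed halves of the Hausdorff distance either retract a witness radially toward $X$ or pre-shrink toward $X$ before invoking the hypothesis. The constraint $\delta<e_\Lambda(K)^{-1}$ arises in Part~1 and $\delta<\lmin(K)$ in Part~2 exactly as you identify.

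The implementations differ in two places. For the half $\Sigma\cap B_\Lambda\subset(P\cap B_\Lambda;\cdot)$ the paper decomposes $Y-X$ into components parallel and orthogonal to $P$ and runs a case analysis on whether the foot $X+Y_{||}$ already lies in $B_\Lambda$; you instead take the nearest point $Z\in P\cap B(X,r')$ given by the hypothesis and retract it radially. Your argument is shorter and avoids the case split. For the half $\Sigma\cap B(X,r'')\subset(P\cap B(X,r'');\cdot)$ the paper uses orthogonal projection onto $P$ (which automatically lands in $B(X,r'')$ since $X\in P$) and obtains the sharper bound $\delta r$; your radial retraction gives $2\delta r$, which still suffices. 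One small technical point: taking $\eta=e_\Lambda(K)\delta$ exactly only guarantees that the produced point $Y$ lies in the \emph{closed} ellipsoid; the paper takes the shrinkage parameter strictly larger than $e_\Lambda(K)\delta$ to land strictly inside, which is why its stated constant is $(2+e_\Lambda(K))$ rather than $(1+e_\Lambda(K))$. Your argument is easily repaired the same way.
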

The following corollary is a direct consequence of this lemma in the case $X=\overline{X}$.

\begin{corollary}
\label{prelim - lambda and normal reif flat corollary}
    Let $\Sigma\subset\R^{n+1}$ be closed and let $K\subset\R^{n+1}$ be compact. Then there exists a constant $\delta_K>0$ depending on $K$, $\Lambda$ and $n$ with the following property. Let $\delta\in (0,\delta_K)$, $X\in\Sigma\cap K$, $r>0$, $r'=\lmax(K)r$, $r''=\lmin(K)r$, and let $P$ be an $n$-plane through $X$.
    \begin{enumerate}
        \item If
        \begin{equation}
    \label{prelim - round flatness assumption}
        D[\Sigma\cap B(X,r');P\cap B(X,r')]\leq \delta r',
    \end{equation}
    then
    \begin{equation}
    \label{prelim - elliptic flatness conclusion}
        D[\Sigma\cap B_\Lambda(X,r);P\cap B_\Lambda(X,r))]\leq (2+e_\Lambda(K))\delta r'.
    \end{equation}
    \item If 
    \begin{equation}
    \label{prelim - elliptic flatness assumption}
        D[\Sigma\cap B_\Lambda(X,r);P\cap B_\Lambda(X,r))]\leq \delta r,
    \end{equation}
    then
    \begin{equation}
    \label{prelim - round flatness conclusion}
        D[\Sigma\cap B(X,r'');P\cap B(X,r'')]\leq 2\delta r.
    \end{equation}
    \end{enumerate}   
\end{corollary}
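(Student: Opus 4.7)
The plan is to specialize Lemma \ref{prelim - lambda and normal reif flat} to the diagonal case $\overline{X}=X$. Under this choice, the nonconcentric ellipse used in that lemma,
$$B_\Lambda(X,\overline{X},r)=X+\Lambda(\overline{X})B(0,r),$$
collapses to the concentric ellipse $B_\Lambda(X,r)=X+\Lambda(X)B(0,r)$ from \eqref{introduction - ellipses definition}. Consequently, the hypothesis and conclusion of part (1) of the lemma become verbatim the hypothesis \eqref{prelim - round flatness assumption} and the conclusion \eqref{prelim - elliptic flatness conclusion} of part (1) of the corollary, and analogously for part (2) via \eqref{prelim - elliptic flatness assumption} and \eqref{prelim - round flatness conclusion}.

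For the constant, the value $\delta_K=\min\{\lmin(K),e_\Lambda(K)^{-1}\}$ produced by the lemma already satisfies the corollary's requirement. The extra mention of dependence on $n$ in the corollary statement is harmless: $n$ enters only through the ambient space $\R^{n+1}$ and the dimension of the approximating planes $P$, both of which are fixed before $\delta_K$ is chosen.

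Since the proof is a one-line specialization, there is no real obstacle. All the substantive work — converting between Euclidean balls of radii $r'=\lmax(K)r$ and $r''=\lmin(K)r$ and the anisotropic ellipses via the eigenvalue bounds in \eqref{prelim - lmin and lmax of K}, and tracking the factor $2+e_\Lambda(K)$ that appears in \eqref{prelim - elliptic flatness conclusion} — is carried out in the lemma itself. The corollary merely repackages the diagonal case in a form convenient to quote in later sections, so I would record it as a one-line deduction in the paper.
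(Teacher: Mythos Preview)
Your proposal is correct and matches the paper's approach exactly: the paper states just before the corollary that it ``is a direct consequence of this lemma in the case $X=\overline{X}$,'' which is precisely the specialization you carry out.
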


\begin{proof}[Proof of Lemma \ref{prelim - lambda and normal reif flat}]
    The proof will make repeated use of the fact that with $X,\overline{X},r,r'$ and $r''$ as in the statement, we have
    \begin{equation}
        \label{prelim - basic inclusion}
        B(X,r'')\subset B_\Lambda(X,\overline{X},r)\subset B(X,r').
    \end{equation}
    Let us first prove \eqref{prelim - elliptic flatness conclusion} under the assumption that \eqref{prelim - round flatness assumption} holds. We proceed in two steps.\\
    \\
    1. First we show that
    \begin{equation}
        \label{prelim - round and elliptic flatness, first proof conclusion}
        \Sigma\cap B_\Lambda(X,\overline{X},r)\subset (P\cap B_\Lambda(X,\overline{X},r);(1+e_\Lambda(K))\delta r').
    \end{equation}
    Let $Y\in \Sigma\cap B_\Lambda(X,\overline{X},r)$, and write $Y= X+Y_{||} + Y_\perp$,
    where $Y_{||}$ and $Y_{\perp}$ are parallel and orthogonal, respectively, to $P$. We consider two cases.\\
    \\
    Case (i): $X+Y_{||}\in P\cap B_\Lambda(X,\overline{X},r)$. In this situation, using that $B_\Lambda(X,\overline{X},r)\subset B(X,r')$ and \eqref{prelim - round flatness assumption}, we see that
    \begin{equation*}
            \dist(Y,P\cap B_\Lambda(X,\overline{X},r)) = |Y_\perp|= \dist(Y,P\cap B(X,r'))
            \leq \delta r',
    \end{equation*}
    which implies \eqref{prelim - elliptic flatness conclusion}.\\
    \\
    Case (ii): $X+Y_{||}\notin P\cap B_\Lambda(X,\overline{X},r)$, or equivalently, $X+Y_{||}\notin B_\Lambda(X,\overline{X},r)$. Now in addition to $|Y_\perp|$, we also need to control the distance from $X+Y_{||}$ to $P\cap B_\Lambda(X,\overline{X},r)$. Write 
    $$X+Y_{||} = X + \Lambda(X)W,$$
    and notice that $X+Y_{||}\notin B_\Lambda(X,\overline{X},r)$ implies $|W|=|\Lambda(\overline{X})^{-1}(Y_{||})|>r$. Let  $Y' = X+\Lambda(\overline{X})W'$, where $W'=rW/|W|$. Note that $|\Lambda(\overline{X})^{-1}(Y'-X)|=|W'|=r$, so $Y'\in \partial B_\Lambda(X,\overline{X},r)$. Moreover, by construction $Y'$ belongs to the line through $X$ and $X+Y_{||}$, so in particular $Y'\in P$. Therefore
    \begin{equation}
        \label{prelim - round and elliptic flatness, bound 1}
        \dist(X+Y_{||},P\cap B_\Lambda(X,\overline{X},r))\leq |X+Y_{||} - Y'|.
    \end{equation}
    
    Denote $\rho = |X+Y_{||} - Y'|$. Then
    \begin{equation*}
        \begin{split}
            |W-W'|&=|\Lambda(\overline{X})^{-1}(X+Y_{||}-Y')|\geq \lmax(K)^{-1}\rho.
        \end{split}
    \end{equation*}
    Therefore, taking into account that $W-W'=\left(1-\frac{r}{|W|}\right)W$ and 
    $W'=\frac{r}{|W|}W$, so that both $W-W'$ and $W'$ are colinear and point in the same direction, we get
    \begin{equation*}
        \begin{split}
            |W| &= |W-W'|+|W'|\geq r + \lmax(K)^{-1}\rho.
        \end{split}
    \end{equation*}
    In particular, we have $B(W,\lmax(K)^{-1}\rho)\subset\R^{n+1}\backslash B(0,r)$, and applying $X+\Lambda(\overline{X})(\cdot)$ we obtain
    \begin{equation}
        \label{prelim - round and elliptic flatness, helper inclusion 1}X+\Lambda(\overline{X})B(W,\lmax(K)^{-1}\rho)\subset\R^{n+1}\backslash B_\Lambda(X,\overline{X},r).
    \end{equation}
    Now, notice that
    \begin{equation}
        \begin{split}
            \label{prelim - label requested by Tatiana 1}X+\Lambda(\overline{X})B(W,\lmax(K)^{-1}\rho)&= X+\Lambda(\overline{X})W + \Lambda(\overline{X})B(0, \lmax(K)^{-1}\rho)\\
            &\supset X+\Lambda(\overline{X})W + B\left(0,\frac{\lmin(K)}{\lmax(K)}\rho\right)\\
            &= B\left(X+Y_{||},e_\Lambda(K)^{-1}\rho\right).
        \end{split}
    \end{equation}
    Combining \eqref{prelim - round and elliptic flatness, helper inclusion 1} with \eqref{prelim - label requested by Tatiana 1} we get
    \begin{equation}
        \label{prelim - empty intersection}
        B\left(X+Y_{||},e_{\Lambda}(K)^{-1}\rho\right)\cap B_\Lambda(X,\overline{X},r)=\varnothing.
    \end{equation}
    In particular, since $Y\in B_\Lambda(X,\overline{X},r)$, \eqref{prelim - round and elliptic flatness, bound 1} and \eqref{prelim - empty intersection} imply that
    \begin{equation*}
        \begin{split}
            |Y_\perp|= |Y-(X+Y_{||})|&\geq e_\Lambda(K)^{-1}\rho\geq e_\Lambda(K)^{-1}\dist(X+Y_{||},P\cap B_\Lambda(X,\overline{X},r)),
        \end{split}
    \end{equation*}
    which gives
    $$\dist(X+Y_{||},P\cap B_\Lambda(X,\overline{X},r))\leq e_\Lambda(K)|Y_\perp|.$$
    From this estimate and \eqref{prelim - round flatness assumption}, which ensures that $|Y_\perp|\leq\delta r'$, we deduce that
    \begin{equation*}
        \begin{split}
            \dist(Y,P\cap B_\Lambda(X,\overline{X},r))&\leq |Y_\perp| + \dist(X+Y_{||},P\cap B_\Lambda(X,\overline{X},r))\\
            &\leq (1+e_\Lambda(K))|Y_\perp|\leq (1+e_\Lambda(K))\delta r',
        \end{split}
    \end{equation*}
    which proves \eqref{prelim - round and elliptic flatness, first proof conclusion}
.\\
\\
2. Next, we show that 
\begin{equation}
    \label{prelim - round and elliptic flatness, second proof conclusion}
    P\cap B_\Lambda(X,\overline{X},r)\subset (\Sigma\cap B_\Lambda(X,\overline{X},r);(2+e_\Lambda(K))\delta r').
\end{equation}
Let $Y\in P\cap B_\Lambda(X,\overline{X},r)$. We would like to use \eqref{prelim - round flatness assumption} to obtain a point in $\Sigma$ which is close to $Y$. However, if we do this directly at $Y$, the resulting point in $\Sigma$ may not necessarily be contained in $B_\Lambda(X,\overline{X},r)$. We compensate for this by adjusting $Y$ in the following way. Write $Y=X+\Lambda(\overline{X})W$, where $|W|<r$. Let $W'=(1-\rho)W$, where $\rho\in (0,1)$ will be chosen later, and let $Y'=X+\Lambda(\overline{X})W'$.

We will first find a ball with center $Y'$ that is contained in $B_\Lambda(X,\overline{X},r)$. To do this, note that because $|W'|<(1-\rho)r$, we have $B(W',\rho r)\subset B(0,r)$. Therefore
\begin{equation}
\label{prelim - round and elliptic flatness, helper inclusion 2}
    \begin{split}
        X+\Lambda(\overline{X})B(W',\rho r)&\subset X+\Lambda(\overline{X})B(0,r)= B_\Lambda(X,\overline{X},r).
    \end{split}
\end{equation}
Now, note that 
\begin{equation}
\label{prelim - helper inclusion *}
    \begin{split}
        X+\Lambda(\overline{X})B(W',\rho r)&= X+\Lambda(\overline{X})W'+\Lambda(\overline{X})B(0,\rho r)\\
    &= Y'+\Lambda(\overline{X})B(0,\rho r)\\
    &\supset Y'+B(0,\lmin(K)\rho r).
    \end{split}
\end{equation}
Combining this with \eqref{prelim - round and elliptic flatness, helper inclusion 2} gives
\begin{equation}
    \label{prelim - round and elliptic flatness, helper inclusion 3}
    B(Y',\lmin(K)\rho r)\subset B_\Lambda(X,\overline{X},r).
\end{equation}
Next, note that since $Y\in P\cap B_\Lambda(X,\overline{X},r)$, by construction we have $Y'\in P\cap B_\Lambda(X,\overline{X},r)$ as well, so in particular,
$$Y'\in P\cap B(X,r').$$
We can now use \eqref{prelim - round flatness assumption} to deduce that there exists $Q\in \Sigma\cap B(X,r')$ such that
\begin{equation}
\label{prelim - round and elliptic flatness, helper inequality 2}
    |Y'-Q|\leq \delta r'.
\end{equation}
We will use $Q$ to approximate $Y$. We would like to ensure that $Q\in B_\Lambda(X,\overline{X},r)$. To do this, notice that by \eqref{prelim - round and elliptic flatness, helper inclusion 3} it suffices to show that
\begin{equation}
\label{prelim - round and elliptic flatness, helper inequality 3}
    |Y'-Q|<\lmin(K)\rho r.
\end{equation}
But from \eqref{prelim - round and elliptic flatness, helper inequality 2}, we see that this holds as long as $e_\Lambda(K)\delta < \rho$. To ensure that this is the case, we assume that $\delta_K<e_\Lambda(K)^{-1}$ and
\begin{equation}
\label{prelim - delta assumption 1}
    \rho\in (e_\Lambda(K)\delta_K,1). 
\end{equation}
In this scenario \eqref{prelim - round and elliptic flatness, helper inequality 3} holds, which implies that $Q\in B_\Lambda(X,\overline{X},r)$. Moreover, since $Q\in\Sigma$, we have $Q\in \Sigma\cap B_\Lambda(X,\overline{X},r)$. To conclude, we estimate
\begin{equation}
\label{prelim - round and elliptic flatness, helper inequality 4}
    \begin{split}
        |Q-Y|&\leq |Q-Y'|+|Y'-Y|\leq \lmax(K)\delta r + |\Lambda(X)(W'-W)|\leq \delta r'+\rho r'.
    \end{split}
\end{equation}
We now assume, in addition to \eqref{prelim - delta assumption 1}, that
\begin{equation}
    \label{prelim - delta assumption 2}
\rho<(1+e_\Lambda(K))\delta_K.
\end{equation}
 Then \eqref{prelim - round and elliptic flatness, helper inequality 4} implies $|Q-Y|\leq (2+e_\Lambda(K))\delta r'$,
proving \eqref{prelim - round and elliptic flatness, second proof conclusion}. Now \eqref{prelim - elliptic flatness conclusion} follows from \eqref{prelim - round and elliptic flatness, first proof conclusion} and \eqref{prelim - round and elliptic flatness, second proof conclusion}.

Next, we assume \eqref{prelim - elliptic flatness assumption} and prove \eqref{prelim - round flatness conclusion}. We proceed in two steps as before.\\
1. First, we claim that
\begin{equation}
    \label{prelim - helper inclusion 6}
    P\cap B(X,r'')\subset (\Sigma\cap B(X,r'');2\delta r).
\end{equation}
To prove this, let $Y\in P\cap B(X,r'')$. Consider
$$Y' = Y-\frac{\delta}{\lmin(K)}(Y-X).$$
Notice that $Y'\in P$. Moreover, if $\delta_K<\lmin(K)$, then since $Y\in B(X,r'')$,
$$|Y'-X|=\left(1-\frac{\delta}{\lmin(K)}\right)|Y-X|<r'',$$ so $Y'\in B(X,r'')$ as well. Now, since $B(X,r'')\subset B_\Lambda(X,\overline{X},r)$, by \eqref{prelim - elliptic flatness assumption} there exists $Z\in \Sigma\cap B_\Lambda(X,\overline{X},r)$ such that
\begin{equation}
    \label{prelim - helper inequality 6}
    |Y'-Z|\leq\delta r.
\end{equation}
This implies
\begin{equation}
    \begin{split}
        |X-Z|&\leq |X-Y'|+|Y'-Z|\\
        &\leq  \left(1-\frac{\delta}{\lmin(K)}\right)|X-Y|+\delta r< r'' - \delta r+\delta r= r'',
    \end{split}
\end{equation}
so $Z\in\Sigma\cap B(X,r'')$. Moreover, by \eqref{prelim - helper inequality 6} $Z$ satisfies
\begin{equation}
    \begin{split}
        |Y-Z|&\leq |Y-Y'|+|Y'-Z|\leq \frac{\delta}{\lmin(K)}|X-Y|+ \delta r= 2\delta r.
    \end{split}
\end{equation}
This proves \eqref{prelim - helper inclusion 6}.\\
2. Now we show that
\begin{equation}
    \label{prelim - helper inclusion 7}
    \Sigma\cap B(X,r'')\subset (P\cap B(X,r''); \delta r).
\end{equation}
Let $Y\in \Sigma\cap B(X,r'')$. Let $Z$ be the orthogonal projection of $Y$ onto $P$. Then because $P$ contains $X$, we have $Z\in B(X,r'')$, so $Z\in P\cap B(X,r'')$. Moreover, using that $B(X,r'')\subset B_\Lambda(X,\overline{X},r)$ and \eqref{prelim - elliptic flatness assumption}, we get
\begin{equation}
    \begin{split}
        |Y-Z|&=\dist(Y,P\cap B(X,r''))=\dist(Y,P\cap B_\Lambda(X,\overline{X},r))\leq \delta r.
    \end{split}
\end{equation}
This gives \eqref{prelim - helper inclusion 7}. Combining equations \eqref{prelim - helper inclusion 6} and \eqref{prelim - helper inclusion 7} we obtain \eqref{prelim - round flatness conclusion}, which completes the proof of Lemma \ref{prelim - lambda and normal reif flat}.
\end{proof}

%%%%%%%%%%%%%%%%
\section{Moment estimates}\label{moments}
Here we start deriving geometric information about a measure $\mu$ under the assumption that $\mu$ and $\Lambda$ satisfy the density and continuity conditions of Theorem \ref{theorem 1}, i.e. equations \eqref{introduction - main density estimate} and \eqref{prelim - holder continuity of lambda} (no flatness assumption needs to be made at this point). To accomplish this we consider certain \textit{moments}, an idea that has already been successfully exploited in the literature, most remarkably in the study of uniform measures (see \cite{Pre87}, \cite{KP87}), as well as in the case of measures that are not necessarily uniform but rather asymptotically uniform in a sense, such as the ones considered in \cite{DKT01}. \textit{A priori}, an appropriate notion of moment in our setting would incorporate suitable $\Lambda$ terms. However, we will instead consider a transformation $\tilde\mu$ of $\mu$ for which the standard notion of moment will suffice.

From now on $K\subset\mathbb R^{n+1}$ will be a fixed compact set with $K\cap\Sigma\neq\varnothing$, and $X_0$ will denote an arbitrary point in $K\cap\Sigma$. We will study the regularity of $\Sigma$ near $X_0$ by considering the following transformation. Let

\begin{equation}
    \label{moments - transformation of K and Sigma}
    \tilde{K} = \Lambda(X_0)^{-1}K, \quad \tilde{\Sigma} = \Lambda(X_0)^{-1}(\Sigma)=\mathrm{spt}(\tilde{\mu}),
\end{equation}

\begin{equation}
\label{moments - transformation of mu and lambda}
\tilde{\mu} = \Lambda(X_0)^{-1}[\mu],\quad \tilde{\Lambda}(Y) = \Lambda(X_0)^{-1}\Lambda(\Lambda(X_0)Y),
\end{equation}
where $Y\in\tilde\Sigma\cap \tilde K$ and $\Lambda(X_0)^{-1}[\cdot]$ denotes push-forward via $\Lambda(X_0)^{-1}$. As we will see, the regularity of $\Sigma$ near $X_0$ will be determined by that of $\tilde\Sigma$ near $Y_0=\Lambda(X_0)^{-1}X_0$. The main benefits of performing this transformation come from the fact that
\begin{equation}\label{moments - identity at x0}
    \tilde{\Lambda}(Y_0) = \mathrm{Id}.
\end{equation}
Let us start by using the density assumption on $\mu$ in Theorem \ref{theorem 1} to derive a corresponding estimate for $\tilde\mu$. If $X\in\Sigma\cap K$ and $Y=\Lambda(X_0)^{-1}X\in\tilde\Sigma\cap\tilde K$ (notice that a generic point of $\tilde{\Sigma}\cap\tilde{K}$ can always be written in this way), then
\begin{align*}
    \mu(B_\Lambda(X,r)) &= \mu (X + \Lambda(X)B(0,r))\\
    &= \mu (\Lambda(X_0)[\Lambda(X_0)^{-1}X + \Lambda(X_0)^{-1}\Lambda(X)B(0,r)])\\
    &=\tilde\mu(\Lambda(X_0)^{-1}X + \tilde\Lambda(\Lambda(X_0)^{-1}X))= \tilde{\mu}(B_{\tilde{\Lambda}}(\Lambda(X_0)^{-1}X, r))= \tilde{\mu}(B_{\tilde{\Lambda}}(Y, r)).
\end{align*}
Thus, \eqref{introduction - main density estimate} implies that for every $Y\in\tilde{\Sigma}\cap\tilde{K}$ and $r\in (0,1]$,
\begin{equation}
\label{moments - holder bound on mu tilde}
\left\lvert \frac{\tilde{\mu}(B_{\tilde{\Lambda}}(Y,r))}{\omega_nr^n} - 1 \right\rvert \leq C_Kr^\alpha.
\end{equation}
We will often use this estimate in the form
\begin{equation}
    \label{moments - density assumption expanded out}
    \omega_nr^n-C_Kr^{n+\alpha}\leq\tilde\mu(B_{\tilde\Lambda}(Y,r))\leq  \omega_nr^n+C_Kr^{n+\alpha}.
\end{equation}

\begin{remark}
\label{moments - on the dependence of local constants}
    By our assumptions on $\Lambda$, we have for every $Y,Y'\in\tilde\Sigma\cap\tilde K$,

    $$\|\tilde\Lambda(Y)-\tilde\Lambda(Y')\|\leq e_\Lambda(K)H_K|Y-Y'|^\beta,$$
    with $H_K$ as in \eqref{prelim - holder continuity of lambda} and $e_\Lambda(K)$ as in \eqref{prelim - eccentricity}. This guarantees that as we work with $\tilde{\mu}$ and $\tilde{\Lambda}$ throughout the rest of this section, any local constants that arise from \eqref{moments - holder bound on mu tilde} and the continuity of $\tilde{\Lambda}$ (including the lemmas in Section \ref{prelim}) can be taken to depend on $K$ and $\Lambda$, but not on the particular choice of $X_0$ (or equivalently $\tilde{K}$). This will become important later on.
\end{remark}
We consider the following moments of $\tilde\mu$ at $Y_0$:

\begin{equation}
    \label{moments - definition of b}
    b=\frac{n+2}{2\omega_nr^{n+2}}\int_{B(Y_0,r)}(r^2 - |Z-Y_0|^2)(Z-Y_0)\de\tilde\mu(Z),
\end{equation}
 \begin{equation}
     \label{moments - definition of Q}
     Q(Y) = \frac{n+2}{\omega_nr^{n+2}}\int_{B(Y_0,r)}\langle Y, Z-Y_0\rangle^2\de\tilde\mu(Z),
 \end{equation}
as well as the trace of the quadratic form $Q$,

 $$\mathrm{tr}(Q)=\frac{n+2}{\omega_nr^{n+2}}\int_{B(Y_0,r)}|Z|^2\de\tilde\mu(Z).$$
The fact that these quantities are well suited to $\tilde\mu$ is a consequence of \eqref{moments - identity at x0}. As in \cite{DKT01}, we will use $b$ and $Q$ to show that near $Y_0$, $\tilde\Sigma$ is close to the zero set of a quadratic polynomial. This is the content of the main result of this section.

% MOMENTS - MAIN PROP

\begin{proposition}
\label{moments - main proposition}
    Suppose $\Lambda$ and $\mu$ satisfy the continuity and density assumptions of Theorem \ref{theorem 1}.  Let $X_0\in\Sigma\cap K$, where $K\subset\R^{n+1}$ is compact, and let $\tilde{\mu}$, $\tilde{\Lambda}$, $\tilde{\Sigma}$, $\tilde{K}$ be as in \eqref{moments - transformation of mu and lambda}, and $Y_0 = \Lambda(X_0)^{-1}X_0$. Then with $b$ and $Q$ as defined in \eqref{moments - definition of b} and \eqref{moments - definition of Q}, there exist $C_K>0$ and $r_K>0$ depending only on $K$, $\Lambda$ and $n$, such that
    \begin{equation}
    \label{moments - trace estimate}
        |\mathrm{tr}(Q) - n|\leq C_Kr^\alpha,
    \end{equation}
    \begin{equation}
        \label{moments - quadratic estimate}
        | 2\langle b, Y-Y_0\rangle + Q(Y-Y_0) - |Y-Y_0|^2|\leq C_K\left(\frac{|Y-Y_0|^3}{r} + r^{2+\min\{\alpha,\beta\}}\right),
    \end{equation}
    whenever $r\in (0,r_K]$ and $Y\in \tilde{\Sigma}\cap B(Y_0, r/2)$. 
\end{proposition}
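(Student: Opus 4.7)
My plan is to follow the moment-computation strategy of \cite{DKT01}, exploiting the normalization $\tilde\Lambda(Y_0)=\mathrm{Id}$, which makes the density bound \eqref{moments - density assumption expanded out} applied \emph{at $Y_0$} reduce to the Euclidean statement $\tilde\mu(B(Y_0,s))=\omega_ns^n+O(s^{n+\alpha})$. The novelty in the present setting is that at a generic $Y\in\tilde\Sigma\cap B(Y_0,r/2)$ the density bound is stated for the elliptic ball $B_{\tilde\Lambda}(Y,s)$, and passing to the Euclidean ball $B(Y,s)$ introduces an $r^\beta$ error via $\|\tilde\Lambda(Y)-\mathrm{Id}\|\leq C_K|Y-Y_0|^\beta\leq C_Kr^\beta$. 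This is the sole source of the $\min\{\alpha,\beta\}$ in the final estimate.

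For the trace estimate, I would write $\mathrm{tr}(Q)=\frac{n+2}{\omega_nr^{n+2}}\int_{B(Y_0,r)}|Z-Y_0|^2\,\de\tilde\mu(Z)$, which by layer-cake equals $\frac{2(n+2)}{\omega_nr^{n+2}}\int_0^r s[\tilde\mu(B(Y_0,r))-\tilde\mu(B(Y_0,s))]\,\de s$; since $B_{\tilde\Lambda}(Y_0,s)=B(Y_0,s)$ the density bound applies directly and a routine computation gives $\mathrm{tr}(Q)=n+O(r^\alpha)$.

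The heart of the proof is the quadratic estimate, which I propose to derive by computing
\[
I(Y)=\int_{B(Y_0,r)}(r^2-|Z-Y|^2)^2\,\de\tilde\mu(Z)
\]
in two ways. First, expand $r^2-|Z-Y|^2=(r^2-|Z-Y_0|^2)+2\langle Y-Y_0,Z-Y_0\rangle-|Y-Y_0|^2$, square, and integrate term-by-term over $B(Y_0,r)$. Three of the six resulting pieces combine, using only the definitions of $b$ and $Q$ together with the layer-cake evaluation of $\int(r^2-|Z-Y_0|^2)\,\de\tilde\mu$, to produce $\frac{4\omega_nr^{n+2}}{n+2}[Q(Y-Y_0)+2\langle b,Y-Y_0\rangle-|Y-Y_0|^2]$; one piece is the pure second moment $\int(r^2-|Z-Y_0|^2)^2\,\de\tilde\mu$, again given by layer-cake at $Y_0$; and the two remaining pieces $|Y-Y_0|^4\tilde\mu(B(Y_0,r))$ and $4|Y-Y_0|^2\langle Y-Y_0,\int(Z-Y_0)\de\tilde\mu\rangle$ are crudely bounded by $O(r^{n+1}|Y-Y_0|^3)$ using the hypothesis $|Y-Y_0|\leq r/2$. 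Second, $I(Y)$ differs from $\int_{B(Y,r)}(r^2-|Z-Y|^2)^2\,\de\tilde\mu$ by a symmetric-difference correction; since $B(Y_0,r)\Delta B(Y,r)$ has $\tilde\mu$-mass $O(r^{n-1}|Y-Y_0|+r^{n+\alpha})$ and the integrand is $O(r^2|Y-Y_0|^2)$ there, this correction is $O(r^{n+1}|Y-Y_0|^3)$. The integral over $B(Y,r)$ is evaluated by layer-cake at $Y$ using the comparison $B(Y,s(1-C_Kr^\beta))\subset B_{\tilde\Lambda}(Y,s)\subset B(Y,s(1+C_Kr^\beta))$, which upgrades the density bound to $\tilde\mu(B(Y,s))=\omega_ns^n+O(s^nr^\beta+s^{n+\alpha})$ and produces the same leading value as at $Y_0$ with error $O(r^{n+4+\min\{\alpha,\beta\}})$. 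Equating both computations of $I(Y)$ and dividing by $\frac{4\omega_nr^{n+2}}{n+2}$ produces the claimed bound.

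I expect the main obstacle to be the bookkeeping of error terms rather than any single step: one has to verify that every auxiliary contribution (the cross term, the quartic term, and the symmetric-difference correction) is absorbed into $O(|Y-Y_0|^3/r+r^{2+\min\{\alpha,\beta\}})$ after the normalization by $r^{n+2}$, which is precisely where the restriction $Y\in B(Y_0,r/2)$ (as opposed to $B(Y_0,r)$) is essential to make the crude bounds close.
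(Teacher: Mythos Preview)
Your proposal is correct and follows the same moment-comparison strategy as the paper, but with a slightly different (and arguably cleaner) organization of where the $\beta$ error enters. The paper carries the anisotropic norm $|\tilde\Lambda(Y)^{-1}(Z-Y)|$ in the integrand throughout: it defines $J_j=\int_{D_j}(r^2-|\tilde\Lambda(Y)^{-1}(Z-Y)|^2)^2\,\de\tilde\mu$ for four nested $\tilde\Lambda$-ellipses $D_1\subset D_2,D_3\subset D_4$ furnished by Lemma~\ref{prelim - lemma on nonconcentric ellipses} (this nesting already costs an $r^{1+\beta}$ in the radii), layer-cakes $J_3$ over $B_{\tilde\Lambda}(Y,\cdot)$ with the \emph{exact} $\alpha$-error density bound, and only at the end replaces $\tilde\Lambda(Y)^{-1}$ by $\mathrm{Id}$ in the integrand to reach your $I(Y)$ (this is the paper's passage $I\to I''$, costing a second $r^\beta$). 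You instead keep the Euclidean integrand $|Z-Y|$ from the start, so the algebraic expansion is immediate and the comparison between $\int_{B(Y_0,r)}$ and $\int_{B(Y,r)}$ is a plain Euclidean symmetric-difference estimate; the $\beta$ error then appears in a single place, namely the conversion $\tilde\mu(B(Y,s))=\omega_ns^n+O(s^nr^\beta+s^{n+\alpha})$ via $B_{\tilde\Lambda}(Y,s(1\mp C_Kr^\beta))\supset/\subset B(Y,s)$. Both routes yield the same error budget $O(r^{n+1}|Y-Y_0|^3+r^{n+4+\min\{\alpha,\beta\}})$; yours avoids invoking Lemma~\ref{prelim - lemma on nonconcentric ellipses} and the integrand-replacement step, at the cost of needing the converted Euclidean density bound at $Y$. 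One small omission: your symmetric-difference correction is $O(r^2|Y-Y_0|^2)\cdot O(r^{n-1}|Y-Y_0|+r^{n+\alpha})$, which also contains a term $O(r^{n+2+\alpha}|Y-Y_0|^2)$ that you dropped; it is harmless since $|Y-Y_0|\le r/2$ makes it $\le C_Kr^{n+4+\alpha}$.
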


\begin{remark}

    It will be useful to keep in mind that even though $\tilde\mu$, $\tilde\Sigma$ and $\tilde K$ depend on $X_0$, the constants $C_K$ and $r_K$ in this result are independent of the particular choice of $X_0\in \Sigma\cap K$.
\end{remark}

\begin{proof}[Proof of Proposition \ref{moments - main proposition}]
    Let us assume without loss of generality that $Y_0 = 0$, and record for later use the fact that $\tilde{\Lambda}(0)= \mathrm{Id}$. We start by proving \eqref{moments - trace estimate}. Here and in what follows we will make repeated use of the following consequence of Fubini's theorem, valid for any measurable set $E$ and any non-negative measurable function $f$:
    $$\int_{E}f(Z)\de\tilde\mu(Z) = \int_0^\infty\tilde\mu(\{Z\in E: f(Z)>t\})\de t.$$
     We see that
    \begin{align*}
        \int_{B(0,r)}|Z|^2\de\tilde\mu(Z) &= \int_{B(0,r)}|Z|^2\de\tilde\mu(Z)\\
        & = \int_0^{r^2}\tilde\mu(\{Z\in B(0,r): |Z|^2>t\})\de t= \int_0^{r^2}\tilde\mu(\{B(0,r)\backslash B(0,\sqrt{t})\})\de t.\\
    \end{align*}
    Now by \eqref{moments - density assumption expanded out} and because $\tilde\Lambda(0)=\mathrm{Id}$, we have for $0<t<r^2$,
    \begin{equation*}
    |\mu(B(0,r)\backslash B(0,\sqrt{t})) - \omega_n(r^n - t^{n/2})|\leq C_K(r^{n+\alpha} + t^{(n+\alpha)/2})\leq C_Kr^{n+\alpha}.
    \end{equation*}
    Therefore, 
    \begin{align*}
        \left\lvert \int_{B(0,r)}|Z|^2\de\tilde\mu(Z) - \int_0^{r^2} \omega_n(r^n - t^{n/2})\de t\right\rvert&\leq \int_0^{r^2}|\tilde\mu(B(0,r)\backslash B(0,\sqrt{t})) - \omega_n(r^n - t^{n/2})|\de t\\
        &\leq C_Kr^{n+\alpha + 2},
    \end{align*}
    which gives
    \begin{equation*}
\left\lvert\frac{n+2}{\omega_nr^{n+2}}\int_{B(0,r)}|Z|^2\de\tilde\mu(Z)-n\right\rvert \leq \left\lvert\frac{n+2}{\omega_nr^{n+2}}\int_0^{r^2}\omega_n(r^n-t^{n/2})\de t - n\right\rvert + C_Kr^{\alpha}\leq C_Kr^\alpha,
    \end{equation*}
    proving \eqref{moments - trace estimate}.

    We now prove \eqref{moments - quadratic estimate}. Assume $0<r<1/2$, and let $Y\in \tilde\Sigma\cap B(0,r/2)$. We consider some ellipses that will help us obtain the necessary estimates. Let 
    \begin{align*}
        &D_1 = B_{\tilde\Lambda}(Y, r - |Y| - C_Kr^{1+\beta}),\ \  D_3 = B_{\tilde\Lambda}(Y,r),\\
        &D_2= B_{\tilde\Lambda}(0,r)=B(0,r),\quad\quad\ \ \ \ \  
        D_4 = B_{\tilde\Lambda}(Y, r + |Y| + C_Kr^{1+\beta}).
    \end{align*}
    If $r$ is small enough depending on $\Lambda$ and $K$, all four radii above are positive, and Lemma \ref{prelim - lemma on nonconcentric ellipses} ensures that
    \begin{equation}
    \label{moments - inclusions of Dj}
        D_1\subset D_2\subset D_4,\quad 
        D_1\subset D_3\subset D_4.
    \end{equation}
    Let, for each $j\in\{ 1,2,3,4\}$, 
    $$J_i = \int_{D_j}(r^2 - |\tilde\Lambda(Y)^{-1}(Z-Y)|^2)^2\de\tilde\mu(Z).$$
    Notice that \eqref{moments - inclusions of Dj} implies
    $$J_1\leq J_2\leq J_4,\quad J_1\leq J_3\leq J_4,$$
    so 
    \begin{equation}
        |J_2-J_3|\leq J_4 - J_1.
    \end{equation}
    We first estimate the right hand side of this inequality. If $Z\in D_4\backslash D_1$, we can write $Z = Y+\tilde\Lambda(Y)W$, where $|\tilde\Lambda(Y)^{-1}(Z-Y)| = |W|$ satisfies 
    \begin{equation*}
        r - |Y| - C_Kr^{1+\beta} <|W|< r + |Y| + C_Kr^{1+\beta}.
    \end{equation*}
    Using this and the fact that $|Y|\leq r/2$ and $r\leq 1/2$,
    \begin{align*}
        |r^2 - |\tilde\Lambda(Y)^{-1}(Z-Y)|^2| &= |r - |W||(r + |W|)\leq (|Y|+ C_Kr^{1+\beta}))(r + r + |Y| + C_Kr^{1+\beta})\\
        &\leq 2|Y|r + |Y|^2 + C_K|Y|r^{1+\beta} + C_Kr^{2+\beta}\leq C_K(r|Y| + r^{2+\beta}).
    \end{align*}
    Therefore,
    \begin{equation}
    \label{moments - J4 minus J1}
    \begin{split}
        J_4 - J_1 &= \int_{D_4\backslash D_1}(r^2 - |\tilde\Lambda(Y)^{-1}(Z-Y)|^2)^2\de\tilde\mu(Z)\leq C_K(r|Y| + r^{2+\beta})^2\tilde\mu(D_4\backslash D_1).
    \end{split}
    \end{equation}
    Now, by \eqref{moments - density assumption expanded out} we have
    \begin{equation}
        \label{moments - step with binomial identity}
        \tilde\mu(D_4\backslash D_1)\leq \omega_n \left[(r + |Y| + C_Kr^{1+\beta})^n-(r - |Y| - C_Kr^{1+\beta})^n   \right]
       + C_{K}(r^{n + \alpha} + r^{n + \alpha + \beta}).
        \end{equation}
        To estimate the  term in brackets we use the fact that if $r>0$ and $\rho\leq Cr$, then 
        \begin{equation}
            \label{moments - helper binomial estimate}
            (r+\rho)^n - (r - \rho)^n\leq Cr^{n-1}\rho.
        \end{equation}
        We use \eqref{moments - helper binomial estimate} with $r$ as in \eqref{moments - step with binomial identity} and $\rho=|Y|+C_Kr^{1+\beta}$. Recall that $|Y|\leq r/2$, so if $r$ is small enough depending on $K$ and $\Lambda$, then $\rho\leq \frac{3}{4}r$. It follows that
        $$(r + |Y| + C_Kr^{1+\beta})^n-(r - |Y| - C_Kr^{1+\beta})^n   \leq Cr^{n_1}(|Y|+C_Kr^{1+\beta}),$$
        which we combine with \eqref{moments - step with binomial identity} to deduce that for $r$ small depending on $K$ and $\Lambda$,
        
        \begin{equation}
            \label{moments - conclusion of binomial step}
            \tilde\mu(D_4\backslash D_1)\leq Cr^{n-1}(|Y| + C_Kr^{1+\beta}) + C_{K}(r^{n+\alpha} + r^{n+\alpha+\beta}).
        \end{equation}
        Thus, by \eqref{moments - J4 minus J1},
        \begin{equation}
            \label{moments - j4 - j1}
            \begin{split}
                J_4 - J_1 &\leq  C_K(r|Y| + r^{2+\beta})^2\left[ r^{n-1}(|Y| + C_Kr^{1+\beta}) +(r^{n+\alpha} + r^{n+\alpha+\beta})\right]\\
            &\leq C_Kr^{n+1}|Y|^3 + C_Kr^{n + 4 + \min\{\alpha,\beta\}}.
            \end{split}
        \end{equation}
        
        We now estimate $J_3$. Write
        \begin{align*}
            J_3 &= \int_{B_{\tilde\Lambda}(Y,r)}(r^2 - |\tilde\Lambda(Y)^{-1}(Z-Y)|^2)^2\de\tilde\mu(Z)\\
            &= \int_0^{r^4}\tilde\mu(\{Z\in B_{\tilde\Lambda}(Y,r): (r^2 - |\tilde\Lambda(Y)^{-1}(Z-Y)|^2)^2 > t\})\de t\\
            &= \int_0\tilde\mu(\{Z\in B_{\tilde\Lambda}(Y,r): |\tilde\Lambda(Y)^{-1}(Z-Y)|<(r^2 - \sqrt{t})^{1/2}\})\de t\\
            &= \int_0^{r^4}\tilde\mu(\{B_{\tilde\Lambda}(Y,(r^2- \sqrt{t})^{1/2})\})\de t.
        \end{align*}
        Let $h(t)=(r^2- \sqrt{t})^{1/2}$. Equation \eqref{moments - density assumption expanded out} implies
        \begin{align*}
            \left\lvert \mu(B_{\tilde\Lambda}(Y,h(t))) - \omega_n h(t)^n\right\rvert &\leq C_Kh(t)^{n+\alpha}\leq C_Kr^{n+\alpha},
        \end{align*}
        so if we let
        $$I(r) = \int_0^{r^4}\omega_nh(t)^n\de t,$$
        then
        
        \begin{equation}
        \label{moments - j3 - i(r)}
            \left\lvert J_3 - I(r)\right\rvert \leq C_K\int_0^{r^4}r^{n+\alpha}\de t\leq C_Kr^{n+4+\alpha}.
        \end{equation}
        Similarly,
        \begin{equation}
            \label{moments - j3 minus thing}\left\lvert\int_{B(0,r)}(r^2 - |Z|^2)^2\de\tilde\mu(Z) - I(r)\right\rvert \leq C_Kr^{n+4+\alpha}.
        \end{equation}
        Combining \eqref{moments - j3 - i(r)} and \eqref{moments - j3 minus thing}, we get
        \begin{equation}
        \label{moments - j3 - center at 0}
            \left\lvert J_3 - \int_{B(0,r)}(r^2 - |Z|^2)^2\de\tilde\mu(Z)\right\rvert\leq C_Kr^{n+4+\alpha}.
        \end{equation}
        Set now
        \begin{equation}
        \label{moments - definition of I}
        \begin{split}
            I &= J_2 - \int_{B(0,r)}(r^2 - |Z|^2)^2\de\tilde\mu(Z)\\
            &= \int_{B(0,r)}(r^2 - |\tilde\Lambda(Y)^{-1}(Z-Y)|^2)^2 - (r^2 - |Z|^2)^2\de\tilde\mu(Z).
        \end{split}
        \end{equation}
        By \eqref{moments - j4 - j1} and \eqref{moments - j3 - center at 0},
        \begin{equation}
        \label{moments - I}
        \begin{split}
            |I| & \leq  |J_2 - J_3| + C_Kr^{n + 4 + \alpha} \\
             & \leq  J_4 - J_1 + C_Kr^{n+4+\alpha}\leq  C_K(r^{n+1}|Y|^3 + r^{n+4+\min\{\alpha,\beta\}}).
        \end{split}
        \end{equation}
        We would now like to replace the term $\tilde\Lambda(Y)$ in the definition of $I$ with $\tilde\Lambda(0)=\mathrm{Id}$. Using that $Z\in B_{\tilde\Lambda}(0,r)$, $|Y|\leq r/2$ and the continuity of $\tilde\Lambda$,
        \begin{align*}
            ||\tilde\Lambda(Y)^{-1}(Z-Y)|^2 - |Z-Y|^2|
            &\leq (|\tilde\Lambda(Y)^{-1}(Z-Y)|+|Z-Y|)||\tilde\Lambda(Y)^{-1}(Z-Y)| - |Z-Y||\\
            &\leq C_Kr|(\tilde\Lambda(Y)^{-1}-\mathrm{Id})(Z-Y)|\leq C_K r^{2+\beta}.
        \end{align*}
        Therefore
        \begin{equation}
        \label{moments - helper estimate X}
        \begin{split}
            |(r^2 - |\tilde\Lambda(Y)^{-1}(Z-Y)|^2)^2 - (r^2 - |Z-Y|^2)^2|&\leq C_Kr^2||\tilde\Lambda(Y)^{-1}(Z-Y)|^2 - |Z-Y|^2|\\
            &\leq C_Kr^{4+\beta}.
            \end{split}
        \end{equation}
        If we now let
        $$I'' = \int_{B(0,r)}(r^2 - |Z-Y|^2)^2 - (r^2 - |Z|^2)^2\de\tilde\mu(Z),$$
        then \eqref{moments - helper estimate X} and \eqref{moments - density assumption expanded out} imply
        \begin{equation}
        \label{moments - I minus I''}
        \begin{split}
            |I-I''|&\leq\int_{B(0,r)}|(r^2-|\tilde\Lambda(Y)^{-1}(Z-Y)|^2)^2-(r^2-|Z-Y|^2)^2|\de\tilde\mu(Z) \\
            &\leq C_K\tilde\mu(B(0,r))r^{4+\beta}\leq C_K(\omega_nr^n + C_Kr^{n+\alpha})r^{4+\beta}\leq C_Kr^{n + 4 +\beta}.
        \end{split}
        \end{equation}
        
        The integral $I''$ will help us transition to the following integral, which as we will see is almost the quadratic polynomial in \eqref{moments - quadratic estimate},
        \begin{equation}
            \label{moments - definition of I'}
            I' = \int_{B(0,r)}\left\lbrace -2|Y|^2(r^2 - |Z|^2) + 4(r^2 - |Z|^2)\langle Z,Y\rangle + 4\langle Y, Z\rangle^2\right\rbrace\de\tilde\mu(Z).
        \end{equation}
        We will show that $I$ and $I'$ are close using $I''$. To this end, note that by \eqref{moments - I minus I''},
        \begin{equation}
        \label{moments - i' - i''}
            |I-I'|\leq |I'-I''| + C_Kr^{n+4+\beta}.
        \end{equation}
        Now we need to estimate $|I' - I''|$. Notice first that
        \begin{align*}
            I'' &= \int_{B(0,r)}(r^2 - |Z-Y|^2)^2 - (r^2 - |Z|^2)^2\de\tilde\mu(Z)\\
            &= \int_{B(0,r)} \left\lbrace r^4 - 2r^2|Z-Y|^2 + |Z-Y|^4 - r^4 + 2r^2|Z|^2 - |Z|^4\right\rbrace\de\tilde\mu(Z)\\
            &= \int_{B(0,r)}\{-2r^2(|Z|^2 - 2\langle Y,Z\rangle + |Y|^2) + (|Z|^2 - 2\langle Y,Z\rangle + |Y|^2)^2 \\
            &\ \quad\quad\quad\quad\quad\quad\quad\quad\quad\quad\quad\quad\quad\quad\quad\quad\quad\quad\quad\quad + 2r^2|Z|^2 - |Z|^4\}\de\tilde\mu(Z)\\
            &= \int_{B(0,r)} \{ 4r^2\langle Y,Z\rangle - 2r^2|Y|^2 + |Z|^4 + 4\langle Y,Z\rangle^2 + |Y|^4\\
            &\ \quad\quad\quad\quad\quad\quad\quad - 4|Z|^2\langle Y,Z\rangle + 2|Y|^2|Z|^2 - 4|Y|^2\langle Y,Z\rangle - |Z|^4\}\de\tilde\mu(Z)\\
            &= \int_{B(0,r)}\{-2(r^2 - |Z|^2)|Y|^2 + 4(r^2 - |Z|^2)\langle Y,Z\rangle + 4\langle Y,Z\rangle^2\\
            &\ \quad\quad\quad\quad\quad\quad\quad\quad\quad\quad\quad\quad\quad\quad\quad\quad\quad\quad\ + |Y|^4 - 4|Y|^2\langle Y,Z\rangle\}\de\tilde\mu(Z).
        \end{align*}
        Therefore, recalling the definition of $I'$ and using that $|Y|\leq r/2$ and \eqref{moments - density assumption expanded out},
        \begin{align*}
            |I' - I''| &\leq \int_{B(0,r)}||Y|^4 - 4|Y|^2\langle Y,Z\rangle|\de\tilde\mu(Z)\leq \tilde\mu(B(0,r))\left(\frac{|Y|^3r}{2} + 4|Y|^3r\right)\\ 
            &\leq C( r^n + C_K r^{n+\alpha})|Y|^3r\leq Cr^{n+1}|Y|^3 + C_Kr^{n+4+\alpha}.
        \end{align*}
        Combining this with \eqref{moments - i' - i''} we get
        \begin{equation}
            \begin{split}
                |I-I'|&\leq \displaystyle\frac{9}{2}\omega_nr^{n+1}|Y|^3 + C_Kr^{n+4+\alpha} + C_Kr^{n+4+\beta}\\
            &\leq  Cr^{n+1}|Y|^3 + C_Kr^{n+4+\min\{\alpha,\beta\}}.
            \end{split}
        \end{equation}
        To conclude, we obtain the desired quadratic polynomial from $I'$. Observe first that
        \begin{align*}
              \int_{B(0,r)}(r^2 - |Z|^2)\de\tilde\mu(Z) &= \int_0^{r^2}\tilde\mu(\left\lbrace Z\in B(0,r) : r^2 - |Z|^2 > t\right\rbrace)\de t\\
              &= \int_0^{r^2}\tilde\mu\left(B(0,\sqrt{r^2 - t})\right)\de t\\
              &= \int_0^{r^2}\left(\omega_n(r^2 - t)^{n/2} + C_K(r^2 - t)^{\frac{n+\alpha}{2}}\right)\de t\\
              &= \frac{2\omega_nr^{n+2}}{n+2} + O(r^{n+2+\alpha}),\\
        \end{align*}
        where $|O(r^{n+2+\alpha})|/r^{n+2+\alpha}\leq C_K$. From here it follows, by multiplying by $(n+2)|Y|^2/(2\omega_nr^{n+2})$, that
        \begin{equation}
            \label{momnents - approximation of |y|^2}
            \left\lvert |Y|^2 - \frac{n+2}{2\omega_n r^{n+2}}|Y|^2\int_{B(0,r)}(r^2 - |Z|^2)\de\tilde\mu(Z)\right\rvert \leq C_K|Y|^2r^\alpha.
        \end{equation}
        Combining \eqref{momnents - approximation of |y|^2} with \eqref{moments - definition of b}, \eqref{moments - definition of Q} and \eqref{moments - definition of I'}, we get
        \begin{align*}
            \left\lvert \frac{(n+2)}{4\omega_nr^{n+2}}I' - \left\lbrace -|Y|^2 + 2\langle b,Y\rangle + Q(Y)\right\rbrace \right\rvert &= \left\lvert |Y|^2 - \frac{n+2}{2\omega_nr^{n+2}}|Y|^2\int_{B(0,r)}(r^2 - |Z|^2)\de\tilde\mu(Z)\right\rvert\\
            &\leq C_K|Y|^2r^{\alpha}.
        \end{align*}
        Finally, combining this estimate with \eqref{moments - I} and \eqref{moments - i' - i''}, and keeping in mind that $|Y|\leq r/2$, we get
        \begin{align*}
            \left\lvert 2\langle b, Y\rangle + Q(Y) - |Y|^2\right\rvert &\leq \frac{n+2}{4\omega_nr^{n+2}}|I'| + C_K|Y|^2r^\alpha\\
            &\leq \frac{n+2}{4\omega_nr^{n+2}}(|I| + Cr^{n+1}|Y|^3 + C_Kr^{n+4+\min\{\alpha,\beta\}}) + C_Kr^{2+\alpha}\\
            &\leq \frac{C_K}{r^{n+2}}(r^{n+1}|Y|^3 + r^{n+4+\min\{\alpha,\beta\}})+ C_Kr^{2+\alpha}\\
            &\leq C_K\left(\frac{|Y|^3}{r} + r^{2+\min\{\alpha,\beta\}}\right).
        \end{align*}
        This shows that \eqref{moments - quadratic estimate} holds and completes the proof of Proposition \ref{moments - main proposition}.
\end{proof}

%%%%%%%%%%%%%%%%%%%%

\section{Decay of $\beta$-numbers}\label{beta numbers}
In this section we continue to assume $\mu$ and $\Lambda$ satisfy the density and continuity assumptions of Theorem \ref{theorem 1}. The main goal here is to obtain an estimate on the decay of the quantity
\begin{equation}\beta_\Sigma(X,r) = \inf_P\left\lbrace \sup_{Y\in \Sigma\cap B(X,r)}\frac{\mathrm{dist}(Y,P)}{r}\right\rbrace,\label{beta numbers - definition of centered beta infty}
\end{equation}
where $X\in\Sigma=\mathrm{spt}(\mu)$, $r>0$, and the infimum is taken over all $n$-planes $P\subset\R^{n+1}$ such that $X\in P$. This quantity is a centered version of P. Jones' $\beta_\infty$ numbers introduced in \cite{Jon90}, as the planes in \eqref{beta numbers - definition of centered beta infty} all go through $X$. The numbers $\beta_\Sigma$ can be considered a unilateral version of $b\beta_\Sigma$, in that they capture if $\Sigma$ is locally close to a plane, but not the converse.

Consider a compact set $K\subset\mathbb R^{n+1}$ such that $\Sigma\cap K\neq\varnothing$. We will show that under suitable conditions, $\beta_\Sigma(\cdot,r)$ decays at a certain rate as $r\to 0$, uniformly on $\Sigma\cap K$. To prove this, we resort to Proposition \ref{moments - main proposition} and show that as in \cite{DKT01}, moment estimates can be used to control $\beta_\Sigma(\cdot,r)$, provided that $\Sigma$ is flat enough.

Before we state the main result of this section, let us recall the quantities $\lmin(K)$, $\lmax(K)$ and $e_\Lambda(K)$ defined in \eqref{prelim - lmin and lmax of K} and \eqref{prelim - eccentricity}, as well as the transformation introduced in \eqref{moments - transformation of K and Sigma} and \eqref{moments - transformation of mu and lambda}. Let us notice the following fact, which is a consequence of the continuity of $\Lambda$: for each compact set $K\subset\R^{n+1}$, there exists a number $d_K>0$ depending only on $K$ and $\Lambda$ such that for every $X_0\in\Sigma\cap K$,
     \begin{equation}
         \Lambda(X_0)((\tilde \Sigma\cap \tilde K;d_K))\subset (\Sigma\cap K,1) \label{beta numbers - neighborhood of K},
     \end{equation}
     where $\tilde\Sigma$ and $\tilde K$ are as in \eqref{moments - transformation of K and Sigma}. In fact, we may take $d_K=\lmax(K)^{-1}$. The main result of this section is the following.

\begin{proposition}
\label{beta numbers - main proposition}
    If $n\geq 3$, suppose that $\Sigma$ is Reifenberg flat with vanishing constant. Suppose that $\mu$ and $\Lambda$ satisfy the density and continuity assumptions of Theorem \ref{theorem 1}. Then, for every compact set $K\subset\R^{n+1}$ there exist $C_K>0$ and $r_K>0$, both depending only on $K$, $\Lambda$ and $n$, such that for all $X_0\in\Sigma\cap K$ and $r\in (0,r_K]$,
    \begin{equation}
        \label{beta numbers - main estinate}
        \beta_{\Sigma}(X_0,r)\leq C_Kr^\gamma,
    \end{equation}
    where $\gamma\in(0,1)$ depends on $\alpha$ and $\beta$.
\end{proposition}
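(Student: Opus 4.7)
The plan is to follow the strategy of DKT (Section 5 of their paper), deducing the decay of $\beta_{\Sigma}$ from the moment estimates of Proposition \ref{moments - main proposition} combined with the flatness information available either by hypothesis (for $n\geq 3$) or by Preiss's classification of low-dimensional uniform measures (for $n\leq 2$).

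Fix $X_0 \in \Sigma \cap K$, pass to the transformed objects $\tilde{\mu}, \tilde{\Sigma}, \tilde{\Lambda}, \tilde{K}$ of Section \ref{moments}, and apply Proposition \ref{moments - main proposition} at scale $r$. This produces a vector $b = b(X_0, r)$ and a symmetric quadratic form $Q = Q(X_0, r)$ with
$$|\mathrm{tr}(Q) - n| \leq C_K r^{\alpha}$$
and
$$\bigl|2\langle b, Y - Y_0\rangle + Q(Y - Y_0) - |Y - Y_0|^2\bigr| \leq C_K\left(\frac{|Y - Y_0|^3}{r} + r^{2+\min\{\alpha,\beta\}}\right)$$
for every $Y \in \tilde{\Sigma} \cap B(Y_0, r/2)$. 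Setting $R(\cdot) = |\cdot|^2 - Q(\cdot)$, a quadratic form of trace close to $1$, this means that $\tilde{\Sigma} - Y_0$ lies pointwise within an error of order $|Y-Y_0|^3/r + r^{2+\min\{\alpha,\beta\}}$ of the quadric $\{Y : R(Y) = 2\langle b, Y\rangle\}$ inside $B(0,r/2)$.

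The core geometric step is to show that $R$ has essentially rank one, in the sense of possessing a single eigenvalue $\lambda$ close to $1$ with unit eigenvector $\nu$ and all the remaining eigenvalues bounded by $C_K r^{\gamma'}$ for some exponent $\gamma' > 0$. Granting this, one writes $R(Y) = \lambda\langle \nu, Y\rangle^2 + E(Y)$ with $|E(Y)| \leq C_K r^{\gamma'}|Y|^2$, and the quadratic estimate forces, for $Y\in\tilde\Sigma\cap B(Y_0,r/2)$, the component of $Y - Y_0$ along $\nu$ to be controlled by $C_K r^{1+\gamma}$ with $\gamma\in (0,1)$ depending on $\alpha$, $\beta$ and $\gamma'$. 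This yields $\beta_{\tilde\Sigma}(Y_0, r/2) \leq C_K r^{\gamma}$, and undoing the linear transformation $\Lambda(X_0)$ (which distorts distances by the uniform factors $\lmin(K), \lmax(K)$) gives \eqref{beta numbers - main estinate}.

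The main obstacle is extracting this rank-one structure of $R$, and this is precisely where the flatness hypothesis enters. For $n \geq 3$, Reifenberg flatness with vanishing constant produces, together with Lemma \ref{prelim - lambda and normal reif flat}, an $n$-plane $P_{X_0, r}$ through $Y_0$ at Hausdorff distance $o(r)$ from $\tilde\Sigma$ in $B(Y_0, r)$; testing the quadratic estimate against points of $P_{X_0,r}$ and combining with the trace bound on $Q$ forces $R$ to be small on the tangent directions of $P_{X_0,r}$ and concentrates its mass on the normal direction $\nu$. For $n \leq 2$, no Reifenberg hypothesis is made, but any tangent measure extracted from $\tilde\mu$ at $Y_0$ is $n$-uniform (a consequence of \eqref{moments - density assumption expanded out} together with $\tilde\Lambda(Y_0)=\mathrm{Id}$ in the blow-up limit) and therefore flat by Theorem \ref{prelim - uniform measures of dimension 2}; this qualitative flatness yields the approximating plane $P_{X_0, r}$ at sufficiently small scales and the rank-one argument proceeds identically. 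Balancing the powers $r^\alpha$ from the density estimate, $r^\beta$ from the continuity of $\Lambda$, and the flatness error from the preceding step to pin down the final exponent $\gamma$ is where most of the bookkeeping will go.
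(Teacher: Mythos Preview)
Your proposal is correct and follows essentially the same route as the paper: reduce to the transformed measure $\tilde\mu$ at $Y_0$, invoke the moment estimates of Proposition~\ref{moments - main proposition}, feed these together with the (transferred) Reifenberg flatness into the DKT rank-one argument to bound $\beta_{\tilde\Sigma}(Y_0,r)$, and then pull back through $\Lambda(X_0)$. The paper does exactly this, though it cites \cite[Proposition~8.6]{DKT01} as a black box rather than re-deriving the rank-one structure of $R$ as you sketch; conversely, the paper spends more effort than you do verifying that the constants $C_K,r_K$ can be taken uniform in $X_0\in\Sigma\cap K$ (since $\tilde\mu,\tilde K$ depend on $X_0$), and that for $n\le 2$ the required flatness holds \emph{uniformly} over $K$ (your tangent-measure argument at the single point $Y_0$ gives only a pointwise threshold, so you will need a pseudo-tangent/compactness upgrade there).
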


\begin{remark}
    Note that the assumption that $\Sigma$ is Reifenberg flat with vanishing constant when $n\geq 3$ is \textit{a priori} stronger than the flatness assumption of Theorem \ref{theorem 1}. However, as we will see in Section \ref{flatness}, the assumptions of Theorem \ref{theorem 1} imply that $\Sigma$ is in fact Reifenberg flat with vanishing constant. This will make Proposition \ref{beta numbers - main proposition} applicable in the proof of Theorem \ref{theorem 1}. 
\end{remark}

\begin{proof}[Proof of Proposition \ref{beta numbers - main proposition}] Let $K$ and $X_0$
 be as in the statement. We consider the transformation $\tilde\mu$ of $\mu$ introduced in \eqref{moments - transformation of K and Sigma} and \eqref{moments - transformation of mu and lambda}, as well as $Y_0=\Lambda(X_0)^{-1}X_0$. It will be important to keep in mind that $\tilde\mu$ depends on $X_0$. The proof has two main steps.
 \begin{enumerate}
     \item[Step 1:] Bounding $\beta_{\tilde\Sigma}(Y_0,r)$. This will rely on Proposition \ref{moments - main proposition} and arguments in connection with  \cite[Proposition 8.6]{DKT01}, which deals with the Euclidean case, and whose statement we include below.

     \item[Step 2:]Bounding $\beta_{\Sigma}(X_0,r)$. This will be a consequence of our estimate on $\beta_{\tilde\Sigma}(Y_0,r)$ from Step 1 and particular features of the transformation $\mu\mapsto\tilde\mu$.
 \end{enumerate}  
    
    \begin{proposition}[\cite{DKT01}, Proposition 8.6]
        \label{beta numbers - dkt proposition}
        Let $\tilde{\mu}$ be a Radon measure in $\R^{n+1}$ with support $\tilde\Sigma$. Assume that for each compact set $\tilde K\subset\R^{n+1}$ there is a constant $C_{\tilde K}>0$ such that
        \begin{equation}
            \label{beta numbers - density condition in dkt}
            \left|\frac{\tilde\mu(B(Y,r))}{\omega_nr^n}-1\right|\leq C_{\tilde K}r^\alpha,
        \end{equation}
        for all $Y\in \tilde K\cap\tilde\Sigma$ and $0<r<1$. If $n\geq 3$, assume that $\tilde\Sigma$ is Reifenberg flat with vanishing constant. Then for each compact set $\tilde K\subset \R^{n+1}$ there exists $r_{\tilde{K}}>0$ depending on $n$, $\alpha$ and $\tilde K$, so that for all $Y\in \tilde K$ and $0<r\leq r_{\tilde{K}}$,
        \begin{equation}
            \label{beta numbers - beta estimate of sigma tilde at any Y}
            \beta_{\tilde\Sigma}(Y,r)\leq C_{\tilde K}r^\gamma,
        \end{equation}
        where $\gamma\in(0,1)$ depends on $\alpha$ and $\beta$.
    \end{proposition}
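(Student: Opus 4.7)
The plan is to run the Euclidean analogue of the moment argument of Proposition \ref{moments - main proposition} --- with $\tilde\Lambda \equiv I$ so that all of the $r^{1+\beta}$ correction terms vanish --- and then iterate over dyadic scales. For each $Y_0 \in \tilde K \cap \tilde\Sigma$ and small $r$, this produces moments $b = b(Y_0, r)$ and $Q = Q(Y_0, r)$ (defined as in \eqref{moments - definition of b} and \eqref{moments - definition of Q}) satisfying a trace bound $|\mathrm{tr}(Q) - n| \le C_{\tilde K} r^\alpha$ and a quadratic bound
\begin{equation*}
|2\langle b, Y - Y_0\rangle + Q(Y - Y_0) - |Y - Y_0|^2| \le C_{\tilde K}\!\left(\frac{|Y - Y_0|^3}{r} + r^{2+\alpha}\right)
\end{equation*}
for every $Y \in \tilde\Sigma \cap B(Y_0, r/2)$. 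In effect, on a ball of radius $r/2$ the support $\tilde\Sigma$ is squeezed against the zero set of a concrete quadratic polynomial with controlled coefficients.

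The central structural step is to extract an approximating $n$-plane from this data. The idea is that under the flatness hypothesis the symmetric form $Q - I$ must have an essentially one-dimensional negative part: one eigenvalue close to $-1$, with unit eigenvector $V$, and $n$ eigenvalues close to $0$. Granted this, the polynomial $2\langle b, Y - Y_0\rangle + Q(Y - Y_0) - |Y - Y_0|^2$ is approximately equal to $2\langle b, Y - Y_0\rangle - \langle V, Y - Y_0\rangle^2$, whose zero set is close on $B(Y_0, r/2)$ to the affine hyperplane $Y_0 + V^\perp$. This yields a one-scale estimate for $\beta_{\tilde\Sigma}(Y_0, r/2)$, which is then upgraded to H\"older decay by a standard iteration: one compares the approximating planes $P(Y_0, r)$ and $P(Y_0, r/2)$ produced at consecutive scales, shows via the moment bound that they differ by $O(r^\eta)$ for some $\eta > 0$ depending on $\alpha$ and $n$, and sums the resulting geometric series to obtain $\beta_{\tilde\Sigma}(Y_0, r) \le C_{\tilde K} r^\gamma$ for some $\gamma \in (0,1)$.

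The main obstacle is justifying the required eigenvalue structure of $Q - I$, that is, ruling out configurations in which several eigenvalues of $Q - I$ have magnitude comparable to $1$. For $n \le 2$ this is automatic: every tangent measure of $\tilde\mu$ at $Y_0$ is an $n$-uniform measure, hence flat by Theorem \ref{prelim - uniform measures of dimension 2}, and a compactness/blow-up argument transfers the flatness of the tangent measure back to the moment data of $\tilde\mu$ at finite scales. For $n \ge 3$ the obstruction is precisely the Kowalski-Preiss cone of Theorem \ref{flat measures - kowalski preiss}, whose associated form $Q$ would have rank $4$; it is here that the Reifenberg flatness with vanishing constant is indispensable, since it forces every blow-up of $\tilde\Sigma$ at $Y_0$ to be contained in an $n$-plane and hence rules out conical $n$-uniform tangent measures. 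Once the spectral structure is in place, the rest of the proof is a by-now-standard bootstrap along dyadic scales.
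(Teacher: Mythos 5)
You are attempting to reprove a result that the paper itself does not prove: Proposition \ref{beta numbers - dkt proposition} is quoted verbatim from \cite{DKT01} (their Proposition 8.6), and the paper only discusses (in Section \ref{beta numbers}) which ingredients of the DKT proof it needs. Your skeleton does match the DKT strategy at a high level: the moments $b$ and $Q$, the claim that $Q$ should be quantitatively close to the orthogonal projection $I-V\otimes V^{\perp}$-structure (one eigenvalue of $Q$ near $0$, $n$ near $1$), the use of flatness to exclude the Kowalski--Preiss cone when $n\geq 3$, and the conclusion that $\tilde\Sigma$ hugs the zero set of the quadratic polynomial.

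The genuine gap is in how you justify the spectral structure and the size of $b$. You obtain them only through a soft compactness/blow-up argument (``tangent measures are flat, and compactness transfers this back to the moment data at finite scales''). Such an argument yields errors that are $o(1)$ as $r\to 0$ with no rate, and therefore can only give $\beta_{\tilde\Sigma}(Y_0,r)\to 0$ (a vanishing-Reifenberg statement, which the present paper derives elsewhere by exactly this kind of blow-up reasoning), never the H\"older bound $\beta_{\tilde\Sigma}(Y_0,r)\leq C_{\tilde K}r^{\gamma}$. To get a rate one must prove quantitative estimates, e.g.\ that the tangential part of $b$ is $O(r^{1+\eta})$, that $Q$ restricted to the approximating plane is $I+O(r^{\eta}+\delta)$, and then use the trace bound \eqref{moments - trace estimate} together with $Q\geq 0$ to pin the remaining eigenvalue; this is done in \cite{DKT01} by plugging into the quadratic estimate \eqref{moments - quadratic estimate} well-spread points of $\tilde\Sigma$ at suitably chosen sub-scales (their existence is exactly what the small-constant Reifenberg hypothesis provides), and it is the technical heart of Proposition 8.6. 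Your dyadic iteration does not repair this: the claimed tilt bound $O(r^{\eta})$ between $P(Y_0,r)$ and $P(Y_0,r/2)$ is equivalent to the quantitative one-scale estimate you have not established, so as written the argument is circular. (Two smaller points: in the purely Euclidean statement $\gamma$ should depend only on $\alpha$ and $n$, not on $\beta$; and without a quantitative bound on $\langle b,V\rangle$ the cross term $2\langle b,V\rangle\, y_V$ in the expansion of the quadratic polynomial can dominate $y_V^2$, so even the one-scale step needs more care than ``the zero set is close to $Y_0+V^{\perp}$.'')
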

    \begin{remark}
    \label{beta numbers - actual flatness assumption in DKT}
    It is worth mentioning, although not necessary for our arguments, that the proof of this result remains valid if $\tilde\Sigma$ is only assumed to be Reifenberg flat with constant $\delta_n$, where $\delta_n>0$ is small enough depending only on $n$.
    \end{remark}

    It should be noted that the transformation $\tilde\mu$ of $\mu$ from \eqref{moments - transformation of K and Sigma} and \eqref{moments - transformation of mu and lambda} does not satisfy the assumptions on the measure $\tilde\mu$ in the above proposition. However, we will draw a parallel between them and show that both measures still satisfy similar conclusions. Let us briefly recall the main elements in the transformation  $\mu\mapsto\tilde\mu$:
    \begin{equation}
        \label{beta numbers - recalling transformation}
        \tilde{\mu} = \Lambda(X_0)^{-1}[\mu],\quad \tilde{\Lambda}(X) = \Lambda(X_0)^{-1}\Lambda(\Lambda(X_0)X),\quad \tilde{K} = \Lambda(X_0)^{-1}(K), \quad \tilde{\Sigma} = \Lambda(X_0)^{-1}(\Sigma).
    \end{equation}

    \subsection{Step 1: Bound for $\beta_{\tilde\Sigma}(Y_0,r)$} The first observation we need to make is that, as mentioned above, we cannot directly apply Proposition \ref{beta numbers - dkt proposition} to the transformation $\tilde\mu$ of $\mu$ given by \eqref{beta numbers - recalling transformation}, the reason being that such $\tilde\mu$ only satisfies \eqref{beta numbers - density condition in dkt} at $Y_0$, whereas at other points $Y\neq Y_0$, $B(Y,r)$ needs to be replaced with $B_{\tilde\Lambda}(Y,r)$. Therefore, instead of applying Proposition \ref{beta numbers - dkt proposition}, we will argue that its proof can still be adapted in our setting to obtain a somewhat weaker conclusion:  
\begin{equation}
    \label{beta numbers - beta of sigma'}
    \begin{split}
    &\text{There exist $C_{\tilde K}>0$ and $r_{\tilde K}>0$, both depending on $\tilde K$, and there  exists $\gamma\in (0,1)$}\\
    &\text{ depending only on $\min\{\alpha,\beta\}$, such that for every $r\in (0,r_{\tilde K}]$, }\beta_{\tilde\Sigma}(Y_0,r)\leq C_{\tilde K}r^\gamma.
    \end{split}
\end{equation}
   
    Note that this condition is only different from the conclusion of Proposition \ref{beta numbers - dkt proposition} in that the $\beta$-number estimate in \eqref{beta numbers - beta of sigma'} only holds at $Y_0$, as opposed to an arbitrary point of $\tilde\Sigma\cap\tilde K$. Therefore, what we need to discuss is the extent to which the arguments in \cite{DKT01} carry over when proving not the full conclusion of Proposition \ref{beta numbers - dkt proposition} in our setting, but rather its validity at $Y_0$. By an inspection of \cite{DKT01}, we see that those arguments rely only on two components:
    \begin{enumerate}[(i)]
        \item A density estimate and two moment estimates for $\tilde\mu$ at $Y_0$; and
        \item $\tilde\Sigma$ being Reifenberg with vanishing or small constant.
    \end{enumerate}
    We will show that both components are still available in our setting, only with minor differences that do not interfere with the proof of Proposition \ref{beta numbers - dkt proposition}, from which the validity of \eqref{beta numbers - beta of sigma'} will follow.\\
    
    (i) \textit{Density and moment estimates. }These are inequalities whose corresponding analogues have been established in the previous section. We first recollect them for the sake of convenience. By \eqref{moments - holder bound on mu tilde} and because $\tilde\Lambda(Y_0)=\mathrm{Id}$, we have for all $r\in (0,1]$,
    \begin{equation}\left|\frac{\tilde\mu(B(Y_0,r))}{\omega_nr^n}-1\right|\leq C_{K}r^\alpha.\label{beta numbers - key estimate 1}\end{equation}
    Also, by Proposition \ref{moments - main proposition} we know that with $ b$ and $Q$ as defined in \eqref{moments - definition of b} and \eqref{moments - definition of Q}, we have
    \begin{equation}|\mathrm{tr}(Q) - n|\leq C_{K}r^\alpha,\label{beta numbers - key estimate 2}\end{equation}
    and
    \begin{equation}\label{beta numbers - quadratic estimate}
        ||Y-Y_0|^2 - 2\langle b, Y-Y_0\rangle - Q(Y-Y_0)|\leq C_{ K}\left(\frac{|Y-Y_0|^3}{r} + r^{2+\min\{\alpha,\beta\}}\right),\end{equation}
        for all $Y\in\tilde\Sigma\cap B(Y_0,r/2)$ and $r\in (0,r_K]$.
    These estimates are very similar to the ones required in the argument of \cite{DKT01} for the proof of Proposition \ref{beta numbers - dkt proposition}. There are only a few differences, but we can see why none of them interfere with their argument.

    \begin{itemize}
        \item The first difference is that the exponent on the last term in \eqref{beta numbers - quadratic estimate} is $\min\{\alpha,\beta\}$, as opposed to $\alpha$ as in \cite{DKT01}. This is not a problem, since we can adjust all three estimates above by replacing $\alpha$ with $\min\{\alpha,\beta\}$.

        \item The second one is that, as mentioned above, \eqref{beta numbers - key estimate 1} implies that \eqref{beta numbers - density condition in dkt} holds at $Y=Y_0$, but not necessarily at other points $Y$. However, an inspection of the arguments in \cite{DKT01} shows that the validity of \eqref{beta numbers - density condition in dkt} at points $Y\neq Y_0$ is only needed in order to ensure that two moment estimates analogous to \eqref{beta numbers - key estimate 2} and \eqref{beta numbers - quadratic estimate} hold. In our case, the validity of both moment estimates has already been established in Section \ref{moments}.

        \item The third difference is that while \eqref{beta numbers - key estimate 2} and \eqref{beta numbers - quadratic estimate} hold with constants $C_K$ and $r_K$ that do not depend on $X_0$, the analogous moment estimates for $\tilde\mu$ needed in \cite{DKT01} hold with constants that depend on $\tilde K$, and therefore also on $X_0$ (see \eqref{beta numbers - recalling transformation}). This is also not a problem, since it only means that the constants in \eqref{beta numbers - key estimate 2} and \eqref{beta numbers - quadratic estimate} enjoy extra uniformity.

        \item The fourth one is that the analogues of \eqref{beta numbers - key estimate 2} and \eqref{beta numbers - quadratic estimate} in \cite{DKT01} hold with $r\in (0,1/2]$, as opposed to $r\in (0,r_K]$ as in our setting. But this is also not a problem since the threshold radius in \eqref{beta numbers - beta of sigma'} can be adjusted accordingly.

        \item The last one is that the constant $C_{K}$ in \eqref{beta numbers - quadratic estimate} multiplies the entire right hand side, as opposed to just the last term as in \cite{DKT01}. However, an inspection of their argument shows that this does not interfere either, since the only difference is that some of the absolute constants that arise in their setting will now depend on $K$.

    \end{itemize}
        \vspace{2mm}
    (ii) \textit{Flatness of $\tilde\Sigma$.} The statement of Proposition \ref{beta numbers - dkt proposition} assumes that $\tilde\Sigma$ is Reifenberg flat with vanishing or small constant. However, all that the proof of Proposition \ref{beta numbers - dkt proposition} in \cite{DKT01} requires is that this condition holds near $\tilde K$, in the following sense: 
    \begin{center}
        There exist $d_{\tilde K}>0$ and $t_{\tilde K}>0$ such that for all $r\in (0,t_{\tilde K}]$ and $Y\in\tilde\Sigma\cap ({\tilde K};d_{\tilde K})$,
    \begin{equation}
        \label{beta numbers - flatness of sigma tilde}b\beta_{\tilde\Sigma}(Y,r)\leq\delta_n,
    \end{equation}
    \end{center}
    where $\delta_n>0$ is small enough, depending only on $n$. We will show that this holds in our setting as a consequence of $\Sigma$ being Reifenberg flat with vanishing constant.
    To see this, let $\varepsilon>0$ and take $d_{\tilde K}=d_K$, with $d_K$ as in \eqref{beta numbers - neighborhood of K}, let $Y\in\tilde\Sigma\cap(\tilde K;d_K)$, and write $Y=\Lambda(X_0)^{-1}X$ for some $X\in\Sigma\cap (K,1)$. Since $\Sigma$ is Reifenberg flat with vanishing constant, there exists $r_K>0$ such that if $0<r\leq r_K$, then
    \begin{equation}
        \label{beta numbers - flatness assumption}b\beta_\Sigma(X,r)\leq\varepsilon.
    \end{equation}
    Let $t_{ K}=r_K\lmax(K)^{-1}$ and suppose $0<r\leq t_K$. Assume also that $\varepsilon$ is small enough so that the assumptions of Lemma \ref{prelim - lambda and normal reif flat} are satisfied. Let $P$ be an $n$-plane through $X$ that attains the infimum in the definition of $b\beta_\Sigma(X,\lmax(K)r)$, and denote $\tilde P=\Lambda(X_0)^{-1}P$. Then, by Lemma \ref{prelim - lambda and normal reif flat} and \eqref{beta numbers - flatness assumption},

\begin{align*}
    D\left[ \tilde \Sigma\cap B(Y,r); \tilde P\cap B(Y,r)\right] &\leq \lmin(X_0)^{-1}D\left[\Sigma\cap \Lambda(X_0)B(Y,r); \Lambda(X_0)(\tilde P \cap B(Y,r))\right]\\
    &\leq \lmin(K)^{-1}D\left[\Sigma\cap \left\lbrace X+\Lambda(X_0)B(0,r)\right\rbrace; P\cap \left\lbrace X+\Lambda(X_0)B(0,r)\right\rbrace \right]\\
    &\leq \lmin(K)^{-1}(2+e_\Lambda(K))\lmax(K)\varepsilon r\leq C_K\varepsilon r.
\end{align*}
    Since $\varepsilon>0$ is arbitrary, this shows that 
    \eqref{beta numbers - flatness of sigma tilde} holds with $t_{\tilde K}=t_K$, and in fact $\tilde\Sigma$ is Reifenberg flat with vanishing constant too.
    \begin{remark}
    \label{beta numbers - dependence of rK on K}
        It will be important to notice that the value of $t_{\tilde K}$ found above depends on $K$, but not on the particular choice of $X_0$, so it enjoys extra uniformity.
    \end{remark}
    \vspace{2mm}
    This completes our justification that the proof of Proposition \ref{beta numbers - dkt proposition} is applicable to $\tilde\mu$ at $Y_0$, and as a consequence, \eqref{beta numbers - beta of sigma'} holds, concluding step 1.

\subsection{Step 2: Bound for $\beta_\Sigma(X_0,r)$} We now use \eqref{beta numbers - beta of sigma'} and translate it into a estimate for $\beta_\Sigma(X_0,\cdot)$. The main aspect we need to deal with is the fact that the constants $C_{\tilde K}$ and $r_{\tilde K}$ in \eqref{beta numbers - beta of sigma'} depend \textit{a priori} on $\tilde K$, and therefore on the choice of $X_0\in \Sigma\cap K$. However, as some of the above arguments suggest, these constants should in fact depend on $K$, but not on the particular choice of $X_0$. We will justify that this is the case, and then use this information to estimate $\beta_\Sigma$ as follows:
\begin{enumerate}[(i)]
    \item $C_{\tilde K}$ can be taken to be independent of $X_0$;
    \item $r_{\tilde K}$ can be taken to be independent of $X_0$;
    \item $\beta_\Sigma$ satisfies \eqref{beta numbers - main estinate}.
\end{enumerate}

(i) \textit{$C_{\tilde K}$ is independent of $X_0\in \Sigma\cap K$. }
An examination of the proof of Proposition \ref{beta numbers - dkt proposition} shows that the constant $C_{\tilde K}$ in \eqref{beta numbers - beta of sigma'} comes from its occurrence in the density and moment estimates discussed in Step 1 (i) above, and subsequent multiplication by various absolute constants. However, as noted before, the constants in these density and moment estimates can be taken to depend on $K$ only. Therefore, the same applies to $C_{\tilde K}$ in \eqref{beta numbers - beta of sigma'}, and we may write $C_{\tilde K}=C_K$.\\

(ii) \textit{$r_{\tilde K}$ is independent of $X_0\in\Sigma\cap K$. }First note that the way the threshold $r_{\tilde K}$ of equation \eqref{beta numbers - beta estimate of sigma tilde at any Y} is chosen in \cite{DKT01} ($r_0$ in their notation), is as $r_{\tilde K} = \frac{1}{4}t_{\tilde K}^{1+\tau}$, $\tau\in (0,1)$, where $t_{\tilde K}$ is a threshold radius for which \eqref{beta numbers - flatness of sigma tilde} holds. But as noted in Remark \ref{beta numbers - dependence of rK on K}, such threshold can be taken to be independent of $X_0$, so the same is true about $\tilde r_K$. Thus we may write $r_{\tilde K}=r_K$.\\

(iii) \textit{Decay of $\beta_\Sigma(X_0,r)$. }To estimate $\beta_\Sigma(X_0,r)$, first notice that by points (i) and (ii), the estimate in \eqref{beta numbers - beta of sigma'} becomes
\begin{equation}
    \label{beta numbers - beta of sigma tilde updated}
    \beta_{\tilde\Sigma}(Y_0,r)\leq C_Kr^\gamma,
\end{equation}
for all $r\in (0,r_K]$, where $C_K>0$ and $r_K>0$ depend only on $K$, $\Lambda$ and $n$. Let $r\in (0,r_K]$ and let $\tilde P$ be an $n$-plane through $Y_0$ attaining the infimum in the definition of $\beta_{\tilde\Sigma}(Y_0,r)$. As before, we can write  $\tilde P=\Lambda(X_0)^{-1}P$, where $P$ is an $n$-plane through $X_0$. We will estimate $\beta_\Sigma(X_0,\lmin(K)r)$. Notice that
\begin{equation}
    \begin{split}
        B(X_0,\lmin(K)r)&\subset B(X_0,\lmin(X_0)r)\subset B_\Lambda(X_0,r)= \Lambda(X_0)B(Y_0,r).
    \end{split}
\end{equation}
Thus given any $W\in \Sigma\cap B(X_0,\lmin(K)r)$, we can write $W = \Lambda(X_0)Z$, with $Z\in \tilde\Sigma \cap B(Y_0,r)$. Then by \eqref{beta numbers - beta of sigma tilde updated},
\begin{align*}
    \mathrm{dist}(W,P) = \mathrm{dist}(\Lambda(X_0)Z, \Lambda(X_0)\tilde P)&\leq \lmax(K)\mathrm{dist}(Z,\tilde P)\leq C_Kr^{1+\gamma}.
\end{align*}
This implies that $ \beta_{\Sigma}(X_0,\lmin(K)r)\leq C_K r^\gamma$,
for all $r\in (0,r_K]$, or equivalently, 
\begin{equation}
    \label{beta numbers - final conclusion}
    \begin{split}
        \beta_\Sigma(X_0,r)&\leq C_K(\lmin(K)^{-1}r)^\gamma\leq C_Kr^\gamma,
    \end{split}
\end{equation}
for all $r\in (0,\lmin(K)r_K]$. This shows that \eqref{beta numbers - main estinate} holds and completes the proof of Proposition \ref{beta numbers - main proposition}.
\end{proof}

%%%%%%%%%%

\section{$\Lambda$-pseudo tangents of $\mu$ and proof of Theorem \ref{theorem 1}}\label{pseudo tangents}
The proof of Theorem \ref{theorem 1} will be complete if we can combine Proposition \ref{beta numbers - main proposition} and the following result.

\begin{proposition}[\cite{DKT01} - Proposition 9.1]
\label{reif flatness - dkt theorem}
Let $\gamma\in (0,1]$. Suppose $\Sigma$ is a Reifenberg-flat set with vanishing constant of dimension $m$ in $\R^{n+1}$, $m\leq n+1$, and that for each compact set $K\subset\R^{n+1}$ there is a constant $C_K>0$ such that
\begin{equation}
    \label{pseudo tangents - decay of beta numbers}
    \beta_\Sigma(X,r)\leq C_Kr^\gamma,
\end{equation}
for all $X\in K\cap\Sigma$ and $r\in (0,r_K]$. Then $\Sigma$ is a $C^{1,\gamma}$ submanifold of dimension $m$ of $\R^{n+1}$.
\end{proposition}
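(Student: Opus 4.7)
My proof plan starts from the observation that Reifenberg flatness with vanishing constant already gives a topological parametrization of $\Sigma$: by Reifenberg's topological disk theorem, for each compact $K\subset\R^{n+1}$ and each $X\in\Sigma\cap K$ there is a scale $r_K>0$ and a bi-Hölder homeomorphism of $\Sigma\cap B(X,r_K)$ onto a piece of the approximating $m$-plane. The remaining task is to upgrade this topological information to $C^{1,\gamma}$ regularity using the quantitative decay \eqref{pseudo tangents - decay of beta numbers}.

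The core of the argument is to produce a unique tangent plane at each point and control how it varies. For $X\in\Sigma\cap K$ fixed and $r_k=2^{-k}$, let $P_k(X)$ be an $n$-plane through $X$ realizing (up to a factor of $2$) the infimum defining $\beta_\Sigma(X,r_k)$. Then $\Sigma\cap B(X,r_k)$ lies in a $C_K r_k^{1+\gamma}$-neighborhood of $P_k(X)$. Combined with the Reifenberg flatness (which supplies an almost-matching lower bound from the plane side), a standard geometric estimate shows
\[
\angle(P_k(X),P_{k+1}(X)) \leq C_K r_k^{\gamma}.
\]
Summing the geometric series, $\{P_k(X)\}$ is Cauchy in the Grassmannian and converges to a plane $T_X$ with
\[
\angle(P_k(X),T_X)\leq C_K r_k^{\gamma}.
\]
In particular $\Sigma$ has a unique tangent plane at every point.

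The second step is to show $X\mapsto T_X$ is $\gamma$-Hölder on $\Sigma\cap K$. Given $X,Y\in\Sigma\cap K$ with $\rho:=|X-Y|$ small, pick $k$ with $r_k\approx \rho$. Both $P_k(X)$ and the translate of $P_k(Y)$ passing through $X$ approximate $\Sigma\cap B(X,c r_k)$ to order $r_k^{1+\gamma}$; this forces $\angle(P_k(X),P_k(Y))\leq C_K r_k^{\gamma}$, and then the triangle inequality together with the Cauchy estimate above yields
\[
\angle(T_X,T_Y)\leq C_K \rho^{\gamma}.
\]
The Reifenberg-flatness input is used here in the form that the approximating planes are bilaterally close to $\Sigma$, so that at scale $r_k$ the picture near $X$ and the picture near $Y$ are geometrically comparable.

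The final step is to pass from the Hölder tangent map to $C^{1,\gamma}$ regularity of $\Sigma$. Fix $X_0\in\Sigma\cap K$ and small $r$. By the vanishing Reifenberg-flat condition together with the uniqueness of $T_{X_0}$, the projection $\pi\colon \Sigma\cap B(X_0,r)\to T_{X_0}$ is a bi-Lipschitz homeomorphism onto its image, so $\Sigma$ is locally the graph of a function $f\colon U\subset T_{X_0}\to T_{X_0}^\perp$. The tangent plane to $\Sigma$ at $(x,f(x))$ is the graph of $Df(x)$, and the Hölder estimate $\angle(T_X,T_Y)\leq C_K|X-Y|^{\gamma}$ translates to $|Df(x)-Df(y)|\leq C_K|x-y|^{\gamma}$, giving $f\in C^{1,\gamma}$. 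The main obstacle is this last passage: one must verify that $f$ is differentiable with $Df(x)$ the slope of $T_{(x,f(x))}$, which requires quantifying the decay of the graph's oscillation over $T_{X_0}$ at dyadic scales from the two estimates $\beta_\Sigma(X,r)\leq C_K r^\gamma$ and $\angle(T_X,T_{X_0})\leq C_K|X-X_0|^{\gamma}$, and then integrating along straight line segments in $U$.
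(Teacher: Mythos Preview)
The paper does not prove this proposition: it is quoted verbatim from \cite{DKT01}, Proposition~9.1, and used as a black box in the proof of Theorem~\ref{theorem 1}. Your outline is essentially the argument that appears in \cite{DKT01}: compare successive best-approximating planes at dyadic scales to extract a unique tangent plane $T_X$ with $\angle(P_k(X),T_X)\leq C_K r_k^\gamma$, then compare planes at nearby points at a matched scale to get $\angle(T_X,T_Y)\leq C_K|X-Y|^\gamma$, and finally pass to a local graph over $T_{X_0}$ to read off $C^{1,\gamma}$ regularity. So your approach is correct and coincides with the source the paper cites.

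One small comment on your final paragraph: the step where you assert that the orthogonal projection $\pi:\Sigma\cap B(X_0,r)\to T_{X_0}$ is a bi-Lipschitz homeomorphism onto its image deserves a word of justification. Vanishing Reifenberg flatness alone gives only a bi-H\"older parametrization; the bi-Lipschitz property of $\pi$ (and in particular its injectivity and the fact that its image covers a full $m$-disk in $T_{X_0}$) comes from combining the $\beta$-decay with the bilateral flatness, which forces the best-approximating planes at all small scales near $X_0$ to make a small angle with $T_{X_0}$. This is handled in \cite{DKT01} but is worth flagging, since it is exactly where the two hypotheses interact.
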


   As mentioned before, the assumption that $\Sigma$ is Reifenberg flat with vanishing constant is stronger than the flatness assumption in Theorem \ref{theorem 1}. However, the following result ensures that the latter suffices in our setting.

\begin{proposition}
    \label{pseudo tangents - main flatness result}Suppose $\mu$ and $\Lambda$ satisfy the density and continuity assumptions of Theorem \ref{theorem 1}, and let $\Sigma=\mathrm{spt}(\mu)$. If $n\geq 3$, suppose also that for any compact set $K\subset\R^{n+1}$ there exists $r_K>0$ such that 
    \begin{equation}
    \label{pseudo tangents - flatness assumption}
        b\beta_\Sigma(K,r_K)=\sup_{r\in (0,r_K]}\sup_{X\in\Sigma\cap K}b\beta_\Sigma(X,r)\leq\delta_K,
    \end{equation}
    where $\delta_K>0$ is small enough depending on $K$ and $\Lambda$. Then $\Sigma$ is Reifenberg flat with vanishing constant.
\end{proposition}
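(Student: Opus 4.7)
The plan is a blowup–contradiction argument. Suppose $\Sigma$ were not Reifenberg flat with vanishing constant: then there would exist a compact set $K\subset\R^{n+1}$, a constant $\eta>0$, points $X_k\in\Sigma\cap K$, and radii $r_k\searrow 0$ with $b\beta_\Sigma(X_k,r_k)\geq\eta$. Passing to a subsequence, we may assume $X_k\to X_\infty\in\Sigma\cap K$. Consider the rescaled measures $\mu_k=r_k^{-n}T_{X_k,r_k}[\mu]$ with $T_{X_k,r_k}(Z)=(Z-X_k)/r_k$. The density hypothesis \eqref{introduction - main density estimate} gives uniform local bounds on $\mu_k$, so a subsequence converges weakly to a Radon measure $\nu$. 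Combining \eqref{introduction - main density estimate} with the H\"older continuity of $\Lambda$ (so that $\Lambda(X_k+r_kY)\to\Lambda(X_\infty)$ uniformly on bounded sets), one verifies that for every $Y\in\mathrm{spt}(\nu)$ and $s>0$,
\begin{equation*}
\nu\bigl(Y+\Lambda(X_\infty)B(0,s)\bigr)=\omega_n s^n,
\end{equation*}
so that the pushforward $\tilde\nu=\Lambda(X_\infty)^{-1}[\nu]$ is a Euclidean $n$-uniform measure. The two-sided density estimates further imply Hausdorff convergence $\mathrm{spt}(\mu_k)\cap B(0,R)\to\mathrm{spt}(\nu)\cap B(0,R)$ for every $R>0$.

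Next, the two hypotheses transfer through the blowup to contradictory statements about $\tilde\nu$. On the one hand, $b\beta_\Sigma(X_k,r_k)\geq\eta$ rescales directly, via the Hausdorff convergence of supports, to the statement that $\mathrm{spt}(\nu)\cap B(0,1)$ has Hausdorff distance at least $\eta$ from every $n$-plane through $0$; applying the eigenvalue control in Lemma \ref{prelim - lambda and normal reif flat} transfers this to a bound $D[\mathrm{spt}(\tilde\nu)\cap B(0,c_K); P'\cap B(0,c_K)]\geq c_K'\eta$ for every $n$-plane $P'$ through $0$, where $c_K,c_K'>0$ depend only on $K$ and $\Lambda$. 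On the other hand, the flatness hypothesis \eqref{pseudo tangents - flatness assumption} supplies $n$-planes $P_k$ through $X_k$ with $D[\Sigma\cap B(X_k,r_k); P_k\cap B(X_k,r_k)]\leq\delta_K r_k$ for large $k$; rescaling, extracting a convergent subsequence $(P_k-X_k)/r_k\to P_\infty$, and applying Lemma \ref{prelim - lambda and normal reif flat} once more gives
\begin{equation*}
D[\mathrm{spt}(\tilde\nu)\cap B(0,c_K);\Lambda(X_\infty)^{-1}P_\infty\cap B(0,c_K)]\leq C_K\delta_K,
\end{equation*}
with $C_K>0$ depending only on $K$ and $\Lambda$.

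To conclude, we classify $\tilde\nu$ via the uniform measure theorems. If $n\leq 2$, Theorem \ref{prelim - uniform measures of dimension 2} forces $\tilde\nu$ to be flat, directly contradicting the $\eta$-lower bound above; this is why no flatness hypothesis is needed in low dimensions. If $n\geq 3$, Theorem \ref{flat measures - kowalski preiss} implies that $\tilde\nu$ is either flat or, up to rotation and scaling, a copy of the light cone $\{x_4^2=x_1^2+x_2^2+x_3^2\}$; the cone has Hausdorff distance on $B(0,1)$ from any $n$-plane through $0$ bounded below by a positive absolute constant $d_0$. Fixing $\delta_K$ small enough so that $C_K\delta_K<c_K d_0$ then rules out the cone case, so $\tilde\nu$ must be flat, contradicting the $\eta$-lower bound. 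The main obstacle is the careful bookkeeping of constants: one must verify that $c_K,c_K',C_K$ arising from the transfer between Euclidean flatness of $\Sigma$, the $\Lambda(X_\infty)$-anisotropic tangent $\nu$, and the Euclidean $n$-uniform $\tilde\nu$ depend only on $K$ and $\Lambda$ (and not on the particular $X_\infty\in\Sigma\cap K$), so that the required smallness of $\delta_K$ can be chosen in advance as a function of $K$ and $\Lambda$ alone; this uniformity rests on Lemma \ref{prelim - lambda and normal reif flat} combined with the eigenvalue bounds from Lemma \ref{prelim - regularity of eigenvalues}.
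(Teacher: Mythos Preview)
Your overall strategy---blow up to an $n$-uniform limit, then invoke Kowalski--Preiss---matches the paper, and the case $n\leq 2$ is fine. But there is a real gap in ruling out the cone when $n\geq 3$. You only invoke the flatness hypothesis at the single point $X_k$ and the single scale $r_k$, so your bound (b) controls $b\beta_{\mathrm{spt}(\tilde\nu)}(0,c_K)$ alone. You then assert that the light cone in $B(0,1)$ is at distance $\geq d_0$ from every $n$-plane through $0$; but Theorem~\ref{flat measures - kowalski preiss} only determines $\tilde\nu$ up to \emph{translation}, so the vertex $v$ need not sit at the origin. If $|v|\gg c_K$ the cone is nearly flat in $B(0,c_K)$, and your bound (b) does not exclude it. (Nor do (a) and (b) together yield a contradiction directly: they only give $c_K'\eta\leq C_K\delta_K$, and $\delta_K$ must be chosen before $\eta$ appears.)

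The paper repairs this by proving the $C_K\delta_K$ flatness bound at \emph{every} point $X\in\Sigma_\infty=\mathrm{spt}(\tilde\nu)$, not only at $0$: given $X$, one chooses preimages $Z_i\in\Sigma$ with $T^\Lambda_{Q_i,\rho_i}(Z_i)\to X$---these land in a neighborhood of $K$, which is why the eigenvalue bounds \eqref{prelim - lmin and lmax of K} are set up on $(\Sigma\cap K;1)$---and applies the flatness hypothesis at $Z_i$ at a suitable scale. Specializing $X$ to the vertex then yields $b\beta_{\mathcal{C}}(0,1)\leq C_K\delta_K<1/\sqrt{2}$, a contradiction. A shorter fix within your framework: use the hypothesis at $X_k$ for \emph{all} scales $s\in(0,r_K]$, which after blowup gives $b\beta_{\mathrm{spt}(\tilde\nu)}(0,R)\leq C_K\delta_K$ for every $R>0$; since $b\beta_{\mathcal{C}+v}(0,R)\to b\beta_{\mathcal{C}}(0,1)\geq 1/\sqrt{2}$ as $R\to\infty$ by scale-invariance of $\mathcal{C}$, this already rules out the translated cone.
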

We first show why this is enough in order to complete the proof of Theorem \ref{theorem 1}.
\begin{proof}[Proof of Theorem \ref{theorem 1}]
    Let $\mu$ and $\Lambda$ be as in the assumptions of the theorem. By Proposition \ref{pseudo tangents - main flatness result}, $\Sigma$ is Reifenberg flat with vanishing constant. Therefore, Proposition \ref{beta numbers - main proposition} ensures that \eqref{pseudo tangents - decay of beta numbers} holds, and the conclusion of Theorem \ref{theorem 1} follows from Proposition \ref{reif flatness - dkt theorem}.

\end{proof}

To prove Proposition \ref{pseudo tangents - main flatness result} we follow an approach based on that of \cite{KT99} in the Euclidean setting, with two main steps:
\begin{enumerate}
    \item[Step 1.] Show that all $\Lambda$-pseudo tangents to $\mu$ are uniform (see definitions below); and
    \item[Step 2.] Prove, via a result of Kowalski and Preiss \cite{KP87}, that \eqref{pseudo tangents - flatness assumption} implies that those $\Lambda$-pseudo tangents are flat, and use this to conclude.
\end{enumerate}

This section is devoted to the first step, which happens to be independent of the smallness of $\delta_K$. We first consider some relevant definitions and facts that will be needed later. 

\subsection{$\Lambda$-pseudo tangent measures}
Given a point $X\in\Sigma$ and a radius $r>0$, consider the mapping 

\begin{equation}
    \label{pseudo tangents - blow up map}
    T^\Lambda_{X,r}(X) = \Lambda(X)^{-1}\left(\frac{Z-X}{r}\right),\quad Z\in\R^{n+1},
\end{equation}
and the measure 
\begin{equation}
    \label{pseudo tangents - def of scaled measure}
    \mu_{P,r} =\frac{1}{\mu(B_\Lambda(P,r))} T^\Lambda_{P,r}[\mu].
\end{equation}
Here $T^\Lambda_{P,r}[\cdot]$ denotes push-forward via $T^\Lambda_{P,r}$, so for $E\subset\R^{n+1}$, 
$$\mu_{P,r}(E) = \frac{\mu(P + r\Lambda(P)E)}{\mu(B_\Lambda(P,r))}.$$

\begin{definition}[$\Lambda$-pseudo tangent measure]
\label{pseudo tangents - definition of pseudo tangents}
    A measure $\nu\not\equiv 0$ is a $\Lambda$-\textit{pseudo tangent measure} of $\mu$ at $Q\in\Sigma$ if there exists a sequence of points $Q_i\in\Sigma$ and radii $\rho_i>0$ with $Q_i\to Q$ and $\rho_i\to 0$ as $i\to\infty$, such that 
    $$\mu_{Q_i,\rho_i}\rightharpoonup \nu.$$
\end{definition}
Here, the symbol $\rightharpoonup$ denotes weak convergence of Radon measures. Note that when the points $Q_i$ in Definition \ref{pseudo tangents - definition of pseudo tangents} satisfy $Q_i=Q$ for all $i$, the resulting measure $\nu$ is a $\Lambda$-tangent measure of $\mu$ (see \cite{CGTW25}). If $\Lambda(Q_i)=\mathrm{Id}$, then $\nu$ is a (pseudo) tangent measure of $\mu$ (see \cite{KT99}). The following are well-known facts about tangent measures in the Euclidean setting (see \cite{Ma95}).
\begin{lemma}[Existence of $\Lambda$-pseudo tangent measures]\label{pseudo tangents - existence of pseudo tangents}
    Let $\mu$ be a Radon measure with support $\Sigma\subset\R^{n+1}$, such that for each compact set $K\subset\R^{n+1}$ with $\Sigma\cap K\neq\varnothing$,
    $$\sup_{\substack{0<r\leq 1\\ X\in\Sigma\cap K}}\frac{\mu(B_\Lambda(X,2r))}{\mu(B_\Lambda(X,r))}<\infty.$$
    Then every sequence of numbers $r_i>0$ with $r_i\searrow 0$ and points $Q_i\in\Sigma$ contains a subsequence $r_{i_l}$, $Q_{i_l}$ such that the measures $\mu_{{Q_{i_l}},r_{i_l}}$ converge to a $\Lambda$-pseudo tangent measure of $\mu$ at $X$.
\end{lemma}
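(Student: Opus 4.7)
The plan is to invoke the standard weak-$*$ compactness for sequences of Radon measures that are uniformly locally bounded on $\R^{n+1}$, after checking that the doubling hypothesis yields precisely such a bound for $\mu_{Q_i,r_i}$. First I would pass to a subsequence so that $Q_i$ converges to some limit point $X\in\Sigma$; this is justified by assuming the $Q_i$ lie in a compact set $K\subset\R^{n+1}$ (otherwise no pseudo-tangent can be expected to exist), together with the closedness of $\Sigma$. Let $C=C_K$ denote the corresponding finite doubling constant coming from the hypothesis.

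Next I would establish uniform local boundedness. Unwinding the definition of the push-forward,
\[
\mu_{Q_i,r_i}(B(0,R)) = \frac{\mu\bigl(Q_i + r_i\Lambda(Q_i)B(0,R)\bigr)}{\mu(B_\Lambda(Q_i,r_i))} = \frac{\mu(B_\Lambda(Q_i,r_iR))}{\mu(B_\Lambda(Q_i,r_i))}.
\]
Given $R\geq 1$, set $k=\lceil\log_2 R\rceil$ so that $r_iR\leq 2^kr_i$. Since $r_i\searrow 0$, eventually $2^kr_i\leq 1$, and iterating the doubling hypothesis $k$ times gives
\[
\mu(B_\Lambda(Q_i,r_iR)) \leq \mu(B_\Lambda(Q_i,2^kr_i)) \leq C^k\,\mu(B_\Lambda(Q_i,r_i)),
\]
so $\mu_{Q_i,r_i}(B(0,R))\leq C^k$ for all sufficiently large $i$, uniformly in $i$.

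With this bound in hand, I would apply the standard weak compactness theorem for Radon measures, extracting (via a diagonal argument on the exhaustion $\R^{n+1}=\bigcup_j B(0,2^j)$ together with Banach--Alaoglu on $C_c$) a subsequence $\mu_{Q_{i_l},r_{i_l}}\rightharpoonup\nu$ for some Radon measure $\nu$ on $\R^{n+1}$. To show $\nu\not\equiv 0$, I use the normalization: $\mu_{Q_i,r_i}(B(0,1))=1$ for every $i$, so for any cutoff $\varphi\in C_c(\R^{n+1})$ with $\chi_{B(0,1)}\leq\varphi\leq\chi_{B(0,2)}$ one has $\int\varphi\,\de\mu_{Q_i,r_i}\geq 1$; passing to the limit under weak convergence yields $\int\varphi\,\de\nu\geq 1$, and hence $\nu(B(0,2))\geq 1>0$. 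By Definition~\ref{pseudo tangents - definition of pseudo tangents}, $\nu$ is therefore a $\Lambda$-pseudo tangent measure of $\mu$ at $X=\lim_l Q_{i_l}$.

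The only point I expect to require care is the interplay between the Euclidean ball $B(0,R)$ on the rescaled side and the anisotropic ball $B_\Lambda(Q_i,r_iR)$ on the original side; this is handled cleanly by the change-of-variables identity above, which is where the $\Lambda(Q_i)$ in the definition of $T^\Lambda_{Q_i,r_i}$ plays its role. A closely related subtlety is that the doubling inequality is valid only for radii $\leq 1$ and centers in $K$, so one must iterate it $\lceil\log_2 R\rceil$ times and verify that all intermediate radii lie in the admissible range---which is automatic for $i$ large since $r_i\searrow 0$.
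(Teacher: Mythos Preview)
Your proposal is correct and follows essentially the same approach as the paper: unwind the push-forward to identify $\mu_{Q_i,r_i}(B(0,R))$ with a ratio of $\mu$-masses of anisotropic balls, iterate the doubling hypothesis $\lceil\log_2 R\rceil$ times to obtain a uniform local bound, and then invoke weak-$*$ compactness of Radon measures. You are in fact slightly more thorough than the paper's own proof, since you explicitly extract a convergent subsequence $Q_{i_l}\to X\in\Sigma$ and verify $\nu\not\equiv 0$ via the normalization $\mu_{Q_i,r_i}(B(0,1))=1$; both points are required by Definition~\ref{pseudo tangents - definition of pseudo tangents} but are left implicit in the paper.
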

\begin{proof}
    Let $K$ be a compact set with $Q_i\in K$ for all $i$, and denote by $c$ the supremum in the statement of the lemma. Then for every $k\in\mathbb N$ we have
    \begin{equation}
        \begin{split}
            \limsup_{i\to\infty}\mu_{{Q_{i}},r_{i}}(B(0,2^k))&=\limsup_{i\to\infty}\frac{1}{\mu(B_\Lambda(Q_i,r_i))}T^\Lambda_{Q_i,r_i}[\mu](B(0,2^k))\\
            &=\limsup_{i\to\infty}\frac{\mu(B_\Lambda(Q_i,2^kr_i))}{\mu(B_\Lambda(Q_i,r_i))}\leq c^k<\infty.
        \end{split}
    \end{equation}
    It follows that the sequence $\mu_{Q_i,r_i}(F)$ is bounded for every compact set $F\subset\R^{n+1}$, and the conclusion of the lemma follows by a standard compactness result for Radon measures (see \cite[Theorem 1.23]{Ma95}).
\end{proof}

\begin{lemma}
    \label{pseudo tangents - 0 is in the support}
    If $\mu$ satisfies the assumptions of Theorem \ref{theorem 1} and $\nu$ is a $\Lambda$-pseudo tangent of $\mu$, then $0\in\mathrm{spt}(\nu)$ .
\end{lemma}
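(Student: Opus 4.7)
The plan is to show that $\nu(B(0,r))>0$ for every $r>0$, which directly gives $0\in\mathrm{spt}(\nu)$. The key observation is that the normalization in the definition of $\mu_{Q_i,\rho_i}$ is tailored to the ellipses $B_\Lambda(Q_i,\cdot)$, and the density assumption \eqref{introduction - main density estimate} pins down the mass of those ellipses with quantitative precision.

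First I would unfold the pushforward. Since $T^\Lambda_{Q_i,\rho_i}$ maps $B_\Lambda(Q_i,s\rho_i) = Q_i + s\rho_i\Lambda(Q_i)B(0,1)$ bijectively onto $B(0,s)$, the definition \eqref{pseudo tangents - def of scaled measure} gives
$$\mu_{Q_i,\rho_i}(B(0,s)) \;=\; \frac{\mu(B_\Lambda(Q_i,s\rho_i))}{\mu(B_\Lambda(Q_i,\rho_i))}.$$
Because $Q_i\to Q\in\Sigma$, for $i$ large the points $Q_i$ all lie in a fixed compact set $K'$ (say $\overline{B(Q,1)}$), and for any fixed $s>0$ one has $s\rho_i\le 1$ once $i$ is large enough. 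Applying \eqref{introduction - main density estimate} to numerator and denominator then yields
$$\mu_{Q_i,\rho_i}(B(0,s)) \;=\; \frac{\omega_n(s\rho_i)^n\bigl(1+O(C_{K'}(s\rho_i)^\alpha)\bigr)}{\omega_n\rho_i^n\bigl(1+O(C_{K'}\rho_i^\alpha)\bigr)} \;\longrightarrow\; s^n$$
as $i\to\infty$.

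To conclude, for any $r>0$ pick $r'\in(0,r)$, so that $\overline{B(0,r')}\subset B(0,r)$. The Portmanteau property for weak convergence of Radon measures, applied to the closed set $\overline{B(0,r')}$, gives
$$\nu(B(0,r)) \;\ge\; \nu(\overline{B(0,r')}) \;\ge\; \limsup_i \mu_{Q_i,\rho_i}(\overline{B(0,r')}) \;\ge\; \lim_i \mu_{Q_i,\rho_i}(B(0,r')) \;=\; (r')^n \;>\; 0,$$
and since $r>0$ is arbitrary, this gives $0\in\mathrm{spt}(\nu)$.

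The argument is essentially routine, so I do not anticipate any real obstacle. The only mild subtlety is arranging the density estimate to apply uniformly along the whole sequence: this is the reason for placing every $Q_i$ inside a single compact set $K'$ so that the constant $C_{K'}$ from \eqref{introduction - main density estimate} can be used uniformly in $i$. Note also that the argument does not require $\nu\not\equiv 0$ as an input, since the positivity of $\nu(B(0,r))$ is established directly.
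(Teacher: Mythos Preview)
Your proof is correct and follows essentially the same approach as the paper: compute $\mu_{Q_i,\rho_i}(B(0,s))=\mu(B_\Lambda(Q_i,s\rho_i))/\mu(B_\Lambda(Q_i,\rho_i))$ via the pushforward, use the density estimate \eqref{introduction - main density estimate} on both numerator and denominator (with all $Q_i$ placed in a single compact set to get a uniform constant), and conclude via the Portmanteau inequality for closed sets. The only cosmetic difference is that you compute the exact limit $s^n$, whereas the paper is content with the cruder lower bound $R^n/2$ for $r$ small; either suffices.
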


       \begin{proof} Recall that under the assumptions of Theorem \ref{theorem 1}, for every $X\in\Sigma\cap K$ and $r\in (0,1]$ we have
    \begin{equation}
        \label{pseudo tangents - density condition expanded out}
        \omega_nr^n-C_Kr^{n+\alpha}\leq\mu(B_\Lambda(X,r))\leq \omega_nr^n+C_Kr^{n+\alpha}.
    \end{equation}
    Thus, if $R>0$, $X\in\Sigma\cap K$ and $r>0$ is small enough, 
    \begin{equation}
        \label{pseudo tangents - bound 1}
        \begin{split}
            \mu_{X,r}(B(0,R))&=\frac{\mu(B_\Lambda(X,rR))}{\mu(B_\Lambda(X,r))}\geq \frac{(rR)^n - C_K(rR)^{n+\alpha}}{r^n+C_Kr^{n+\alpha}}\geq \frac{R^n}{2}.
        \end{split}
    \end{equation}
    Now, since $\nu$ is a $\Lambda$-pseudo tangent measure of $\mu$, we have $\mu_{P_i,\rho_i}\rightharpoonup \nu$ for some $P_i\in\Sigma\cap K$, where $K\subset\mathbb R^{n+1}$ is a compact set, $\rho_i>0$ and $\rho_i\to 0$. Therefore, applying \eqref{pseudo tangents - bound 1} with $X=P_i$ and $r=\rho_i$, we get 
    \begin{equation*}
        \begin{split}
            \nu(B(0,2R))&\geq \nu(\overline{B(0,R)})\geq\limsup_{i\to\infty}\mu_{P,\rho_i}(\overline{B(0,R)})\\
            &\geq \limsup_{i\to\infty}\mu_{P,\rho_i}(B(0,R))\geq\frac{R^n}{2}>0,
        \end{split}
    \end{equation*}
    from which the desired conclusion follows.
\end{proof}

The key point about $\Lambda$-pseudo tangents in our context is that if a measure $\mu$ satisfies the density assumption of Theorem \ref{theorem 1}, then all its $\Lambda$-pseudo tangent measures are $n$-uniform, as shown below under a more relaxed assumption on $\mu$ (see Definition \ref{pseudo tangents - lambda asod} and Proposition \ref{pseudo tangents - pseudo tangents are uniform}).

\begin{definition}\label{pseudo tangents - lambda asod}
    A Radon measure $\mu$ in $\R^{n+1}$ with support $\Sigma$ is called $\Lambda$-\textit{asymptotically optimally doubling of dimension $n$} if for every compact set $K\subset\R^{n+1}$ ,
    \begin{equation}
        \label{pseudo tangents - lambda asod definition}
        \lim_{r\to 0}\sup_{\substack{X\in \Sigma\cap K\\ \tau\in [\frac{1}{2},1]}}\left|\frac{\mu(B_\Lambda(X,\tau r))}{\mu(B_\Lambda(X,r))}-\tau^n\right|=0.
    \end{equation}
\end{definition}
The corresponding Euclidean version of this notion is considered in \cite{DKT01}, Definition 1.4. We summarize a couple of facts about this condition and its connection with measures that satisfy the density condition \eqref{introduction - main density estimate} in Theorem \ref{theorem 1}.

\begin{proposition}
\label{pseudo tangents - density implies asod}
    Let $\mu$ be a Radon measure with support $\Sigma$ in $\mathbb R^{n+1}$.
    \begin{enumerate}
        \item If $\mu$ satisfies \eqref{introduction - main density estimate}, then it also satisfies \eqref{pseudo tangents - lambda asod definition}.
        \item If $\mu$ satisfies \eqref{pseudo tangents - lambda asod definition}, then for every $K\subset\mathbb R^{n+1}$ compact,
    \begin{equation}
        \label{pseudo tangents - asymptotically optimally doubling}
        \lim_{r\to 0}\sup_{t\in (0,1)}\sup_{X\in \Sigma\cap K}\left\lvert \frac{\mu(B_\Lambda(X,t r))}{\mu(B_\Lambda(X,r))} - t^n\right\rvert = 0.
    \end{equation}
    \end{enumerate}
\end{proposition}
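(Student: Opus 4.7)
The plan is to handle the two parts separately, with the first being an almost immediate consequence of the density bound and the second requiring a telescoping argument that iterates the $[\tfrac12,1]$-doubling hypothesis down to arbitrarily small scales.

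For part (1), I would start from the expanded form of \eqref{introduction - main density estimate}, which gives $\mu(B_\Lambda(X,r)) = \omega_n r^n(1+\epsilon(X,r))$ with $|\epsilon(X,r)|\leq C_Kr^\alpha$ for $X\in\Sigma\cap K$ and $r\in(0,1]$. For $\tau\in[\tfrac12,1]$ one then has
\[
\frac{\mu(B_\Lambda(X,\tau r))}{\mu(B_\Lambda(X,r))}=\tau^n\cdot\frac{1+\epsilon(X,\tau r)}{1+\epsilon(X,r)}.
\]
Since $|\epsilon(X,\tau r)|,|\epsilon(X,r)|\leq C_Kr^\alpha$, for $r$ small enough (depending on $K$) the denominator is bounded below by $1/2$, and expanding the fraction shows
\[
\left|\frac{\mu(B_\Lambda(X,\tau r))}{\mu(B_\Lambda(X,r))}-\tau^n\right|\leq C_K r^\alpha,
\]
which immediately gives \eqref{pseudo tangents - lambda asod definition} (with a quantitative H\"older rate, in fact).

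For part (2), given $t\in(0,1)$ I would choose the integer $k\geq 0$ with $2^{-k-1}<t\leq 2^{-k}$, so that $t=2^{-k}s$ with $s\in(\tfrac12,1]$, and telescope
\[
\frac{\mu(B_\Lambda(X,tr))}{\mu(B_\Lambda(X,r))}=\left(\prod_{j=0}^{k-1}\frac{\mu(B_\Lambda(X,2^{-j-1}r))}{\mu(B_\Lambda(X,2^{-j}r))}\right)\cdot\frac{\mu(B_\Lambda(X,s\cdot 2^{-k}r))}{\mu(B_\Lambda(X,2^{-k}r))}.
\]
Setting $\eta(r)=\sup\{|\mu(B_\Lambda(X,\tau r'))/\mu(B_\Lambda(X,r'))-\tau^n|:X\in\Sigma\cap K,\ \tau\in[\tfrac12,1],\ r'\leq r\}$, the hypothesis \eqref{pseudo tangents - lambda asod definition} ensures $\eta(r)\to 0$ as $r\to 0$, and each factor in the telescoping product lies within $\eta(r)$ of $(1/2)^n$ or $s^n$ respectively. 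Taking logarithms and summing, each $\log$ contributes $-n\log 2+O(\eta(r))$ (or $\log s^n+O(\eta(r))$), so
\[
\log\frac{\mu(B_\Lambda(X,tr))}{\mu(B_\Lambda(X,r))}=\log t^n+O\bigl((k+1)\eta(r)\bigr),
\]
whence
\[
\left|\frac{\mu(B_\Lambda(X,tr))}{\mu(B_\Lambda(X,r))}-t^n\right|\leq C\, t^n (k+1)\eta(r)
\]
for $r$ small enough that $(k+1)\eta(r)$ is below a fixed threshold.

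The main obstacle to watch out for is precisely this last step, because $k\sim\log_2(1/t)$ can be arbitrarily large as $t\to 0$, so naively the iterated error grows without bound. The saving observation is that $t^n(k+1)\leq C_n$ uniformly in $t\in(0,1)$ since $t^n(k+1)\leq 2^{-nk}(k+1)$, giving a $t$-independent bound $C_n\eta(r)\to 0$ as $r\to 0$, uniformly in $t\in(0,1)$ and $X\in\Sigma\cap K$. This yields \eqref{pseudo tangents - asymptotically optimally doubling} and completes the argument.
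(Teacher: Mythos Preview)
Part (1) is correct and essentially identical to the paper's computation.

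For part (2), your dyadic-telescoping strategy is sound, but the passage through logarithms has a gap. You obtain $|\text{ratio}-t^n|\leq Ct^n(k+1)\eta(r)$ from $|e^E-1|\leq C|E|$, and that linearization requires $|E|\lesssim(k+1)\eta(r)$ to lie below a fixed threshold. Since $k\to\infty$ as $t\to0$, no single choice of $r$ achieves this for \emph{all} $t\in(0,1)$ simultaneously, so the displayed bound is not actually established uniformly in $t$. Your ``saving observation'' $t^n(k+1)\leq C_n$ is exactly the right ingredient, but it must be used one step earlier: for $\eta(r)$ small depending only on $n$, the crude estimate $|e^E-1|\leq|E|e^{|E|}$ together with $t^ne^{|E|}\leq 2^{-nk}e^{C(k+1)\eta(r)}\leq C_n2^{-nk/2}$ gives $|\text{ratio}-t^n|\leq C_n(k+1)2^{-nk/2}\eta(r)\leq C_n\eta(r)$, now genuinely uniform in $t$.

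The paper sidesteps this issue by telescoping \emph{additively} rather than multiplicatively: writing $t=\tau^j$ with $\tau=t^{1/j}\in[\tfrac12,1)$,
\[
\mu(B_\Lambda(X,tr))-t^n\mu(B_\Lambda(X,r))=\sum_{k=0}^{j-1}\tau^{nk}\bigl[\mu(B_\Lambda(X,\tau^{j-k}r))-\tau^n\mu(B_\Lambda(X,\tau^{j-k-1}r))\bigr],
\]
so each bracket is at most $\varepsilon\,\mu(B_\Lambda(X,r))$ by the hypothesis, and the geometric weights $\sum_{k\geq0}\tau^{nk}\leq(1-2^{-n/2})^{-1}$ (for $j\geq2$; the case $j=1$ is a single term) give the uniform bound $C_n\varepsilon$ in one line, with no exponential to control.
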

\begin{proof}
    For the proof of the first statement, note that by \eqref{introduction - main density estimate}, if $r>0$ is small enough then
    $$\left|\frac{\mu(B_\Lambda(X,\tau r))}{\omega_n(\tau r)^n} - 1\right|\leq C_K(\tau r)^\alpha  \quad\text{and}\quad \left| \frac{\omega_nr^n}{\mu(B_\Lambda(X,r))} - 1\right\rvert\leq C_Kr^\alpha.$$
    Therefore,
    \begin{equation}
\label{pseudo tangents - quantitative asod}
    \begin{split}
        \left\lvert \frac{\mu(B_\Lambda(X,\tau r))}{\mu(B_\Lambda(X,r))} - \tau^n\right\rvert
    &\leq \tau^n\left\lbrace \left\lvert \frac{\omega_nr^n}{\mu(B_\Lambda(X,r))}\left(\frac{\mu(B_\Lambda(X,\tau r))}{\omega_n(\tau r)^n} - 1\right)\right\rvert  +  \left\lvert \frac{\omega_nr^n}{\mu(B_\Lambda(X,r))} - 1\right\rvert\right\rbrace\\
    &\ \\
    &\leq \tau^n\left\lbrace C_K(\tau r)^\alpha + C_K r^\alpha\right\rbrace\leq C_K\tau^n r^\alpha\leq C_Kr^\alpha.
    \end{split}
\end{equation}
This gives \eqref{pseudo tangents - lambda asod definition}. For the second statement, \eqref{pseudo tangents - lambda asod definition} implies that given $\varepsilon>0$, there exists $R>0$ so that for $r\in (0,R)$, $X\in\Sigma\cap K$ and $\tau\in [\frac{1}{2},1]$,
\begin{equation}
    \label{pseudo tangents - auxiliary doubling}|\mu(B_\Lambda(X,\tau r))-\tau^n\mu(B_\Lambda(X,r))|\leq\varepsilon\mu(B_\Lambda(X,r)).
\end{equation}

Let $t\in (0,1]$, and let $j\geq 1$ be such that $\frac{1}{2^j}\leq t<\frac{1}{2^{j-1}}$, so that $\tau:=t^{1/j}\in [\frac{1}{2},\frac{1}{\sqrt{2}})$. Then by \eqref{pseudo tangents - auxiliary doubling}, we have for $X\in\Sigma\cap K$, $r\in (0,R)$ and $k\geq 1$,
$$|\mu(B_\Lambda(X,\tau^{k} r))-\tau^n\mu(B_\Lambda(X,\tau^{k-1}r))|\leq\varepsilon\mu(B_\Lambda(X,r)).$$
Therefore,
\begin{equation*}
    \begin{split}
        |\mu(B_\Lambda(X,t r))-t^n\mu(B_\Lambda(X,r))|&\leq\sum_{k=0}^{j-1}\tau^{nk}|\mu(B_\Lambda(X,\tau^{j-k}r))-\tau^n\mu(B_\Lambda(X,\tau^{j-k-1}r))|\\
        &\leq \varepsilon\mu(B_\Lambda(X,r))\sum_{k=0}^{j-1}\tau^{nk}\\
        &\leq \varepsilon\mu(B_\Lambda(X,r))\sum_{k=0}^{\infty}\frac{1}{(\sqrt{2})^{nk}}= C\varepsilon\mu(B_\Lambda(X,r)),
    \end{split}
\end{equation*}
where $C>0$ depends only on $n$. This implies
$$\left|\frac{\mu(B_\Lambda(X,tr))}{\mu(B_\Lambda(X,r))}-t^n\right|\leq C\varepsilon,$$
for all $r\in (0,R)$, from which the desired conclusion follows.
\end{proof}

The following is the main result of this section.

\begin{proposition}
\label{pseudo tangents - pseudo tangents are uniform}
   Suppose that $\Lambda$ satisfies the continuity assumption of Theorem \ref{theorem 1} and $\mu$ is $\Lambda$-asymptotically optimally doubling of dimension $n$ in $\mathbb R^{n+1}$. If $\nu$ is a $\Lambda$-pseudo tangent measure of $\mu$, then $\nu$ is $n$-uniform. Moreover, \eqref{pseudo tangents - uniform measure} holds with $C=1$.
\end{proposition}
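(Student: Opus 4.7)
The plan is to show $\nu(B(X,r)) = r^n$ for every $X \in \mathrm{spt}(\nu)$ and every $r > 0$, directly from the defining weak limit $\mu_{Q_i,\rho_i} \rightharpoonup \nu$. The first step is to approximate $X$ by points of $\mathrm{spt}(\mu_{Q_i,\rho_i})$: since $X \in \mathrm{spt}(\nu)$, weak convergence forces $\mu_{Q_i,\rho_i}$ to place mass in every neighborhood of $X$ eventually, and because $T^\Lambda_{Q_i,\rho_i}$ is a homeomorphism I can choose $P_i \in \Sigma$ with $X_i := T^\Lambda_{Q_i,\rho_i}(P_i) \to X$. In particular $|P_i - Q_i| = \rho_i|\Lambda(Q_i) X_i| \leq C\rho_i$ for large $i$, so $P_i$ is close to $Q_i$ on the natural scale.

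The heart of the argument is then the computation
\[
\mu_{Q_i,\rho_i}(B(X_i,r)) \;=\; \frac{\mu\bigl(P_i + \rho_i \Lambda(Q_i) B(0,r)\bigr)}{\mu(B_\Lambda(Q_i,\rho_i))} \longrightarrow r^n.
\]
The numerator is an ellipse centered at $P_i$ whose shape is governed by $\Lambda(Q_i)$ rather than the natural matrix $\Lambda(P_i)$. Using the H\"older bound $\|\Lambda(P_i)-\Lambda(Q_i)\| \leq H_K|P_i-Q_i|^\beta \leq C\rho_i^\beta$, one writes this ellipse as $P_i + \rho_i\Lambda(P_i)\cdot\Lambda(P_i)^{-1}\Lambda(Q_i)B(0,r)$, bounds $\|\Lambda(P_i)^{-1}\Lambda(Q_i)-I\|$ by $C_K\rho_i^\beta$, and sandwiches it between $B_\Lambda(P_i,\rho_i r(1\mp\eta_i))$ with $\eta_i = O(\rho_i^\beta)$. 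A separate application of Lemma \ref{prelim - lemma on nonconcentric ellipses} to the nonconcentric pair $B_\Lambda(P_i,\rho_i)$, $B_\Lambda(Q_i,\rho_i)$ gives analogous inclusions between those two ellipses with a comparable error. The full asymptotic doubling of Proposition \ref{pseudo tangents - density implies asod}(2), applied to each sandwich, then yields
\[
\frac{\mu(B_\Lambda(P_i,\rho_i r))}{\mu(B_\Lambda(P_i,\rho_i))} \longrightarrow r^n
\qquad \text{and} \qquad
\frac{\mu(B_\Lambda(P_i,\rho_i))}{\mu(B_\Lambda(Q_i,\rho_i))} \longrightarrow 1,
\]
whose product gives the claimed limit.

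The final step upgrades this to an identity for $\nu$ via Lipschitz cutoffs. For $\delta > 0$, let $\psi^\delta_r$ be a $1$-Lipschitz function equal to $1$ on $B(0,r-\delta)$ and vanishing outside $B(0,r)$, and let $\tilde\psi^\delta_r$ equal $1$ on $B(0,r)$ and vanish outside $B(0,r+\delta)$. Since $X_i \to X$, the functions $\psi^\delta_r(\cdot - X_i)$ converge uniformly to $\psi^\delta_r(\cdot - X)$, so the weak convergence $\mu_{Q_i,\rho_i} \rightharpoonup \nu$ combined with the sandwich
\[
\int\psi^\delta_r(Z-X_i)\,d\mu_{Q_i,\rho_i} \leq \mu_{Q_i,\rho_i}(B(X_i,r)) \leq \int\tilde\psi^\delta_r(Z-X_i)\,d\mu_{Q_i,\rho_i}
\]
passes to the limit to give $\int\psi^\delta_r(Z-X)\,d\nu \leq r^n \leq \int\tilde\psi^\delta_r(Z-X)\,d\nu$. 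Monotone convergence as $\delta \searrow 0$ yields $\nu(B(X,r)) \leq r^n \leq \nu(\overline{B(X,r)})$ for every $r > 0$, and replacing $r$ by $r\pm\epsilon$ and letting $\epsilon\to 0$ (using continuity of the Radon measure $\nu$ from above and below on nested balls) upgrades this to the equality $\nu(B(X,r)) = r^n$.

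I expect the main obstacle to be the two-source error bookkeeping in the second step: the mismatched-shape error from $\Lambda(Q_i)\neq\Lambda(P_i)$ (of order $\rho_i^\beta$, controlled by H\"older continuity) and the nonconcentricity between $P_i$ and $Q_i$ (of order $\rho_i$) both must be absorbed simultaneously. Neither the continuity assumption nor asymptotic doubling alone handles them; only their combination, applied through stacked inclusions of ellipses as in Lemma \ref{prelim - lemma on nonconcentric ellipses}, reduces the ratio to $r^n$ in the limit.
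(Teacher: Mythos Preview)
Your overall strategy matches the paper's, and your final step via Lipschitz cutoffs is a clean alternative to the paper's open/closed set limsup/liminf argument. However, there is a genuine gap in your justification of the second displayed limit
\[
\frac{\mu(B_\Lambda(P_i,\rho_i))}{\mu(B_\Lambda(Q_i,\rho_i))} \longrightarrow 1.
\]
Applying Lemma \ref{prelim - lemma on nonconcentric ellipses} at scale $\rho_i$ gives
\[
B_\Lambda(P_i,\rho_i)\subset B_\Lambda\bigl(Q_i,\ \rho_i+\lmin(P_i)^{-1}|P_i-Q_i|+C_K\rho_i^{1+\beta}\bigr),
\]
and since $|P_i-Q_i|=\rho_i|\Lambda(Q_i)X_i|$, the outer radius equals $\rho_i\bigl(1+\lmin(P_i)^{-1}|\Lambda(Q_i)X_i|+O(\rho_i^\beta)\bigr)$. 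The middle term tends to a \emph{nonzero} constant (roughly $c|X|$), so the relative error is $O(1)$, not $o(1)$. Asymptotic doubling then only yields $\limsup_i \mu(B_\Lambda(P_i,\rho_i))/\mu(B_\Lambda(Q_i,\rho_i))\leq (1+c|X|)^n$, which is not $1$. Your phrase ``comparable error'' conflates the shape error (genuinely $O(\rho_i^\beta)$) with the nonconcentricity error (of order $1$ relative to the scale); this is precisely the obstacle the paper flags when it writes that moving the center directly ``would introduce an error comparable to $\rho_i$, which is a larger order of magnitude than what we can allow if $r$ is small.''

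The missing idea is a scale-amplification trick: for a large parameter $\kappa$, factor
\[
\frac{\mu(B_\Lambda(P_i,s))}{\mu(B_\Lambda(Q_i,s))}=\frac{\mu(B_\Lambda(P_i,s))}{\mu(B_\Lambda(P_i,\kappa s))}\cdot\frac{\mu(B_\Lambda(P_i,\kappa s))}{\mu(B_\Lambda(Q_i,\kappa s))}\cdot\frac{\mu(B_\Lambda(Q_i,\kappa s))}{\mu(B_\Lambda(Q_i,s))}.
\]
The outer two factors tend to $\kappa^{-n}$ and $\kappa^{n}$ by doubling and cancel. For the middle factor, Lemma \ref{prelim - lemma on nonconcentric ellipses} now applies at scale $\kappa s$, where the nonconcentricity $|P_i-Q_i|\sim\rho_i|X|$ is small \emph{relative to the radius} $\kappa\rho_i$; the sandwich has relative error $O(|X|/\kappa)$, so doubling traps the middle factor between $(1\pm C|X|/\kappa)^n$. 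Sending $i\to\infty$ and then $\kappa\to\infty$ yields the limit $1$. With this correction (or an equivalent $\varepsilon$-argument as in the paper), your proof goes through.
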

\begin{remark}
    This result remains valid in any codimension.
\end{remark}

To prove this we start with a description of the support of any given $\Lambda$-pseudo tangent measure of $\mu$.
\begin{lemma}
\label{pseudo tangents - characterization of support}
    Suppose $\mu$ is a $\Lambda$-asymptotically optimally doubling measure of dimension $n$ in $\R^{n+1}$ with support $\Sigma$. Let $\rho_i>0$ and $Q_i\in\Sigma$ be such that $\rho_i\to 0$, $Q_i\to Q\in\Sigma$ and $\mu_{Q_i,\rho_i}\rightharpoonup \nu$ as $i\to\infty$, where $\nu$ is a $\Lambda$-pseudo tangent measure of $\mu$. If $T^\Lambda_{Q_i,\rho_i}$ is defined as in \eqref{pseudo tangents - blow up map} and $X\in\R^{n+1}$, then $X\in\mathrm{spt}(\nu)$ if and only if there exist $X_i\in T^\Lambda_{Q_i,r_i}(\Sigma)$ such that $X_i\to X$ as $i\to\infty$.
\end{lemma}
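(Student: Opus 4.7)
The plan is to prove both implications separately. Both rest on the basic identity that for any $E \subset \R^{n+1}$,
\[
\mu_{Q_i, \rho_i}(E) \,=\, \frac{\mu(Q_i + \rho_i \Lambda(Q_i) E)}{\mu(B_\Lambda(Q_i, \rho_i))}.
\]

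For the ``only if'' direction I would argue by contradiction. If no sequence $X_i \in T^\Lambda_{Q_i, \rho_i}(\Sigma)$ converges to $X$, then after passing to a subsequence there exists $r > 0$ with $B(X, r) \cap T^\Lambda_{Q_i, \rho_i}(\Sigma) = \varnothing$ for every $i$. Equivalently, $Q_i + \rho_i \Lambda(Q_i) B(X, r)$ is disjoint from $\Sigma = \mathrm{spt}(\mu)$, so this set has zero $\mu$-measure and therefore $\mu_{Q_i, \rho_i}(B(X, r)) = 0$. The standard lower semicontinuity for weak convergence of Radon measures, applied to the open set $B(X, r)$, then yields $\nu(B(X, r)) \leq \liminf_i \mu_{Q_i, \rho_i}(B(X, r)) = 0$, contradicting $X \in \mathrm{spt}(\nu)$.

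For the ``if'' direction, given $X_i \in T^\Lambda_{Q_i, \rho_i}(\Sigma)$ with $X_i \to X$, fix $r > 0$; the task is to exhibit a constant $c_0 = c_0(r) > 0$ with $\mu_{Q_i, \rho_i}(\overline{B(X, r)}) \geq c_0$ for all large $i$. Setting $Y_i = Q_i + \rho_i \Lambda(Q_i) X_i \in \Sigma$ and taking $i$ so large that $|X - X_i| < r/2$, I would bound
\[
\mu_{Q_i, \rho_i}(\overline{B(X, r)}) \,\geq\, \mu_{Q_i, \rho_i}(B(X_i, r/2)) \,=\, \frac{\mu\bigl(Y_i + \rho_i \Lambda(Q_i) B(0, r/2)\bigr)}{\mu(B_\Lambda(Q_i, \rho_i))}.
\]
The numerator set contains the Euclidean ball $B(Y_i, \rho_i r \lmin(K)/2)$, which in turn contains an anisotropic ball $B_\Lambda(Y_i, c\rho_i)$ for a constant $c = c(r, K) > 0$. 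For the denominator, the estimate $|Y_i - Q_i| \leq \rho_i \lmax(K) |X_i|$ combined with Lemma \ref{prelim - lemma on nonconcentric ellipses} gives $B_\Lambda(Q_i, \rho_i) \subset B_\Lambda(Y_i, C\rho_i)$ for some $C$ independent of large $i$. Then Proposition \ref{pseudo tangents - density implies asod} applied at $Y_i \in \Sigma$ yields, for $i$ large,
\[
\mu(B_\Lambda(Y_i, c\rho_i)) \,\geq\, \tfrac{c^n}{2}\, \mu(B_\Lambda(Y_i, \rho_i)), \qquad \mu(B_\Lambda(Y_i, C\rho_i)) \,\leq\, 2C^n\, \mu(B_\Lambda(Y_i, \rho_i)),
\]
and combining produces the desired uniform bound $c_0 > 0$. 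Finally, the upper semicontinuity of mass on compact sets for weak convergence gives $\nu(\overline{B(X, r)}) \geq \limsup_i \mu_{Q_i, \rho_i}(\overline{B(X, r)}) \geq c_0 > 0$; since $r > 0$ was arbitrary, $X \in \mathrm{spt}(\nu)$.

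The main technical obstacle I foresee lies in the ``if'' direction: the three ellipses $Y_i + \rho_i \Lambda(Q_i) B(0, r/2)$, $B_\Lambda(Y_i, \rho_i)$ and $B_\Lambda(Q_i, \rho_i)$ involve two distinct matrices $\Lambda(Q_i)$ and $\Lambda(Y_i)$ at two nearby but distinct centers, and what matters is that the constants obtained from their comparison are uniform in $i$. This uniformity is exactly what Lemma \ref{prelim - lemma on nonconcentric ellipses} together with the Hölder continuity of $\Lambda$ is designed to deliver, and Proposition \ref{pseudo tangents - density implies asod} then handles the passage between the two radii of the same anisotropic ball. Everything else—the zero-measure contradiction and the portmanteau invocations—is routine.
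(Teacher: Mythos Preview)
Your proof is correct and follows essentially the same approach as the paper: the forward direction is the same contradiction-plus-weak-convergence argument (the paper phrases it with a test function, you use portmanteau lower semicontinuity directly), and the converse proceeds by shifting both numerator and denominator to anisotropic balls centered at the point $Y_i\in\Sigma$ (the paper's $Z_i$), then invoking the doubling property there. The only cosmetic difference is that for the numerator the paper applies Lemma~\ref{prelim - lemma on nonconcentric ellipses} to shift centers, whereas you first pass from $B(X,r)$ to $B(X_i,r/2)$ and then go through a Euclidean ball; both routes yield the same uniform lower bound.
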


\begin{proof}[Proof of Lemma \ref{pseudo tangents - characterization of support}]
    For the forward direction, let $X\in\mathrm{spt}(\nu)$. Suppose, for contradiction, that there exist $\varepsilon_0>0$ and $i_k\in\mathbb{N}$ with $i_k\to\infty$, and for every $i_k$
    \begin{equation}
        \label{pseudo tangents - big distance}
        \mathrm{dist}(X,T^\Lambda_{Q_{i_k},r_{i_k}}(\Sigma))\geq\varepsilon_0.
    \end{equation}
    If $\varphi\in C_c(B(X,\varepsilon_0/2))$ and $\chi_{B(X,\varepsilon_0/4)}\leq\varphi\leq\chi_{B(X,\varepsilon_0/2)}$, by \eqref{pseudo tangents - big distance} we have $\varphi\left(T^\Lambda_{Q_{i_k},r_{i_k}}(Y)\right)=0$ for every $Y\in\Sigma$. Therefore,
    \begin{equation*}
\nu(B(X,\varepsilon_0/4))\leq \int\varphi\de\nu = \lim_{k\to\infty}\frac{1}{\mu(B_\Lambda(Q_{i_k},\rho_{i_k}))}\int_\Sigma\varphi\left(T^\Lambda_{Q_{i_k},r_{i_k}}(Y)\right)\de\mu(Y)=0,
    \end{equation*}
    which contradicts the assumption that $X\in\mathrm{spt}(\nu)$.

    To prove the converse, let $X_i\in T^\Lambda_{Q_i,\rho_i}$ be such that $X_i\to X$ as $i\to\infty$, and write 
    $$X_i = \Lambda(Q_i)^{-1}\left(\frac{Z_i - Q_i}{\rho_i}\right),\quad Z_i\in\Sigma.$$
    Given $r>0$,
    \begin{equation}
    \label{pseudo tangents - initial estimate}
        \begin{split}
\mu_{Q_i,\rho_i}(B(X,r)) &= \frac{\mu(\rho_i\Lambda(Q_i)B(X,r) + Q_i)}{\mu(B_\Lambda(Q_i,\rho_i))}\\
&= \frac{\mu(\rho_i\Lambda(Q_i)(B(0,r) + X) + Q_i)}{\mu(B_\Lambda(Q_i,\rho_i))}\\
        &= \frac{\mu(\Lambda(Q_i)B(0,r\rho_i) + Q_i + \rho_i\Lambda(Q_i)X)}{\mu(B_\Lambda(Q_i,\rho_i))}= \frac{\mu(B_\Lambda(Q_i+\rho_i\Lambda(Q_i)X,r\rho_i))}{\mu(B_\Lambda(Q_i,\rho_i))}.
        \end{split}
    \end{equation}
    To get a lower bound, we need to shift the center $Q_i + \rho_i\Lambda(Q_i)X$ in the numerator to a point in $\Sigma$ so that we can use the doubling assumption. Notice that 
    \begin{equation}
    \label{pseudo tangents - notice 1}
        \begin{split}
            |(Q_i + \rho_i\Lambda(Q_i)X) - Z_i| &= |(Q_i + \rho_i\Lambda(Q_i)X) - (Q_i + \rho_i\Lambda(Q_i)X_i)|\\
        &= \rho_i|\Lambda(Q_i)(X-X_i)|,
        \end{split}
    \end{equation}
    so by Lemma \ref{prelim - lemma on nonconcentric ellipses},
    $$B_\Lambda(Q_i+\rho_i\Lambda(Q_i)X,r\rho_i) \supset B_\Lambda(Z_i, r\rho_i - \lmin(Q_i+\rho_i\Lambda(Q_i)X)^{-1}\rho_i|\Lambda(Q_i)(X-X_i)| - C_K(r\rho_i)^{1+\beta}).$$
    Assuming $i$ is large enough depending on $r$, $K$ and $\Lambda$, we have
    $$\lmin(Q_i+\rho_i\Lambda(Q_i)X)^{-1}|\Lambda(Q_i)(X-X_i)|\leq \frac{r}{4},\quad C_K(r\rho_i)^{1+\beta}\leq\frac{r}{4}.$$
    It follows from the last inclusion above that for all $i$ large enough,
    $$B_\Lambda(Q_i + \rho_i\Lambda(Q_i)X,r\rho_i)\supset B_\Lambda(Z_i,r\rho_i/4).$$
    From this and \eqref{pseudo tangents - initial estimate} we get
    \begin{equation}
        \label{pseudo tangents - spt lower bound}
        \mu_{Q_i\rho_i}(B(X,r))\geq \frac{\mu(B_\Lambda(Z_i,r\rho_i/4))}{\mu(B_\Lambda(Q_i,\rho_i))}.
    \end{equation}
    
    Next, we proceed similarly as above to change the center once more, so that both centers coincide. Note that 
    $$|Q_i - Z_i|= \rho_i|\Lambda(Q_i)X_i|\leq C_K\rho_i,$$
    where $C_{K}>0$ is a constant depending on $X$, $K$ and $\Lambda$. Thus by an application of Lemma \ref{prelim - lemma on nonconcentric ellipses}, equation \eqref{prelim - nested ellipses, larger radius}, we get
    \begin{equation*}
        B_\Lambda(Q_i,\rho_i)\subset B_\Lambda(Z_i, \rho_i + \lmin(Q_i)^{-1}\rho_i|\Lambda(Q_i)X_i| + C_K\rho_i^{1+\beta})\subset B_\Lambda(Z_i,C_K\rho_i).
    \end{equation*}
    Combining this with \eqref{pseudo tangents - spt lower bound} and using the doubling assumption on $\mu$, if $i$ is large enough depending on $r$ and $C_K$,
    \begin{equation*}
        \mu_{Q_i,\rho_i}(B(X,r))\geq \frac{\mu(B_\Lambda(Z_i,r\rho_i/4))}{\mu(B_\Lambda(Z_i,C_K\rho_i))}\geq \frac{1}{2}\left(\frac{r}{4C_K}\right)^n.
    \end{equation*}
    Therefore, since $\mu_{Q_i,\rho_i}\rightharpoonup\nu$, 
    \begin{align*}
        \nu(B(X,2r))\geq \nu(\overline{B(X,r)})\geq \limsup_{i\to\infty}\mu_{Q_i,\rho_i}(\overline{B(X,r)})\geq\frac{1}{2}\left(\frac{r}{4C_K}\right)^n.
    \end{align*}
    This implies that $\nu(B(X,r))>0$ for every $r>0$, which in turn shows that $X\in\mathrm{spt}(\nu)$ as desired.
\end{proof}

\subsection{Proof of Proposition \ref{pseudo tangents - pseudo tangents are uniform}}

\begin{proof}Let $\nu$ be a $\Lambda$-pseudo tangent of $\mu$, and let $Q_i\in\Sigma$ and $\rho_i>0$ be such that $Q_i\to Q$, $\rho_i\to 0$ and  $\mu_{Q_i,\rho_i}\rightharpoonup \nu$ as $i\to\infty$. By Lemma \ref{pseudo tangents - characterization of support}, there exist $X_i\in T^\Lambda_{Q_i,\rho_i}(\Sigma)$ such that $X_i\to X$ as $i\to\infty$. Write
$$X_i = \Lambda(Q_i)^{-1}\left(\frac{Z_i - Q_i}{\rho_i}\right),\quad Z_i\in\Sigma.$$
Let $r>0$. We need to get lower and upper bounds for 
$$\mu_{Q_i,\rho_i}(B(X,r)) = \frac{\mu(B_\Lambda(Q_i + \rho_i\Lambda(Q_i)X), r\rho_i)}{\mu(B_\Lambda(Q_i,\rho_i))}.$$

We start with an upper bound. Let $\varepsilon>0$. As in the proof of Lemma \ref{pseudo tangents - characterization of support}, by \eqref{pseudo tangents - notice 1},
\begin{align*}
    |(Q_i+\rho_i\Lambda(Q_i)X) - Z_i| &= \rho_i|\Lambda(Q_i)(X-X_i)|.
\end{align*}
So an application of Lemma \ref{prelim - lemma on nonconcentric ellipses}, equation \eqref{prelim - nested ellipses, larger radius}, gives
\begin{equation*}
B_\Lambda(Q_i+\rho_i\Lambda(Q_i)X,r\rho_i)\subset B_\Lambda(Z_i, r\rho_i + \lmin(Q_i+\rho_i\Lambda(Q_i)X)^{-1}\rho_i|\Lambda(Q_i)(X-X_i)| + C_K(r\rho_i)^{1+\beta}).
\end{equation*}
If $i$ is large enough depending on $r$, $X$, $K$ and $\Lambda$, we can guarantee that
$$\lmin(Q_i+\rho_i\Lambda(Q_i)X)^{-1}\rho_i|\Lambda(Q_i)(X-X_i)|\leq \varepsilon r\rho_i
,\quad C_K(r\rho_i)^{1+\beta}\leq \varepsilon r\rho_i.$$
It then follows from the inclusion above that for such $i$,
\begin{equation}
\label{pseudo tangents - uniform, first inclusion}
B_\Lambda(Q_i + \rho_i \Lambda(Q_i)X, r\rho_i)\subset B_\Lambda(Z_i, r\rho_i(1+ 2\varepsilon)),
\end{equation}
and consequently,
\begin{equation}
\label{pseudo tangents - uniform, first upper bound}
    \mu_{Q_i,\rho_i}(B(X,r)) \leq \frac{\mu(B_\Lambda(Z_i,r\rho_i(1 + 2\varepsilon)))}{\mu(B_\Lambda(Q_i,\rho_i))}.
\end{equation}
Write
\begin{equation}
    \label{pseudo tangents - uniform, two factors}
    \frac{\mu(B_\Lambda(Z_i, r\rho_i(1+ 2\varepsilon)))}{\mu(B_\Lambda(Q_i,\rho_i))} = 
\frac{\mu(B_\Lambda(Z_i,r\rho_i(1+\varepsilon)))}{\mu(B_\Lambda(Q_i,r\rho_i(1+2\varepsilon)))}\cdot\frac{\mu(B_\Lambda(Q_i,r\rho_i(1+2\varepsilon)))}{\mu(B_\Lambda(Q_i,\rho_i))}.
\end{equation}
Assume without loss of generality that $Q_i\in\Sigma\cap B(Q,1)$. If $i$ is large enough depending on $r$, $\varepsilon$ and $\Lambda$, then by the doubling assumption on $\mu$, the second factor above satisfies
\begin{equation}
    \label{pseudo tangents - uniform, second factor*}
\frac{\mu(B_\Lambda(Q_i,r\rho_i(1+2\varepsilon)))}{\mu(B_\Lambda(Q_i,\rho_i))}\leq (1+\varepsilon)[r(1+2\varepsilon)]^n.
\end{equation}
To deal with the first factor, we would like to move the center $Q_i$ to $Z_i$. However, doing so directly would introduce an error comparable to $\rho_i$, which is a larger order of magnitude than what we can allow if $r$ is small. The following estimate avoids this obstacle. Let $\kappa>0$ be a large constant to be determined. Then for $i$ large depending on $\kappa$  and $\varepsilon$,

\begin{equation}
\label{pseudo tangents - uniform, three fractions}
\begin{array}{rcl}
    \displaystyle\frac{\mu(B_\Lambda(Z_i,r\rho_i(1+\varepsilon)))}{\mu(B_\Lambda(Q_i,r\rho_i(1+2\varepsilon)))}  &=&\displaystyle\frac{\mu(B_\Lambda(Z_i,r\rho_i(1+2\varepsilon)))}{\mu(B_\Lambda(Z_i,\kappa r\rho_i(1+2\varepsilon)))}\cdot\frac{\mu(B_\Lambda(Z_i,\kappa r\rho_i(1+2\varepsilon)))}{\mu(B_\Lambda(Q_i,\kappa r\rho_i(1+2\varepsilon)))}\\
    &&\\
    & &\quad\quad\quad\quad\quad\quad\quad\quad\quad\quad\quad\cdot\displaystyle\frac{\mu(B_\Lambda(Q_i,\kappa r\rho_i(1+2\varepsilon)))}{\mu(B_\Lambda(Q_i, r\rho_i(1+2\varepsilon)))}\\
    &&\\
    &\leq& \displaystyle(1+\varepsilon)^2\cdot\frac{\mu(B_\Lambda(Z_i,\kappa r\rho_i(1+2\varepsilon)))}{\mu(B_\Lambda(Q_i,\kappa r\rho_i(1+2\varepsilon)))}.
\end{array}
\end{equation}

We can now make the centers coincide. Recall that
\begin{equation*}
    |Q_i - Z_i|=\rho_i|\Lambda(Q_i)X_i|\leq C_K\rho_i,
\end{equation*}
where $C_K>0$ is a constant that depends on $X$, $K$ and $\Lambda$. Therefore, by Lemma \ref{prelim - lemma on nonconcentric ellipses},
\begin{equation}
    \label{pseudo tangents - helper inclusion 1}
    \begin{split}
    B_\Lambda(Z_i,\kappa r\rho_i(1+2\varepsilon))&\subset B_\Lambda(Q_i, \kappa r\rho_i(1+2\varepsilon)) +\lmin(Z_i)^{-1}\rho_i|\Lambda(Q_i)X_i| + C_K(\kappa r\rho_i(1+2\varepsilon))^{1+\beta})\\
    &\subset B_\Lambda(Q_i,\kappa r\rho_i(1+2\varepsilon) + C_K\rho_i + C_K(\kappa r\rho_i(1+2\varepsilon))^{1+\beta})\\
    &\subset B_\Lambda\left(Q_i, \kappa r\rho_i\left[1 + 2\varepsilon + \frac{C_K}{\kappa r} + C_K(\kappa r\rho_i)^\beta(1+2\varepsilon)^{1+\beta}\right]\right).
\end{split}
\end{equation}

We now take $\kappa$ to be sufficiently large, depending on $X$, $K$, $\Lambda$, $r$ and $\varepsilon$, so that $\frac{C_K}{\kappa r}<\varepsilon$. In addition, we assume that $i$ is sufficiently large, depending on $X$, $K$, $\Lambda$, $r$ and $\varepsilon$, so that
$$C_K(\kappa r\rho_i)^\beta(1+2\varepsilon)^{1+\beta}\leq \varepsilon.$$
In this scenario, \eqref{pseudo tangents - helper inclusion 1} implies
$$B_\Lambda(Z_i,\kappa r\rho_i(1+2\varepsilon))\subset B_\Lambda(Q_i, \kappa r\rho_i(1+4\varepsilon)).$$
It follows from this inclusion and the doubling assumption on $\mu$,
\begin{align*}
    \frac{\mu(B_\Lambda(Z_i,\kappa r\rho_i(1+2\varepsilon)))}{\mu(B_\Lambda(Q_i,\kappa r\rho_i(1+2\varepsilon)))} &\leq \frac{\mu(B_\Lambda(Q_i,\kappa r\rho_i(1+4\varepsilon)))}{\mu(B_\Lambda(Q_i,\kappa r\rho_i(1+2\varepsilon)))}\\
    &\leq \frac{\mu(B_\Lambda(Q_i,\kappa r\rho_i(1+4\varepsilon)))}{\mu(B_\Lambda(Q_i,\kappa r\rho_i))}\leq (1+\varepsilon)(1+4\varepsilon)^n.
\end{align*}
Combining this with \eqref{pseudo tangents - uniform, three fractions} we get
\begin{equation}
    \label{pseudo tangents - reference for singular set section}\frac{\mu(B_\Lambda(Z_i,r\rho_i(1+\varepsilon)))}{\mu(B_\Lambda(Q_i,r\rho_i(1+2\varepsilon)))}\leq (1+\varepsilon)^3(1+4\varepsilon)^n.
\end{equation}

Putting this together with \eqref{pseudo tangents - uniform, second factor*} and \eqref{pseudo tangents - uniform, first upper bound}, we obtain
\begin{equation*}
    \mu_{Q_i,\rho_i}(B(X,r))\leq (1+\varepsilon)^3(1+4\varepsilon)^n(1+\varepsilon)[r(1+2\varepsilon)]^n\leq r^n(1+4\varepsilon)^{2n+4},
\end{equation*}
for all $i$ large depending on $X$, $K$, $\Lambda$, $r$ and $\varepsilon$. This shows that
\begin{equation}
    \label{pseudo tangents - uniform, final upper bound}
    \limsup_{i\to\infty}\mu_{Q_i,\rho_i}(B(X,r))\leq r^n.
\end{equation}
An analog argument gives 
\begin{equation}
    \label{pseudo tangents - uniform, final lower bound}
    \liminf_{i\to\infty}\mu_{Q_i,\rho_i}(B(X,r))\geq r^n.
\end{equation}

Combining \eqref{pseudo tangents - uniform, final upper bound} and \eqref{pseudo tangents - uniform, final lower bound} we can show that $\nu$ satisfies the desired conclusion. In fact, using that $\mu_{Q_i,\rho_i}\rightharpoonup \nu$ we get
\begin{equation}
\label{pseudo tangents - upper bound final}
\nu(B(X,r))\leq\liminf_{i\to\infty}\mu_{Q_i,\rho_i}(B(X,r))
    \leq r^n,
\end{equation}
and given any $\varepsilon\in (0,1)$,
\begin{equation}
\begin{split}
\label{pseudo tangents - lower bound final}
    \nu(B(X,r))&\geq \nu(\overline{B(X,(1-\varepsilon)r)})\geq\limsup_{i\to\infty}\mu_{Q_i,\rho_i}(\overline{B(X,(1-\varepsilon)r)})\\
    &\geq \limsup_{i\to\infty}\mu_{Q_i,\rho_i}(B(X,(1-\varepsilon)r))\geq [(1-\varepsilon)r]^n.
\end{split}
\end{equation}
Since this holds for every $\varepsilon>0$, we conclude from \eqref{pseudo tangents - upper bound final} and \eqref{pseudo tangents - lower bound final} that
$$\nu(B(X,r))=r^n,$$
completing the proof of Proposition \ref{pseudo tangents - pseudo tangents are uniform}.
\end{proof}

\section{Flatness of a measure with uniform $\Lambda$-pseudo tangents}\label{flatness}

In this section we complete Step 2 of the proof of Proposition \ref{pseudo tangents - main flatness result}. We do this by proving the more general statement that if all $\Lambda$-pseudo tangent measures of $\mu$ are $n$-uniform, and if $\Sigma=\mathrm{spt}(\mu)$ satisfies flatness condition \eqref{pseudo tangents - flatness assumption} when $n\geq 3$, then $\Sigma$ is Reifenberg flat with vanishing constant. This does not require density estimate \eqref{introduction - main density estimate} to be satisfied. However, when proving Theorem \ref{theorem 1}, the fact that all $\Lambda$-pseudo tangent measures of $\mu$ are $n$-uniform will be a consequence of \eqref{introduction - main density estimate}, as discussed in the previous section.

Except for $\delta_K$, all other local constants that arise will eventually be denoted by $C_K$ as before. It may be convenient to recall the quantities associated with $\Lambda$ and any compact set $K\subset\mathbb R^{n+1}$, $\lmin(K)$, $\lmax(K)$ and $e_\Lambda(K)$,  introduced in \eqref{prelim - lmin and lmax of K} and \eqref{prelim - eccentricity}. We will also consider the quantity

\begin{equation}
    \label{flat measures - Mk definition}
    \quad M_K=(2+e_\Lambda(K))\lmax(K).
\end{equation}

\begin{proposition}
\label{flat measures - main result}
    Let $\mu$ be a Radon measure on $\R^{n+1}$ such that all its $\Lambda$-pseudo tangent measures are $n$-uniform, where $\Lambda$ satisfies the continuity assumption of Theorem \ref{theorem 1}, and let $K\subset\R^{n+1}$ be compact. If $n\geq 3$, suppose also that there exists $r_K>0$ such that
    \begin{equation}
        \label{flat measures - fair flatness}
        b\beta_\Sigma(K,r_K)=\sup_{r\in (0,r_K]}\sup_{X\in\Sigma\cap K}\theta_\Sigma(X,r)\leq\delta_K,
    \end{equation}
    where is $\delta_K>0$ is small enough depending on $K$ and $\Lambda$. Then $$\lim_{r\searrow 0}b\beta_\Sigma(K,r)=0.$$
    In particular, if $n\leq 2$, or $n\geq 3$ and \eqref{flat measures - fair flatness} holds for every compact $K\subset \R^{n+1}$, then  $\Sigma$ is Reifenberg-flat with vanishing constant. 
\end{proposition}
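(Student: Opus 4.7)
The plan is to argue by contradiction. Suppose $b\beta_\Sigma(K,r)$ does not tend to $0$ as $r\to 0$; then there exist $\varepsilon_0>0$ and sequences $X_i\in\Sigma\cap K$, $r_i\searrow 0$ with $b\beta_\Sigma(X_i,r_i)\geq\varepsilon_0$ for every $i$. By compactness of $K$, after passing to a subsequence, we may assume $X_i\to X_\infty\in\Sigma\cap K$. The uniformity of all $\Lambda$-pseudo tangents (together with a brief compactness argument ruling out blow-up of the local doubling ratios for $\mu$) supplies the hypothesis of Lemma \ref{pseudo tangents - existence of pseudo tangents}, so a further subsequence produces a $\Lambda$-pseudo tangent $\nu$ of $\mu$ at $X_\infty$ with $\mu_{X_i,r_i}\rightharpoonup\nu$. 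By hypothesis $\nu$ is $n$-uniform, so Theorem \ref{flat measures - kowalski preiss} leaves only two possibilities up to translation, rotation and rescaling: either $\nu$ is flat, or $n\geq 3$ and $\nu$ is the Kowalski--Preiss cone. For $n\leq 2$ the second option is absent and $\nu$ is automatically flat.

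For $n\geq 3$ I would use hypothesis \eqref{flat measures - fair flatness} to exclude the cone. Write $T_i:=T^\Lambda_{X_i,r_i}$ and $\Sigma_i:=T_i(\Sigma)=\mathrm{spt}(\mu_{X_i,r_i})$. For each fixed $R>0$, once $i$ is large enough that $r_iR\leq r_K$, the bound $b\beta_\Sigma(X_i,r_iR)\leq\delta_K$ is in force. Pushing an optimal approximating $n$-plane through $X_i$ forward by the affine map $T_i$, and tracking the anisotropic distortion induced by $\Lambda(X_i)$ via Lemma \ref{prelim - lemma on nonconcentric ellipses} and Corollary \ref{prelim - lambda and normal reif flat corollary}, together with the uniform data $\lmin(K)$, $\lmax(K)$ and $e_\Lambda(K)$, yields a Euclidean bilateral-flatness bound
\[
b\beta_{\Sigma_i}(0,R')\leq C_K\delta_K,
\]
valid on every prescribed scale $R'>0$ once $i$ is large, with $C_K$ depending only on $K$ and $\Lambda$. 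The uniform density bounds on the blow-ups (inherited from the $n$-uniformity of $\nu$) upgrade the weak convergence $\mu_{X_i,r_i}\rightharpoonup\nu$ to Hausdorff convergence of supports on compact sets, in the spirit of Lemma \ref{pseudo tangents - characterization of support}. Passing to the limit in the display above gives $b\beta_{\mathrm{spt}(\nu)}(0,R')\leq C_K\delta_K$ for every $R'>0$. Since $0\in\mathrm{spt}(\nu)$ (apply Lemma \ref{pseudo tangents - characterization of support} with the constant sequence $0\in T_i(\Sigma)$), and since the Kowalski--Preiss cone admits an absolute positive lower bound $c_n>0$ on $b\beta$ at any of its points at scales large enough to encompass its vertex, choosing $\delta_K$ so that $C_K\delta_K<c_n$ excludes the cone alternative.

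Thus $\nu$ is flat, i.e., $\mathrm{spt}(\nu)=P$ for some $n$-plane $P$ through $0$. Inverting $T_i$, the Hausdorff convergence $\Sigma_i\cap B(0,R')\to P\cap B(0,R')$ translates into the statement that the $n$-plane $P_i:=X_i+\Lambda(X_i)P$ through $X_i$ satisfies $D[\Sigma\cap B(X_i,r_i);P_i\cap B(X_i,r_i)]=o(r_i)$ as $i\to\infty$, contradicting $b\beta_\Sigma(X_i,r_i)\geq\varepsilon_0$. This proves $\lim_{r\searrow 0}b\beta_\Sigma(K,r)=0$ under the hypotheses of the proposition; the final assertion that $\Sigma$ is Reifenberg flat with vanishing constant (when the flatness hypothesis holds on every compact $K$, automatic for $n\leq 2$) is then immediate from the definition. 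The main technical obstacle is the scale-invariant transfer step above: converting a Euclidean flatness bound at scale $r_i$ around $X_i$ into a Euclidean flatness bound at scale $1$ around $0$ for the blow-up, while absorbing the anisotropic factors of $\Lambda(X_i)$ cleanly enough that the (absolute in $i$) smallness of $\delta_K$ still defeats the Kowalski--Preiss constant $c_n$.
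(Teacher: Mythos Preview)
Your proposal is correct and follows the same blow-up/Kowalski--Preiss strategy as the paper: extract a $\Lambda$-pseudo tangent $\nu$ along a would-be bad sequence, use Theorem~\ref{flat measures - kowalski preiss} to reduce to the plane/cone dichotomy, exclude the cone via the flatness hypothesis, and then derive a contradiction from flatness of $\nu$. The one substantive difference is in how the cone is excluded: the paper transfers the $\delta_K$-flatness bound to \emph{every} point $X\in\Sigma_\infty$ at unit scale (which requires approximating $X$ by points of $T^\Lambda_{Q_i,\rho_i}(\Sigma)$ and the auxiliary radii $r_i,s_i,r_i',s_i'$ together with the ``close planes'' claim), then specializes $X$ to the cone vertex; you instead work only at $0$---which lies in every $\Sigma_i$ for free---but across all scales $R'$, and use that $b\beta$ of the cone at any of its points tends to the vertex value $\geq 1/\sqrt{2}$ as $R'\to\infty$. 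Your variant is somewhat more economical for this step, at the cost of needing the large-scale asymptotic of the cone rather than just its value at the vertex; both are equally valid.
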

Assuming this result momentarily, the proof of Proposition \ref{pseudo tangents - main flatness result} is short.
\begin{proof}[Proof of Proposition \ref{pseudo tangents - main flatness result}]
    By Proposition \ref{pseudo tangents - density implies asod}, $\mu$ is $\Lambda$-asymptotically optimally doubling of dimension $n$, so by Proposition \ref{pseudo tangents - pseudo tangents are uniform} all its $\Lambda$-pseudo tangent measures are $n$-uniform. Proposition \ref{flat measures - main result} then implies that $\Sigma$ is Reifenberg-flat with vanishing constant.
\end{proof}
At the core of the proof of Proposition \ref{flat measures - main result} is Theorem \ref{flat measures - kowalski preiss}. In our case, the measure $\nu$ in that theorem will be a suitable $\Lambda$-pseudo tangent measure of $\mu$ that captures how flat $\mu$ is. The key point is that the light cone in \eqref{flat measures - light cone} is not $\delta$-Reifenberg flat if for example $\delta<1/\sqrt{2}$. This implies that if $\nu$ inherits \eqref{flat measures - fair flatness}, then by Theorem \ref{flat measures - kowalski preiss}, $\nu$ must be flat. Such information can then be used to show that $\Sigma$ is Reifenberg flat with vanishing constant. This approach follows ideas developed by Kenig and Toro in \cite{KT99} in the Euclidean setting.\\ 

\begin{remark}
\label{flat measures - compactness of hausdorff distance}
    Before proceeding with the proof, we record for later use the following compactness property of Hausdorff distance: if $\Gamma_i\subset\R^{n+1}$ contains the origin for all $i\in\mathbb{N}$, then there exists a subsequence $i_k$ and a set $\Gamma\subset\R^{n+1}$ such that
$$\Gamma_{i_k}\to \Gamma,$$
with respect to Hausdorff distance, uniformly on compact subsets of $\R^{n+1}$.
\end{remark}

\begin{proof}[Proof of Proposition \ref{flat measures - main result}]
    Let $K\subset\R^{n+1}$ be compact. Consider
    $$\ell = \lim_{\tau\searrow 0}b\beta_\Sigma(K,\tau),$$
    where 
    $b\beta_\Sigma(K,\tau)$ is as in \eqref{flat measures - fair flatness}. We will show that $\ell=0$. Let $\tau_i>0$ be such that $\tau_i\searrow 0$ and $b\beta_\Sigma(K,\tau_i)\to\ell$. Let $Q_i\in\Sigma\cap K$ be points for which
    \begin{equation}
        \label{flat measures - tau i approximates ell}
        b\beta_\Sigma(Q_i,\tau_i)\to \ell.
    \end{equation}
     Since $\Sigma\cap K$ is compact, we may assume without loss of generality that $Q_i\to Q\in\Sigma\cap K$. We will need to work with the auxiliary scales
    \begin{equation}
    \label{flat measures - auxiliary scales}
    \rho_i=\lmin(K)^{-1}\tau_i.
    \end{equation}
    
    Recall the map
    $$T^\Lambda_{Q_i,\rho_i}(X) = \Lambda(Q_i)^{-1}\left(\frac{X-Q_i}{\rho_i}\right),\quad X\in \R^{n+1}.$$
    Notice first that $0\in T^\Lambda_{Q_i,\rho_i}(\Sigma)$ for all $i$. Thus, by Remark \ref{flat measures - compactness of hausdorff distance} we may assume modulo passing to a subsequence that there exists $\Sigma_\infty\subset\R^{n+1}$ such that
    \begin{equation}
    \label{flat measures - sigma infinity}
        T^\Lambda_{Q_i,\rho_i}(\Sigma)\to\Sigma_\infty,
    \end{equation}
    with respect to Hausdorff distance $D$, uniformly on compact sets. We may also assume upon taking a further subsequence that $\mu_{Q_i,\rho_i}\rightharpoonup \nu$, where $\mu_{Q_i,\rho_i}$ is as in \eqref{pseudo tangents - def of scaled measure} and $\nu$ is a $\Lambda$-pseudo tangent measure of $\mu$. Moreover, we know by Proposition \ref{pseudo tangents - pseudo tangents are uniform} that $\nu$ is $n$-uniform, and we may assume without loss of generality, upon multiplying $\nu$ by a suitable constant, that \eqref{pseudo tangents - uniform measure} is satisfied with with $C=\omega_n$, so that the assumptions of Theorem \ref{flat measures - kowalski preiss} are satisfied. Note that by Lemma \ref{pseudo tangents - characterization of support} and \eqref{flat measures - sigma infinity}, we have
    $$\mathrm{spt}(\nu)=\Sigma_\infty.$$
    Thus, by Theorem \ref{flat measures - kowalski preiss}, we know that $\Sigma_\infty$ must be an $n$-plane or a light cone as in \eqref{flat measures - light cone}.\\
    
    We will now use the fact that $T^\Lambda_{Q_i,\rho_i}\to\Sigma_\infty$ with respect to $D$ and \eqref{flat measures - fair flatness} to rule out the case in which $\Sigma_\infty$ is a light cone. Let $X\in\Sigma_\infty$. By Lemma \ref{pseudo tangents - characterization of support}, there exist points $Z_i\in\Sigma$ such that if
    $$X_i = T^\Lambda_{Q_i,\rho_i}(Z_i),$$
    then $X_i\to X$ as $i\to\infty$. Notice that this implies $|Z_i-Q_i|\to 0$. Assume without loss of generality that $|X-X_i|\leq 1/2$, $|Q-Q_i|\leq 1/2$ and $|Q_i-Z_i|\leq 1/2$. Observe that then $Z_i\in (\Sigma\cap K; 1)$. We consider two auxiliary radii that will help us compare $\Sigma$ with $\Sigma_\infty$,
    $$r_i = \rho_i(1+|X-X_i|),\quad s_i = \rho_i(1-|X-X_i|).$$
    
    We start with a compatibility statement about minimizing planes for $b\beta_\Sigma(Z_i,\cdot)$ at certain scales. For each $i$, let 
    $$r_i' = \lmax(K)r_i,\quad s_i' = \lmax(K)s_i.$$
    Since $\rho_i\to 0$ as $i\to\infty$, we can assume that $r_i'$, $s_i'\leq r_K$ if $i$ is large enough depending on $K$ and $\Lambda$, where $r_K$ is as in the statement of Proposition \ref{flat measures - main result}. First, by \eqref{flat measures - fair flatness} there are $n$-planes $P(Z_i, r_i')$, $P(Z_i, s_i')$ such that
    \begin{equation}
    \label{flat measures - flat ri'}
        D[\Sigma\cap B(Z_i,r_i');P(Z_i,r_i')\cap B(Z_i,r_i')]\leq \delta_Kr_i',
    \end{equation}
    \begin{equation}
    \label{flat measures - flat si'}
        D[\Sigma\cap B(Z_i,s_i');P(Z_i,s_i')\cap B(Z_i,s_i')]\leq \delta_Ks_i'.
    \end{equation}
    Note that by \eqref{flat measures - flat ri'}, \eqref{flat measures - flat si'} and Corollary \ref{prelim - lambda and normal reif flat corollary}, if $\delta_K<\min\{\lmin(K),e_\Lambda(K)^{-1}\}$, then
    \begin{equation}
    \label{flat measures - plane of ri}
        D[\Sigma\cap B_\Lambda(Z_i,r_i);P(Z_i,r_i')\cap B_\Lambda(Z_i,r_i)]\leq M_K\delta_Kr_i,
    \end{equation}
    \begin{equation}
    \label{flat measures - plane of si}
        D[\Sigma\cap B_\Lambda(Z_i,s_i);P(Z_i,s_i')\cap B_\Lambda(Z_i,s_i)]\leq M_K\delta_Ks_i.
    \end{equation}
    \\
    \textit{Claim:} if $\delta_K<\lmin(K)M_K^{-1}/3$, then
    \begin{equation}
    \label{flat measures - close planes}
        P(Z_i,r_i')\cap B_\Lambda(Z_i,s_i)\subset (P(Z_i,s_i')\cap B_\Lambda( Z_i,s_i);\sigma_K\delta_K(s_i+2r_i)),
    \end{equation}
    where $\sigma_K>0$ depends only on $K$ and $\Lambda$.
    \begin{proof}[Proof of the claim]

    The proof of this is analogue to the one in \cite{KT99} for round balls. Given $Y\in P(Z_i,r_i')\cap B_\Lambda(Z_i,s_i)$, write $Y=Z_i+\Lambda(Z_i)W$, where $|W|<s_i$. Consider
    $$\overline{Y}=Z_i+\Lambda(Z_i)\left(\left[1-\frac{M_K\delta_K r_i'}{\lmin(K)s_i'}\right]W\right).$$
    Using that $r_i'/s_i'\leq 3$ and our assumption on $\delta_K$, we see that for all $i$

    \begin{equation}
        \label{pseudo tangents - scaling factor is positive}
        1-\frac{M_K\delta_K r_i'}{\lmin(K)s_i'}>0.
    \end{equation}
    Next, since $(1-\frac{M_K\delta_Kr_i'}{\lmin(K)s_i'})|W|<|W|<s_i$, we have
    \begin{equation*}
        \begin{split}
            \overline{Y}\in B_\Lambda(Z_i,s_i)&\subset B_\Lambda(Z_i,r_i), 
        \end{split}
    \end{equation*}
    and
    \begin{equation}
    \begin{split}
    \label{flat measures - y bar minus y}
        |\Lambda(Z_i)^{-1}(\overline{Y}-Y)|=\frac{M_K\delta_K r_i'}{\lmin(K)s_i'}|W|\leq \frac{M_K\delta_Kr_i}{\lmin(K)s_i}|W|< \lmin(K)^{-1}M_K\delta_Kr_i.
        \end{split}
    \end{equation}
    Moreover, $Y\in P(Z_i,r_i')$ implies that $\overline{Y}\in P(Z_i,r_i')$ as well, by construction. Combining this with \eqref{flat measures - y bar minus y} and recalling that $\lmin(K)^{-1}M_K\delta_K<1/3$, we see that 
    $$\overline{Y}\in P(Z_i,r_i')\cap B_\Lambda(Z_i,r_i).$$
    Thus we can apply \eqref{flat measures - plane of ri} to obtain a point $Z\in \Sigma\cap B_\Lambda(Z_i,r_i)$ such that
    \begin{equation}
        \label{flat meaasures - p minus y bar}
        |Z-\overline{Y}|\leq M_K\delta_K r_i.
    \end{equation}
    Using \eqref{flat meaasures - p minus y bar} and the definition of $\overline{Y}$,
    \begin{align*}
        |\Lambda(Z_i)^{-1}(Z-Z_i)|&\leq |\Lambda(Z_i)^{-1}(Z-\overline{Y})|+|\Lambda(Z_i)^{-1}(\overline{Y}-Z_i)|\\
        &\leq \lmin(K)^{-1}|Z-\overline{Y}|+s_i-\lmin(K)^{-1}M_K\delta_K r_i\\
        &\leq \lmin(K)^{-1}M_K\delta_K r_i+s_i-\lmin(K)^{-1}M_K\delta_K r_i= s_i,
    \end{align*}
    so $Z\in B_\Lambda(Z_i,s_i)$. But we also know $Z\in\Sigma$, so $Z\in\Sigma\cap B_\Lambda(Z_i,s_i)$. Therefore, by \eqref{flat measures - plane of si} there exists $Y'\in P(Z_i,s_i')\cap B_\Lambda(Z_i,s_i)$ such that
    \begin{equation}
    \label{flat measures - y' minus p}
        |Y'-Z|\leq M_K\delta_Ks_i.
    \end{equation}
    Combining \eqref{flat measures - y bar minus y}, \eqref{flat meaasures - p minus y bar} and \eqref{flat measures - y' minus p}, we obtain
    \begin{equation}
    \begin{split}
    \label{pseudo tangents - y minus y bar}
        |Y-Y'|&\leq |Y-\overline{Y}|+|\overline{Y}-Z|+|Z-Y'|\\
        &\leq |\Lambda(Z_i)\Lambda(Z_i)^{-1}(\overline{Y}- Y)| + M_K\delta_Kr_i + M_K\delta_Ks_i\\
        &\leq e_\Lambda(K)M_K\delta_Kr_i + M_K\delta_Kr_i + M_K\delta_Ks_i\leq \sigma_K\delta_K(s_i+2r_i),
        \end{split}
    \end{equation}
    where $\sigma_K=M_K\max\{e_\Lambda(K),1\}$. This completes the proof of the claim.
    \end{proof}
    
    As a next step, we want to unravel \eqref{flat measures - sigma infinity} into estimates that capture how closely $\Sigma$ can be approximated by an affine copy of $\Sigma_\infty$ near $Q$. Let $\varepsilon>0$. Equation \eqref{flat measures - sigma infinity} guarantees that if $i$ is large enough depending on $\varepsilon$ and $K$, then
    \begin{equation}
        \label{flat measures - sigma is close to sigma infinity}
        D[\Sigma_\infty\cap B(X,1),T^\Lambda_{Q_i,\rho_i}(\Sigma)\cap B(X,1)]\leq\varepsilon.
    \end{equation}
    We will use this estimate to obtain inclusions in two directions.\\
\\
    1. On one hand, \eqref{flat measures - sigma is close to sigma infinity} implies
    \begin{align*}
        T^\Lambda_{Q_i,\rho_i}(\Sigma)\cap B(X_i,1-|X-X_i|)&\subset T^\Lambda_{Q_i,\rho_i}(\Sigma)\cap B(X,1)\subset (\Sigma_\infty\cap B(X,1);\varepsilon).
    \end{align*}
    Applying $(T^\Lambda_{Q_i,\rho_i})^{-1}(\cdot) = Q_i + \rho_i\Lambda(Q_i)(\cdot)$, we get
    \begin{equation}
    \label{flat measures - first auxiliary inclusion}
    \begin{split}
        \Sigma\cap [Q_i+\rho_i\Lambda(Q_i)B(X_i,1-|X-X_i|)]&\subset ((T^\Lambda_{Q_i,\rho_i})^{-1}[\Sigma_\infty\cap B(X,1)];\lmax(Q_i)\rho_i\varepsilon)\\
        &\subset ((T^\Lambda_{Q_i,\rho_i})^{-1}[\Sigma_\infty\cap B(X,1)];\lmax(K)\rho_i\varepsilon).
        \end{split}
    \end{equation}
    We would like to adjust the left hand side in a way that it looks like the intersection of $\Sigma$ with a suitable ellipse. We proceed as follows,
    \begin{align*}
        B_\Lambda(Z_i,s_i)
        &= Z_i + \rho_i\Lambda(Z_i)B(0,1-|X-X_i|)\\
        &\subset Z_i+\rho_i\Lambda(Q_i)B(0,1-|X-X_i|) + \rho_i(\Lambda(Z_i)-\Lambda(Q_i))B(0,1-|X-X_i|)\\
        &\subset Q_i + \rho_i\Lambda(Q_i)X_i + \rho_i\Lambda(Q_i)B(0,1-|X-X_i|) + B(0,\varepsilon\rho_i(1-|X-X_i|))\\
        &= Q_i+\rho_i\Lambda(Q_i)B(X_i,1-|X-X_i|) + B(0,\varepsilon\rho_i(1-|X-X_i|))\\
        &\subset (Q_i+\rho_i\Lambda(Q_i)B(X_i,1-|X-X_i|); \varepsilon\rho_i(1-|X-X_i|)),
    \end{align*}
where the third line holds for all $i$ large enough, depending on $K$ and $\Lambda$, by continuity of $\Lambda$. Combining this with \eqref{flat measures - first auxiliary inclusion} and recalling that $\rho_i(1-|X-X_i|)=s_i$, we get
\begin{equation}
\label{flat measures - first inclusion between sigmas}
\begin{split}
    \Sigma\cap B_\Lambda (Z_i,s_i)&\subset (\Sigma\cap [Q_i+\rho_i\Lambda(Q_i)B(X_i,1-|X-X_i|)];\varepsilon s_i)\\
    &\subset ((T^\Lambda_{Q_i,\rho_i})^{-1}[\Sigma_\infty\cap B(X,1)];\varepsilon s_i + \lmax(K)\rho_i\varepsilon).
    \end{split}
\end{equation}
2. The other inclusion we can extract from \eqref{flat measures - sigma is close to sigma infinity} is
\begin{align*}
    \Sigma_\infty\cap B(X,1)&\subset (T^\Lambda_{Q_i,\rho_i}(\Sigma)\cap B(X,1);\varepsilon)\subset (T^\Lambda_{Q_i,\rho_i}(\Sigma)\cap B(X_i,1+|X-X_i|);\varepsilon).
\end{align*}
Applying $(T^\Lambda_{Q_i,\rho_i})^{-1}(\cdot)$ as before, this inclusion gives
\begin{equation}
\label{flat measures - second auxiliary inclusion}
\begin{split}
    (T^\Lambda_{Q_i,\rho_i})^{-1}[\Sigma_\infty\cap B(X,1)]\subset (\Sigma\cap\left\lbrace Q_i+\rho_i\Lambda(Q_i)B(X_i,1+|X-X_i|)\right\rbrace;\lmax(K)\rho_i\varepsilon).
\end{split}
\end{equation}
Proceeding similarly as above, we can make the right hand side look like the intersection of $\Sigma$ with a suitable ellipse. Namely, if $i$ is large enough depending on $K$ and $\Lambda$,
    \begin{align*}
        Q_i+&\rho_i\Lambda(Q_i)(B(X_i,1+|X-X_i|))\\
        &= Q_i+\rho_i\Lambda(Q_i)(X_i+B(0,1+|X-X_i|))\\
        &= Z_i + \rho_i\Lambda(Q_i)B(0,1+|X-X_i|)\\
        &\subset Z_i + \rho_i\Lambda(Z_i)B(0,1+|X-X_i|) + \rho_i(\Lambda(Q_i)-\Lambda(Z_i))B(0,1+|X-X_i|)\\
        &= B_\Lambda(Z_i, r_i) + \rho_i(\Lambda(Q_i)-\Lambda(Z_i))B(0,1+|X-X_i|)\subset (B_\Lambda(Z_i, r_i);\varepsilon r_i).
    \end{align*}
    This and \eqref{flat measures - second auxiliary inclusion} give 
    \begin{equation}
    \label{flat measures - second inclusion between sigmas}
    \begin{split}
        (T^\Lambda_{Q_i,\rho_i})^{-1}[\Sigma_\infty\cap B(X,1)]\subset (\Sigma\cap B_\Lambda(Z_i,r_i);\lmax(K)\rho_i\varepsilon + \varepsilon r_i).
    \end{split}
    \end{equation}
    
    We would now like to use  \eqref{flat measures - first inclusion between sigmas} and \eqref{flat measures - second inclusion between sigmas} along with assumption \eqref{flat measures - fair flatness} to show that $\Sigma_\infty$ must be a plane. Recall the planes $P(Z_i,r_i')$, $P(Z_i,s_i')$ from \eqref{flat measures - plane of ri} and \eqref{flat measures - plane of si}. On one hand, using \eqref{flat measures - close planes}, \eqref{flat measures - plane of si} and \eqref{flat measures - first inclusion between sigmas}, and keeping in mind that $\rho_i-s_i=\rho_i|X-X_i|$, $ s_i\leq \rho_i$ and $r_i\leq 2\rho_i$, we see that
    \begin{equation}
    \label{flat measures - final inclusion 1}
        \begin{split}
            P(Z_i,r_i')\cap B_\Lambda(Z_i,\rho_i)&\subset (P(Z_i,r_i')\cap B_\Lambda(Z_i,s_i);\lmax(K)\rho_i|X-X_i|)\\
            &\subset (P(Z_i,s_i')\cap B_\Lambda(Z_i,s_i);C_K\rho_i|X-X_i|+\sigma_K\delta_K(s_i+2r_i))\\
            &\subset \left(\Sigma\cap B_\Lambda(Z_i,s_i); C_K(\rho_i|X-X_i|+s_i+2r_i) + M_K\delta_Ks_i\right)\\
            &\subset ((T^\Lambda_{q_i,\rho_i})^{-1}\left[\Sigma_\infty\cap B(X,1)\right]; C_K(\rho_i|X-X_i|+s_i+2r_i+\delta_K\rho_i)+ \varepsilon(s_i+\lmax(K)\rho_i))\\
            &\subset ((T^\Lambda_{q_i,\rho_i})^{-1}\left[\Sigma_\infty\cap B(X,1)\right]; C_K\rho_i(|X-X_i|+\delta_K+\varepsilon)).
        \end{split}
    \end{equation}
    Similarly, from \eqref{flat measures - second inclusion between sigmas}, \eqref{flat measures - plane of ri} and \eqref{flat measures - close planes}
    we get
    \begin{equation}
    \label{flat measures - final inclusion 2}
        \begin{split}
            (T^\Lambda_{Q_i,\rho_i})^{-1}[\Sigma_\infty\cap B(X,1)]&\subset(\Sigma\cap B_\Lambda(Z_i,r_i); \varepsilon(\lmax(K)\rho_i+r_i))\\
            &\subset (P(Z_i,r_i')\cap B_\Lambda(Z_i,r_i);\varepsilon(C_K\rho_i + r_i)+ M_K\delta_Kr_i)\\
            &\subset (P(Z_i,r_i')\cap B_\Lambda(Z_i,\rho_i); \varepsilon(C_K\rho_i + r_i)+ C_K\delta_K\rho_i + \lmax(K)\rho_i|X-X_i|)\\
            &\subset (P(Z_i,r_i')\cap B_\Lambda(Z_i,\rho_i); C_K\rho_i(|X-X_i|+\delta_K+\varepsilon)).
        \end{split}
    \end{equation}
    
    We now want to apply $T^\Lambda_{Q_i,\rho_i}(\cdot)$ on \eqref{flat measures - final inclusion 1} and \eqref{flat measures - final inclusion 2}. Notice that 
    \begin{equation}
    \label{flat measures - helper inclusion 5}
    \begin{split}
        T^\Lambda_{Q_i,\rho_i}(B_\Lambda(Z_i,\rho_i))&= \Lambda(Q_i)^{-1}\left(\frac{B_\Lambda(Z_i,\rho_i)-Q_i}{\rho_i}\right)\\
        &= \Lambda(Q_i)^{-1}\left(\frac{Z_i-Q_i}{\rho_i}\right)+\Lambda(Q_i)^{-1}\Lambda(Z_i)B(0,1)\\
        &= X_i+\Lambda(Q_i)^{-1}\Lambda(Z_i)B(0,1).
        \end{split}
    \end{equation}
    We would like to compare this set with $B(X_i,1)$. Notice that by continuity of $\Lambda$ and because $|Z_i-Q_i|\to 0$, if $i$ is large enough depending on $K$ and $\Lambda$, we have
    \begin{equation*}
    \|\Lambda(Q_i)^{-1}\Lambda(Z_i)-\mathrm{Id}\|\leq \varepsilon,\quad \|\Lambda(Z_i)^{-1}\Lambda(Q_i)-\mathrm{Id}\|\leq\varepsilon.  
    \end{equation*}
    
    We claim that this implies
    \begin{equation}
    \label{flat measures - helper inclusion 4}
        \begin{split}
            B(0,1-\varepsilon)&\subset \Lambda(Q_i)^{-1}\Lambda(Z_i)B(0,1)\subset B(0,1+\varepsilon).
        \end{split}
    \end{equation}
    To see this, note that on one hand,
    \begin{equation*}
        \begin{split}
            \Lambda(Q_i)^{-1}\Lambda(Z_i)B(0,1)&\subset (\Lambda(Q_i)^{-1}\Lambda(Z_i)-I)B(0,1)+B(0,1)\\
            &\subset B(0,\varepsilon)+B(0,1)= B(0,1+\varepsilon).
        \end{split}
    \end{equation*}
    On the other hand,
    \begin{equation*}
        \begin{split}
            B(0,1-\varepsilon)&\subset (I-\Lambda(Q_i)^{-1}\Lambda(Z_i))B(0,1-\varepsilon)+\Lambda(Q_i)^{-1}\Lambda(Z_i)B(0,1-\varepsilon)\\
            &\subset B(0,\varepsilon(1-\varepsilon))+\Lambda(Q_i)^{-1}\Lambda(Z_i)B(0,1-\varepsilon)\\
            &\subset \Lambda(Q_i)^{-1}\Lambda(Z_i)\left[(\Lambda(Z_i)^{-1}\Lambda(Q_i)-I)B(0,\varepsilon(1-\varepsilon))+B(0,\varepsilon(1-\varepsilon))\right]\\
            &\quad\quad\quad\quad\quad\quad\quad\quad\quad\quad\quad\quad\quad\quad\quad+\Lambda(Q_i)^{-1}\Lambda(Z_i)B(0,1-\varepsilon)\\
            &\subset \Lambda(Q_i)^{-1}\Lambda(Z_i)\left[B(0,\varepsilon^2)+B(0,\varepsilon(1-\varepsilon))\right]+\Lambda(Q_i)^{-1}\Lambda(Z_i)B(0,1-\varepsilon)\\
            &\subset \Lambda(Q_i)^{-1}\Lambda(Z_i)B(0,1).
        \end{split}
    \end{equation*}
    These inclusions prove \eqref{flat measures - helper inclusion 4}.
    
    Now, combining \eqref{flat measures - helper inclusion 5} and \eqref{flat measures - helper inclusion 4} we obtain
    \begin{equation}
        \label{flat measures - helper inclusion 6}
        \begin{split}
            B(X_i,1-\varepsilon)&\subset T^\Lambda_{Q_i,\rho_i}(B_\Lambda(Z_i,\rho_i))\subset B(X_i,1+\varepsilon).
        \end{split}
    \end{equation}
    Denote by $P_i$ the plane $T^\Lambda_{Q_i,\rho_i}(P(Z_i,r_i'))$, and notice that $X_i\in P_i$.
    Applying $T^\Lambda_{Q_i,\rho_i}$ on \eqref{flat measures - final inclusion 1} and \eqref{flat measures - final inclusion 2}, we obtain
    \begin{equation*}
        P_i\cap T^\Lambda_{Q_i,\rho_i}(B_\Lambda(Z_i,\rho_i))\subset (\Sigma_\infty\cap B(X,1);C_K(|X-X_i|+\delta_K+\varepsilon)),
    \end{equation*}
    \begin{equation*}
        \Sigma_\infty\cap B(X,1)\subset (P_i\cap T^\Lambda_{Q_i,\rho_i}(B_\Lambda(Z_i,\rho_i));C_K(|X-X_i|+\delta_K+\varepsilon)).
    \end{equation*}
    Taking now \eqref{flat measures - helper inclusion 6} into account, the last two inclusions above give, respectively,
    \begin{equation*}
    \label{flat measures - helper inclusion 7}
        \begin{split}
            P_i\cap B(X_i,1)&\subset (P_i\cap B(X_i,1-\varepsilon);\varepsilon)\subset (P_i\cap T^\Lambda_{Q_i,\rho_i}(B_\Lambda(Z_i,\rho_i));\varepsilon)\\
            &\subset (\Sigma_\infty\cap B(X,1);C_K(|X-X_i|+\delta_K+\varepsilon)),
        \end{split}
    \end{equation*}
    and
    \begin{equation*}
    \label{flat measures - helper inclusion 8}
        \begin{split}
            \Sigma_\infty\cap B(X,1)&\subset (P_i\cap T^\Lambda_{Q_i,\rho_i}(B_\Lambda(Z_i,\rho_i));C_K(|X-X_i|+\delta_K+\varepsilon))\\
            &\subset (P_i\cap B(X_i,1+\varepsilon);C_K(|X-X_i|+\delta_K+\varepsilon))\\
            &\subset (P_i\cap B(X_i,1); C_K(|X-X_i|+\delta_K+\varepsilon)).
        \end{split}
    \end{equation*}
   These inclusions show that
    \begin{equation}
        \label{flat measures - sigma infty is close to a plane}
        D[\Sigma_\infty\cap B(X,1);P_i\cap B(X_i,1)]\leq C_K(|X-X_i|+\delta_K+\varepsilon).
    \end{equation}
    
    To conclude, we want to replace $X_i$ with $X$ in this estimate and use it to rule out the case in which $\Sigma_\infty$ is a cone. Let $P_i' = P_i -X_i+X$. Since $X_i\in P_i$, we have $X\in P_i'$. Also, note that
    $$D[P_i'\cap B(X,1);P_i\cap B(X_i,1)]\leq |X-X_i|.$$
    Combining this with \eqref{flat measures - sigma infty is close to a plane}, we get
    \begin{equation}
    \label{flat measures - helper inequality 10}
        D[\Sigma_\infty\cap B(X,1);P_i'\cap B(X,1)]\leq C_K(|X-X_i|+\delta_K+\varepsilon).
    \end{equation}
    By compactness of the space of $n$-planes through $X$ in $\R^{n+1}$, we can assume upon passing to a subsequence, that there is an $n$-plane $P_X$ through $X$ such that $P_i'\to P_X$ with respect to $D$, uniformly on compact sets. By \eqref{flat measures - helper inequality 10}, $P_X$ satisfies
    \begin{equation}
    \label{flat measures - helper inequality 11}
        D[\Sigma_\infty\cap B(X,1);P_X\cap B(X,1)]\leq C_K(\delta_K+\varepsilon).
    \end{equation}
    
    Now, by Theorem \ref{flat measures - kowalski preiss}, if $\Sigma_\infty$ is not an $n$-plane, then $n\geq 3$ and there exist $X_\infty\in\Sigma_\infty$ and a rotation $\mathcal{R}$ of $\R^{n+1}$ such that $\mathcal{R}(\Sigma_\infty - X_\infty)$ is the light cone 
    $$\mathcal{C}=\{(x_1,\cdots, x_{n+1})\in \R^{n+1}: x_4^2=x_1^2+x_2^2+x_3^2\}.$$
    In such scenario, applying \eqref{flat measures - helper inequality 11} with $X=X_\infty$ and denoting by $L$ the plane $\mathcal{R}(P_{X_\infty}-X_\infty)$, we get
    \begin{equation*}
        \begin{split}
            D[\mathcal{C}\cap B(0,1);L\cap B(0,1)]&= D[\Sigma_\infty\cap B(X_\infty,1);P_{X_\infty}\cap B(X_\infty,1)]\leq C_K(\delta_K+\varepsilon).
        \end{split}
    \end{equation*}
    Notice that since $P_{X_\infty}$ contains $X_\infty$, $L$ must contain the origin. Then if $\delta_K<C_K^{-1}/\sqrt{2}$ and $\varepsilon$ is small enough,
    \begin{equation}
    \label{flat measures - helper inequality 12}
    D[\mathcal{C}\cap B(0,1);L\cap B(0,1)]<\frac{1}{\sqrt{2}}.
    \end{equation}
    However, a quick calculation shows that this inequality fails for every plane $L$ through the origin. It follows that $\Sigma_\infty$ must be an $n$-plane. Moreover, since $\Sigma_\infty=\mathrm{spt}(\nu)$ and $\nu$ is a $\Lambda$-pseudo tangent measure of $\mu$, we have $0\in\Sigma_\infty$ by Remark \ref{pseudo tangents - 0 is in the support}. So we can use \eqref{flat measures - sigma is close to sigma infinity} with $X=0$ to get
    \begin{equation*}
        D[\Sigma_\infty\cap B(0,1);T^\Lambda_{Q_i,\rho_i}(\Sigma)\cap B(0,1)]\leq \varepsilon.
    \end{equation*}
    Applying $T^\Lambda_{Q_i,\rho_i}(\cdot)=Q_i+\rho_i\Lambda(Q_i)(\cdot)$, we obtain
    \begin{equation}
    \label{flat measures - helper inequality 13}
        D[\Sigma_\infty^{(i)}\cap B_\Lambda(Q_i,\rho_i);\Sigma\cap B_\Lambda(Q_i,\rho_i)]\leq \lmax(K)\rho_i\varepsilon,
    \end{equation}
    where $\Sigma_\infty^{(i)}=T^\Lambda_{Q_i,\rho_i}(\Sigma_\infty)$. Notice that $\Sigma_\infty^{(i)}$ is an $n$-plane containing $Q_i$. Now recall that $\rho_i=\lmin(K)^{-1}\tau_i$, so if we combine \eqref{flat measures - helper inequality 13} with Corollary \ref{prelim - lambda and normal reif flat corollary}, we get 
    \begin{equation*}
        \begin{split}
            D[\Sigma_\infty^{(i)}\cap B(Q_i,\tau_i);\Sigma\cap B(Q_i,\tau_i)]&\leq 2\lmax(K)\rho_i\varepsilon\\
            &= 2e_\Lambda(K)\tau_i\varepsilon.
        \end{split}
    \end{equation*}
    Combining this with \eqref{flat measures - tau i approximates ell}, we deduce that
    \begin{equation*}
        \begin{split}
            \lim_{\tau\to 0}b\beta_\Sigma(K,\tau)&=\lim_{i\to\infty}b\beta_\Sigma(Q_i,\tau_i)\leq \limsup_{i\to\infty}\frac{1}{\tau_i}D[\Sigma_\infty^{(i)}\cap B(Q_i,\tau_i);\Sigma\cap B(Q_i,\tau_i)]\leq 2e_\Lambda(K)\varepsilon.
        \end{split}
    \end{equation*}
    Since this holds for every $\varepsilon>0$, we conclude that
    $$\lim_{\tau\to 0}b\beta_\Sigma(K,\tau)=0,$$
    completing the proof of Proposition \ref{flat measures - main result}.
    \end{proof}

\section{Proof of Theorem \ref{theorem 2}}\label{asod}

    The key idea of the proof is that the doubling condition \eqref{introduction - lambda holder asod} can be used to obtain information about the density $\Theta_\Lambda(\mu,X)$ introduced in \eqref{introduction - elliptic density definition}. More specifically, the assumptions of Theorem \ref{theorem 2} imply that \eqref{introduction - main density estimate} holds when $\mu$ is replaced with a certain measure which has the same support as $\mu$, and $\alpha$ is replaced with a number that depends on $\alpha$ and $\beta$, making Theorem \ref{theorem 1} applicable. These ideas are contained in the following lemma.

    \begin{lemma}
    \label{asod - technical lemma}
        Let $\Lambda$ and $\mu$ be as in the assumptions of Theorem \ref{theorem 2}. Then 
        \begin{equation}
            \label{asod - theta is finite}
            0<\Theta_\Lambda(\mu,X)<\infty,
        \end{equation}
        for every $X\in\Sigma=\mathrm{spt}(\mu)$. Also, for every compact set $K\subset\mathbb R^{n+1}$ there exists a constant $C_K>0$ depending on $K$ and $\Lambda$, such that
        \begin{equation}
            \label{asod - log theta holder continuous}
            |\log\Theta_\Lambda(X)-\log\Theta_\Lambda(Y)|\leq C_K|X-Y|^{\frac{\gamma}{1+\alpha}},
        \end{equation}
        whenever $X,Y\in\Sigma\cap K$ and $|X-Y|\leq \Delta_K$, where $\Delta_K>0$ is small enough depending on $K$ and $\Lambda$, and $\gamma=\min\{\alpha,\beta\}$. Moreover, the measure 
        $$\de\mu_0(X)=\frac{1}{\Theta_\Lambda(\mu,X)}\de\mu(X)$$
        is a Radon measure with $\mathrm{spt}(\mu_0)=\Sigma$, with the property that for every compact set $K\subset\mathbb R^{n+1}$ there exist $r_K>0$ and $C_K>0$ such that for every $X\in K\cap\Sigma$ and $r\in(0,r_K]$,
        \begin{equation}
            \label{asod - mu0 has density 1}
            \left|\frac{\mu_0(B_\Lambda(X,r))}{\omega_nr^n}-1\right|\leq C_Kr^{\gamma'},
        \end{equation}
        where $\gamma'=\frac{\min\{\alpha,\beta\}}{1+\alpha}$. 
        
    \end{lemma}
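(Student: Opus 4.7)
The plan is to first extract a multiplicative form of the density estimate from the doubling condition, then use it to compare densities at nearby points, and finally verify the density of $\mu_0$ by a splitting argument.

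I start by iterating \eqref{introduction - lambda holder asod} at dyadic scales. Writing $g(r)=\mu(B_\Lambda(X,r))/(\omega_nr^n)$ and taking $t=1/2$, the doubling gives $|g(r/2)/g(r)-1|\leq C_Kr^\alpha$ for every $X\in\Sigma\cap K$ and $r\in(0,r_K]$ small. Iterating,
$$\frac{g(r/2^k)}{g(r)}=\prod_{j=0}^{k-1}\frac{g(r/2^{j+1})}{g(r/2^j)}\in\left[\exp(-C_Kr^\alpha),\exp(C_Kr^\alpha)\right],$$
which shows that $g(r)$ is Cauchy as $r\searrow 0$ and converges to $\omega_n\Theta_\Lambda(\mu,X)$. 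Since $X\in\Sigma$ forces $g(r)>0$, the bracketing inclusion preserves positivity in the limit, proving \eqref{asod - theta is finite}. Passing to the limit in $k$ also yields the uniform multiplicative density estimate
$$\left|\frac{\mu(B_\Lambda(X,r))}{\omega_nr^n\,\Theta_\Lambda(\mu,X)}-1\right|\leq C_Kr^\alpha,\quad X\in\Sigma\cap K,\ r\in(0,r_K],$$
whose constant is independent of the (a priori unknown) value of $\Theta_\Lambda(\mu,X)$.

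To derive \eqref{asod - log theta holder continuous}, I compare $\mu(B_\Lambda(X,r))$ and $\mu(B_\Lambda(Y,r))$ via Lemma \ref{prelim - lemma on nonconcentric ellipses}, which for $|X-Y|$ small relative to $r$ yields the inclusions $B_\Lambda(Y,r-\epsilon)\subset B_\Lambda(X,r)\subset B_\Lambda(Y,r+\epsilon)$ with $\epsilon\leq C_K(|X-Y|+r^{1+\beta})$. Substituting the multiplicative density estimate into $\mu(B_\Lambda(Y,r\pm\epsilon))$ and $\mu(B_\Lambda(X,r))$, dividing out the $\Theta_\Lambda$ factors, and using $(1\pm\epsilon/r)^n=1+O(\epsilon/r)$ for $\epsilon/r$ small, I obtain
$$\left|\frac{\Theta_\Lambda(\mu,X)}{\Theta_\Lambda(\mu,Y)}-1\right|\leq C_K\left(\frac{|X-Y|}{r}+r^\alpha+r^\beta\right)\leq C_K\left(\frac{|X-Y|}{r}+r^\gamma\right).$$
Optimizing in $r$ gives $r=|X-Y|^{1/(1+\gamma)}$, under which both the smallness hypotheses of Lemma \ref{prelim - lemma on nonconcentric ellipses} and the requirement $\epsilon/r\ll 1$ reduce to $|X-Y|$ being small. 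Taking logarithms and using $\gamma/(1+\gamma)\geq\gamma/(1+\alpha)$ (since $\gamma\leq\alpha$) to bound $|X-Y|^{\gamma/(1+\gamma)}\leq|X-Y|^{\gamma/(1+\alpha)}$ for $|X-Y|<1$ proves \eqref{asod - log theta holder continuous}.

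From \eqref{asod - log theta holder continuous}, $\log\Theta_\Lambda$ is continuous on the compact set $\Sigma\cap K$, and combined with pointwise positivity this produces uniform positive upper and lower bounds on $\Theta_\Lambda$ there. Hence $\de\mu_0=\Theta_\Lambda(\mu,\cdot)^{-1}\,\de\mu$ is a Radon measure with $\mathrm{spt}(\mu_0)=\Sigma$. To establish \eqref{asod - mu0 has density 1} I split
$$\mu_0(B_\Lambda(X,r))=\frac{\mu(B_\Lambda(X,r))}{\Theta_\Lambda(\mu,X)}+\int_{B_\Lambda(X,r)}\left(\frac{1}{\Theta_\Lambda(\mu,Z)}-\frac{1}{\Theta_\Lambda(\mu,X)}\right)\de\mu(Z).$$
The first term equals $\omega_nr^n(1+O(r^\alpha))$ by the estimate from step one, with $\Theta_\Lambda(\mu,X)$ cancelling. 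For the second term, every $Z\in B_\Lambda(X,r)$ satisfies $|Z-X|\leq\lmax(K)r$, so \eqref{asod - log theta holder continuous} and the uniform lower bound on $\Theta_\Lambda$ give $|\Theta_\Lambda(\mu,Z)^{-1}-\Theta_\Lambda(\mu,X)^{-1}|\leq C_Kr^{\gamma/(1+\alpha)}$; integrating against $\mu(B_\Lambda(X,r))\leq C_Kr^n$ contributes at most $C_Kr^{n+\gamma/(1+\alpha)}$, proving \eqref{asod - mu0 has density 1} with $\gamma'=\gamma/(1+\alpha)$. The main obstacle is the Hölder step: one must balance the additive error $|X-Y|/r$ against the inherent $r^\beta$ and $r^\alpha$ losses from the non-concentric ellipses lemma and the density estimate, and verify that the scaling $r\sim|X-Y|^{1/(1+\gamma)}$ is compatible with the geometric hypotheses of Lemma \ref{prelim - lemma on nonconcentric ellipses}.
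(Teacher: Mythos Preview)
Your proof is correct and follows essentially the same strategy as the paper's: iterate the doubling condition at dyadic scales to obtain existence of $\Theta_\Lambda$ together with the multiplicative density estimate, combine that estimate with the nested-ellipses lemma to get the H\"older bound, and then split $\mu_0(B_\Lambda(X,r))$ into a main term plus an oscillation term. Your choice of scale $r=|X-Y|^{1/(1+\gamma)}$ in the H\"older step is in fact slightly sharper than the paper's $r\sim|X-Y|^{1/(1+\alpha)}$ (it yields exponent $\gamma/(1+\gamma)\geq\gamma/(1+\alpha)$, which you then correctly weaken), and there is a harmless typo where $g(r)$ converges to $\Theta_\Lambda(\mu,X)$, not $\omega_n\Theta_\Lambda(\mu,X)$.
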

   \begin{proof}
       The proof is similar to that of \cite[Proposition 6.1]{DKT01}.
       Let $K$ be as in the statement and let $X\in \Sigma\cap K$. For $r_k=2^{-k}$, $k\geq 0$, let
       $$D_k(X)=\frac{\mu(B_\Lambda(X,r_k))}{\omega_nr_k^n},\quad l_k=\log D_k(X),$$
       and for any $t\in[\frac{1}{2},1]$, let
       $$R_t(X,r)=\frac{\mu(B_\Lambda(X,tr))}{\mu(B_\Lambda(X,r))}-t^n.$$
       Notice that
       \begin{align*}
           \frac{D_{k+1}(X)}{D_k(X)}-1&=\frac{2^n\mu(B_\Lambda(X,r_{k+1}))}{\mu(B_\Lambda(X,r_k))}-1= 2^nR_{1/2}(X,r_k).
       \end{align*}
       By \eqref{introduction - lambda holder asod}, we have
       \begin{equation}
           \label{asod - asod bound on R}
           2^nR_{1/2}(X,r_k)\leq C_K2^{-k\alpha}.
       \end{equation}
        Thus, if $k_0$ is large enough and $k\geq k_0$,
        \begin{equation}
            \label{asod - lk cauchy}
            \begin{split}
                |l_{k+1}-l_k|&=\left|\log\frac{D_{k+1}(X)}{D_k(X)}\right|\leq C_K2^{-k\alpha}.
            \end{split}
        \end{equation}
        This implies that the sequence $\{l_k\}$ is Cauchy, so $l_\infty:=\lim_{k\to\infty}l_k$ exists and is finite, and we have
        \begin{equation}
            \label{asod - Dk converges}
        \lim_{k\to\infty}D_k(X)=e^{l_\infty}.\end{equation}
        It also follows from \eqref{asod - lk cauchy} that if $k_0$ is large enough,
        \begin{equation}
            \label{asod - l infty - lk}
            |l_k-l_\infty|\leq C_K2^{-k\alpha}.
        \end{equation}
        
        We will show that
        \begin{equation}
            \label{asod - e to the l infty equals density}
            \Theta_\Lambda(\mu,X)=e^{l_\infty}.
        \end{equation}
        Let $r\in (0,1)$, and write $r=tr_k$ for some $t\in[\frac{1}{2},1]$ and some $k\geq 0$. Then 
        \begin{equation}
        \label{asod - expansion of density ratio}
            \begin{split}
                \frac{\mu(B_\Lambda(X,r))}{\omega_nr^n}&=\frac{\mu(B_\Lambda(X,tr_k))}{\omega_nt^nr_k^n}\\
                &=\frac{\mu(B_\Lambda(X,tr_k))}{t^n\mu(B_\Lambda(X,r_k))} D_{k}(X)= t^{-n}(R_t(X,r_k)+t^n)D_k(X).
            \end{split}
        \end{equation}
        Letting $r\to 0$, we have $r_k\to 0$, $R_t(X,r_k)\to 0$ by \eqref{introduction - lambda holder asod}, and $D_k(X)\to e^{l_\infty}$ by \eqref{asod - Dk converges}. Thus, \eqref{asod - expansion of density ratio} yields \eqref{asod - e to the l infty equals density}, and in particular
        $$0<\Theta_\Lambda(\mu,X)<\infty.$$
        
      We will now prove \eqref{asod - log theta holder continuous}. Let us denote $\delta=\log(1+t^{-n}R_t(X,r_k))$. By \eqref{introduction - lambda holder asod}, and keeping in mind that $t\geq 1/2$ and $r_k\leq 2r$, if $r$ is small enough depending on $K$ and $\Lambda$, we have
      \begin{equation}
      \label{asod - bound on delta}
          \begin{split}
              |\delta|&\leq \log(1+t^{-n}C_Krk^\alpha)\leq C_Kr^\alpha.
          \end{split}
      \end{equation}
      Notice that if $r$ is small enough, then by \eqref{asod - l infty - lk}, \eqref{asod - expansion of density ratio} and \eqref{asod - bound on delta},
      \begin{equation}
          \begin{split}
              \label{asod - log ratio minu log density}\left|\log\frac{\mu(B_\Lambda(X,r))}{\omega_nr^n}-\log \Theta_\Lambda(\mu,X)\right|&\leq |\delta|+|\log D_k(X)-\log \Theta_\Lambda(\mu,X)|\\
              &= |\delta|+|l_k-l_\infty|\leq C_Kr^\alpha,
          \end{split}
      \end{equation}
      where we have used that $2^{-k\alpha}=r_k^\alpha\leq 2^\alpha r^\alpha$. We will show that \eqref{asod - log theta holder continuous} holds when $|X-Y|$ is small, depending on $K$ and $\Lambda$. Suppose $|X-Y|^{\frac{1}{1+\alpha}}< r_{k_0}$, with $k_0$ as in \eqref{asod - lk cauchy} and \eqref{asod - l infty - lk}. Let $k\geq k_0$ be such that 
      $$r_{k+1}\leq |X-Y|^{\frac{1}{1+\alpha}}<r_k.$$
      
      Choosing $k_0$ large enough depending on $K$ and $\Lambda$, we have
      
      $$|X-Y|\leq r_k^{1+\alpha}\leq \lmin(K)\frac{r_k}{2},$$ 
      where $\lmin(K)$ is as in \eqref{prelim - lmin and lmax of K}. In particular, we can apply Lemma \ref{prelim - lemma on nonconcentric ellipses} to ensure that
      $$B_\Lambda(Y,r_k-\lmin(X)^{-1}|X-Y|-C_Kr_k^{1+\beta})\subset B_\Lambda(X,r_k).$$
      Now, using again that $|X-Y|<r_k^{1+\alpha}$, setting $\gamma=\min\{\alpha,\beta\}$ we obtain
      \begin{equation}
      \label{asod - ellipse containment}
          \begin{split}
              r_k-\lmin(X)^{-1}|X-Y|-C_Kr_k^{1+\beta}&\geq r_k-C_K(r_k^{1+\alpha}+r_k^{1+\beta})\geq r_k(1-C_Kr_k^{\gamma})\\
              &= r_k(1-C_Kr_{k+1}^\gamma)\geq r_k(1-C_K|X-Y|^{\frac{\gamma}{1+\alpha}}).
          \end{split}
      \end{equation}
      Denoting $\rho=r_k(1-C_K|X-Y|^{\frac{\gamma}{1+\alpha}})$, we see that \eqref{asod - ellipse containment} implies $B_\Lambda(Y,\rho)\subset B_\Lambda(X,r_k),$
      and thus
      $$\frac{\mu(B_\Lambda(Y,\rho))}{\omega_n\rho^n}\leq\frac{\mu(B_\Lambda(X,r_k))}{\omega_n\rho^n}=\frac{r_k^n}{\rho^n}D_k(X).$$
      Therefore,
      \begin{equation}
          \begin{split}
              \label{asod - bound for log of ratio}
              \log\left(\frac{\mu(B_\Lambda(Y,\rho))}{\omega_n\rho^n}\right)&\leq n\log \frac{r_k}{\rho}+l_k\leq n\log\frac{r_k}{\rho}+l_\infty+C_K2^{-k\alpha}\\
              &= l_\infty+C_Kr_k^{\alpha}-n\log\frac{\rho}{r_k}\\
              &\leq l_\infty+C_K|X-Y|^{\frac{\alpha}{1+\alpha}}-n\log\left(1-C_K|X-Y|^{\frac{\gamma}{1+\alpha}}\right)\\
              &\leq l_\infty+C_K|X-Y|^{\frac{\alpha}{1+\alpha}}+C_K|X-Y|^{\frac{\gamma}{1+\alpha}}\leq l_\infty+C_K|X-Y|^{\frac{\gamma}{1+\alpha}}.
          \end{split}
      \end{equation}
      
      On the other hand, we know by \eqref{asod - log ratio minu log density} that
      \begin{equation}
          \label{asod - log ratio minus log density plus more}
          \left|\log\frac{\mu(B_\Lambda(Y,\rho))}{\omega_n\rho^n}-\log\Theta_\Lambda(\mu,Y)\right|\leq C_K\rho^\alpha\leq C_K|X-Y|^{\frac{\alpha}{1+\alpha}}.
      \end{equation}
      Thus, combining \eqref{asod - e to the l infty equals density}, \eqref{asod - bound for log of ratio} and \eqref{asod - log ratio minus log density plus more} we obtain
      \begin{equation*}
          \log \Theta_{\Lambda}(\mu,Y)\leq l_\infty+C_K|X-Y|^{\frac{\gamma}{1+\alpha}}=\log\Theta_\Lambda(\mu,X)+C_K|X-Y|^{\frac{\gamma}{1+\alpha}}.
      \end{equation*}
      An analog argument can be used to show that
      $$\log \Theta_{\Lambda}(\mu,X)\leq \log\Theta_\Lambda(\mu,Y)+C_K|X-Y|^{\frac{\gamma}{1+\alpha}},$$
      from which \eqref{asod - log theta holder continuous} follows.

       Now we continue with the measure $\mu_0$ defined in the statement of Lemma \ref{asod - technical lemma}. From \eqref{asod - theta is finite} and \eqref{asod - log theta holder continuous}, it follows that $\Theta_\Lambda(\mu,\cdot)$ is locally bounded above and below by positive constants. This implies that $\mu_0$ is a Radon measure with support $\mathrm{spt}(\mu_0)=\mathrm{spt}(\mu)=\Sigma$. We will show that \eqref{asod - mu0 has density 1} holds. Let $X\in K\cap\Sigma$, and suppose that $0<r\leq \lmax(K)^{-1}r_K$ and $r_K<1$. Then every $Y\in B_\Lambda(X,r)$ satisfies $|X-Y|< r_K$ and $Y\in\Sigma\cap (K;1)$, so applying \eqref{asod - log theta holder continuous} to the compact set $(K;1)$,
      \begin{equation}
      \label{asod - bound on eta}
          \eta:=\sup_{Y\in \Sigma\cap B_\Lambda(X,r)}|\log\Theta_\Lambda(\mu,X)-\log\Theta_\Lambda(\mu,Y)|\leq C_Kr^{\frac{\gamma}{1+\alpha}}.
      \end{equation}
      
      From the definition of $\eta$, it follows that for every $Y\in\Sigma\cap B_\Lambda(X,r)$,
      $$e^{-\eta}\leq\frac{\Theta_\Lambda(\mu,Y)}{\Theta_\Lambda(\mu,X)}\leq e^\eta.$$
      If we integrate this inequality with respect to $\de\mu_0(Y)$ over $B_\Lambda(X,r)$, we get
      $$\mu_0(B_\Lambda(X,r))e^{-\eta}\leq\frac{\mu(B_\Lambda(X,r))}{\Theta_\Lambda(\mu,X)}\leq \mu_0(B_\Lambda(X,r))e^\eta,$$
      or equivalently,
      $$e^{-\eta}\leq\frac{\Theta_\Lambda(\mu,X)\mu_0(B_\Lambda(X,r))}{\mu(B_\Lambda(X,r))}\leq e^\eta.$$
      This implies that
      \begin{equation}
          \begin{split}
              \label{asod - log of ratio of mu0}
              \left|\log\frac{\mu_0(B_\Lambda(X,r))}{\omega_nr^n}\right|&=\left|\log\left(\frac{\Theta_\Lambda(X,r)\mu_0(B_\Lambda(X,r))}{\mu(B_\Lambda(X,r))}\cdot\frac{\mu(B_\Lambda(X,r))}{\Theta_\Lambda(\mu,X)\omega_nr^n}\right)\right|\\
              &\ \\
              &\leq \eta+\left|\log\frac{\mu(B_\Lambda(X,r))}{\omega_nr^n}-\log\Theta_\Lambda(\mu,X)\right|.
          \end{split}
      \end{equation}
      It then follows from \eqref{asod - log ratio minu log density}, \eqref{asod - bound on eta} and \eqref{asod - log of ratio of mu0} that
      $$\left|\log\frac{\mu_0(B_\Lambda(X,r))}{\omega_nr^n}\right|\leq C_K(r^{\frac{\gamma}{1+\alpha}}+r^\alpha)\leq C_Kr^{\frac{\gamma}{1+\alpha}},$$
      or equivalently
      $$e^{-C_Kr^{\gamma'}}\leq\frac{\mu_0(B_\Lambda(X,r))}{\omega_nr^n}\leq e^{C_Kr^{\gamma'}},$$
      where $\gamma'=\frac{\gamma}{1+\alpha}$. Thus, for all $r>0$ small enough depending on $K$ and $\Lambda$,
      \begin{equation}
      \label{asod - estimate X}
          1-C_Kr^{\gamma'}\leq\frac{\mu_0(B_\Lambda(X,r))}{\omega_nr^n}\leq 1+C_Kr^{\gamma'},
      \end{equation}
       completing the proof of the lemma. 
   \end{proof}
   We are now ready to prove Theorem \ref{theorem 2}.
   \begin{proof}[Proof of Theorem \ref{theorem 2}]
       Let $\mu$ be as in the assumptions of the Theorem and $\mu_0$ as in Lemma \ref{asod - technical lemma}. Lemma \ref{asod - technical lemma} implies that $\Sigma=\mathrm{spt}(\mu)=\mathrm{spt}(\mu_0)$, and by  \eqref{asod - mu0 has density 1}, $\mu_0$ satisfies the density condition \eqref{introduction - main density estimate} with $\alpha$ replaced by $\frac{\min\{\alpha,\beta\}}{1+\alpha}$. Thus, by Theorem \ref{theorem 1}, $\Sigma$ is a $C^{1,\gamma}$ submanifold of dimension $n$ of $\mathbb R^{n+1}$, where $\gamma\in (0,1)$ depends on $\alpha$ and $\beta$.   
   \end{proof}

\section{Proof of theorem \ref{theorem 3}: the regular set}
\label{nonflat case}

We define the regular set of $\Sigma=\mathrm{spt}(\mu)$ as
\begin{equation}
    \label{nonflat case - regular set}
    \mathcal{R}=\{X\in\Sigma:\limsup_{r\searrow 0}b\beta_{\Sigma}(X,r)=0\},
\end{equation}
and the singular set as $\mathcal{S}=\Sigma\backslash\mathcal{R}$. We devote this section to the study of $\mathcal{R}$. The techniques used here are based on ideas developed in \cite{PTT08} in the Euclidean setting.

\begin{proposition}[The regular set]
\label{nonflat case - regular set}
    Under the assumptions of Theorem \ref{theorem 3}, the set $\mathcal{R}$ defined in \eqref{nonflat case - regular set} is a $C^{1,\gamma}$ submanifold of dimension $n$ of $\R^{n+1}$. In particular, $\mathcal{R}$ is an open subset of $\Sigma$. 
\end{proposition}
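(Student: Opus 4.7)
The plan is to reduce Proposition \ref{nonflat case - regular set} to a local application of Theorem \ref{theorem 2} on $\mathcal{R}$. Specifically, I will show that every $X_0 \in \mathcal{R}$ admits a neighborhood $U$ in which $\Sigma$ satisfies the flatness hypothesis \eqref{introduction - flatness condition} of Theorem \ref{theorem 2} with $\delta_K$ as small as desired. Theorem \ref{theorem 2} then yields that $\Sigma \cap U$ is a $C^{1,\gamma}$ $n$-submanifold; since $C^{1,\gamma}$-regularity implies $b\beta_\Sigma(X,r) \to 0$ at every $X \in \Sigma \cap U$, each such point lies in $\mathcal{R}$, so $\mathcal{R}$ is open in $\Sigma$ and is covered by this family of neighborhoods, making it a $C^{1,\gamma}$ $n$-dimensional submanifold of $\mathbb{R}^{n+1}$.

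The central step is the following local uniform flatness: given $X_0 \in \mathcal{R}$ and $\eta > 0$, there exist a neighborhood $U$ of $X_0$ and $r_0 > 0$ such that $b\beta_\Sigma(X, r) \leq \eta$ for every $X \in \Sigma \cap U$ and every $r \in (0, r_0]$. I would argue by contradiction: otherwise there are sequences $X_i \to X_0$ in $\Sigma$ and $r_i \to 0$ with $b\beta_\Sigma(X_i, r_i) \geq \eta$. The hypothesis of Theorem \ref{theorem 3} directly implies $\mu$ is $\Lambda$-asymptotically optimally doubling, so Lemma \ref{pseudo tangents - existence of pseudo tangents} and Proposition \ref{pseudo tangents - pseudo tangents are uniform} yield a subsequence along which $\mu_{X_i, r_i} \rightharpoonup \nu$ with $\nu$ an $n$-uniform $\Lambda$-pseudo-tangent of $\mu$ at $X_0$. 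If $\nu$ were flat, the Hausdorff convergence of the rescaled supports, combined with Corollary \ref{prelim - lambda and normal reif flat corollary} to transfer between Euclidean and anisotropic balls (together with a matching rescaling of the blow-up scale by $\lmin(K)$), would force $b\beta_\Sigma(X_i, r_i) \to 0$, contradicting the choice of $X_i, r_i$. When $n \leq 2$, Theorem \ref{prelim - uniform measures of dimension 2} guarantees automatically that $\nu$ is flat, and the same contradiction also shows $\mathcal{S} = \varnothing$.

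The main obstacle is dimensions $n \geq 3$: by the Kowalski--Preiss classification (Theorem \ref{flat measures - kowalski preiss}) the only non-flat alternative for $\nu$ in codimension one is the light cone, which must be ruled out using $X_0 \in \mathcal{R}$. My plan is to exploit the tangent measure $\tilde\nu$ of $\nu$ at infinity, which by Theorem \ref{nonflat case - uniqueness of tangents for uniform measures} is uniquely defined, $n$-uniform, and conical. A diagonal argument realizes $\tilde\nu$ as a $\Lambda$-pseudo-tangent of $\mu$ at $X_0$ along base points $Q_{i(k)} \to X_0$ and scales $\rho_k \to 0$ chosen so that $|Q_{i(k)} - X_0|/\rho_k \to 0$. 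At such scales the blow-up $\mu_{Q_{i(k)}, \rho_k}$ agrees, up to a translation tending to $0$ and the vanishing H\"older error from $\Lambda$, with the centered blow-up $\mu_{X_0, \rho_k}$. Since $X_0 \in \mathcal{R}$, the supports $T^\Lambda_{X_0, \rho_k}(\Sigma)$ converge in Hausdorff distance on compact sets to a hyperplane through the origin, so $\tilde\nu$ is flat. Consequently $F(\tilde\nu) = 0$, and Theorem \ref{nonflat case - theorem of preiss} (in the smooth reformulation due to De Lellis) forces $\nu$ itself to be flat, completing the contradiction.

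The hardest technical point is the diagonal construction producing $\tilde\nu$ with scales $\rho_k$ that dominate the base-point displacements, especially when $|Q_i - X_0|/r_i \to \infty$. One must carefully match the two-step rescaling $(\mu_{Q_i, r_i})_{0, R}$ with the one-step blow-up $\mu_{Q_i, r_i R}$ using the $\Lambda$-asymptotic doubling ratios (cf. Proposition \ref{pseudo tangents - density implies asod} and the estimates in the proof of Proposition \ref{pseudo tangents - pseudo tangents are uniform}), and then verify via the H\"older continuity of $\Lambda$ that this two-step blow-up is approximated in the weak topology by the centered blow-up $\mu_{X_0, \rho_k}$, so that the Hausdorff flatness inherited from $X_0 \in \mathcal{R}$ can be transferred to $\tilde\nu$.
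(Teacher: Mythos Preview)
Your overall strategy---reduce to Theorem~\ref{theorem 2} by proving local uniform flatness via a pseudo-tangent contradiction argument---is sound and takes a genuinely different route from the paper. The paper instead adapts the $\tilde\beta_{2,\mu}$ machinery of \cite{PTT08}: it shows (Lemma~\ref{nonflat case - technical lemma}, Theorem~\ref{nonflat case - theorem 1}) that smallness of $\tilde\beta_{2,\mu}$ at one ball propagates to all sub-balls, then converts this to uniform bilateral flatness (Theorem~\ref{nonflat case - theorem 2}, Corollary~\ref{nonflat case - corollary}). Your approach bypasses the $L^2$ coefficients entirely and works directly with $b\beta_\Sigma$ and the Kowalski--Preiss classification, which is more elementary in spirit.

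However, the diagonal step for $n\geq 3$ has a real gap. You claim one can realize the tangent at infinity $\tilde\nu$ as a weak limit $\mu^\Lambda_{Q_{i(k)},\rho_k}\rightharpoonup\tilde\nu$ with $|Q_{i(k)}-X_0|/\rho_k\to 0$, but this is exactly what fails when $\nu$ is not flat. Indeed, if such a sequence existed, then (by your own observation) the limit would coincide with a $\Lambda$-tangent of $\mu$ at $X_0$ and hence be flat; so no such sequence can converge to a light-cone $\tilde\nu$. Concretely, in the regime $|X_i-X_0|/r_i\to\infty$ the standard diagonal (first pick $R_k\to\infty$ so that $\nu_{0,R_k}$ is close to $\tilde\nu$, then pick $i_k$ large so that $\mu^\Lambda_{X_{i_k},R_k r_{i_k}}$ is close to $\nu_{0,R_k}$) forces $|X_{i_k}-X_0|/(R_k r_{i_k})\to\infty$, not $0$; reversing the order of choices destroys the convergence to $\tilde\nu$. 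The argument as written is therefore circular.

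The natural repair is the connectedness property (Lemma~\ref{singular set - connectedness property}), which the paper proves independently for the singular-set analysis. Choose intermediate scales $\tau_i=\max(r_i,|X_i-X_0|)^{1/2}$, so that $\tau_i>r_i$, $\tau_i\to 0$, and $|X_i-X_0|/\tau_i\to 0$. Then any subsequential limit $\alpha$ of $\mu^\Lambda_{X_i,\tau_i}$ is flat, since (exactly as you argue) it agrees with a $\Lambda$-tangent at $X_0\in\mathcal{R}$. Lemma~\ref{singular set - connectedness property}---itself proved via Preiss's theorem and an intermediate-scale argument close to what you sketch---then forces the smaller-scale limit $\nu=\lim\mu^\Lambda_{X_i,r_i}$ to be flat, giving the desired contradiction. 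With this patch your approach goes through.
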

 
\subsection{Technical results and proof of Proposition \ref{nonflat case - regular set}}
 Suppose $\mu$ and $\Lambda$ satisfy the assumptions of Theorem \ref{theorem 3}. By Lemma \ref{asod - technical lemma}, we may assume without loss of generality that $\mu$ satisfies \eqref{asod - mu0 has density 1}. Note for later use that if $X_0\in \Sigma\cap K$, where $K\subset\mathbb R^{n+1}$ is compact, and $r>0$ is small enough depending on $K$ and $\Lambda$, then \eqref{asod - mu0 has density 1} implies
 \begin{equation}
      \begin{split}
          \label{nonflat case - measure 1}
          C_K^{-1}r^n\leq \mu(B(X_0,r))\leq C_Kr^n.
      \end{split}
\end{equation}
For example, the upper bound in \eqref{nonflat case - measure 1} can be obtained by noting that $B(X_0,r)\subset B_\Lambda(X_0,\lmin(K)^{-1}r)$, and applying \eqref{asod - mu0 has density 1} to $B_\Lambda(X_0,\lmin(K)^{-1}r)$. The lower bound can be obtained similarly.

As in \cite{PTT08}, we need a smooth version of the $\beta_2$-numbers of $\mu$, which are in turn an $L^2$ version of the $\beta$-numbers considered in Section \ref{beta numbers}. Let $\varphi\in C_c^\infty(\R^{n+1})$ be a radially non-increasing function such that $\chi_{B(0,2)}\leq\varphi\leq\chi_{B(0,3)}$. For $X_0\in\Sigma=\mathrm{spt}(\mu)$ and $B=B(X_0,r)$, let
 \begin{equation}
     \label{nonflat case - smooth beta}
     \tilde\beta_{2,\mu}(B)=\tilde\beta_{2,\mu}(X_0,r)=\min_P\left(\frac{1}{r^{n+2}}\int\varphi\left(\frac{|X-X_0|}{r}\right)\mathrm{dist}(X,P)^2\de\mu(X)\right)^{1/2},
 \end{equation}
 where the minimum is taken over all $n$-planes $P$ in $\mathbb R^{n+1}$. 
Note that if $P$ is a minimizing $n$-plane for $b\beta_\Sigma(X_0,3r)$ and $r$ is small, then by \eqref{nonflat case - measure 1},
 \begin{equation}
     \begin{split}
     \label{nonflat case - beta less than bilateral beta}
     \tilde\beta_{2,\mu}(X_0,r)&\leq\min_{P'}\left(\frac{1}{r^{n+2}}D[\Sigma\cap B(X_0,3r);P'\cap B(x_0,3r)]^2\mu(B(X_0,3r))\right)^{1/2} \\
     &\leq \frac{C_K}{r}D[\Sigma\cap B(X_0,3r);P\cap B(x_0,3r)]=C_Kb\beta_\Sigma(X_0,3r),     \end{split}
 \end{equation}
 for some constant $C_K>0$ depending only on $K$ and $\Lambda$, where $D$ denotes Hausdorff distance as before.
 
 It is also convenient to observe that the coefficients $\tilde\beta_{2,\mu}$ enjoy some regularity: if $X_0,X_0'\in\Sigma\cap K$, $B=B(X_0,r)$, $B'=B(X_0',r')$, $B'\subset B\subset K$, and $r'\geq cr$, then there exists a constant $C_K>0$ depending on $c$, $K$ and $\Lambda$ such that
 \begin{equation}
     \label{asod nonflat case - doubling of beta}
     \tilde\beta_{2,\mu}(B')\leq C_K\tilde\beta_{2,\mu}(B).
 \end{equation}
 
 In the same spirit as Lemma \ref{prelim - lambda and normal reif flat}, we need to establish a comparison between the quantity on the right-hand side of \eqref{nonflat case - smooth beta} and the corresponding quantity obtained when the term $|X-X_0|/r$ is replaced with an anisotropic rescaling determined by $\Lambda$. Recall the numbers $\lmax(K)$ and $\lmin(K)$ associated with any compact set $K$, introduced in \eqref{prelim - lmin and lmax of K}.

 \begin{lemma}
     \label{nonflat case - round and elliptic beta2}Let $r>0$ and suppose $K\subset\mathbb R^{n+1}$  is compact. Denote $r'=\lmax(K)r$ and $r''=\lmin(K)r$, where $\Lambda$ satisfies the assumptions of Theorem \ref{theorem 3}. If $P$ is any $n$-plane in $\mathbb R^{n+1}$ and $\mu$ is a Radon measure in $\R^{n+1}$ with support $\Sigma$, then for every $X_0,Z\in \Sigma\cap K$,
     \begin{equation}
         \label{nonflat case - round to elliptic}
         \int\varphi\left(\frac{|\Lambda(Z)^{-1}(X-X_0)|}{r}\right)\mathrm{dist}(X,P)^2\de\mu(X)\leq \int\varphi\left(\frac{|X-X_0|}{r'}\right)\mathrm{dist}(X,P)^2\de\mu(X),
     \end{equation}

          \begin{equation}
         \label{nonflat case - elliptic to round}
         \int\varphi\left(\frac{|X-X_0|}{r''}\right)\mathrm{dist}(X,P)^2\de\mu(X)\leq \int\varphi\left(\frac{|\Lambda(Z)^{-1}(X-X_0)|}{r}\right)\mathrm{dist}(X,P)^2\de\mu(X).
     \end{equation}
  \end{lemma}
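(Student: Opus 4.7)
The plan is to reduce both inequalities to a pointwise comparison of the integrands, exploiting the radial monotonicity of $\varphi$ together with the eigenvalue bounds on $\Lambda(Z)^{-1}$, and then integrate against the non-negative factor $\mathrm{dist}(X,P)^2 \, d\mu$.

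First I would record that since $\Lambda(Z)$ is symmetric and positive definite with eigenvalues in $[\lmin(Z),\lmax(Z)]$, the inverse $\Lambda(Z)^{-1}$ is symmetric positive definite with eigenvalues in $[\lmax(Z)^{-1},\lmin(Z)^{-1}]$. Hence for every $v \in \mathbb R^{n+1}$,
\begin{equation*}
\frac{|v|}{\lmax(Z)} \;\leq\; |\Lambda(Z)^{-1}v| \;\leq\; \frac{|v|}{\lmin(Z)}.
\end{equation*}
Applying this with $v = X - X_0$, dividing by $r$, and using that $Z \in \Sigma\cap K \subset (\Sigma\cap K;1)$, so that $\lmin(K) \leq \lmin(Z)$ and $\lmax(Z) \leq \lmax(K)$ by \eqref{prelim - lmin and lmax of K}, yields the chain
\begin{equation*}
\frac{|X-X_0|}{r'} \;\leq\; \frac{|X-X_0|}{\lmax(Z)\, r} \;\leq\; \frac{|\Lambda(Z)^{-1}(X-X_0)|}{r} \;\leq\; \frac{|X-X_0|}{\lmin(Z)\, r} \;\leq\; \frac{|X-X_0|}{r''}.
\end{equation*}

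Next, since $\varphi$ is radially non-increasing (i.e., $\varphi(a) \geq \varphi(b)$ whenever $|a| \leq |b|$), applying $\varphi$ to the above chain reverses the inequalities and gives the pointwise comparison
\begin{equation*}
\varphi\!\left(\frac{|X-X_0|}{r''}\right) \;\leq\; \varphi\!\left(\frac{|\Lambda(Z)^{-1}(X-X_0)|}{r}\right) \;\leq\; \varphi\!\left(\frac{|X-X_0|}{r'}\right),
\end{equation*}
valid for every $X \in \mathbb R^{n+1}$. Multiplying through by the non-negative quantity $\mathrm{dist}(X,P)^2$ and integrating against $d\mu(X)$ then yields \eqref{nonflat case - round to elliptic} and \eqref{nonflat case - elliptic to round} simultaneously.

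There is no real obstacle here; this is essentially a calibration lemma. The only point requiring care is to match the operator-norm eigenvalue bounds $\lmin(Z),\lmax(Z)$ with the uniform constants $\lmin(K),\lmax(K)$ of \eqref{prelim - lmin and lmax of K}, which is immediate from the inclusion $\Sigma\cap K \subset (\Sigma\cap K;1)$ on which those quantities are defined.
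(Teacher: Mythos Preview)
Your proposal is correct and follows essentially the same approach as the paper: both arguments use the eigenvalue bounds for $\Lambda(Z)^{-1}$ to obtain the pointwise comparison $\frac{|X-X_0|}{r'}\leq\frac{|\Lambda(Z)^{-1}(X-X_0)|}{r}\leq\frac{|X-X_0|}{r''}$, invoke the radial monotonicity of $\varphi$, and integrate against $\mathrm{dist}(X,P)^2\,\de\mu$. Your version is slightly more explicit in justifying the passage from $\lmin(Z),\lmax(Z)$ to $\lmin(K),\lmax(K)$, but the argument is the same.
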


  \begin{remark}
  \label{nonflat case - remark on lambda inverse}
      The statement remains true if $\Lambda(\cdot)$ is replaced with $\Lambda(\cdot)^{-1}$, as long as $r'$ and $r''$ are adjusted accordingly. More specifically, since the smallest and largest eigenvalues of $\Lambda(\cdot)^{-1}$ are $\lmax(\cdot)^{-1}$ and $\lmin(\cdot)^{-1}$, respectively, the lemma applies with $\Lambda(\cdot)^{-1}$ in place of $\Lambda(\cdot)$ if the scales $r'$ and $r''$ are taken to be $r'=\lmin(K)^{-1}r$ and $r''=\lmax(K)^{-1}r$.
  \end{remark}

  \begin{proof}[Proof of Lemma \ref{nonflat case - round and elliptic beta2}]
      With $K$, $X_0$ and $Z$ as in the assumptions, we have for any $r>0$,
      $$\frac{1}{r}|\Lambda(Z)^{-1}(X-X_0)|\geq\frac{1}{r}|\lmax(K)^{-1}(X-X_0)|=\frac{|X-X_0|}{r'},$$
      so \eqref{nonflat case - round to elliptic} follows because $\varphi$ is radially non-increasing. Equation \eqref{nonflat case - elliptic to round} follows for the same reason, by observing that
      $$\frac{1}{r}|\Lambda(Z)^{-1}(X-X_0)|\leq\frac{1}{r}|\lmin(K)^{-1}(X-X_0)|=\frac{|X-X_0|}{r''}.$$
  \end{proof}

The proof of Theorem \ref{theorem 3} relies on the following two results, which are analogues of Theorem 4.2 and Theorem 4.3 in \cite{PTT08}. It is worth noticing that even though we state both results in codimension $1$, the statements remain true in any codimension. Recall the notion of a $\Lambda$-asymptotically optimally doubling measure (see Definition \ref{pseudo tangents - lambda asod}).

\begin{theorem}\label{nonflat case - theorem 1}
    Let $\mu$ be a $\Lambda$-asymptotically optimally doubling measure of dimension $n$ on $\R^{n+1}$ with support $\Sigma$. Let $K\subset\mathbb R^{n+1}$ be a compact set, and suppose that
    \begin{equation}
        \label{nonflat case - adr}
        C_K^{-1}r^n\leq \mu(B(X,r))\leq C_Kr^n,
    \end{equation}
    for $X\in \Sigma\cap K$, $0<r\leq\mathrm{diam}(K)$. For any $\eta>0$, there exists $\delta>0$ depending only on $\eta$, $n$, $\mu$, $K$ and $\Lambda$ such that if $B$ is a ball contained in $K$ and centered at $\Sigma\cap K$ with $\tilde\beta_{2,\mu}(B)\leq\delta$, then $\tilde\beta_{2,\mu}(B')\leq\eta$ for any ball $B'\subset B$ centered at $\Sigma\cap \frac{1}{2}B$.
\end{theorem}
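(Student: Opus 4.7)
The plan is to argue by contradiction via a two-scale blow-up argument, in the spirit of \cite{PTT08,KT99}. Suppose the statement fails: there exist $\eta>0$, balls $B_k = B(X_k,r_k) \subset K$ with $X_k \in \Sigma\cap K$ and $\tilde\beta_{2,\mu}(B_k) \to 0$, and sub-balls $B_k' = B(Y_k,s_k) \subset B_k$ with $Y_k \in \Sigma \cap \tfrac12 B_k$ and $\tilde\beta_{2,\mu}(B_k') \geq \eta$. Pass to subsequences so $X_k \to X_\infty$, $Y_k \to Y_\infty$, $Y_k' := (Y_k-X_k)/r_k \to Y_\infty' \in \overline{B(0,1/2)}$, and $t_k := s_k/r_k \to t_\infty \in [0,1-|Y_\infty'|]$. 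Perform a first Euclidean blow-up by letting $\mu_k := r_k^{-n}(T_{X_k,r_k})_*\mu$ with $T_{X_k,r_k}(Z)=(Z-X_k)/r_k$. By \eqref{nonflat case - adr}, the $\mu_k$ have uniformly bounded mass on compacts, so after extraction $\mu_k \rightharpoonup \mu_\infty$; weak continuity of the $\tilde\beta_{2,\cdot}$ functional (valid under Ahlfors regularity) then forces $\tilde\beta_{2,\mu_\infty}(B(0,1))=0$, and hence $\mathrm{spt}(\mu_\infty)\cap B(0,3)\subset P$ for some $n$-plane $P$ through the origin.

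If $t_\infty>0$, the inclusion $B(Y_\infty',3t_\infty)\subset B(0,3-2|Y_\infty'|)\subset B(0,3)$ allows me to use $P$ as a test plane and conclude $\tilde\beta_{2,\mu_\infty}(B(Y_\infty',t_\infty))=0$, contradicting $\lim_k\tilde\beta_{2,\mu_k}(B(Y_k',t_k)) \geq \eta$. The hard case is $t_\infty=0$, where $s_k\to 0$ forces a second blow-up. Introduce $\nu_k := s_k^{-n}(T_{Y_k,s_k})_*\mu$ and the parallel $\Lambda$-rescaling $\bar\nu_k := \mu(B_\Lambda(Y_k,s_k))^{-1}(T^\Lambda_{Y_k,s_k})_*\mu$. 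Ahlfors regularity provides, after extraction, weak limits $\nu_k \rightharpoonup \nu_\infty$ and $\bar\nu_k \rightharpoonup \bar\nu$; the identity $T_{Y_k,s_k}=\Lambda(Y_k)\circ T^\Lambda_{Y_k,s_k}$, together with the density normalization from Lemma \ref{asod - technical lemma}, yields $\nu_\infty = \omega_n^{-1}\,\Lambda(Y_\infty)_*\bar\nu$. By Proposition \ref{pseudo tangents - pseudo tangents are uniform}, $\bar\nu$ is $n$-uniform, while $\tilde\beta_{2,\nu_\infty}(B(0,1)) \geq \eta$ by weak continuity.

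It then remains to show that $\bar\nu$ is flat. For $n\leq 2$ this is immediate from Theorem \ref{prelim - uniform measures of dimension 2}. For $n\geq 3$, I would invoke the tangent-at-infinity reformulation of Preiss's Theorem \ref{nonflat case - theorem of preiss}: it suffices to prove that the unique tangent $\widetilde{\bar\nu}$ at infinity (which exists by Theorem \ref{nonflat case - uniqueness of tangents for uniform measures}) is flat. This is the main obstacle and is handled by a diagonal extraction at the scales $R_k:=1/t_k \to \infty$. A direct computation gives $T_{0,R_k}\circ T^\Lambda_{Y_k,s_k}(Z) = \Lambda(Y_k)^{-1}(Z-Y_k)/r_k$, and converting to $\mu_k$-coordinates shows that $R_k^{-n}(T_{0,R_k})_*\bar\nu_k$ equals, up to a factor tending to $\omega_n^{-1}$, the pushforward of $\mu_k$ under $W\mapsto \Lambda(Y_k)^{-1}(W-Y_k')$. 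Passing to the limit (using $Y_k'\to Y_\infty'\in P$, $\Lambda(Y_k)\to\Lambda(Y_\infty)$, and $\mu_k\rightharpoonup\mu_\infty$), the outcome is a measure supported in the $n$-plane $\Lambda(Y_\infty)^{-1}(P-Y_\infty')$ through the origin. By uniqueness of the tangent at infinity, this limit is $\widetilde{\bar\nu}$, which is therefore flat; Preiss's theorem then forces $\bar\nu$ to be flat, so $\nu_\infty$ is supported in an $n$-plane and $\tilde\beta_{2,\nu_\infty}(B(0,1))=0$, contradicting the lower bound $\geq\eta$.
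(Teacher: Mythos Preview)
Your proposal contains a genuine gap in the diagonal step. You claim that ``by uniqueness of the tangent at infinity,'' the limit of $R_k^{-n}(T_{0,R_k})_*\bar\nu_k$ coincides with the tangent at infinity $\widetilde{\bar\nu}$ of $\bar\nu$. But uniqueness of the tangent at infinity is a statement about $\lim_{R\to\infty} R^{-n}(T_{0,R})_*\bar\nu$; it says nothing about the diagonal sequence $R_k^{-n}(T_{0,R_k})_*\bar\nu_k$ where both $k$ and $R_k$ vary together. Weak convergence $\bar\nu_k\rightharpoonup\bar\nu$ only controls $\bar\nu_k$ on fixed compacts, not on $B(0,cR_k)$ with $R_k\to\infty$, so you cannot interchange these limits. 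Concretely, your setup gives smallness of $\tilde\beta_{2,\mu}$ only at the single outer scale $r_k$; after blowing up at $s_k$ this translates to information at the single scale $R_k$, and you have no control of $\tilde\beta_{2,\bar\nu_k}$ (or of $\bar\nu$) at any \emph{fixed} large $R$, which is what Preiss's criterion requires.

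The paper's proof sidesteps this by first proving an intermediate dyadic lemma (Lemma~\ref{nonflat case - technical lemma}): there is $\varepsilon_1>0$ and, for each $\delta_0>0$, an $N$ so that if $\tilde\beta_{2,\mu}(2^kB)\le\varepsilon_1$ for all $k=1,\dots,N$ then $\tilde\beta_{2,\mu}(B)\le\delta_0$. The contradiction hypothesis then produces balls $B_j$ with smallness at $j$ consecutive dyadic scales above $B_j$; blowing up at $r(B_j)$, for each \emph{fixed} $k$ one eventually has $j\ge k$, so the limit $\nu$ satisfies $\tilde\beta_{2,\nu}(B(0,2^k\lmax(K)^{-1}))\le C_K\varepsilon_1$ for \emph{every} $k\ge1$. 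This gives the needed $\limsup_{R\to\infty}F(\nu_{0,R})\le\varepsilon_0^2$ honestly, and Preiss then forces $\nu$ flat. Theorem~\ref{nonflat case - theorem 1} itself follows by iterating the lemma down through dyadic generations. The missing idea in your argument is precisely this: one must arrange smallness at a \emph{growing family of intermediate scales}, not just at the two extreme scales $s_k$ and $r_k$. A smaller issue: you invoke Lemma~\ref{asod - technical lemma} for the density normalization, but that lemma requires the H\"older doubling hypothesis, which is not part of the statement of Theorem~\ref{nonflat case - theorem 1}; the relation between $\nu_\infty$ and $\bar\nu$ must be argued directly from the doubling bounds.
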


\begin{theorem}\label{nonflat case - theorem 2}
    Let $\mu$ be a $\Lambda$-asymptotically optimally doubling measure of dimension $n$ on $\R^{n+1}$ with support $\Sigma$. Assume that $0\in\Sigma$. Let $K\subset\mathbb R^{n+1}$ be a compact set such that $B(0,2)\subset K$, and suppose that \eqref{nonflat case - adr} holds for $X\in\Sigma\cap K$, $0<r\leq \mathrm{diam}(K)$. Given $\varepsilon>0$, there exists $\delta\in (0,\varepsilon_0)$, depending only on $\varepsilon$, $n$, $\mu$, $K$ and $\Lambda$ such that if $\tilde\beta_{2,\mu}(B)\leq \delta$ for every ball $B$ contained in $B(0,2)$ and centered at $\Sigma\cap K$, then there exists $R>0$ such that $b\beta_\Sigma(X,r)<\varepsilon$ for all $X\in\Sigma\cap B(0,1)$ and $r\in (0,R)$.
\end{theorem}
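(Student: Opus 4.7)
The plan is to argue by contradiction, adapting the strategy of \cite[Theorem 4.3]{PTT08} to the anisotropic setting. Suppose the statement fails for some $\varepsilon>0$. Then there exist a sequence of measures $\mu_k$ satisfying the hypotheses uniformly (same $C_K$, same $\Lambda$, with the same $\Lambda$-AOD modulus on $K$) together with $\tilde\beta_{2,\mu_k}(B)\leq 1/k$ for every admissible ball $B\subset B(0,2)$ centered on $\Sigma_k\cap K$, but with points $X_k\in\Sigma_k\cap B(0,1)$ and radii $r_k\searrow 0$ such that $b\beta_{\Sigma_k}(X_k,r_k)\geq\varepsilon$.

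The first step is a blow-up. Form the anisotropic rescalings $\nu_k=(\mu_k)_{X_k,r_k}$ from \eqref{pseudo tangents - def of scaled measure}. The uniform Ahlfors regularity \eqref{nonflat case - adr} together with Lemma \ref{pseudo tangents - existence of pseudo tangents} gives a weakly convergent subsequence $\nu_k\rightharpoonup\nu$. The proof of Proposition \ref{pseudo tangents - pseudo tangents are uniform} uses only the $\Lambda$-AOD property and local H\"older continuity of $\Lambda$, both of which are available here, and so it applies to yield that $\nu$ is $n$-uniform with $\nu(B(Y,r))=r^n$; by Lemma \ref{pseudo tangents - 0 is in the support} (adapted), $0\in\mathrm{spt}(\nu)$.

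The second step is to transfer the smallness of $\tilde\beta_{2,\mu_k}$ to smallness of the functional $F$ defined in \eqref{singular set - functional} applied to $\nu$. For any fixed $R>0$, the ball $B(X_k,3Rr_k)$ lies in $B(0,2)$ for $k$ large, so $\tilde\beta_{2,\mu_k}(X_k,Rr_k)\leq 1/k$. Rescaling by $T^\Lambda_{X_k,r_k}$ and invoking Lemma \ref{nonflat case - round and elliptic beta2} together with Remark \ref{nonflat case - remark on lambda inverse} (to convert the anisotropic rescaling back to Euclidean balls), the H\"older continuity of $\Lambda$ guarantees that $\Lambda(Y)$ may be replaced by $\Lambda(X_k)$ with error $O((Rr_k)^\beta)$ on the ball of interest. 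This yields a Euclidean $L^2$-flatness bound $F(\nu_k)\lesssim_R 1/k+o(1)$ on $B(0,R)$. Continuity of $F$ under weak convergence (from \cite{De08}) gives $F(\nu)=0$ on every ball $B(0,R)$, hence $\limsup_{R\to\infty}F(\nu_{0,R})=0\leq\varepsilon_0^2$.

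The final step is to close the contradiction. By the corollary of \cite{De08} stated just before Section 2.2 (together with Theorem \ref{prelim - uniform measures of dimension 2} when $n\leq 2$), $\nu$ is flat: $\nu=\mathcal{H}^n\mres P$ for some $n$-plane $P$ through $0$. By Lemma \ref{pseudo tangents - characterization of support}, the weak convergence $\nu_k\rightharpoonup\nu$ promotes to Hausdorff convergence of the supports on $B(0,1)$, i.e. $D[T^\Lambda_{X_k,r_k}(\Sigma_k)\cap B(0,1);P\cap B(0,1)]\to 0$. Undoing the anisotropic rescaling via Corollary \ref{prelim - lambda and normal reif flat corollary} yields $b\beta_{\Sigma_k}(X_k,r_k)\to 0$, contradicting $b\beta_{\Sigma_k}(X_k,r_k)\geq\varepsilon$. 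The main obstacle is Step 2: rigorously translating the family of Euclidean $\tilde\beta_{2,\mu_k}$ bounds into smallness of the Euclidean functional $F$ for the anisotropic blow-ups $\nu_k$, where one must control both the change-of-variable errors from Lemma \ref{nonflat case - round and elliptic beta2} and the error in freezing $\Lambda$ to $\Lambda(X_k)$ on the shrinking balls $B(X_k,Rr_k)$; both errors vanish because $r_k\to 0$ and $\Lambda$ is H\"older continuous.
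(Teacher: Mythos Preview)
Your contradiction setup is incorrect: because the theorem allows $\delta$ to depend on $\mu$, negating it yields a \emph{single} measure $\mu$ for which the hypothesis holds for every $\delta>0$---that is, $\tilde\beta_{2,\mu}(B)=0$ for every admissible ball---together with points $X_k\in\Sigma\cap B(0,1)$ and radii $r_k\searrow 0$ satisfying $b\beta_\Sigma(X_k,r_k)\geq\varepsilon$. There is no family of varying $\mu_k$; introducing one would require $\delta$ to be independent of $\mu$, which the statement does not claim.

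With this correction your argument is exactly the paper's: blow up $\mu$ via $T^\Lambda_{X_k,\tau_k}$ to an $n$-uniform pseudo-tangent $\nu$, transfer the vanishing $\tilde\beta_{2,\mu}$ through Lemma~\ref{nonflat case - round and elliptic beta2} to obtain $\tilde\beta_{2,\nu}(0,R)=0$ for all $R>0$, so that $\mathrm{spt}(\nu)$ lies in an $n$-plane directly (the appeal to Preiss is unnecessary), and contradict the lower bound on $b\beta_{\Sigma_\infty}(0,1)$ inherited from $b\beta_\Sigma(X_k,r_k)\geq\varepsilon$ via Corollary~\ref{prelim - lambda and normal reif flat corollary}. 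The ``freezing $\Lambda$'' issue you raise in Step~2 does not actually arise: $T^\Lambda_{X_k,r_k}$ already uses the fixed matrix $\Lambda(X_k)$, so only the eigenvalue bounds $\lmin(K),\lmax(K)$ enter the comparison, not the H\"older continuity of $\Lambda$.
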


We will use these results combined in the form of the following corollary.

\begin{corollary}\label{nonflat case - corollary}
    Let $\mu$ be a $\Lambda$-asymptotically optimally doubling measure in $\R^{n+1}$ with support $\Sigma$. Let $K\subset\mathbb R^{n+1}$ be compact, and suppose that \eqref{nonflat case - adr} holds for $X\in\Sigma\cap K$, $0<r\leq\mathrm{diam}(K)$. Given $\varepsilon>0$, there exists $\delta\in (0,\varepsilon_0)$ depending only on $\varepsilon$, $n$, $\mu$, $K$ and $\Lambda$ such that if $\tilde\beta_{2,\mu}(B(X_0,4R_0))\leq\delta$, where $X_0\in\Sigma$ and $B(X_0,4R_0)\subset K$, then there exists $R>0$ such that $b\beta_\Sigma(X,r)<\varepsilon$ for all $X\in\Sigma\cap B(X_0,R_0)$ and $r\in (0,R)$. In particular, $\Sigma\cap B(X_0,R_0)$ is $\varepsilon$-Reifenberg flat.
\end{corollary}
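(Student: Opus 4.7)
The plan is to deduce the statement from Theorems \ref{nonflat case - theorem 1} and \ref{nonflat case - theorem 2} by chaining them together, after an affine rescaling that moves $X_0$ to the origin and sends $R_0$ to $1$. Concretely, I would set $T(X) := (X - X_0)/R_0$, $\mu' := R_0^{-n} T[\mu]$ (supported on $\Sigma' := T(\Sigma)$), $\Lambda'(Y) := \Lambda(X_0 + R_0 Y)$, and $K' := T(K)$. A direct change of variables yields, for every $Y$ and $s > 0$,
\[
\tilde\beta_{2,\mu'}(Y, s) = \tilde\beta_{2,\mu}(X_0 + R_0 Y,\, R_0 s), \qquad b\beta_{\Sigma'}(Y, s) = b\beta_\Sigma(X_0 + R_0 Y,\, R_0 s),
\]
while the identity $\mu'(B_{\Lambda'}(Y, r)) = R_0^{-n}\mu(B_\Lambda(X_0 + R_0 Y,\, R_0 r))$ shows that $\mu'$ is $\Lambda'$-asymptotically optimally doubling and satisfies the AD-regularity condition \eqref{nonflat case - adr} on $K'$ with the same constant $C_K$ as $\mu$ on $K$. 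Since $B(X_0, 4R_0) \subset K$, we get $B(0, 4) \subset K'$, so in particular $B(0, 2) \subset K'$ as required by Theorem \ref{nonflat case - theorem 2}.

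Next I would run a two-stage application of the two theorems on the rescaled data. Given $\varepsilon > 0$, Theorem \ref{nonflat case - theorem 2} applied to $\mu'$ and $K'$ produces $\delta_2 \in (0, \varepsilon_0)$ such that if $\tilde\beta_{2,\mu'}(B) \leq \delta_2$ for every ball $B \subset B(0, 2)$ centered on $\Sigma' \cap K'$, then there exists $R' > 0$ with $b\beta_{\Sigma'}(Y, s) < \varepsilon$ for all $Y \in \Sigma' \cap B(0, 1)$ and $s \in (0, R')$. Then Theorem \ref{nonflat case - theorem 1}, applied with input $\eta = \delta_2$ and to the ball $B(0, 4) \subset K'$, produces $\delta_1 > 0$ such that the single-ball condition $\tilde\beta_{2,\mu'}(B(0, 4)) \leq \delta_1$ forces $\tilde\beta_{2,\mu'}(B') \leq \delta_2$ for every ball $B' \subset B(0, 4)$ centered on $\Sigma' \cap B(0, 2)$.

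Setting $\delta := \delta_1$, the assumption $\tilde\beta_{2,\mu}(B(X_0, 4R_0)) \leq \delta$ translates to $\tilde\beta_{2,\mu'}(B(0, 4)) \leq \delta_1$ by the first transformation identity. Chaining Theorem \ref{nonflat case - theorem 1} with Theorem \ref{nonflat case - theorem 2} then yields $b\beta_{\Sigma'}(Y, s) < \varepsilon$ on $\Sigma' \cap B(0, 1)$ for $s \in (0, R')$, and pulling back via the second transformation identity gives $b\beta_\Sigma(X, r) < \varepsilon$ on $\Sigma \cap B(X_0, R_0)$ for all $r \in (0, R)$ with $R := R_0 R'$. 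The ``in particular'' clause is then simply the definition of $\varepsilon$-Reifenberg flatness of $\Sigma \cap B(X_0, R_0)$.

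The main routine task is verifying that the transformed data $(\mu', \Lambda', K')$ inherits from $(\mu, \Lambda, K)$ all the quantitative hypotheses of Theorems \ref{nonflat case - theorem 1} and \ref{nonflat case - theorem 2} with constants depending only on $\mu$, $\Lambda$, $K$ and the fixed radius $R_0$ (the latter being absorbed into the dependence on $K$). Once this is confirmed, the parameter $\delta = \delta_1$ depends only on $\varepsilon$, $n$, $\mu$, $K$ and $\Lambda$ as claimed.
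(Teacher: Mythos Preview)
Your approach is the intended one: the paper does not write out a separate proof and presents the corollary as Theorems~\ref{nonflat case - theorem 1} and~\ref{nonflat case - theorem 2} ``combined'', which amounts to exactly the chaining you describe after an affine normalization sending $(X_0,R_0)$ to $(0,1)$.

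There is, however, a loose end in your last paragraph. You write that the constants produced by Theorems~\ref{nonflat case - theorem 1} and~\ref{nonflat case - theorem 2} for the rescaled data depend on ``the fixed radius $R_0$ (the latter being absorbed into the dependence on $K$)''. But $R_0$ is not fixed: the corollary asks for a single $\delta$ valid for \emph{every} pair $(X_0,R_0)$ with $B(X_0,4R_0)\subset K$, whereas the black-box outputs of those theorems, applied to $(\mu',\Lambda',K')$, a priori depend on $(X_0,R_0)$ through $\mu'$ and $\Lambda'$ themselves---the dependence on the measure in Lemma~\ref{nonflat case - technical lemma} and Theorem~\ref{nonflat case - theorem 2} enters via a compactness argument, not via explicit constants that transfer under rescaling. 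This is repairable: the $\Lambda'$-pseudo tangents of $\mu'$ coincide with $\Lambda$-pseudo tangents of $\mu$ (a direct computation gives $(\mu')^{\Lambda'}_{Y,\rho}=\mu^{\Lambda}_{X_0+R_0Y,\,R_0\rho}$), and the ADR constant is preserved, so the contradiction arguments can be run uniformly over the rescaled family. Alternatively, note that in the only application (the proof of Proposition~\ref{nonflat case - regular set}) one takes $K=\overline{B(X_0,4R_0)}$, so there $\delta$ depending on $K$ already encodes dependence on $(X_0,R_0)$ and the uniformity you are trying to establish is never actually used.
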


Before proving Theorem \ref{nonflat case - theorem 1} and Theorem \ref{nonflat case - theorem 2}, we use Corollary \ref{nonflat case - corollary} to derive Proposition \ref{nonflat case - regular set}.

\begin{proof}[Proof of Proposition \ref{nonflat case - regular set}]  Let $\mu$ be as in the assumptions of Theorem \ref{theorem 3}. As before, by Lemma \ref{asod - technical lemma} we may assume without loss of generality that $\mu$ satisfies \eqref{asod - mu0 has density 1}, so that \eqref{nonflat case - measure 1} holds. By Proposition \ref{pseudo tangents - density implies asod}, we know that $\mu$ is $\Lambda$-asymptotically optimally doubling. Let $X_0\in\mathcal{R}$ and $\sigma>0$. By definition of $\mathcal{R}$, there exists $R_0>0$ such that $b\beta_\Sigma(X_0,r)\leq \sigma$ whenever $0<r\leq 12R_0$. Let $K=\overline{B(X_0,4R_0)}$. By \eqref{nonflat case - beta less than bilateral beta}, we have 
\begin{equation}
    \label{nonflat case - small beta 2 tilde}
    \tilde\beta_{2,\mu}(B(X_0,4R_0))\leq C_K\sigma,
\end{equation}
where $C_K>0$ depends only on $K$ and $\Lambda$. Let us assume without loss of generality that $R_0$ is small enough so that \eqref{asod - mu0 has density 1} and \eqref{nonflat case - measure 1} hold for every $X\in \Sigma\cap K$ and $r< 8R_0=\mathrm{diam}(K)$, ensuring that the assumptions of Corollary \ref{nonflat case - corollary} are satisfied.

Given any $\varepsilon>0$, let $\delta\in (0,\varepsilon_0)$ be as in the conclusion of Corollary \ref{nonflat case - corollary}. If $\sigma$ is small enough so that $C_K\sigma<\delta$, then by \eqref{nonflat case - small beta 2 tilde} we have $\tilde\beta_{2,\mu}(B(X_0,4R_0))<\delta$, and Corollary \ref{nonflat case - corollary} implies that $\Sigma\cap B(X_0,R_0)$ is $\varepsilon$-Reifenberg flat. We can assume without loss of generality that $\varepsilon<\delta_{K}$, where $K=\overline{B(X_0,R_0)}$ and $\delta_{K}$ is as in Proposition \ref{flat measures - main result}. Then, by Proposition \ref{flat measures - main result},
$$\lim_{r\searrow 0}b\beta_\Sigma(K,r)=0.$$
This and \eqref{asod - mu0 has density 1} ensure that $\mu\mres B(X_0,R_0)$ satisfies the assumptions of Theorem \ref{theorem 1}. Therefore, $\Sigma\cap B(X_0,R_0)$ is a $C^{1,\gamma}$-submanifold of $\mathbb R^{n+1}$ of dimension $n$ for some $\gamma\in (0,1)$ depending on $\alpha$ and $\beta$. This also implies that $\mathcal{R}$ is an open subset of $\Sigma$, as desired.
\end{proof}

  \subsection{Proof of technical results}
We now turn to the proofs of Theorem \ref{nonflat case - theorem 1} and Theorem \ref{nonflat case - theorem 2}. The main ingredient is the following lemma, where for any ball $B=B(X,r)$ and any positive number $c>0$, we denote $r(B)=r$ and $cB=B(X,cr)$.

\begin{lemma}
\label{nonflat case - technical lemma}
    Let $\mu$ be a $\Lambda$-asymptotically optimally doubling measure of dimension $n$ on $\mathbb R^{n+1}$. Let $K\subset\mathbb R^{n+1}$ be compact, and let $\delta_0$ be any positive constant. Suppose that \eqref{nonflat case - adr} holds for $X\in\Sigma\cap K$, $0<r\leq\mathrm{diam}(K)$. Then there exists some constant $\varepsilon_1$ depending on $\varepsilon_0$ and $C_0$, but not on $\delta_0$, and there exists an integer $N>0$ depending only on $\mu$, $K$, $\Lambda$ and $\delta_0$, such that if $B$ is a ball centered at $\Sigma$ with $2^kB\subset K$ and
    \begin{equation}
        \label{nonflat case - small beta}
        \tilde\beta_{2,\mu}(2^kB)\leq\varepsilon_1, \quad k\in\{1,\ldots, N\},
    \end{equation}
    then 
    $$\tilde\beta_{2,\mu}(B)\leq\delta_0.$$ 
\end{lemma}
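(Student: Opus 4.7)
The plan is a contradiction-and-compactness argument following the strategy of \cite[Lemma 4.4]{PTT08}. Suppose the lemma fails: there exists $\delta_0>0$ such that for every $\varepsilon_1>0$ and every $N$, there is a ball violating the conclusion. Taking $\varepsilon_1=1/i$ and $N=i$, I obtain balls $B_i=B(X_i,r_i)$ centered at $\Sigma\cap K$ with $2^{i}B_i\subset K$, satisfying $\tilde\beta_{2,\mu}(2^kB_i)\leq 1/i$ for $k\in\{1,\ldots,i\}$ but $\tilde\beta_{2,\mu}(B_i)>\delta_0$. After passing to a subsequence, assume $X_i\to X_\infty\in\Sigma\cap K$. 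Form the rescaled measures $\tilde\mu_i:=\mu_{X_i,r_i}$ via the $\Lambda$-blow-up $T^\Lambda_{X_i,r_i}$ as in \eqref{pseudo tangents - def of scaled measure}. The Ahlfors-David regularity \eqref{nonflat case - adr} and the doubling of $\mu$ give uniform bounds on $\tilde\mu_i(B(0,R))$ for each $R>0$, so by standard compactness we may extract a further subsequence with $\tilde\mu_i\rightharpoonup\nu$ for some $\Lambda$-pseudo tangent measure $\nu$.

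By Proposition \ref{pseudo tangents - pseudo tangents are uniform}, the limit $\nu$ is $n$-uniform with $\nu(B(X,r))=r^n$ on $\mathrm{spt}(\nu)$. The core of the proof is to argue that $\nu$ is flat. Invoking Lemma \ref{nonflat case - round and elliptic beta2} (and Remark \ref{nonflat case - remark on lambda inverse}) to compare Euclidean $\tilde\beta_2$ with the $\Lambda$-rescaled analogue, the hypothesis $\tilde\beta_{2,\mu}(2^kB_i)\leq 1/i$ converts, with constants depending only on $e_\Lambda(K)$ and $C_0$, into smallness of an analogous anisotropic quantity at scales $2^k$ in the rescaled picture. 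Since the integrand defining $\tilde\beta_2$ is bounded, continuous and compactly supported, and the Grassmannian of $n$-planes through $0$ is compact, weak convergence $\tilde\mu_i\rightharpoonup\nu$ yields, in the limit, $\tilde\beta_{2,\nu}(0,2^k)\leq C_{K,\Lambda}\,\varepsilon_1$ for every $k\in\mathbb N$. For $n$-uniform $\nu$ with $\nu(B(0,1))=\omega_n$, this translates (via a direct computation relating $F$ to $\tilde\beta_2$) into $F(\nu_{0,R})\leq C_{K,\Lambda}^2\varepsilon_1^2/\omega_n$ uniformly in large $R$. Choosing $\varepsilon_1$ small enough depending only on $\varepsilon_0$, $C_0$ and $\Lambda$ (hence independent of $\delta_0$), we ensure $\limsup_{R\to\infty}F(\nu_{0,R})\leq\varepsilon_0^2$. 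The Corollary of de Lellis following Theorem \ref{nonflat case - theorem of preiss} (for $n\geq 3$), together with Theorem \ref{prelim - uniform measures of dimension 2} (for $n\leq 2$), then forces $\nu$ to be flat.

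Flatness of $\nu$ gives $\tilde\beta_{2,\nu}(0,1)=0$. By the same weak-convergence/compactness argument applied at the unit scale, $\tilde\beta_{2,\tilde\mu_i}(0,1)\to 0$; unraveling the rescaling via Lemma \ref{nonflat case - round and elliptic beta2} one more time produces $\tilde\beta_{2,\mu}(B_i)\to 0$, contradicting $\tilde\beta_{2,\mu}(B_i)>\delta_0$. The main technical obstacle is the consistent handling of the anisotropic rescaling: $\tilde\beta_2$ is defined through round balls while $T^\Lambda_{X_i,r_i}$ is anisotropic, so Lemma \ref{nonflat case - round and elliptic beta2} must be invoked both going upward (translating the Euclidean hypothesis into a bound compatible with the $\Lambda$-blow-up, so that it can be inherited by $\nu$) and coming back down (converting flatness of $\nu$ into the Euclidean estimate that contradicts the lower bound $\delta_0$). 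The distinction in the statement between $\varepsilon_1$ depending only on $\varepsilon_0,C_0$ and $N$ depending on $\delta_0$ reflects this structure: $\varepsilon_1$ merely has to trigger Preiss's threshold in the limit (an absolute quantity), while $N$ controls how many dyadic scales of smallness must accumulate for the diagonal argument to promote the per-scale estimate into $\limsup_{R\to\infty}F(\nu_{0,R})\leq\varepsilon_0^2$, which is quantitatively tied to how small the final $\tilde\beta_{2,\mu}(B)$ must be.
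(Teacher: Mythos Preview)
Your proposal follows the same route as the paper: argue by contradiction, extract a $\Lambda$-pseudo tangent $\nu$ that is $n$-uniform (Proposition \ref{pseudo tangents - pseudo tangents are uniform}), invoke Preiss's flatness criterion via the tangent at infinity, and contradict the lower bound $\delta_0$. The use of Lemma \ref{nonflat case - round and elliptic beta2} to pass between round and anisotropic scalings is exactly how the paper handles it, and your final contradiction (flatness of $\nu$ forces $\tilde\beta_{2,\mu}(B_i)\to 0$) is equivalent to the paper's lower bound \eqref{nonflat case - goal2}.

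There is, however, a genuine inconsistency in your contradiction setup that affects the stated conclusion. The lemma asserts the quantifier order $\exists\,\varepsilon_1\ \forall\,\delta_0\ \exists\,N$: the constant $\varepsilon_1$ must be fixed \emph{before} $\delta_0$ enters. Your negation ``there exists $\delta_0>0$ such that for every $\varepsilon_1>0$ and every $N$\ldots'' swaps the quantifiers on $\varepsilon_1$ and $\delta_0$; a contradiction derived from that hypothesis only yields $\forall\,\delta_0\ \exists\,\varepsilon_1\ \exists\,N$, which is strictly weaker and would not suffice for the proof of Theorem \ref{nonflat case - theorem 1}, where one sets $\delta_0=\min\{\varepsilon_1,\eta\}$ and hence cannot allow $\varepsilon_1$ to depend on $\delta_0$. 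Relatedly, you take $\varepsilon_1=1/i$ at the outset but later write ``choosing $\varepsilon_1$ small enough\ldots''; these are incompatible. With $\varepsilon_1=1/i\to 0$ you would obtain $\tilde\beta_{2,\nu}(0,2^k)=0$ for all $k$ and Preiss's theorem is not even needed, but you have not produced any fixed $\varepsilon_1$ independent of $\delta_0$.

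The fix is to follow the paper's order: \emph{first} fix $\varepsilon_1$ (to be determined), then suppose by contradiction that for this fixed $\varepsilon_1$ no $N$ works for the given $\delta_0$. This gives balls $B_j$ with $\tilde\beta_{2,\mu}(2^kB_j)\leq\varepsilon_1$ for $k=1,\ldots,j$, and in the limit $\tilde\beta_{2,\nu}(0,R)\leq C_K\varepsilon_1$ for all large $R$ as in \eqref{nonflat case - goal 1}. Only at the very end do you choose $\varepsilon_1$ small enough, depending on $\varepsilon_0$ and the constants $C_K$ but not on $\delta_0$, to trigger \eqref{asod nonflat case - tg at infinity is flat}. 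With this correction the remaining steps of your outline are fine.
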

\begin{proof}[Proof of Lemma \ref{nonflat case - technical lemma}]
    Suppose, for contradiction, that such an $N$ does not exist. Then there is a sequence of points $\{X_j\}\subset\Sigma\cap K$ and balls $B_j=B(X_j,r_j)$ such that $2^jB_j\subset K$ and 
    \begin{equation}
        \label{nonflat case - small beta j}
        \tilde\beta_{2,\mu}(2^kB_j)\leq\varepsilon_1,\quad k\in\{1,\ldots, j\},
    \end{equation}
    but $\tilde\beta_{2,\mu}(B_j)>\delta_0$. Note that since $K$ is bounded and $2^jB_j\subset K$, we have $r_j\to 0$ as $j\to\infty$. For each $j\geq 1$, let 
    $$\mu_j=\frac{1}{\mu(B_\Lambda(X_j,r_j))}T^\Lambda_{X_j,r_j}[\mu].$$ 
    Upon taking a subsequence, we may assume without loss of generality that $\mu_j\rightharpoonup\nu$, where $\nu$ is a $\Lambda$-pseudo tangent of $\mu$, which we know is $n$-uniform by Proposition \ref{pseudo tangents - pseudo tangents are uniform}.
    
    We will show that
    \begin{equation}
        \label{nonflat case - goal 1}\tilde\beta_{2,\nu}(B(0,2^k\lmax(K)^{-1}))\leq C_K\varepsilon_1,\quad k\geq 1,
    \end{equation}
    and
    \begin{equation}
        \label{nonflat case - goal2}
        \tilde\beta_{2,\nu}(B(0,\lmin(K)^{-1}))\geq C_K^{-1}\delta_0.
    \end{equation}
    To prove \eqref{nonflat case - goal 1}, fix $k\geq 1$. Let $L_j^*$ be a minimizing plane for $\tilde\beta_{2,\mu}(2^kB_j)$, and let 
    $$L_j=\frac{1}{r_j}\Lambda(X_j)^{-1}(L_j^*-X_j).$$  
   Upon taking a subsequence, we may assume that $L_j\to L$ with respect to $D$, uniformly on compact sets, where $L$ is an $n$-plane. Note that this implies that $\mathrm{dist}(\cdot,L_j)\to\mathrm{dist}(\cdot,L)$ uniformly on compact subsets of $\mathbb R^{n+1}$. Combining this with the fact that the function $\varphi$ in the definition of $\tilde\beta_{2,\cdot}$ is continuous, $|\varphi|\leq 1$ and $\mu_j\rightharpoonup\nu$, it follows that
    \begin{equation}
        \label{nonflat case - tool 1}
        \begin{split}
            \left|\int\varphi\right.&\left.\left(\frac{|X|}{2^k\lmax(K)^{-1}}\right)\mathrm{dist}(X,L_j)^2\de\mu_j(X)-\int\varphi\left(\frac{|X|}{2^k\lmax(K)^{-1}}\right)\mathrm{dist}(X,L)^2\de\nu(X)\right|\\
            & \\
        &\leq  \mu_j(B(0,3\cdot 2^k\lmax(K)^{-1}))\|\mathrm{dist}(\cdot,L_j)^2-\mathrm{dist}(\cdot,L)^2\|_{L^\infty(B(0,2^k\lmax(K)^{-1}))}\\
        & \\
        & +\left|\int\varphi\left(\frac{|X|}{2^k\lmax(K)^{-1}}\right)\mathrm{dist}(X,L)^2\de\mu_j(X)-\int\varphi\left(\frac{|X|}{2^k\lmax(K)^{-1}}\right)\mathrm{dist}(X,L)^2\de\nu(X)\right|\\
        &\to 0,
        \end{split}
    \end{equation}
    as $j\to\infty$. On the other hand, by Remark \ref{nonflat case - remark on lambda inverse}, an application of Lemma \ref{nonflat case - round and elliptic beta2}, equation \eqref{nonflat case - elliptic to round} with $\Lambda(\cdot)^{-1}$ in place of $\Lambda(\cdot)$ gives

    \begin{equation}
    \label{nonflat case - estimate 1}
        \begin{split}
            \frac{1}{2^{k(n+2)}}\int&\varphi\left(\frac{|X|}{2^k\lmax(K)^{-1}}\right)\mathrm{dist}(X,L_j)^2\de\mu_j(X)\\
            &\leq\frac{1}{2^{k(n+2)}}\int\varphi\left(\frac{|\Lambda(X_j)X|}{2^k}\right)\mathrm{dist}\left(X,L_j\right)^2\de\mu_j(X)\\
            &\leq\frac{C_K}{2^{k(n+2)}\mu(B_\Lambda(X_j,r_j))}\int\varphi\left(\frac{|X-X_j|}{2^kr_j}\right)\mathrm{dist}\left(\frac{\Lambda(X_j)^{-1}(X-X_j)}{r_j},L_j\right)^2\de\mu(X).
        \end{split}
    \end{equation}
    
    By the definition of $L_j$,
    \begin{equation*}
        \begin{split}
            \mathrm{dist}\left(\frac{\Lambda(X_j)^{-1}(X-X_j)}{r_j},L_j\right)&=\mathrm{dist}\left(\frac{\Lambda(X_j)^{-1}(X-X_j)}{r_j},\frac{\Lambda(X_j)^{-1}(L_j^*-X_j)}{r_j}\right)\\
            &\leq \frac{C_K}{r_j}\mathrm{dist}(X,L_j^*).
        \end{split}
    \end{equation*}
     Combining this with \eqref{nonflat case - estimate 1} and \eqref{nonflat case - small beta j} we obtain

     \begin{equation}
     \label{asod nonflt case - helper &}
         \begin{split}
             \frac{1}{2^{k(n+2)}}\int\varphi\left(\frac{|X|}{2^k\lmax(K)^{-1}}\right)&\mathrm{dist}(X,L_j)^2\de\mu_j(X)\\
             &\leq \frac{C_K}{2^{k(n+2)}r_j^{n+2}}\int\varphi\left(\frac{|X-X_j|}{2^kr_j}\right)\mathrm{dist}(X,L_j^*)^2\de\mu(X)\\
             &= C_K\tilde\beta_{2,\mu}(2^kB_j)\leq C_K\varepsilon_1.
         \end{split}
     \end{equation}
This estimate and \eqref{nonflat case - tool 1} with a choice of $j$ large enough give
\begin{equation*}
    \begin{split}
        \frac{1}{(2^k\lmax(K)^{-1})^{n+2}}\int\varphi\left(\frac{|X|}{2^k\lmax(K)^{-1}}\right)\mathrm{dist}(X,L)^2\de\nu(X)&\leq C_K\varepsilon_1,
    \end{split}
\end{equation*}
from which \eqref{nonflat case - goal 1} follows.

To prove \eqref{nonflat case - goal2}, let $L$ be any $n$-plane. Using Lemma \ref{nonflat case - round and elliptic beta2} applied to $\Lambda(\cdot)^{-1}$, along with the definition of $\mu_j$,
\begin{equation*}
    \begin{split}
        \tilde\beta_{2,\nu}(0,\lmin(K)^{-1})&\geq C_K\int\varphi\left(\frac{|X|}{\lmin(K)^{-1}}\right)\mathrm{dist}(X,L)^2\de\mu_j(X)\\
        &\geq C_K\int\varphi(|\Lambda(X_j)X|)\mathrm{dist}(X,L)^2\de\mu_j(X)\\
        &= \frac{C_K}{\mu(B_\Lambda(X_j,r_j))}\int\varphi\left(\frac{|X-X_j|}{r_j}\right)\mathrm{dist}\left(\frac{\Lambda(X_j)^{-1}(X-X_j)}{r_j},L\right)^2\de\mu(X) \\
        & \\
        &\geq\frac{C_K}{r_j^{n+2}}\int\varphi\left(\frac{|X-X_j|}{r_j}\right)\mathrm{dist}(X,X_j+r_j\Lambda(X_j)L)^2\de\mu(X)\\
        &\geq C_K\tilde\beta_{2,\mu}(B_j)>C_K\delta_0,
    \end{split}
\end{equation*}
by our assumption on $\tilde\beta_{2,\mu}(B_j)$. This proves \eqref{nonflat case - goal2}.

We are now ready to complete the proof of the lemma. We claim that $\varepsilon_1$ is small enough, then \eqref{nonflat case - small beta j} implies that the tangent measure $\tilde\nu$ at $\infty$ of $\nu$  satisfies
\begin{equation}
    \label{asod nonflat case - tg at infinity is flat}
    \min_P\int_{B(0,1)}\mathrm{dist}(X,P)^2\de\tilde\nu(X)\leq\varepsilon_0^2,
\end{equation}
where the minimum is taken over all $n$-planes $P\subset\mathbb R^{n+1}$. To show this, notice first that by arguments similar to those leading up to \eqref{asod nonflt case - helper &} and by definition of $\tilde\nu$, we have 
$$\tilde\beta_{2,\tilde\nu}(0,3)\leq C_K\tilde\beta_{2,\nu}(0,2^k\lmax(K)^{-1}),$$for $k$ large. Also by the estimates leading up to \eqref{asod nonflt case - helper &}, we have
$$\tilde\beta_{2,\nu}(0,2^k\lmax(K)^{-1})\leq C_K\tilde\beta_{2,\mu}(2^kB_j)\leq C_K\varepsilon_1.$$
It follows that if $j$ and $k\in \{1,\ldots,j\}$ are large enough, then $\tilde\beta_{2,\tilde\nu}(0,3)\leq C_K\varepsilon_1$, which gives \eqref{asod nonflat case - tg at infinity is flat} by choosing $\varepsilon_1$ small enough depending on $K$, $\Lambda$ and $\varepsilon_0$, and observing that the left hand side of \eqref{asod nonflat case - tg at infinity is flat} is upper bounded by $\tilde\beta_{2,\tilde\nu}(0,3)$.

To conclude, we combine \eqref{asod nonflat case - tg at infinity is flat} with Theorem \ref{nonflat case - theorem of preiss} to deduce that $\nu$ is flat, which contradicts \eqref{nonflat case - goal2}, completing the proof of the lemma.
\end{proof}
With this lemma in hand, we can prove Theorems \ref{nonflat case - theorem 1} and \ref{nonflat case - theorem 2} essentially in the same way as \cite{PTT08}. 

\begin{proof}[Proof of Theorem \ref{nonflat case - theorem 1}]
    Let $\eta>0$, let $\varepsilon_1$ and $N$ be as in Lemma \ref{nonflat case - technical lemma}, and set $\delta_0=\min\{\varepsilon_1,\eta\}$. Let $\delta>0$ be a small number to be determined, and suppose $B$ is a ball of radius $r(B)$ contained in $K$ and centered at $\Sigma\cap K$ with $\tilde\beta_{2,\mu}(B)\leq\delta$. If $\delta$ is small enough depending on $\varepsilon_1$, $\eta$ and $N$, and $B'$ is any ball contained in $B$,  centered at $\Sigma\cap B$, with radius $r(B')\geq 2^{-N-1}r(B)$, then by \eqref{asod nonflat case - doubling of beta},
\begin{equation}
    \label{asod nonflat case - desired small beta}
    \tilde\beta_{2,\mu}(B')\leq\min\{\varepsilon_1,\eta\}.
\end{equation} 

    Let now $B'$ be any ball centered at $\Sigma\cap\frac{1}{2}B$ with $2^{-N-2}r(B)\leq r(B')<2^{-N-1}r(B)$. Then $2^NB'$ is centered at $\Sigma\cap\frac{1}{2}B$ and $r(2^NB')<r(B)/2$, so $2^NB'\subset B$ and $\tilde\beta_{2,\mu}(2^kB')\leq\varepsilon_1$ for every $k\in\{1,\ldots,N\}$. Therefore, we can apply Lemma \ref{nonflat case - technical lemma} to $B'$ and deduce that  $B'$ satisfies \eqref{asod nonflat case - desired small beta}. From this and the arguments above it follows that if $B'$ is any ball centered on $\Sigma\cap \frac{1}{2}B$ with radius $r(B')\geq 2^{-N-2}r(B)$, then
    $B'$ satisfies \eqref{asod nonflat case - desired small beta}. Iterating this procedure, we deduce that for any $j\geq 2$, if $B'$ is a ball centered at $\Sigma\cap\frac{1}{2}B$ with $r(B')\geq 2^{-N-j}$, then $B'$ satisfies \eqref{asod nonflat case - desired small beta}, which completes the proof.

\end{proof}

\begin{proof}[Proof of Theorem \ref{nonflat case - theorem 2}]
    Suppose, for contradiction, that there exists $\varepsilon_1>0$ such that for each $i\geq i_0$ for some $i_0\geq 1$, and for each ball $B\subset B(0,2)$ centered at $\Sigma\cap K$, we have
    $$\tilde\beta_{2,\mu}(B)\leq 2^{-i}\leq\varepsilon_0,$$
    but there are $X_i\in\Sigma\cap B(0,1)$ and $r_i\searrow 0$ such that $b\beta_\Sigma(X_i,r_i)\geq\varepsilon_1$. Write $r_i=\lmin(K)\tau_i$. Fix $i\geq 1$ momentarily, and let $P$ be a minimizing plane for $b\beta_{\Sigma_i}(0,1)$. Write
   $$P=\frac{1}{\tau_i}\Lambda(X_i)^{-1}(\tilde P-X_i),$$ 
   for some $n$-plane $\tilde P$. Consider $\Sigma_i=\frac{1}{\tau_i}\Lambda(X_i)^{-1}(\Sigma-X_i)$,
    as well as the measures 
    $$\mu_i=\frac{1}{\mu(B_\Lambda(X_i,\tau_i))}T^\Lambda_{X_i,\tau_i}[\mu].$$
    Note that $\Sigma_i=\mathrm{spt}(\mu_i)$. By Corollary \ref{prelim - lambda and normal reif flat corollary},
    
    \begin{equation}
    \label{nonflat case - big bilateral beta}
        \begin{split}    
    b\beta_{\Sigma_i}(0,1)&= D[\Sigma_i\cap B(0,1);P\cap B(0,1)]\\
    &\geq \frac{C_K}{\tau_i}D[\Sigma\cap B_\Lambda(X_i,\tau_i);\tilde P\cap B_\Lambda(X_i,\tau_i)]\\
    &\geq \frac{C_K}{r_i}D[\Sigma\cap B(X_i,r_i);\tilde P\cap B(X_i,r_i)]\geq C_Kb\beta_\Sigma(X_i,r_i)\geq \varepsilon_1.
    \end{split}
    \end{equation}
    
Note that this estimate holds for every $i\geq 1$. We also know that upon passing to a subsequence, we have $\mu_i\rightharpoonup\nu$, where $\nu$ is an $n$-uniform $\Lambda$-pseudo tangent of $\mu$, and $\Sigma_i\to\Sigma_\infty=\mathrm{spt}(\nu)$ with respect to $D$, uniformly on compact sets, as before. This, combined with \eqref{nonflat case - big bilateral beta} implies that
\begin{equation}
    \label{nonflat case - big beta of mu infinity}b\beta_{\Sigma_\infty}(0,1)\geq\varepsilon_1/2.
\end{equation}
On the other hand, similarly as in \eqref{nonflat case - tool 1}, we have for every $r>0$, $\tilde\beta_{2,\mu}(0,r)\to\tilde\beta_{2,\nu}(0,r)$. But our initial assumptions imply that for every $r>0$ there exists $i_r\geq 1$ such that if $i\geq i_r$, then $\tilde\beta_{2,\mu_i}(0,r)\leq 2^{-i}$. It then follows that $\tilde\beta_{2,\nu}(0,r)=0$ for every $r>0$, which implies that $\Sigma_\infty$ is contained in an $n$-plane. This contradicts \eqref{nonflat case - big beta of mu infinity} and completes the proof.
\end{proof}

\section{Proof of Theorem \ref{theorem 3}: the singular set}\label{singular set}

Our arguments involve several different measures along with their singular sets, so to avoid confusion we denote the singular set of any Radon measure $\nu$, as defined in the previous section, by $\mathcal{S}_\nu$. We also let $\mathrm{dim}_\mathcal{H}$ denote Hausdorff dimension, and we continue to denote $\Sigma=\mathrm{spt}(\mu)$, with $\mu$ as in the assumptions of Theorem \ref{theorem 3} or Proposition \ref{singular set - main result} below. To complete the proof of Theorem \ref{theorem 3} it is enough to show the following.

\begin{proposition}[The singular set]\label{singular set - main result}
    Suppose $\mu$ is a $\Lambda$-asymptotically optimally doubling measure of dimension $m$ on $\R^{n+1}$, $1\leq m\leq n+1$. Then either $m\leq 2$ and $\mathcal{S}_\mu=\varnothing$, or $m\geq 3$ and
    \begin{equation}
        \label{singular set - main conclusion}\mathrm{dim}_\mathcal{H}(\mathcal{S}_\mu)\leq m-3.
    \end{equation}
\end{proposition}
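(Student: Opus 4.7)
The proposition is proved in two cases. \emph{When $m \leq 2$,} Proposition \ref{pseudo tangents - pseudo tangents are uniform} shows every $\Lambda$-pseudo tangent of $\mu$ is $m$-uniform, and Theorem \ref{prelim - uniform measures of dimension 2} shows it is then flat. This forces $\mathcal{R} = \Sigma$: if some $X \in \Sigma$ had $\limsup_{r \searrow 0} b\beta_\Sigma(X, r) > 0$, Lemma \ref{prelim - lambda and normal reif flat} would translate this into a uniform bilateral non-flatness for the anisotropic rescalings $T^\Lambda_{X, r_k}(\Sigma)$ at the unit scale along a suitable $r_k \searrow 0$. By Lemma \ref{pseudo tangents - characterization of support} this non-flatness would pass to the support of any $\Lambda$-pseudo tangent extracted from $\mu_{X, r_k}$, contradicting flatness. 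Hence $\mathcal{S}_\mu = \varnothing$.

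\emph{When $m \geq 3$,} I would follow the strategy of Preiss, Tolsa and Toro \cite{PTT08} and parameterize $\mathcal{S}_\mu$ through the $F$-functional of \eqref{singular set - functional}. With $\varepsilon_0 > 0$ the constant from Theorem \ref{nonflat case - theorem of preiss} (equivalently, from the De Lellis reformulation \eqref{singular set - Preiss reformulation}), define
\begin{equation*}
\mathcal{S}^* = \Bigl\{ X \in \Sigma : \limsup_{r \searrow 0} F(\mu_{X, r}) > \varepsilon_0^2 \Bigr\}.
\end{equation*}
The first step is $\mathcal{S}_\mu \subset \mathcal{S}^*$. If $X \notin \mathcal{S}^*$, then every $\Lambda$-pseudo tangent $\nu$ of $\mu$ at $X$ — which is $m$-uniform by Proposition \ref{pseudo tangents - pseudo tangents are uniform} — also satisfies $\limsup_{R \to \infty} F(\nu_{0, R}) \leq \varepsilon_0^2$, via a diagonal argument exploiting that the rescalings $\nu_{0, R}$ themselves arise as $\Lambda$-pseudo tangents of $\mu$ at $X$ along product sequences. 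The Preiss--De Lellis rigidity then forces every such $\nu$ to be flat, and the argument from the first case places $X$ in $\mathcal{R}$.

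The main step is the bound $\dim_\mathcal{H}(\mathcal{S}^*) \leq m - 3$. I would stratify $\mathcal{S}^* = \bigcup_k \mathcal{S}^*_k$ by letting $\mathcal{S}^*_k$ be the set of $X \in \Sigma$ with $F(\mu_{X, r}) > \varepsilon_0^2 + 1/k$ at arbitrarily small $r > 0$, and construct for each $k$ a Vitali-type cover of $\mathcal{S}^*_k$ by \emph{bad} balls $B(X, r)$ on which $F$ is large. The goal is a Carleson-type packing estimate
\begin{equation*}
\sum_{B \text{ bad}} r(B)^{m - 3 + \sigma} \leq C_k < \infty
\end{equation*}
for arbitrarily small $\sigma > 0$, which yields $\mathcal{H}^{m - 3 + \sigma}(\mathcal{S}^*_k) = 0$ and, taking $\sigma \searrow 0$, the desired dimension bound.

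The main obstacle is this packing estimate, and it is where the codimension-$1$ rigidity enters decisively: at any $X \in \mathcal{S}^*$, a non-flat $m$-uniform $\Lambda$-pseudo tangent of $\mu$ must, by Theorem \ref{flat measures - kowalski preiss}, be an affine image of the Kowalski--Preiss light cone, whose vertex set is an $(m - 3)$-plane. A careful adaptation of the counting argument of \cite{PTT08} — exchanging Euclidean rescalings for the anisotropic $T^\Lambda_{X, r}$ via Lemma \ref{prelim - lemma on nonconcentric ellipses}, which secures uniform comparability between Euclidean balls and $\Lambda$-ellipses at each scale — should convert this cone-vertex dimension bound into the required Carleson estimate.
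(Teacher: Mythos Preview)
Your $m\le 2$ case is essentially the same as the paper's, only organized differently; the paper packages the ``non-flat tangent at a singular point'' step as Lemma~\ref{singular set - lambda tangents at singular points} and then invokes Theorem~\ref{prelim - uniform measures of dimension 2}.

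For $m\ge 3$ your plan has two genuine gaps.

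\textbf{Codimension.} The proposition is stated for arbitrary $1\le m\le n+1$, and the paper proves it in that generality (it remarks explicitly that only the codimension-$1$ case is needed for Theorem~\ref{theorem 3}, but the proof covers all $m$). Your argument hinges on Kowalski--Preiss (Theorem~\ref{flat measures - kowalski preiss}) to conclude that every non-flat $m$-uniform pseudo-tangent is a light cone with $(m-3)$-dimensional vertex set. That classification is available only when $m=n$. For $3\le m<n$ there is no such classification, so the structure you need for the packing step is simply not there.

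\textbf{The packing estimate is not in \cite{PTT08} and is not the right mechanism.} The results of \cite{PTT08} control $\tilde\beta_{2,\mu}$ to show that the regular set is $C^{1,\gamma}$; they do not produce a Carleson packing of ``bad'' balls at the level $m-3$, and no counting argument there yields $\mathcal{H}^{m-3+\sigma}(\mathcal{S}^*_k)=0$. What you sketch is closer to a quantitative stratification in the style of Cheeger--Naber, which would require effective cone-splitting and $\varepsilon$-regularity inputs that are not established in this setting. Even in codimension~$1$, turning ``all non-flat tangents are KP cones'' into a packing bound at exponent $m-3$ is a substantial piece of work that your outline does not supply.

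The paper instead follows Nimer's blow-up/induction scheme. The engine is a \emph{conservation of singularities} statement (Proposition~\ref{singular set - conservation of singularities}): if $X_k\in\mathcal{S}_\mu$ and $T^\Lambda_{X,r_k}(X_k)\to Y$ along a convergent blow-up $\mu^\Lambda_{X,r_k}\rightharpoonup\nu$, then $Y\in\mathcal{S}_\nu$. Assuming $\mathcal{H}^s(\mathcal{S}_\mu)>0$ for some $s>0$, a density-point argument for $\mathcal{H}^s_\infty$ combined with this conservation transfers positive $\mathcal{H}^s$-content to $\mathcal{S}_\nu$ for some $m$-uniform $\Lambda$-tangent $\nu$. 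Iterating (now with round blow-ups) yields a conical $m$-uniform tangent $\nu'$ via Theorem~\ref{nonflat case - uniqueness of tangents for uniform measures}, and then a further tangent at a nonzero point which, by the dimension-reduction Lemma~\ref{singular set - dimension reduction}, splits as $\mathbb{R}\times A$ with $A$ the support of an $(m-1)$-uniform measure. This reduces the problem to dimension $m-1$ and the induction closes at $m=3$ by Theorem~\ref{singular set - singular set of conical 3-uniform measures} and Corollary~\ref{singular set - corollary}, which say the singular set of a $3$-uniform measure is locally finite. None of this uses a codimension-$1$ classification.
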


The Euclidean analogue of this result was proven by Nimer \cite{Ni18} under the assumption that $\mu$ be \textit{uniformly asymptotically doubling}. In our setting, an anisotropic analogue of such notion would go beyond the scope of this paper, so we only consider the particular case in which $\mu$ is $\Lambda$-asymptotically optimally doubling. It is still worth mentioning that a detailed analysis shows that the proofs are very similar in both cases. We also point out that even though we prove Proposition \ref{singular set - main result} in any codimension, Theorem \ref{theorem 3} only relies on its validity in codimension 1.

The dimension bound of Proposition \ref{singular set - main result} can be motivated by the case where $\mu$ is $n$-uniform in $\R^{n+1}$. In that scenario, it follows from Theorem \ref{flat measures - kowalski preiss} that $\mathcal{S}_\mu$  is either empty or, up to a rotation and translation, an $(n-3)$-dimensional plane. More generally, Nimer showed in \cite{Ni17} that in any codimension, the dimension of the singular set of an $m$-uniform measure $\mu$ is at most $m-3$. Since the measure $\mu$ in the assumptions of Theorem \ref{theorem 3} behaves asymptotically as $r\to 0$ as an $n$-uniform measure in a certain sense, it is natural to expect that the same dimension bound should be preserved.

\subsection{Technical results and proof of Proposition \ref{singular set - main result}}

Our arguments rely crucially on a dimension reduction result, and a statement about the singular set of conical $3$-uniform measures in arbitrary codimension, both proven by Nimer.

\begin{lemma}[Dimension reduction \cite{Ni18}]\label{singular set - dimension reduction}
    Let $\nu$ be an $m$-uniform, conical measure on $\R^{n+1}$, $1\leq m\leq n+1$. Let $X\in\mathrm{spt}(\nu)$, $X\neq 0$, and let $\lambda$ be the tangent measure of $\nu$ at $X$ (normalized so that $\lambda(B(0,1))=1$). Then, up to rotation,
    $$\lambda=c\mathcal{H}^m\mres(\mathbb R\times A),$$
    where $c>0$ is a constant and $A\subset\mathbb R^{n}$ is the support of an $(m-1)$-uniform measure.
\end{lemma}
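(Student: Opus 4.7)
The plan is to use conicity of $\nu$ together with Preiss's uniqueness of tangent measures for uniform measures (Theorem~\ref{nonflat case - uniqueness of tangents for uniform measures}) to produce translation invariance of $\lambda$ in the direction of $X$, then convert that translation invariance into a product decomposition, and finally read off $(m-1)$-uniformity of the transverse factor via an Abel-type integral equation. By Theorem~\ref{nonflat case - uniqueness of tangents for uniform measures}, $\lambda$ is $m$-uniform and conical, and independent of the chosen blow-up sequence: for every $r_k\searrow 0$ the normalized rescalings $\nu_{X,r_k}(E) := r_k^{-m}\nu(X+r_kE)$ converge weakly to $\lambda$ (up to a harmless multiplicative constant coming from the $m$-uniform density of $\nu$). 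After a rotation we may assume $X = |X|e_1$, and conicity of $\nu$ forces the entire open ray $\mathbb R^+e_1$ to lie in $\mathrm{spt}(\nu)$; this is the fact that powers the next step.

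Next I would establish invariance of $\lambda$ under the translations $Y\mapsto Y+te_1$ for every $t\in\mathbb R$. Fix $t\in\mathbb R$, and for $k$ large enough that $s_k := 1 + tr_k/|X| > 0$, set $X_k := s_kX = (|X|+tr_k)e_1\in\mathbb R^+e_1\subset\mathrm{spt}(\nu)$. A short computation using the conicity identity $\nu(s_kA) = s_k^m\nu(A)$ gives the rescaling relation $\nu_{X_k,r_k} = \nu_{X,r_k/s_k}$; since $r_k/s_k\to 0$ and $s_k\to 1$, uniqueness of tangent measures at $X$ forces $\nu_{X,r_k/s_k}\rightharpoonup\lambda$. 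Testing against an arbitrary $\varphi\in C_c(\mathbb R^{n+1})$,
\begin{equation*}
\int\varphi(Y-te_1)\,\de\lambda(Y) \;=\; \lim_{k\to\infty}r_k^{-m}\int\varphi\!\left(\frac{Z-X_k}{r_k}\right)\de\nu(Z) \;=\; \lim_{k\to\infty}\int\varphi\,\de\nu_{X_k,r_k} \;=\; \int\varphi\,\de\lambda,
\end{equation*}
which is the asserted translation invariance in the $e_1$ direction.

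Writing $\mathbb R^{n+1} = \mathbb R e_1\times\mathbb R^n$, translation invariance immediately forces $\mathrm{spt}(\lambda) = \mathbb R\times A$ for a closed set $A\subset\mathbb R^n$, and a standard disintegration (defining $\lambda'(E) := \lambda([0,1]\times E)$) produces the product decomposition $\lambda = \mathcal{L}^1\otimes\lambda'$ with $\mathrm{spt}(\lambda') = A$. To identify $\lambda'$ as $(m-1)$-uniform, for any $y'\in A$ and $r>0$ Fubini yields
\begin{equation*}
\lambda(B((0,y'),r)) \;=\; 2\int_0^r g\!\left(\sqrt{r^2-s^2}\right)\de s,\qquad g(\rho) \;:=\; \lambda'(B_{\mathbb R^n}(y',\rho)),
\end{equation*}
and the left-hand side is a constant multiple of $r^m$ by $m$-uniformity of $\lambda$. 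This is an Abel-type integral equation whose only monotone solution is $g(\rho) = c'\rho^{m-1}$ for a constant $c'$ depending only on $m$ (one verifies this directly by the substitution $u = r\sin\theta$, which also fixes the relationship between $c$ and $c'$). Hence $\lambda'$ is $(m-1)$-uniform on $\mathbb R^n$ supported on $A$, and $\lambda = c\,\mathcal{H}^m\mres(\mathbb R\times A)$, which is the conclusion of the lemma.

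The main obstacle will be the translation-invariance step: one must ensure that $X_k = s_kX$ remains inside $\mathrm{spt}(\nu)$ for large $k$ (this is exactly where it matters that $X$ lies on a full ray of the cone, so that conicity immediately yields the membership $X_k\in\mathbb R^+e_1\subset\mathrm{spt}(\nu)$), and one must verify the conical-rescaling identity $\nu_{X_k,r_k} = \nu_{X,r_k/s_k}$ with the correct normalization so that the uniqueness clause of Theorem~\ref{nonflat case - uniqueness of tangents for uniform measures} can be applied to the shifted sequence of radii $r_k/s_k$ rather than $r_k$ itself.
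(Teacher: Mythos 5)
The paper itself gives no proof of this lemma — it is quoted directly from Nimer [Ni18] — so there is no in-paper argument to compare against; what you wrote is a correct, self-contained proof along the classical dimension-reduction lines (Kowalski--Preiss/Preiss, and essentially the argument in [Ni18]). Your key steps check out: conicity gives $\nu(sE)=s^m\nu(E)$, hence the identity $\nu_{X_k,r_k}=\nu_{X,r_k/s_k}$ (which in fact holds whether or not $X_k\in\mathrm{spt}(\nu)$, so the ray observation is not even needed there), and Preiss's uniqueness of tangent measures upgrades convergence along $r_k$ to convergence along $r_k/s_k$, yielding translation invariance of $\lambda$ in the $e_1$ direction; the disintegration $\lambda=\mathcal{L}^1\otimes\lambda'$ and the slicing identity for $\lambda(B((0,y'),r))$ are correct. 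Two spots deserve one more line each, though neither is a genuine gap: (i) uniqueness for your Abel-type equation among monotone $g$ should be justified, e.g.\ substitute $u=\sqrt{r^2-s^2}$ to rewrite it as $\int_0^w h(v)(w-v)^{-1/2}\,\de v=cw^{m/2}$ with $h(v)=g(\sqrt{v})$, and apply the Abel operator once more to see that any two locally integrable solutions agree a.e., monotonicity and continuity of the power function then giving equality everywhere; (ii) the final formula $\lambda=c\,\mathcal{H}^m\mres(\mathbb{R}\times A)$ does not follow from the product decomposition alone, but from the standard fact that a measure with constant, positive, finite $m$-density at every point of its support is a constant multiple of $\mathcal{H}^m$ restricted to that support, applied to the $m$-uniform measure $\lambda$ whose support you have identified as $\mathbb{R}\times A$ (this also sidesteps any worry about products of Hausdorff measures). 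With those two remarks added, your proof is complete and matches the standard route used in the cited source.
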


\begin{theorem}[Regularity of conical $3$-uniform measures \cite{Ni17}]
\label{singular set - singular set of conical 3-uniform measures}
    Let $\nu$ be a conical $3$-uniform measure on $\R^{n+1}$. Then there exists $\gamma>0$ such that $\mathrm{spt}(\nu)\backslash\{0\}$ is a $C^{1,\gamma}$ submanifold of dimension $3$ of $\R^{n+1}$.
\end{theorem}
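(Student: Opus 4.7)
The plan is to show, at every nonzero point $X_0 \in \mathrm{spt}(\nu)$, that $\mathrm{spt}(\nu)$ is locally a $C^{1,\gamma}$ $3$-submanifold, by combining dimension reduction (Lemma \ref{singular set - dimension reduction}) with Preiss's classification of low-dimensional uniform measures (Theorem \ref{prelim - uniform measures of dimension 2}) and the density-to-regularity machinery of Theorem \ref{theorem 1} (applied in the Euclidean case $\Lambda \equiv \mathrm{Id}$). The first step is to observe that the tangent measure of $\nu$ at any $X_0 \neq 0$ is flat. By Theorem \ref{nonflat case - uniqueness of tangents for uniform measures} this tangent $\lambda$ exists, is unique, and is $3$-uniform and conical. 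Applying Lemma \ref{singular set - dimension reduction} at any nonzero point of $\mathrm{spt}(\lambda)$ then yields $\lambda = c\,\mathcal{H}^3 \mres (\mathbb{R} \times A)$ up to rotation, where $A$ supports a $2$-uniform measure, and Theorem \ref{prelim - uniform measures of dimension 2} forces $A$ to be a $2$-plane. Hence $\lambda$ is $c\,\mathcal{H}^3$ restricted to a $3$-plane through the origin.

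Next I would convert this flat-tangent information into vanishing Reifenberg flatness of $\mathrm{spt}(\nu)$ on compact subsets of $\mathrm{spt}(\nu) \setminus \{0\}$. Weak convergence $\nu_{X,r} \rightharpoonup \lambda_X$ together with the lower density bound coming from $3$-uniformity gives, by an argument analogous to Lemma \ref{pseudo tangents - characterization of support}, that $b\beta_{\mathrm{spt}(\nu)}(X,r) \to 0$ pointwise as $r \to 0$ for each $X \in \mathrm{spt}(\nu) \setminus \{0\}$. A contradiction/compactness argument, exploiting the uniqueness of $\lambda_X$, should upgrade this to $\sup_{X \in K} b\beta_{\mathrm{spt}(\nu)}(X,r) \to 0$ on any compact $K \subset \mathrm{spt}(\nu) \setminus \{0\}$. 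Finally, since $\nu$ is $3$-uniform, after multiplying $\nu$ by $\omega_3/c$ we have the exact identity $\nu(B(Y,r)) = \omega_3 r^3$ for every $Y \in \mathrm{spt}(\nu)$ and $r > 0$, which trivially satisfies the Hölder density hypothesis of Theorem \ref{theorem 1} with $\Lambda \equiv \mathrm{Id}$ for any $\alpha \in (0,1)$. Combined with the vanishing Reifenberg flatness just established, Theorem \ref{theorem 1} then yields that $\mathrm{spt}(\nu) \cap K$ is a $C^{1,\gamma}$ $3$-submanifold of $\mathbb{R}^{n+1}$; covering $\mathrm{spt}(\nu) \setminus \{0\}$ by such compacts concludes the proof.

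The main obstacle I expect is the upgrade from pointwise to uniform vanishing Reifenberg flatness on a compact $K$, since it requires controlling how the tangent plane $\mathrm{spt}(\lambda_X)$ varies with $X$. Here the conical structure of $\nu$ is a powerful ally: by homogeneity it suffices to verify uniform flatness on a single compact annulus such as $\{Y \in \mathrm{spt}(\nu) : 1 \leq |Y| \leq 2\}$, and scaling transports the bound to all of $\mathrm{spt}(\nu) \setminus \{0\}$. This should reduce the uniform control to a single compactness argument, for which the uniqueness-of-tangents property of uniform measures (Theorem \ref{nonflat case - uniqueness of tangents for uniform measures}) is well-suited.
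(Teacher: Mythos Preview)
The paper does not supply its own proof of this theorem; it is quoted from Nimer \cite{Ni17} and used as a black box in Section~\ref{singular set}. So there is nothing to compare your proposal against within the paper itself.

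Your overall strategy is sound and indeed gives a self-contained proof using the paper's tools, but two points need correcting. First, your invocation of Lemma~\ref{singular set - dimension reduction} is misphrased: you should apply it to $\nu$ (which is conical by hypothesis) at the nonzero point $X_0$, which directly yields that the tangent $\lambda$ of $\nu$ at $X_0$ satisfies $\lambda = c\,\mathcal{H}^3\mres(\mathbb{R}\times A)$ with $A$ the support of a $2$-uniform measure. Applying the lemma ``at any nonzero point of $\mathrm{spt}(\lambda)$'' instead computes a tangent of $\lambda$, not $\lambda$ itself, so as written that step does not give what you claim. Second, Theorem~\ref{theorem 1} is stated only in codimension one (its flatness step relies on the Kowalski--Preiss classification, Theorem~\ref{flat measures - kowalski preiss}), whereas here $\nu$ is a $3$-uniform measure in $\mathbb{R}^{n+1}$ with $n+1$ possibly much larger than $4$. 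Since you have already established vanishing Reifenberg flatness away from the origin by the tangent argument, you should bypass Theorem~\ref{theorem 1} and invoke the underlying DKT machinery directly, namely Proposition~\ref{beta numbers - dkt proposition} (which, as noted in the paper, holds in any codimension once vanishing Reifenberg flatness is in hand) together with Proposition~\ref{introduction - DKT beta to C1gamma regularity}. With these two fixes the argument goes through.
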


Recall, for $X\in\Sigma$ and $r>0$, the mappings $T_{X,r}$ and $T^{\Lambda}_{X,r}$ defined in \eqref{nonflat case - round blow up map} and \eqref{pseudo tangents - blow up map}, as well as the corresponding re-scalings of a Radon measure $\nu$,
$$\nu_{X,r}=\frac{1}{\nu(B(X,r))}T_{X,r}[\nu],\quad \nu^\Lambda_{X,r}=\frac{1}{\nu(B_\Lambda(X,r))}T^\Lambda_{X,r}[\nu].$$

We will need the following technical results, whose Euclidean analogues are in \cite{Ni18}.

\begin{lemma}[Connectedness property]\label{singular set - connectedness property}
    Suppose $\mu$ is a $\Lambda$-asymptotically optimally doubling measure of dimension $m$ in $\R^{n+1}$, $1\leq m\leq n+1$. Let $X_k\in\mathrm{spt(\mu)\cap \overline{B(0,1)} }$ be such that $X_k\to X$ as $k\to \infty$. Let also $\tau_k>\sigma_k>0$ be such that $\tau_k,\sigma_k\to 0$ and
    $$\mu_{X_k,\tau_k}^{\Lambda}\rightharpoonup \alpha,\quad \mu_{X_k,\sigma_k}^{\Lambda}\rightharpoonup \beta,$$
    as $k\to\infty$, where $\alpha$ and $\beta$ are non-zero Radon measures. If $\alpha$ is flat, then $\beta$ is flat.
\end{lemma}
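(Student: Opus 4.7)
The plan is to establish flatness of $\beta$ by combining a direct scaling identity with the Preiss-type rigidity of Theorem~\ref{nonflat case - theorem of preiss}. By Proposition~\ref{pseudo tangents - pseudo tangents are uniform}, both $\alpha$ and $\beta$ are $m$-uniform with $\alpha(B(0,1)) = \beta(B(0,1)) = 1$. If $m \leq 2$, Theorem~\ref{prelim - uniform measures of dimension 2} immediately gives that $\beta$ is flat, so I assume $m \geq 3$.

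Set $s_k = \tau_k/\sigma_k > 1$ and $D_s(Y) = sY$. A direct calculation from the definition of $\mu^\Lambda_{X,r}$ gives
\[
\mu^\Lambda_{X_k, \sigma_k} \;=\; \frac{\mu(B_\Lambda(X_k, \tau_k))}{\mu(B_\Lambda(X_k, \sigma_k))}\,(D_{s_k})_*\bigl[\mu^\Lambda_{X_k, \tau_k}\bigr],
\]
and the $\Lambda$-asymptotic optimal doubling of $\mu$ (Proposition~\ref{pseudo tangents - density implies asod}, extended to all $t\in (0,1)$) makes this mass ratio asymptotic to $s_k^m$. Splitting into two subcases by passing to a subsequence: if $s_k \to s \in [1,\infty)$, then in the limit $\beta = s^m (D_s)_*\alpha$; since $\alpha = \omega_m^{-1}\mathcal{H}^m \mres P$ for a linear $m$-plane $P$ through $0$, one has $(D_s)_*\alpha = s^{-m}\alpha$, forcing $\beta = \alpha$, which is flat.

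The main subcase is $s_k \to \infty$, where the morally correct picture is that $\beta$ is a ``tangent measure at $0$ of $\alpha$'' and hence equals $\alpha$ itself by the self-similarity of flat measures. My plan is to argue by contradiction via De Lellis's reformulation \eqref{singular set - Preiss reformulation} of Theorem~\ref{nonflat case - theorem of preiss}: if $\beta$ were non-flat, there would exist $R^* > 0$ with $F(\beta_{0,R^*}) > \varepsilon_0^2$. The scaling identity above combined with the asymptotic doubling yields the key relation
\[
\beta_{0,R}\;=\;\lim_{k\to\infty}\mu^\Lambda_{X_k, R\sigma_k}\qquad\text{weakly, for each fixed }R>0,
\]
so that $F(\mu^\Lambda_{X_k, R^*\sigma_k}) > \varepsilon_0^2$ for large $k$, while $F(\mu^\Lambda_{X_k, \tau_k}) \to F(\alpha) = 0$. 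Since $s_k \to \infty$ gives $R^*\sigma_k < \tau_k$ eventually, and since $r \mapsto \mu^\Lambda_{X_k, r}$ is weakly continuous and $F$ is continuous under weak convergence, the intermediate value theorem produces scales $r_k \in (R^*\sigma_k,\tau_k)$ along which weak subsequential limits $\gamma$ are $m$-uniform pseudo-tangents with any prescribed $F$-value in $(0,\varepsilon_0^2)$.

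The main obstacle will be turning the existence of such non-flat intermediate $m$-uniform pseudo-tangents into a genuine contradiction. The plan is to show that the set of all weak subsequential limits of $\mu^\Lambda_{X_k, r_k}$ with $r_k \in [\sigma_k, \tau_k]$ forms a connected subset of the $m$-uniform measures in the $\mathcal{F}$-topology (using continuity in $r$ of $\phi_k(r) = \mu^\Lambda_{X_k, r}$ together with a Kuratowski-limit argument for connected sets), and that flatness is clopen on this set: closed because normalized flat $m$-uniform measures form a copy of the compact Grassmannian $G(m, n+1)$, and open by a uniform version of Preiss's rigidity applied through the tangent-at-infinity operation (exploiting its uniqueness, Theorem~\ref{nonflat case - uniqueness of tangents for uniform measures}). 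Connectedness together with the presence of the flat $\alpha$ then forces $\beta$ to be flat. The delicate step is the openness, since Preiss's threshold $\varepsilon_0^2$ controls the tangent at infinity rather than the $F$-value at a single scale.
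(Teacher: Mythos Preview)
Your setup through the intermediate value step is essentially the same as the paper's: reduce to $m\ge 3$, assume $\beta$ non-flat, pick $R_0$ with $F(\beta_{0,R_0})>\varepsilon_0^2$, and note $F(\mu^\Lambda_{X_k,\tau_k})\to 0$ while $F(\mu^\Lambda_{X_k,R_0\sigma_k})>\kappa$ for any fixed $\kappa\in(0,\varepsilon_0^2)$ and large $k$. Where you diverge is in the endgame. Your plan to show that the set of all subsequential weak limits of $\mu^\Lambda_{X_k,r_k}$, $r_k\in[\sigma_k,\tau_k]$, is connected and that flatness is clopen on it is not carried through, and you correctly flag the openness as delicate. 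The obstruction is real: Preiss's $\varepsilon_0$ threshold in Corollary~\eqref{singular set - Preiss reformulation} constrains the tangent at infinity $\limsup_{R\to\infty}F(\nu_{0,R})$, not $F(\nu)$ itself, and weak closeness of $\nu$ to a flat measure gives you no uniform control over $\nu_{0,R}$ for large $R$. The connectedness of the limit set is also not obvious; continuity of each $r\mapsto\mu^\Lambda_{X_k,r}$ gives connected images, but subsequential limits of a sequence of connected sets need not form a connected set.

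The paper closes the gap with one extra idea that replaces your clopen argument entirely. Choose $\delta_k$ to be the \emph{largest} scale in $[R_0\sigma_k,\tau_k]$ with $F(\mu^\Lambda_{X_k,\delta_k})=\kappa$, so that $F(\mu^\Lambda_{X_k,r})\le\kappa$ for all $r\in[\delta_k,\tau_k]$. Pass to a subsequence so $\mu^\Lambda_{X_k,\delta_k}\rightharpoonup\nu$; then $F(\nu)=\kappa>0$, so $\nu$ is not flat. The same argument that gave $\tau_k/\sigma_k\to\infty$ gives $\tau_k/\delta_k\to\infty$ (otherwise a bounded-ratio subsequence would force $\nu$ to be a rescaling of the flat $\alpha$). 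Hence for any fixed $R>1$, eventually $R\delta_k\in[\delta_k,\tau_k]$, so $F(\mu^\Lambda_{X_k,R\delta_k})\le\kappa$; passing to the limit via Lemma~\ref{singular set - scaling and blow up} yields $F(\nu_{0,R})\le\kappa$ for every $R>1$. Now $\limsup_{R\to\infty}F(\nu_{0,R})\le\kappa<\varepsilon_0^2$, so Corollary~\eqref{singular set - Preiss reformulation} forces $\nu$ flat, contradicting $F(\nu)=\kappa$. The maximality of $\delta_k$ is exactly what converts single-scale information into control of the tangent at infinity, which is the step your outline was missing.
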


\begin{lemma}[$\Lambda$-tangents at a singular point]
\label{singular set - lambda tangents at singular points}
    Suppose $\mu$ is a $\Lambda$-asymptotically optimally doubling measure of dimension $m$ in $\R^{n+1}$, $1\leq m \leq n+1$. Then:
    \begin{enumerate}
        \item If $X\in\mathcal{S}_\mu$, then $\mu$ has no flat $\Lambda$-tangent measures at $X$.
        \item If $\mu$ has a $\Lambda$-tangent measure at $X$ which is not flat, then $X\in\mathcal{S}_\mu$.
    \end{enumerate}
    \end{lemma}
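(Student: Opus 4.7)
The plan is to prove both parts by relating the support of a $\Lambda$-tangent $\nu$ at $X$ to the Hausdorff limit of the rescaled supports $T^\Lambda_{X,r_k}(\Sigma)$, and combining this with the fact that $\Lambda$-tangent measures are $m$-uniform (Proposition \ref{pseudo tangents - pseudo tangents are uniform}) together with the connectedness property (Lemma \ref{singular set - connectedness property}).

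I would begin with the contrapositive of (2): if $X\in\mathcal{R}$, every $\Lambda$-tangent $\nu$ at $X$ is flat. Fix $r_k\searrow 0$ with $\mu^\Lambda_{X,r_k}\rightharpoonup\nu$. Since $X\in\mathcal{R}$, $b\beta_\Sigma(X,r_k)\to 0$, so there exist $m$-planes $P_k\ni X$ whose intersections with $B(X,r_k)$ lie within Hausdorff distance $o(r_k)$ of $\Sigma\cap B(X,r_k)$. Applying the fixed invertible linear map $T^\Lambda_{X,r_k}$, whose operator norm and inverse norm are controlled by $\lmin(X)$ and $\lmax(X)$, the $m$-planes $P_k'=T^\Lambda_{X,r_k}(P_k)\ni 0$ Hausdorff-approximate $T^\Lambda_{X,r_k}(\Sigma)$ on any bounded set. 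Passing to a further subsequence with $P_k'\to P$, an $m$-plane through $0$, Lemma \ref{pseudo tangents - characterization of support} yields $\mathrm{spt}(\nu)=P$. Since $\nu$ is $m$-uniform and supported on an $m$-plane, it must be a constant multiple of $\mathcal{H}^m\mres P$, hence flat.

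For part (1), suppose $X\in\mathcal{S}_\mu$. Then there exist $\varepsilon_0>0$ and $s_j\searrow 0$ with $b\beta_\Sigma(X,s_j)\geq\varepsilon_0$. After extracting a subsequence, $\mu^\Lambda_{X,s_j}\rightharpoonup\beta$ for some $\Lambda$-tangent $\beta$. If $\beta$ were flat with $\mathrm{spt}(\beta)=P$, then Lemma \ref{pseudo tangents - characterization of support} would give $T^\Lambda_{X,s_j}(\Sigma)\to P$ in Hausdorff distance on compact sets, and undoing the rescaling would force $b\beta_\Sigma(X,s_j)\to 0$, contradicting the lower bound; hence $\beta$ is not flat. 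Now suppose, for contradiction, that $\mu$ admits a flat $\Lambda$-tangent $\alpha$ at $X$ arising along scales $\tau_k\searrow 0$. Extracting subsequences we may interleave so that $\tau_k>s_{j(k)}$ with $s_{j(k)}\to 0$ and $\mu^\Lambda_{X,s_{j(k)}}\rightharpoonup\beta$. Applying Lemma \ref{singular set - connectedness property} with $X_k=X$, $\tau_k$ and $\sigma_k=s_{j(k)}$, the flatness of $\alpha$ forces $\beta$ to be flat, a contradiction.

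The main obstacle is translating between the Euclidean flatness quantity $b\beta_\Sigma$, which defines $\mathcal{R}$, and the anisotropic blow-ups $T^\Lambda_{X,r}$ underlying the $\Lambda$-tangent framework. Fortunately, because all rescalings are centered at the same fixed point $X$, the transformation is a single invertible linear map, and its distortion of Hausdorff distance is controlled uniformly by $\lmin(X)$ and $\lmax(X)$, which keeps the bookkeeping clean. A secondary technical point is the classical fact that an $m$-uniform measure supported on an $m$-plane is a constant multiple of $\mathcal{H}^m$, which follows from Theorem \ref{prelim - uniform measures of dimension 2} when $m\leq 2$ and from standard translation-invariance arguments in general.
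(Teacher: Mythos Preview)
Your proposal is correct and follows essentially the same approach as the paper: both arguments identify $\mathrm{spt}(\nu)$ with the Hausdorff limit of $T^\Lambda_{X,r_k}(\Sigma)$ via Lemma \ref{pseudo tangents - characterization of support}, transfer between the Euclidean quantity $b\beta_\Sigma$ and the anisotropic blow-up using only the eigenvalue bounds of the fixed matrix $\Lambda(X)$, and invoke Lemma \ref{singular set - connectedness property} with $X_k\equiv X$ to pass from one non-flat $\Lambda$-tangent to the nonexistence of any flat one. The only cosmetic differences are that you argue part (2) by contrapositive while the paper argues it directly, and you spell out the interleaving of scales $\tau_k>s_{j(k)}$ needed to apply the connectedness lemma, which the paper leaves implicit.
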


One of the key insights of Nimer \cite{Ni18} in the Euclidean setting is that if a measure is regular enough, then its singularities are ``preserved'' under blow-ups. The following is the corresponding anisotropic version of this statement.

\begin{proposition}[Conservation of singularities]\label{singular set - conservation of singularities}
    Suppose $\mu$ is a $\Lambda$-asymptotically optimally doubling measure of dimension $m$ on $\R^{n+1}$, $1\leq m\leq n+1$. Let $X\in\mathrm{spt}(\mu)$ and $r_k>0$ be such that $r_k\to 0$ and $\mu^{\Lambda}_{X,r_k}\rightharpoonup \nu$ as $k\to\infty$, where $\nu$ is a nonzero Radon measure. If $X_k\in\mathcal{S}_\mu$ and $Y_k=\Lambda(X )^{-1}\left(\frac{X_k-X}{r_k}\right)\to Y$ for some $Y\in \R^{n+1}$ as $k\to\infty$, then $Y\in\mathcal{S}_\nu$.
\end{proposition}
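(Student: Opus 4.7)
The plan is to argue by contradiction: suppose $Y\in\mathcal{R}_\nu$, so that $b\beta_{\mathrm{spt}(\nu)}(Y,\rho)\to 0$ as $\rho\to 0$. I will derive that $X_k\in\mathcal{R}_\mu$ for some sufficiently large $k$, contradicting $X_k\in\mathcal{S}_\mu$. The strategy is to propagate smoothed flatness of $\nu$ at $Y$ to smoothed flatness of $\mu$ at $X_k$, and then invoke Corollary~\ref{nonflat case - corollary} together with Proposition~\ref{flat measures - main result}.

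I would begin by showing $\tilde\beta_{2,\nu}(Y,\rho)\to 0$ as $\rho\to 0$. By Proposition~\ref{pseudo tangents - pseudo tangents are uniform}, $\nu$ is $m$-uniform, so in particular $\mathrm{spt}(\nu)$ satisfies the Ahlfors-regularity bound \eqref{nonflat case - adr}. The estimate \eqref{nonflat case - beta less than bilateral beta}, applied to $\nu$, then gives $\tilde\beta_{2,\nu}(Y,\rho)\leq C\,b\beta_{\mathrm{spt}(\nu)}(Y,3\rho)\to 0$. Given $\varepsilon>0$ chosen small enough that Proposition~\ref{flat measures - main result} applies on a compact neighborhood of $X_k$, let $\delta>0$ be the corresponding threshold in Corollary~\ref{nonflat case - corollary}. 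I would then fix $\rho>0$ so small that $\tilde\beta_{2,\nu}(Y,\rho)<\delta/(2C_K)$, where $C_K$ absorbs the affine distortion appearing below.

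Next I would transfer this smallness first to $\mu^\Lambda_{X,r_k}$ and then to $\mu$. Using $\mu^\Lambda_{X,r_k}\rightharpoonup\nu$ together with $Y_k\to Y$, and testing against a near-optimal plane for $\tilde\beta_{2,\nu}(Y,\rho)$, the continuity of the integrand in $\tilde\beta_2$ yields $\limsup_{k\to\infty}\tilde\beta_{2,\mu^\Lambda_{X,r_k}}(Y_k,\rho)\leq\tilde\beta_{2,\nu}(Y,\rho)$. I would then change variables via $W=X+r_k\Lambda(X)Z$ and use the eigenvalue bounds on $\Lambda(X)$, together with H\"older continuity of $\Lambda$ near $X$, to convert this bound into $\tilde\beta_{2,\mu}(B(X_k,4R_k))<\delta$ for large $k$, with $R_k$ of order $r_k\rho$.

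Finally, Corollary~\ref{nonflat case - corollary} produces $R>0$ such that $b\beta_\Sigma(Z,r)<\varepsilon$ for $Z\in\Sigma\cap B(X_k,R_k)$ and $r\in(0,R)$. Since $\varepsilon$ was chosen small enough, Proposition~\ref{flat measures - main result} applied with $K=\overline{B(X_k,R_k)}$ then forces $\lim_{r\to 0}b\beta_\Sigma(K,r)=0$, so $X_k\in\mathcal{R}_\mu$, contradicting $X_k\in\mathcal{S}_\mu$. The main difficulty I anticipate is the bookkeeping for the anisotropic change of variables: $\mu^\Lambda_{X,r_k}$ is defined using $\Lambda(X)$ rather than $\Lambda(X_k)$, so the smoothed $\beta_2$ of $\mu^\Lambda_{X,r_k}$ at $Y_k$ initially translates into an anisotropic coefficient of $\mu$ whose weight is built from $\Lambda(X)$; comparing this to the isotropic $\tilde\beta_{2,\mu}(B(X_k,4R_k))$ requires careful use of the H\"older continuity of $\Lambda$ and of the fact that $X_k\to X$, together with uniform control on the minimizing planes and Ahlfors-regularity constants as $k\to\infty$.
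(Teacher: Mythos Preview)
Your route is genuinely different from the paper's. The paper argues \emph{directly}: it manufactures scales $\sigma_k\ll r_k$ so that $\mu^\Lambda_{X_k,\sigma_k}\rightharpoonup\nu^{(\infty)}$ with $\nu^{(\infty)}$ not flat (using that each $X_k\in\mathcal{S}_\mu$ together with Lemma~\ref{singular set - lambda tangents at singular points}), separately shows $\mu^\Lambda_{X_k,r_k}\rightharpoonup\nu_{Y,1}$, and then interlaces these two families of blow-ups to produce a non-flat tangent of $\nu$ at $Y$ via the connectedness Lemma~\ref{singular set - connectedness property}. Your contradiction scheme, pushing flatness from $\nu$ at $Y$ back to $\mu$ at $X_k$ through the Section~\ref{nonflat case} machinery, is more geometric and, when it works, arguably cleaner.

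There is, however, a real gap. Your final step invokes Proposition~\ref{flat measures - main result}, but that proposition is stated and proved only in codimension one: its proof rests on the Kowalski--Preiss classification (Theorem~\ref{flat measures - kowalski preiss}) to rule out the light cone. Proposition~\ref{singular set - conservation of singularities} is stated for arbitrary $1\leq m\leq n+1$, and that generality is \emph{used} in the induction inside Proposition~\ref{singular set - main result} (the dimension-reduction Lemma~\ref{singular set - dimension reduction} drops the intrinsic dimension at each step). So as written your argument only covers $m=n$. A smaller issue in the same vein: Corollary~\ref{nonflat case - corollary} assumes the Ahlfors bound \eqref{nonflat case - adr}, which is not among the hypotheses here; this is harmless in the H\"older-doubling or uniform cases but needs justification in general.

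A repair that keeps your strategy is to bypass Proposition~\ref{flat measures - main result} entirely. Once $\tilde\beta_{2,\mu}(B(X_k,4R_k))<\delta$ for some fixed large $k$, Theorem~\ref{nonflat case - theorem 1} (which the paper notes holds in any codimension) yields $\tilde\beta_{2,\mu}(B(X_k,r))<\eta$ for \emph{all} $0<r\leq 2R_k$. Passing any $\Lambda$-tangent $\alpha$ of $\mu$ at $X_k$ through the limit estimates in the proof of Lemma~\ref{nonflat case - technical lemma} then gives $F(\alpha_{0,R})\leq C_K\eta$ for every $R>0$; with $\eta$ below the Preiss threshold of Corollary~\ref{singular set - Preiss reformulation} this forces $\alpha$ to be flat, contradicting part~1 of Lemma~\ref{singular set - lambda tangents at singular points}. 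This version works for general $m$.
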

An important corollary of the Euclidean version of this proposition is the following result.

\begin{corollary}[Singularities of $3$-uniform measures \cite{Ni18}]
\label{singular set - corollary}
    Let $\nu$ be a $3$-uniform measure in $\R^{n+1}$. Then for every compact set $K\subset\R^{n+1}$, $\mathcal{S}_\nu\cap K$ is finite. In particular, $\mathrm{dim}_\mathcal{H}(\mathcal{S}_\nu)=0$. 
\end{corollary}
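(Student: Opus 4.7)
The plan is to argue by contradiction, combining the uniqueness of tangent measures to uniform measures, the conservation of singularities under blow-ups, and the rigidity theorem for conical $3$-uniform supports. First I would observe that $\mathcal{R}_\nu$ is open in $\mathrm{spt}(\nu)$ directly from its definition as a $\limsup$ vanishing set (the same reasoning used at the end of Proposition \ref{nonflat case - regular set}), so $\mathcal{S}_\nu$ is closed. If $\mathcal{S}_\nu\cap K$ were infinite for some compact $K$, I could extract a sequence of distinct points $X_k\in \mathcal{S}_\nu\cap K$ with $X_k\to X$, and necessarily $X\in \mathcal{S}_\nu\cap K$.

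Next, I would apply Theorem \ref{nonflat case - uniqueness of tangents for uniform measures} to produce the unique tangent measure $\lambda$ of $\nu$ at $X$, which is itself $3$-uniform and conical. Choose the scales $r_k=2|X_k-X|\to 0$, so that the rescaled points $Y_k=(X_k-X)/r_k$ all have $|Y_k|=1/2$; by compactness, pass to a subsequence with $Y_k\to Y$ and $|Y|=1/2$. Since $\nu$ satisfies $\nu(B(Z,r))=Cr^3$ for $Z\in\mathrm{spt}(\nu)$, it is trivially $\Lambda$-asymptotically optimally doubling in the sense of Definition \ref{pseudo tangents - lambda asod} with $\Lambda\equiv\mathrm{Id}$, so the Euclidean analogue of Proposition \ref{singular set - conservation of singularities} applies to the sequence $X_k\in\mathcal{S}_\nu$. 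This yields $Y\in\mathcal{S}_\lambda$.

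To derive the contradiction, I would invoke Theorem \ref{singular set - singular set of conical 3-uniform measures}: $\mathrm{spt}(\lambda)\setminus\{0\}$ is a $C^{1,\gamma}$ submanifold, hence every point $Z\in \mathrm{spt}(\lambda)\setminus\{0\}$ has $\limsup_{r\searrow 0} b\beta_{\mathrm{spt}(\lambda)}(Z,r)=0$ and so lies in $\mathcal{R}_\lambda$. In particular $\mathcal{S}_\lambda\subset\{0\}$, which contradicts $|Y|=1/2$. Finiteness of $\mathcal{S}_\nu\cap K$ for every compact $K$ then gives $\mathcal{S}_\nu$ countable, so $\mathrm{dim}_{\mathcal{H}}(\mathcal{S}_\nu)=0$.

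The only subtlety I anticipate is checking that $Y\in\mathrm{spt}(\lambda)$, so that the conclusion $Y\in\mathcal{S}_\lambda$ is meaningful; this follows from the Euclidean analogue of Lemma \ref{pseudo tangents - characterization of support}, because each $Y_k$ lies in $T_{X,r_k}(\mathrm{spt}(\nu))$ and the rescaled supports converge in Hausdorff distance on compact sets to $\mathrm{spt}(\lambda)$. All the rest is bookkeeping: the serious content is packaged in the conservation-of-singularities proposition and in the conical rigidity theorem for $3$-uniform measures.
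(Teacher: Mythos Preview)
Your proposal is correct and follows exactly the route the paper indicates: the corollary is attributed to \cite{Ni18} and presented as a consequence of the Euclidean case of Proposition \ref{singular set - conservation of singularities} together with Theorem \ref{singular set - singular set of conical 3-uniform measures}, and your argument does precisely this---accumulate singular points, blow up at the limit to a conical $3$-uniform tangent (unique by Theorem \ref{nonflat case - uniqueness of tangents for uniform measures}), transfer a singularity away from the origin via conservation of singularities, and contradict the regularity of the cone away from $0$. The only cosmetic point is that after extracting $Y_k\to Y$ you should also note (or pass to a further subsequence so) that $\nu_{X,r_k}\rightharpoonup\lambda$; this is automatic from uniqueness of tangents plus compactness, as you implicitly use.
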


Before we proceed with the proofs of these results we use them to prove Proposition \ref{singular set - main result}.

\begin{proof}[Proof of Proposition \ref{singular set - main result}] First consider the case $m\leq 2$. Suppose that there exists $X\in\mathcal{S}_\mu$. By Lemma \ref{singular set - lambda tangents at singular points}, $\mu$ has a $\Lambda$-tangent measure $\nu$ at $X$ that is not flat. By Lemma \ref{pseudo tangents - pseudo tangents are uniform}, we know that $\nu$ is $2$-uniform, so by Theorem \ref{prelim - uniform measures of dimension 2} we obtain a contradiction. This implies that $\mathcal{S}_\mu=\varnothing$.

Next we handle the case $m\geq 3$ by induction. For the base case, consider $m=3$ and suppose, for contradiction, that there exists $s>0$ such that $\mathcal{H}^s(\mathcal{S}_\mu)>0$. We first find a point $X\in\mathcal{S}_\mu$ and a $\Lambda$-tangent measure $\nu$ of $\mu$ at $X$, such that
\begin{equation}
    \label{singular set - conclusion 1}
    \mathcal{H}^s
(\mathcal{S}_\nu\cap \overline{B(0,1)})>0.\end{equation}

The assumption that $\mathcal{H}^s(\mathcal{S}_\mu)>0$ implies that $\mathcal{H}^s_\infty(\mathcal{S}_\mu)>0$ (by the arguments in the proof of Lemma 2.1 in \cite{EG15}, for example). By sub-additivity of $\mathcal{H}^s_\infty$, there exists a compact set $K\subset\R^{n+1}$ such that $\mathcal{H}^s_\infty(\mathcal{S}_\mu\cap K)>0$. Let $\mathcal{S}_\mu(K)=\mathcal{S}_\mu\cap K$. Then, according to the proof of Theorem 2.7 in \cite{EG15}, for $\mathcal{H}^s$-almost every $X\in \mathcal{S}_\mu(K)$
we have 
$$\theta^{s,*}(\mathcal{H}^s_\infty\mres\mathcal{S}_\mu(K),X)\geq 2^{-s}.$$ Fix any such $X$. Then there exists a sequence of radii $r_k>0$ such that $r_k\to 0$, and $\mathcal{S}_k:=\Lambda(X)^{-1}\left(\frac{\mathcal{S}_\mu(K)-X}{r_k}\right)$ satisfies
\begin{equation}
    \label{singular set - condition 7}
    \mathcal{H}^s_\infty\left(\mathcal{S}_k\cap \overline{B(0,1)}\right)\geq C_K^{-1}2^{-s},
\end{equation}
    for all $k$, where $C_K>0$ depends only on $K$ and $\Lambda$. In particular, $X$ is a limit point of $\mathcal{S}_\mu$, and since $\mathcal{S}_\mu$ is closed by Proposition \eqref{nonflat case - regular set}, it follows that $X\in\mathcal{S}_\mu(K)$, or $0\in\mathcal{S}_k$ for all $k$. Thus, upon passing to a subsequence, we may find a closed set $F\subset\R^{n+1}$ such that 
    \begin{equation}
        \label{singular set - convergence of singular set}
        \mathcal{S}_k\to F,
    \end{equation} with respect to Hausdorff distance, uniformly on compact sets.
    
    Upon passing to a further subsequence, we may also assume $\mu^\Lambda_{X,r_k}\rightharpoonup\nu$, where $\nu$ is $m$-uniform with $\nu(B(0,1))=1$ by Proposition \ref{pseudo tangents - pseudo tangents are uniform}.
    We claim that $F\subset\mathcal{S}_\nu$. In fact, if $Y\in F$, we can find $Y_k\in\mathcal{S}_k$ such that $Y_k\to Y$. By definition of $\mathcal{S}_k$, we have $Y_k=\Lambda(X)^{-1}\left(\frac{X_k-X}{r_k}\right)$ for some $X_k\in\mathcal{S}_\mu$, so Proposition \ref{singular set - conservation of singularities} implies that $Y\in\mathcal{S}_\nu$, proving the claim. This, combined with \eqref{singular set - convergence of singular set}, implies that for any $\varepsilon>0$ and $k$ large enough,

    \begin{equation}
        \label{singular set - condition 4}
        \mathcal{S}_k\cap \overline{B(0,1)}\subset (\mathcal{S}_\nu;\varepsilon)\cap \overline{B(0,1)}\subset (\mathcal{S}_\nu\cap\overline{B(0,1)};\varepsilon).
    \end{equation}

    We use this information to estimate $\mathcal{H}_\infty^s(\mathcal{S}_\nu\cap\overline{B(0,1)})$. Let $\delta>0$, $\mathcal{S}_\nu(B)=\mathcal{S}_\nu\cap\overline{B(0,1)}$, and let $\{E_\ell\}$ be a cover of $\mathcal{S}_\nu(B)$ such that
    $$\mathcal{H}^s_{\infty}(\mathcal{S}_\nu(B))>\sum_\ell\omega_s\left(\frac{\mathrm{diam}(E_\ell)}{2}\right)^s-\delta.$$
    By incorporating small open neighborhoods of the sets $E_\ell$ if necessary we may assume that each $E_\ell$ is open. By Proposition \ref{nonflat case - regular set}, $\mathcal{S}_\nu$ is closed, so $\mathcal{S}_\nu(B)$ is compact, and we may assume that $\mathcal{S}_\nu(B)\subset E_1\cup\cdots\cup E_L$ for some $L<\infty$. Therefore, by \eqref{singular set - condition 4}, if $k$ is large enough we have $\mathcal{S}_k(B):=\mathcal{S}_k\cap \overline{B(0,1)}\subset E_1\cup\cdots\cup E_L$, which implies
    $$\mathcal{H}^s_\infty(\mathcal{S}_k(B))\leq\omega_s\sum_{\ell=1}^L\left(\frac{\mathrm{diam}(E_\ell)}{2}\right)^s<\mathcal{H}^s_\infty(\mathcal{S}_\nu(B))+\delta.$$
    With this and \eqref{singular set - condition 7} we obtain
    $$\mathcal{H}^s_\infty(\mathcal{S}_\nu(B))\geq\limsup_{k\to\infty}\mathcal{H}^s_\infty(\mathcal{S}_k(B))>0,$$
    which implies \eqref{singular set - conclusion 1}. However, we know that $\nu$ is $3$-uniform, so by Corollary \ref{singular set - corollary}, $\mathcal{S}_\nu$ is at most countable, which contradicts \eqref{singular set - conclusion 1}. This implies that $s=0$, and completes the proof of the base case.

    For the inductive step, assume that $m\geq 4$, and suppose that the theorem holds for measures of dimension $m-1$. Let $s>0$ be such that $\mathcal{H}^s(\mathcal{S}_\mu)>0$. By the procedure described above, we can find a point $X\in\mathcal{S}_\mu$ and a $\Lambda$-tangent measure $\nu$ of $\mu$ at $X$, such that \eqref{singular set - conclusion 1} holds.
    Now, by \eqref{singular set - conclusion 1}, we can apply the same procedure again with $\Lambda(X)^{-1}$ replaced by $\mathrm{Id}$, to obtain a tangent measure $\nu'$ of $\nu$ at a point $ X'\in\mathcal{S}_\nu\cap\overline{B(0,1)}$, such that $\mathcal{H}^s(\mathcal{S}_{\nu'}\cap \overline{B(0,1)})>0$. We know that $\nu$ is $m$-uniform, so by Theorem \ref{nonflat case - theorem of preiss}, $\nu'$ is $m$-uniform and conical, and in particular it satisfies the assumptions of Lemma \ref{singular set - dimension reduction}. Moreover, since $\mathcal{H}^s(\mathcal{S}_{\nu'}\cap \overline{B(0,1)})>0$ and $s>0$, we have that for some $\rho>0$ small enough, $\mathcal{H}^s(\mathcal{S}_{\nu'}\cap\overline{B(0,1)}\backslash B(0,\rho))>0$. Thus, we can apply the above procedure once more to obtain a point $X''\in\mathcal{S}_{\nu'}$, $X''\neq 0$, and a tangent measure $\nu''$ of $\nu'$ at $X''$ such that
    \begin{equation}
        \label{singular set - condition 9}
        \mathcal{H}^s(\mathcal{S}_{\nu''}\cap \overline{B(0,1)})>0.
    \end{equation}

    By Lemma \ref{singular set - dimension reduction}, upon a rotation, $\Sigma'':=\mathrm{spt}(\nu'')$ satisfies $\Sigma''=\mathbb R\times A$, where $A\subset\R^{n}$ is the support of an $(m-1)$-uniform measure $\nu_0''$. In particular, we have $\mathcal{S}_{\nu''}\subset\R\times\mathcal{S}_{\nu''_0}$,
    which implies
    \begin{equation}
    \label{singular set - condition 8}\mathrm{dim}_\mathcal{H}(\mathcal{S}_{\nu''})\leq\mathrm{dim}_\mathcal{H}(\mathcal{S}_{\nu''_0})+1.
    \end{equation}
    Since $\nu_0''$ is $(m-1)$-uniform, our induction hypothesis implies that $\mathrm{dim}_\mathcal{H}(\mathcal{S}_{\nu''_0})\leq m-4,$ so \eqref{singular set - condition 8} gives $\mathrm{dim}_\mathcal{H}(\mathcal{S}_{\nu''})\leq m-3$. Combining this with \eqref{singular set - condition 9} we obtain 
    $$s\leq\mathrm{dim}_\mathcal{H}(\mathcal{S}_{\nu''})\leq m-3.$$ Since $\mathcal{H}^s(\mathcal{S}_\mu)>0$ by assumption, it then follows that $\mathrm{dim}_\mathcal{H}(\mathcal{S}_\mu)\leq m-3$, completing the inductive step and the proof of Proposition \ref{singular set - main result}.
\end{proof}

\subsection{Proof of technical results}

Now we turn to the proofs of Lemma \ref{singular set - connectedness property}, Lemma \ref{singular set - lambda tangents at singular points} and Proposition \ref{singular set - conservation of singularities}. Some of the arguments below rely on the following fact.

\begin{lemma}[Scaling before and after blow-up]
\label{singular set - scaling and blow up}
    If $\nu,\nu'$ are nonzero Radon measures on $\R^{n+1}$, $\nu$ is $\Lambda$-asymptotically optimally doubling of dimension $m$, $1\leq m\leq n+1$, $X_k,X\in\mathrm{spt}(\nu)$, $R,R_k,r_k>0$, $R_k\to R$, $r_k\to 0$ and $\nu^\Lambda_{X_k,r_k}\rightharpoonup\nu'$ as $k\to\infty$, then
\begin{equation}
    \label{singular set - scaling pre and post blow up}\nu^\Lambda_{X_k,R_kr_k}\rightharpoonup\nu'_{0,R}.
\end{equation}

\end{lemma}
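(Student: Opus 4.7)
The plan rests on the factorization
\[
T^\Lambda_{X_k, R_k r_k} \;=\; T_{0, R_k} \circ T^\Lambda_{X_k, r_k},
\]
which is immediate from the definition \eqref{pseudo tangents - blow up map} since $\Lambda(X_k)^{-1}(Z-X_k)/(R_k r_k) = R_k^{-1}\, T^\Lambda_{X_k, r_k}(Z)$. Pushing $\nu$ forward by both sides and renormalizing will yield
\[
\nu^\Lambda_{X_k, R_k r_k} \;=\; \frac{\nu(B_\Lambda(X_k, r_k))}{\nu(B_\Lambda(X_k, R_k r_k))}\; T_{0, R_k}\big[\nu^\Lambda_{X_k, r_k}\big],
\]
so the argument reduces to identifying the limit of each factor on the right.

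For the scalar factor, the $\Lambda$-asymptotically optimally doubling hypothesis applied uniformly on a compact set containing the $X_k$ (with the upgrade to arbitrary ratios $t\in(0,1)$ supplied by Proposition \ref{pseudo tangents - density implies asod}, whose proof adapts verbatim to dimension $m$) combined with $R_k\to R>0$ will give convergence to $R^{-m}$. For the measure-valued factor, the plan is to test against arbitrary $\varphi \in C_c(\R^{n+1})$ via
\[
\int \varphi\, dT_{0, R_k}\big[\nu^\Lambda_{X_k, r_k}\big] \;=\; \int \varphi(W/R_k)\, d\nu^\Lambda_{X_k, r_k}(W);
\]
since $R_k\to R$, the integrands are uniformly supported in a single compact set for $k$ large and converge uniformly to $W\mapsto \varphi(W/R)$, and the local mass bounds furnished by $\nu^\Lambda_{X_k, r_k}\rightharpoonup \nu'$ will then give $T_{0, R_k}[\nu^\Lambda_{X_k, r_k}] \rightharpoonup T_{0, R}[\nu']$. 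Combining the two pieces produces $\nu^\Lambda_{X_k, R_k r_k} \rightharpoonup R^{-m} T_{0, R}[\nu']$.

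What remains is to identify $R^{-m} T_{0, R}[\nu']$ with $\nu'_{0, R}$, which reduces to the mass identity $\nu'(B(0, R)) = R^m$. This will be the main obstacle, because \emph{a priori} $\nu'$ could charge $\partial B(0, R)$, so the asod identity $\nu^\Lambda_{X_k, r_k}(B(0, R_k)) = \nu(B_\Lambda(X_k, R_k r_k))/\nu(B_\Lambda(X_k, r_k)) \to R^m$ cannot be transferred directly to $\nu'$. To bypass it, the plan is to fix $s>0$ and exploit the monotone chain
\[
\nu^\Lambda_{X_k, r_k}(B(0, s-\varepsilon)) \;\leq\; \nu^\Lambda_{X_k, r_k}(B(0, s)) \;\leq\; \nu^\Lambda_{X_k, r_k}(\overline{B(0, s)}) \;\leq\; \nu^\Lambda_{X_k, r_k}(B(0, s+\varepsilon)),
\]
whose outer terms converge to $(s\pm\varepsilon)^m$ by asod at the scales $s\pm\varepsilon$. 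The Portmanteau theorem (open set inequality on $B(0, s)$, compact set inequality on $\overline{B(0, s)}$), followed by $\varepsilon\searrow 0$, then yields $\nu'(B(0, s)) \leq s^m \leq \nu'(\overline{B(0, s)})$, so $\nu'(B(0, s)) = s^m$ for every $s$ outside the at-most-countable exceptional set where $\nu'(\partial B(0, s))>0$. Continuity of $\nu'(B(0, \cdot))$ from below together with continuity of $s\mapsto s^m$ will then force $\nu'(B(0, R)) = R^m$ and complete the proof.
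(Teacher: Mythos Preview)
Your proof is correct and follows essentially the same route as the paper: both rely on the factorization $T^\Lambda_{X_k,R_kr_k}=T_{0,R_k}\circ T^\Lambda_{X_k,r_k}$, handle the push-forward factor via uniform convergence of $\varphi(\cdot/R_k)\to\varphi(\cdot/R)$ against a weakly convergent sequence with locally bounded mass, and treat the normalizing factor separately. The only organizational difference is that the paper invokes Proposition~\ref{pseudo tangents - pseudo tangents are uniform} to assert $\nu'$ is $m$-uniform (hence $\nu'(\partial B(0,\rho))=0$ and $\nu'(B(0,R))=R^m$), whereas you re-derive the needed identity $\nu'(B(0,R))=R^m$ from scratch via the Portmanteau inequalities and left-continuity of $s\mapsto\nu'(B(0,s))$; your version is thus slightly more self-contained, the paper's slightly more economical.
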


\begin{proof}
    Let $\psi\in C_c(\R^{n+1})$. It suffices to show that
    \begin{equation}
        \label{singular set - statement 0}
        \int\psi\de\nu^\Lambda_{X_k,R_kr_k}=\frac{1}{\nu^\Lambda_{X_k,r_k}(B(0,R_k))}\int\psi\de(T_{0,R_k}[\nu^\Lambda_{X_k,r_k}])\to\frac{1}{\nu'(B(0,R))}\int\psi\circ T_{0,R}\de\nu'.
    \end{equation}
    
    Note that by the doubling property of $\nu$, $\nu'$ is $m$-uniform, so for any $\rho>0$, $\nu'(\partial B(0,\rho))=0$, which implies $\nu^\Lambda_{X_k,r_k}(B(0,\rho))\to\nu'(B(0,\rho))$. We first show that
    \begin{equation}
        \label{singular set - statement 1}\nu^{\Lambda}_{X_k,r_k}(B(0,R_k))\to\nu'(B(0,R)).
    \end{equation}
    We write
    \begin{equation}
    \label{singular set - statement 1.1}
        \begin{split}
            |\nu^\Lambda_{X_k,r_k}(B(0,R_k))&-\nu'(B(0,R))|\\
            &\leq |\nu^\Lambda_{X_k,r_k}(B(0,R_k))-\nu^\Lambda_{X_k,r_k}(B(0,R))|+|\nu^\Lambda_{X_k,r_k}(B(0,R))-\nu'(B(0,R))|.
        \end{split}
    \end{equation}
    The last term goes to $0$ by the above observation. For the first term, let $\varepsilon\in (0,R)$ and suppose that $k$ is large enough so that $|R_k-R|<\varepsilon$. Then
    \begin{equation}
        \begin{split}
            |\nu^\Lambda_{X_k,r_k}(B(0,R_k))-\nu^\Lambda_{X_k,r_k}(B(0,R))|&\leq \nu^\Lambda_{X_k,r_k}(B(0,R+\varepsilon)\backslash B(0,R-\varepsilon))\\
            &\to \nu'(B(0,R+\varepsilon)\backslash B(0,R-\varepsilon))\leq C\varepsilon R^{m-1},
        \end{split}
    \end{equation}
    as $k\to\infty$. Since this holds for every $\varepsilon\in (0,R)$, \eqref{singular set - statement 1} now follows from \eqref{singular set - statement 1.1}. 
    Next we show that
    \begin{equation}
        \label{singular set - statement 2}
        \int\psi\de(T_{0,R_k}[\nu^\Lambda_{X_k,r_k}])=\int\psi\circ T_{0,R_k}\de\nu^\Lambda_{X_k,r_k}\to\int\psi\circ T_{0,R}\de\nu'.
    \end{equation}
    Note that $\psi\circ T_{0,R_k}\to\psi\circ T_{0,R}$ uniformly. Let $M>0$ be such that $\mathrm{spt}(\psi\circ T_{0,R_k})\subset B(0,M)$ for all $k$. Since $\nu^\Lambda_{X_k,r_k}$ converges weakly, the sequence $\nu^\Lambda_{X_k,r_k}(\overline{B(0,M)})$ is bounded, so
    \begin{equation}
        \begin{split}
            \label{singular set - statement 3}\left|\int\psi\circ T_{0,R_k}\de\nu^\Lambda_{X_k,r_k}-\int\psi\circ T_{0,R}\de\nu^\Lambda_{X_k,r_k}\right|&\leq C\|\psi\circ T_{0,R_k}-\psi\circ T_{0,R}\|_\infty\to 0,
        \end{split}
    \end{equation}
    as $k\to\infty$. On the other hand, since $\nu^\Lambda_{X_k,r_k}\rightharpoonup\nu'$ we have
    $$\int\psi\circ T_{0,R}\de\nu^\Lambda_{X_k,r_k}\to \int\psi\circ T_{0,R}\de\nu'.$$
    Combining this with \eqref{singular set - statement 3} we obtain \eqref{singular set - statement 2}, and from \eqref{singular set - statement 1} and \eqref{singular set - statement 2} we obtain \eqref{singular set - statement 0}.

\end{proof}

\begin{proof}[Proof of Lemma \ref{singular set - connectedness property}]Recall the functional $F$ defined in \eqref{singular set - functional}.  Since $\mu$ is $\Lambda$-asymptotically optimally doubling of dimension $m$, we know that $\alpha$ and $\beta$ are $m$-uniform. Suppose $\alpha$ is flat, but $\beta$ is not. Then $F(\alpha)=0$, but by Corollary \ref{singular set - Preiss reformulation}, for every $R>0$ we can find  $R_0>R$ such that $F(\beta_{0,R_0})>\varepsilon^2_0$, with $\varepsilon_0>0$ as in \eqref{singular set - Preiss reformulation}. In particular, since $\mu_{X_k,\tau_k}^{\Lambda}\rightharpoonup \alpha$, we have $\mu_{X_k,R_0\sigma_k}^{\Lambda}\rightharpoonup \beta_{0,R_0}$ by Lemma \ref{singular set - scaling and blow up}, so by continuity of $F$ with respect to weak convergence, it follows that for some $\kappa\in (0,\varepsilon_0^2)$ and for all $k$ large enough,
    \begin{equation}
        \label{singular set - kappa inequality}F(\mu_{X_k,\tau_k}^{\Lambda})<\kappa< F(\mu_{X_k,R_0\sigma_k}^{\Lambda}).
    \end{equation}
    
    We will first show that 
    \begin{equation}
        \label{singular set - ratio goes to infinity}
        \tau_k/\sigma_k\to\infty,
    \end{equation}
    as $k\to\infty$. Assume, by contradiction, that this is not the case. Since $\tau_k>\sigma_k$ by assumption, upon passing to a subsequence we have $\gamma_k:=\tau_k/\sigma_k\to\gamma$, for some $\gamma\geq 1$. This implies
    \begin{equation}
        \label{singular set - beta blow up 1}
        \begin{split}
            \frac{1}{\mu(B_\Lambda(X_k,\sigma_k))}T^\Lambda_{X_k,\tau_k}[\mu]&=\frac{1}{\mu(B_\Lambda(X_k,\sigma_k))}T^\Lambda_{X_k,\gamma_k\sigma_k}[\mu]\rightharpoonup T_{0,\gamma}[\beta].
        \end{split}
    \end{equation}
    On the other hand, since $\tau_k/\sigma_k\to\gamma$, \eqref{pseudo tangents - asymptotically optimally doubling} implies that $\mu(B(X_k,\tau_k))/\mu(B(X_k,\sigma_k))\to\gamma^n$
    as $k\to\infty$. Therefore,
    \begin{equation*}
        \label{}
        \begin{split}
            \frac{1}{\mu(B_\Lambda(X_k,\sigma_k))}T^\Lambda_{X_k,\tau_k}[\mu] = \frac{\mu(B_\Lambda(X_k,\tau_k))}{\mu(B_\Lambda(X_k,\sigma_k))}\frac{T^\Lambda_{X_k,\tau_k}[\mu]}{\mu(B_\Lambda(X_k,\tau_k))}\rightharpoonup \gamma^n\alpha,
        \end{split}
    \end{equation*}
    which contradicts \eqref{singular set - beta blow up 1} because $\gamma^n\alpha$ is flat but $T_{0,\gamma}[\beta]$ is not. This proves \eqref{singular set - ratio goes to infinity}. In particular, if $k$ is large enough, we have $R_0\sigma_k<\tau_k$.

    Note that for each $k$, the function $r\mapsto F(\mu^\Lambda_{X_k,r})$ is continuous away from $r=0$, by continuity of $F$. Therefore, \eqref{singular set - kappa inequality} implies that there exist $\delta_k\in [R_0\sigma_k,\tau_k]$ such that $F(\mu^\Lambda_{X_k,\delta_k})=\kappa$ and 
    \begin{equation}
        \label{singular set - kappa ineq 1}
        F(\mu^\Lambda_{X_k,r})\leq \kappa,
    \end{equation} for every $r\in [\delta_k,\tau_k]$ (take $\delta_k=\max\{r\in [R_0\sigma_k,\tau_k]: F(\mu^\Lambda_{X_k,r})\geq \kappa\}$). By the doubling property of $\mu$, we may assume upon passing to a subsequence that $\mu_{X_k,\delta_k}^{\Lambda}\rightharpoonup\nu$, where $\nu$ is an $m$-uniform $\Lambda$-pseudo tangent measure of $\mu$. Note that by continuity of $F$, we have
    \begin{equation}
        \label{singular set - F of kappa}F(\nu)=\kappa.
    \end{equation}
    
    We will now show that $\nu$ is flat, which will contradict \eqref{singular set - F of kappa} and complete the proof. Note that since $F(\mu^\Lambda_{X_k,\delta_k})=\kappa>0$ for all $k$ large, the same proof that $\tau_k/\sigma_k\to\infty$ can be applied to show that $\tau_k/\delta_k\to\infty$. Let $R>1$, and suppose $k$ is large enough so that $R\delta_k<\tau_k$. By \eqref{singular set - kappa ineq 1} we have $F(\mu^\Lambda_{X_k,R\delta_k})\leq\kappa$ for all $k$ large. But we also have $\mu^\Lambda_{X_k,R\delta_k}\rightharpoonup T_{0,R}[\nu]$. Therefore, by continuity of $F$, it follows that $F(\nu_{0,R})\leq \kappa$, and letting $R\to\infty$ we get
    $$\limsup_{R\to\infty}F(\nu_{0,R})\leq \kappa<\varepsilon_0^2.$$
    By Corollary \ref{singular set - Preiss reformulation}, this implies that $\nu$ is flat, as desired.
\end{proof}

    \begin{proof}[Proof of Lemma \ref{singular set - lambda tangents at singular points}]
        Let us denote $\Sigma=\mathrm{spt}(\mu)$. For the proof of 1, applying Lemma \ref{singular set - connectedness property} with $X_k=X$ for all $k$, we see that it suffices to show that $\mu$ has at least one $\Lambda$-tangent measure at $X$ that is not flat.
        Since $X\in\mathcal{S}_\mu$, there exists a sequence $r_k>0$, $k\in\mathbb N$, with $r_k\searrow 0$ as $k\to\infty$, such that
\begin{equation}
    \label{nonflat case - big beta}
    b\beta_\Sigma(X,r_k)\geq c,
\end{equation}
for all $k$. Let $\Sigma_k=T^\Lambda_{X,r_k}(\Sigma)$. Then by \eqref{nonflat case - big beta} and an argument as in Step 2 of Section \ref{beta numbers}, we have for $\rho_0=\lmin(X)^{-1}$,
\begin{equation}
    \label{nonflat case - big beta sigma k}
    b\beta_{\Sigma_k}(0,\rho_0)\geq c_1 b\beta_\Sigma(X,r_k)\geq c_2>0,
\end{equation}
where $c_1$ and $c_2$ depend on $X$. Since $0\in \Sigma_k$ for all $k$, we have as in Section \ref{flatness} that upon passing to a subsequence, there exists a closed set $\Sigma_\infty\subset\R^{n+1}$ such that $0\in\Sigma_\infty$ and $\Sigma_k\to\Sigma_\infty$ as $k\to\infty$ with respect to Hausdorff distance, uniformly on compact sets. Note that \eqref{nonflat case - big beta sigma k} implies that
\begin{equation}
    \label{nonflat case - big beta sigma infinity}
    b\beta_{\Sigma_\infty}(0,\rho_0)\geq c_2/2.
\end{equation}

Let now $\mu_k=\mu^\Lambda_{X,r_k}$. Since $\mu$ is $\Lambda$-asymptotically optimally doubling, we may assume by Lemma \ref{pseudo tangents - existence of pseudo tangents} that upon passing to a further subsequence, we have $\mu_k\rightharpoonup\nu$, where $\nu$ is a $\Lambda$-tangent measure of $\mu$. Moreover, since $\Sigma_k\to\Sigma_\infty$ with respect to Hausdorff distance, Lemma \ref{pseudo tangents - characterization of support} implies that $\mathrm{spt}(\nu)=\Sigma_\infty$. But \eqref{nonflat case - big beta sigma infinity} implies that $\Sigma_\infty$ cannot be a plane, so $\nu$ is not flat, as desired.

For the proof of 2, suppose $\mu^\Lambda_{X,r_k}\rightharpoonup\nu$, where $\nu$ is not flat. Then by a similar argument as above we may assume upon passing to a subsequence, that $\Sigma_k=\mu^\Lambda_{X,r_k}$ satisfies $\Sigma_k\to\Sigma_\infty$ for some $\Sigma_\infty\subset\mathbb R^{n+1}$ with respect to Hausdorff distance, and $\Sigma_\infty=\mathrm{spt}(\nu)$. Since $\nu$ is not flat, we have $b\beta_{\Sigma_\infty}(0,1)>0$, so for $k$ large enough and $\rho_1=\lmax(X)^{-1}$,
$$b\beta_{\Sigma}(X,\rho_1r_k)\geq c_2b\beta_{\Sigma_k}(0,1)\geq c_2b\beta_{\Sigma_\infty}(0,1)>0,$$
which implies that $X\in\mathcal{S}_\mu$.
    \end{proof}

\begin{proof}[Proof of Proposition \ref{singular set - conservation of singularities}]  Note first that since $Y_k\to Y$, we have
$$|X_k-X|\leq \lmax(X)r_k|Y_k|\to 0,$$
as $k\to\infty$, so $X_k\to X$. We start by constructing a sequence of radii $\sigma_k>0$ such that $\sigma_k\to 0$, $r_k/\sigma_k\to \infty$, and $\mu^\Lambda_{X_k,\sigma_k}\rightharpoonup \nu^{(\infty)}$ as $k\to\infty$, where $\nu^{(\infty)}$ satisfies 
    \begin{equation}
        \label{singular set - tilde nu big F}
        \limsup_{R\to\infty}F(\nu^{(\infty)}_{0,R})>\varepsilon_0^2/2.
    \end{equation} Since $\mu$ is $\Lambda$-asymptotically optimally doubling, we can construct inductively a family of sequences $\{s_j^{(k)}\}_{i\in\mathbb N}$, where $\{s_j^{(1)}\}$ is a subsequence of $\{r_j\}$, $\{s_j^{(k+1)}\}$ is a subsequence of $\{s_j^{(k)}\}$ for $k\geq 1$, and for every $k$,
    $$\mu^\Lambda_{X_k,s_j^{(k)}}\rightharpoonup \nu^{(k)},$$
    as $j\to\infty$, where $\nu^{(k)}$ is an $m$-uniform $\Lambda$-tangent measure of $\mu$ at $X_k$ with $\nu^{(k)}(B(0,1))=1$ by Proposition \ref{pseudo tangents - pseudo tangents are uniform}. For each $k$, note that $s^{(k)}_j\to 0$ as $j\to\infty$, and since $X_k\in\mathcal{S}_\mu$, Lemma \ref{singular set - lambda tangents at singular points} implies that $\nu^{(k)}$ is not flat. Therefore, by Corollary \ref{singular set - Preiss reformulation}, for each $R>0$ we can find $R'>R$, depending on $k$, such that 
    \begin{equation}
        \label{singular set - limsup is big}
        F(\nu^{(k)}_{0,R'})>\varepsilon_0^2.
    \end{equation}
    Let $R_k$ be any such $R'$. Note that by uniformity of $\nu^{(k)}$, for every $r>0$ we have
    $$\nu^{(k)}_{0,R_k}(B(0,r))=\frac{\nu^{(k)}(B(0,rR_k))}{\nu^{(k)}(B(0,R_k))}=r^m,$$
    so upon passing to a subsequence we may assume that $\nu^{(k)}_{0,R_k}\rightharpoonup\nu^{(\infty)}$ as $k\to\infty$, for some Radon measure $\nu^{(\infty)}$. We will show that $\nu^{(\infty)}$ satisfies \eqref{singular set - tilde nu big F}. Recall the functionals $\mathcal{F}_r$ and $\mathcal{F}$ introduced in \eqref{prelim - distance between measures}.
    Note first that $\mu^\Lambda_{X_k,R_ks^{(k)}_j}\rightharpoonup\nu^{(k)}_{0,R_k}$ as $j\to\infty$, so by Proposition \ref{singular set - metric for weak convergence} and because $s^{(k)}_j\to 0$ as $j\to\infty$, we can find $j=j_k$ large enough so that 
    \begin{equation}
    \label{singular set - bound 1}
        \mathcal{F}(\mu^\Lambda_{X_k,R_ks^{(k)}_{j_k}},\nu^{(k)}_{0,R_k})<2^{-k},\quad \sigma_k:=R_ks^{(k)}_{j_k}\leq r_k^2.
    \end{equation}
     Since $\nu^{(k)}_{0,R_k}\rightharpoonup\nu^{(\infty)}$, \eqref{singular set - bound 1} implies that $\mu^\Lambda_{X_k,\sigma_k}\rightharpoonup\nu^{(\infty)}$, so $\nu^{(\infty)}$ is a $\Lambda$-pseudo tangent of $\mu$ at $X$. Note that $\frac{r_k}{\sigma_k}\geq\frac{1}{r_k}\to\infty$
    as $k\to\infty$.

    We now show that \eqref{singular set - tilde nu big F} holds. Suppose, for contradiction, that $F(\nu^{(\infty)}_{0,R})\leq\varepsilon_0^2/2$ for all $R>1$ large enough. For each such $R$, let $P$ be a minimizing plane in the definition of $F(\nu^{(\infty)}_{0,R})$. Then with $\varphi$ as in the definition of $F$, and keeping in mind that $\nu^{(k)}_{0,RR_k}(B(0,1))=1$ by Proposition \ref{pseudo tangents - pseudo tangents are uniform},
    \begin{equation}
        \label{singular set - bound 2}
        \begin{split}
            F(\nu^{(k)}_{0,RR_k})&\leq\int\varphi(Z)\mathrm{dist}(Z,P)^2\de\nu^{(k)}_{0,RR_k}(Z)\\
            &=\int\varphi(Z)\mathrm{dist}(Z,P)^2\de\nu^{(k)}_{0,RR_k}(Z)-\int\varphi(Z)\mathrm{dist}(Z,P)^2\de\nu^{(\infty)}_{0,R}(Z)+F(\nu^{(\infty)}_{0,R})\\
            &\leq C\mathcal{F}_3(\nu^{(k)}_{0,RR_k},\nu^{(\infty)}_{0,R})+\frac{\varepsilon_0^2}{2}.
        \end{split}
    \end{equation}
    Now, by \eqref{singular set - scaling pre and post blow up} we have $\nu^{(k)}_{0,RR_k}\rightharpoonup\nu^{(\infty)}_{0,R}$ as $k\to\infty$, so by Proposition \ref{singular set - metric for weak convergence} and \eqref{singular set - bound 2} it follows that for all $k$ large enough, $F(\nu^{(k)}_{0,RR_k})<\varepsilon_0^2$.
    This contradicts \eqref{singular set - limsup is big}, proving that \eqref{singular set - tilde nu big F} holds. This implies that $\nu^{(\infty)}$ is not flat.

    Next, let $\nu,Y_k$ and $Y$ be as in the statement of the proposition. We know by Lemma \ref{pseudo tangents - characterization of support} and Proposition \ref{pseudo tangents - pseudo tangents are uniform} that $Y\in\mathrm{spt}(\nu)$ and $\nu$ is $m$-uniform with $\nu(B(0,1))=1$. We show that
    \begin{equation}
        \label{singular set - conclusion 2} \mu^\Lambda_{X_k,r_k}\rightharpoonup \nu_{Y,1}.
    \end{equation}
    Note first that
    $$T^\Lambda_{X_k,r_k}=T_{Y_k,1}\circ\Lambda(X_k)^{-1}\circ\Lambda(X)\circ T^\Lambda_{X,r_k}.$$
    Therefore, if $\varphi\in C_c(\R^{n+1})$ and $A_k=\Lambda(X_k)^{-1}\circ\Lambda(X)$, then
    \begin{equation}
        \label{singular set - expression 1}
        \begin{split}
            \int\varphi\de\mu^\Lambda_{X_k,r_k}&=\frac{1}{\mu(B_\Lambda(X_k,r_k))}\int\varphi\de(T^\Lambda_{X_k,r_k}[\mu])\\
            &=\frac{1}{\mu(B_\Lambda(X_k,r_k))}\int\varphi\circ T_{Y_k,1}\circ A_k\de(T^\Lambda_{X,r_k}[\mu])\\
            &=\frac{\mu(B_\Lambda(X,r_k))}{\mu(B_\Lambda(X_k,r_k))}\int\varphi\circ T_{Y_k,1}\circ A_k\de\mu^\Lambda_{X,r_k}.
        \end{split}
    \end{equation}
    By an argument  as the one leading up to \eqref{pseudo tangents - reference for singular set section}, we have $\mu(B_\Lambda(X,r_k))/\mu(B_\Lambda(X_k,r_k))\to 1$ as $k\to\infty$. On the other hand, since $A_k\to \mathrm{Id}$ and $T_{Y_k,1}\to T_{Y,1}$ uniformly on compact sets as $k\to\infty$, it follows that $\varphi\circ T_{Y_k,1}\circ A_k\to\varphi\circ T_{Y,1}$ uniformly as $k\to\infty$. Let $M>0$ be such that $\mathrm{spt}(\varphi\circ T_{Y_k,1}\circ A_k)\subset B(0,M)$ for all $k$. The fact that $\mu^\Lambda_{X,r_k}\rightharpoonup\nu$ as $k\to\infty$ implies that the sequence $\mu^\Lambda_{X,r_k}(\overline{B(0,M)})$ is bounded. Therefore, for $\varepsilon>0$, if $k$ is large enough,
    \begin{equation}
        \label{}
        \begin{split}
            \left|\int\varphi\right.&\circ\left.  T_{Y_k,1}\circ  A_k\de\mu^\Lambda_{X,r_k}-\int\varphi\de\nu_{Y,1}\right|\\
            &\leq \left|\int\varphi\circ T_{Y_k,1}\circ A_k\de\mu^\Lambda_{X,r_k}-\int\varphi\circ T_{Y,1}\de\mu^\Lambda_{X,r_k}\right|+\left|\int\varphi\circ T_{Y,1}\de\mu^\Lambda_{X,r_k}-\int\varphi\circ T_{Y,1}\de\nu\right|\\
            &\leq \varepsilon \mu^\Lambda_{X,r_k}(\overline{B(0,M)})+\varepsilon\leq C\varepsilon.
        \end{split}
    \end{equation}
    Thus
    $$\int\varphi\circ T_{Y_k,1}\circ A_k\de\mu^\Lambda_{X,r_k}\to\int\varphi\de\nu_{Y,1},$$
    as $k\to\infty$, and it then follows from \eqref{singular set - expression 1} that $\int\varphi\de\mu^\Lambda_{X_k,r_k}\to \int\varphi\de\nu_{Y,1}$ as $k\to\infty$, proving \eqref{singular set - conclusion 2}.

    Finally, we use $\nu^{(\infty)}$ to show that $\nu$ has a tangent measure at $Y$ that is not flat, so that by Lemma \ref{singular set - lambda tangents at singular points}
    \begin{equation}
        \label{singular set - singular point}Y\in\mathcal{S}_\nu.
    \end{equation}
    Let $\rho_k=\sigma_k/r_k$, and note that $\rho_k\to 0$ as shown above. We know that $\nu$ is $m$-uniform, so we may assume upon passing to a subsequence that $\nu_{Y,\rho_k}\rightharpoonup\alpha$, where $\alpha$ is a tangent measure of $\nu$ at $Y$ with $\alpha(B(0,1))=1$. We will prove that $\alpha$ is not flat. By \eqref{singular set - conclusion 2} and since $\rho_k\to 0$, for every $k$ we can find $\ell_k$ such that if $\ell\geq \ell_k$, then
    \begin{equation}
        \label{singular set - bound 6}
        \mathcal{F}_1(\mu^\Lambda_{X_\ell,r_\ell},\nu_{Y,1})<\frac{\rho_k}{k}\nu_{Y,1}(B(0,\rho_k)),\quad \rho_\ell<\rho_k.
    \end{equation}
    We may assume without loss of generality that $\ell_{k+1}<\ell_{k}$ for all $k\geq 1$. Let $\tilde\tau_k=r_{\ell_k}\rho_k$ and $\tilde X_k=X_{\ell_k}$. We claim that
    \begin{equation}
        \label{singular set - bound 5}
        \mathcal{F}_R(\mu^\Lambda_{\tilde X_k,\tilde\tau_k},\nu_{Y,\rho_k})<\frac{2}{k}.
    \end{equation}
    In fact, if $\varphi\in\mathcal{L}(R)$ (see \eqref{prelim - distance between measures}),
    \begin{equation}
        \begin{split}
            \label{singular set - bound 7}\left|\int\right. &\varphi\left.\de\mu^\Lambda_{\tilde X_k,\tilde\tau_k}-\int\varphi\de\nu_{Y,\rho_k}\right|\\
            &\leq \left|\frac{1}{\mu(B_\Lambda(\tilde X_k,\tilde\tau_k)))}\int\varphi\de(T^\Lambda_{\tilde X_k,\tilde\tau_k}[\mu])-\frac{1}{\nu_{Y,1}(B(0,\rho_k))}\int\varphi\de(T_{0,\rho_k}[\nu_{Y,1}])\right|\\
            &\leq \left|\frac{\mu(B_\Lambda(\tilde X_k,r_{\ell_k}))}{\mu(B_\Lambda(\tilde X_k,\tilde\tau_k)))}\int\varphi\de(T_{ 0,\rho_k}[\mu^\Lambda_{\tilde X_k,r_{\ell_k}}])-\frac{1}{\nu_{Y,1}(B(0,\rho_k))}\int\varphi\de(T_{0,\rho_k}[\nu_{Y,1}])\right|\\
            &\leq I_1+I_2,
        \end{split}
    \end{equation}
    where
    $$I_1=\left|\frac{\mu(B_\Lambda(\tilde X_k,r_{\ell_k}))}{\mu(B_\Lambda(\tilde X_k,\tilde\tau_k)))}-\frac{1}{\nu_{Y,1}(B(0,\rho_k))}\right|\int\varphi\de(T_{0,\rho_k}[\mu^\Lambda_{\tilde X_k,r_{\ell_k}}]),$$
    $$I_2=\frac{1}{\nu_{Y,1}(B(0,\rho_k))}\left|\int\varphi\de(T_{0,\rho_k}[\mu^\Lambda_{\tilde X_k,r_{\ell_k}}])-\int\varphi\de(T_{0,\rho_k}[\nu_{Y,1}])\right|.$$
    By the doubling property of $\mu$, we have
    \begin{equation*}
        \label{}
        \begin{split}
 \left|\frac{\mu(B_\Lambda(\tilde X_k,r_{\ell_k}))}{\mu(B_\Lambda(\tilde X_k,\tilde\tau_k)))}-\frac{1}{\nu_{Y,1}(B(0,\rho_k))}\right|=\left|\frac{\mu(B_\Lambda(\tilde X_k,r_{\ell_k}))}{\mu(B_\Lambda(\tilde X_k,\tilde\tau_k)))}-\frac{r_{\ell_k}}{\tilde\tau_k}\right|\to 0,
        \end{split}
    \end{equation*}
    as $k\to\infty$. Moreover, since $\varphi\in\mathcal{L}(R)$, we have $|\varphi\circ T_{0,\rho_k}|\leq CR$ for some $C>0$, and since  $\varphi\circ T_{0,\rho_k}$ is continuous and supported in $B(0,R)$ for $k$ large, we have
    $$\int\varphi\de(T_{0,\rho_k}[\mu^\Lambda_{\tilde X_k,r_{\ell_k}}])=\int\varphi\circ T_{0,\rho_k}\de\mu^\Lambda_{\tilde X_k,r_{\ell_k}}$$
    is bounded, so $I_1\to 0$ as $k\to\infty$. On the other hand, using \eqref{singular set - bound 6} we have for all $k$ large enough,
    \begin{equation}
        \label{}
        \begin{split}
            I_2&\leq \frac{1}{\rho_k\nu_{Y,1}(B(0,\rho_k))}\mathcal{F}_{R\rho_k}(\mu^\Lambda_{\tilde X_k,r_{\ell_k}},\nu_{Y,1})\leq \frac{1}{\rho_k\nu_{Y,1}(B(0,\rho_k))}\mathcal{F}_1(\mu^\Lambda_{X_{\ell_k},r_{\ell_k}},\nu_{Y,1})\leq\frac{1}{k},
        \end{split}
    \end{equation}
    so $I_2\to 0$ as well.

    Thus, \eqref{singular set - bound 7} implies \eqref{singular set - bound 5}, and since $\nu_{Y,\rho_k}\rightharpoonup\alpha$, we deduce that $\mu^\Lambda_{\tilde X_k,\tilde\tau_k}\rightharpoonup\alpha$. But we also know that $\mu^\Lambda_{X_k,\sigma_k}\rightharpoonup\nu^{(\infty)}$, and $\nu^{(\infty)}$ is not flat. Therefore, by Lemma \ref{singular set - connectedness property}, $\alpha$ cannot be flat, which implies \eqref{singular set - singular point} and completes the proof.
\end{proof}

\newpage


\begin{thebibliography}{20}

\bibitem[Bes28]{Bes28} A. S. Besicovitch, \textit{On the fundamental geometrical properties of linearly measurable plane sets of points}, \textit{Mathematische Annalen}, \textbf{98}, (1928), 422-464.

\bibitem[Bes38]{Bes38} A. S. Besicovitch, \textit{On the fundamental geometrical properties of linearly measurable plane sets of points (II)}, \textit{Mathematische Annalen}, \textbf{115}, (1938), 296-329.

\bibitem[Bes39]{Bes39} A. S. Besicovitch, \textit{On the fundamental geometrical properties of linearly measurable plane sets of points (III)}, \textit{Mathematische Annalen}, \textbf{116}, (1939), 349-357.


\bibitem[CGTW25]{CGTW25} E. Casey, M. Goering, B. Wilson, T. Toro, \textit{Rectifiability and tangents in a rough Riemannian setting}, preprint \url{arXiv:2311.00589}



\bibitem[De08]{De08} C. De Lellis, \textit{Lecture notes on rectifiable sets, densities, and tangent measures}, Zurich Lectures in Advanced Mathematics, \textbf{7} (2008). \url{https://ems.press/books/zlam/43}






\bibitem[DelI22]{DI22}  G. Del Nin, K. Idu, \textit{Geometric criteria for $C^{1,\alpha}$ rectifiability}, Journal of the London Math. Society, \textbf{105}, (2022), no. 1, 445-462.



\bibitem[DKT01]{DKT01} 
G. David, C. Kenig, T. Toro, \textit{Asymptotically optimally doubling measures and Reifenberg flat sets with vanishing constant}, Communications on Pure and Applied Mathematics, (4) \textbf{54}, (2001), 385-449.



\bibitem[DS91]{DS91} G. David, S. Semmes, \textit{Singular integrals and rectifiable sets in $\R^{n}$}: \textit{Au-del\`a des graphes lipschitziens,} Ast\'erisque, \textbf{193} (1991).


\bibitem[DS92]{DS92} G. David, S. Semmes, \textit{Analysis of and on uniformly rectifiable sets}, Mathematical Surveys and Monographs, \textbf{38}, American Mathematical Society, Providence, RI (1993).



\bibitem[EG15]{EG15} L.C. Evans, R.F. Gariepy, \textit{Measure Theory and Fine Properties of Functions}, Revised edition, Chapman and Hall/CRC Press (2015).

\bibitem[Ghi20]{Ghi20} S. Ghinassi, \textit{Sufficient conditions for $C^{1,\alpha}$ parametrization and rectifiability}, \textit{Annales Academiæ Scientiarum Fennicæ Mathematica}, \textbf{45}, (2020), 1065-1094.





\bibitem[Hof24]{Hof24} J. Hoffman, \textit{Square functions controlling smoothness and higher-order quantitative rectifiability}, to appear in \textit{Mathematische Annalen} (2025). \url{doi.org/10.1007/s00208-025-03215-5}




\bibitem[Jon90]{Jon90} P. Jones, \textit{Rectifiable sets and the traveling salesman problem}, Invent. Math., \textbf{102}, (1990), no. 1, 1-15.


\bibitem[KoP87]{KP87} O. Kowalski, D. Preiss, \textit{Besicovitch-type properties of measures and submanifolds}, J. Reine Angew. Math., \textbf{379}, (1987), 115-151.


\bibitem[KT99]{KT99} C. Kenig, T. Toro, \textit{Free boundary regularity for harmonic measures and Poisson kernels}, Annals of Mathematics, \textbf{150}, (1999), 369-454.



\bibitem[Le13]{Le13} S. Lewis, \textit{Singular points of H\"older asymptotically optimally doubling measures}, preprint \url{arxiv.org/abs/1301.1993}.




\bibitem[Mar61]{Mar61} J. M. Marstrand, \textit{Hausdorff two-dimensional measure in 3-space}, Proc. London. Math. Soc., (3) \textbf{11}, (1961), 91-108.

\bibitem[Mar64]{Mar64} J. M. Marstrand, \textit{The $(\phi,s)$ regular subsets of $n$ space}, Trans. Amer. Math. Soc. \textbf{113}, (1964), 369-392.


\bibitem[Mat75]{Mat75} P. Mattila, \textit{Hausdorff m regular and rectifiable sets in $n$-space}, Trans. Amer. Math. Soc. \textbf{205}, (1975), 263-274.

\bibitem[Mat95]{Ma95} P. Mattila, \textit{Geometry of Sets and Measures in Euclidean Space - Fractals and rectifiability}, Cambridge Studies in Advanced Mathematics, \textbf{44}, Cambridge University Press , Cambridge (1995).


\bibitem[Ni17]{Ni17} A. Nimer, \textit{A sharp bound on the Hausdorff dimension of the singular set of a uniform measure}, Calculus of Variations and Partial Differential Equations, \textbf{56}, 111 (2017).


\bibitem[Ni18]{Ni18} A. Nimer, \textit{Singular sets of uniformly asymptotically doubling measures}, preprint \url{arxiv.org/abs/1809.08941}




\bibitem[Ni22]{Ni22} A. Dali Nimer,  \textit{Conical 
3
-uniform measures: A family of new examples and characterizations}, J. Differential Geom. \textbf{121} (1) (2022), 57 - 99.





\bibitem[Pre87]{Pre87} D. Preiss, \textit{Geometry of measures in $\R^n$: distribution, rectifiability and densities}, Ann. of Math., \textbf{125}, (1987), 537-643.



\bibitem[PTT08]{PTT08} D. Preiss, X. Tolsa, T. Toro, \textit{On the smoothness of H\"older-doubling measures}, Calculus of Variations and Partial Differential Equations, \textbf{33}, (2008), 339-363.



\bibitem[Rei60]{Rei60} E. R. Reifenberg, \textit{Solution of the Plateau Problem for $m$-dimensional surfaces of varying topological type}, Acta Math., \textbf{104}, (1960), 1-92.




\bibitem[Si84]{Si84} L. Simon, \textit{Lectures in Geometric Measure Theory}, Center for Mathematical Analysis, Australian National University, v.3 (1984).



\bibitem[Tol15]{Tol15} X. Tolsa, \textit{Uniform measures and uniform rectifiability}, Journal of the London Mathematical Society, (1) \textbf{92}, (2015), 1-18.


\end{thebibliography}
\end{document}